\DeclareFontFamily{T1}{cbgreek}{}
\DeclareFontShape{T1}{cbgreek}{m}{n}{<-6>  grmn0500 <6-7> grmn0600 <7-8> grmn0700 <8-9> grmn0800 <9-10> grmn0900 <10-12> grmn1000 <12-17> grmn1200 <17-> grmn1728}{}
\DeclareSymbolFont{quadratics}{T1}{cbgreek}{m}{n}
\DeclareMathSymbol{\qoppa}{\mathord}{quadratics}{19}
\DeclareMathSymbol{\Qoppa}{\mathord}{quadratics}{21}
\definecolor{e-mail}{rgb}{0,.40,.80}
\definecolor{reference}{rgb}{.20,.60,.22}
\definecolor{citation}{rgb}{0,.40,.80}
\newtheorem{thm}{Theorem}[section]
\newtheorem*{theorem}{Theorem}
\newaliascnt{prop}{thm}
\newtheorem{prop}[prop]{Proposition}
\crefname{prop}{proposition}{propositions}
\newtheorem{conjecture}[thm]{Conjecture}
\newtheorem{lm}[thm]{Lemma}
\newtheorem{assumption}[thm]{Assumption}
\theoremstyle{definition}
\newtheorem{defn}[thm]{Definition}
\theoremstyle{remark}
\newtheorem{remark}[thm]{Remark}
\newtheorem{example}[thm]{Example}
\newcommand{\A}{\mathrm{A}}
\newcommand{\B}{\mathrm{B}}
\newcommand{\C}{\mathrm{C}}
\newcommand{\D}{\mathrm{D}}
\newcommand{\rH}{\mathrm{H}}
\newcommand{\K}{\mathrm{K}}
\newcommand{\rL}{\mathrm{L}}
\newcommand{\LA}{\mathrm{LA}}
\newcommand{\T}{\mathrm{T}}
\newcommand{\U}{\mathrm{U}}
\newcommand{\bC}{\mathbf{C}}
\newcommand{\bE}{\mathbb{E}}
\newcommand{\Ga}{\mathbf{G}_{\mathrm{a}}}
\newcommand{\Gm}{\mathbf{G}_{\mathrm{m}}}
\newcommand{\bL}{\mathbb{L}}
\newcommand{\bN}{\mathbf{N}}
\newcommand{\bP}{\mathbf{P}}
\newcommand{\Q}{\mathbf{Q}}
\newcommand{\R}{\mathbf{R}}
\newcommand{\bS}{\mathbb{S}}
\newcommand{\bT}{\mathbb{T}}
\newcommand{\Z}{\mathbf{Z}}
\newcommand{\cC}{\mathcal{C}}
\newcommand{\cD}{\mathcal{D}}
\newcommand{\cF}{\mathcal{F}}
\newcommand{\cG}{\mathcal{G}}
\newcommand{\cL}{\mathcal{L}}
\newcommand{\cN}{\mathcal{N}}
\newcommand{\cO}{\mathcal{O}}
\newcommand{\cS}{\mathcal{S}}
\newcommand{\g}{\mathfrak{g}}
\newcommand{\fl}{\mathfrak{l}}
\newcommand{\flie}{\mathfrak{lie}}
\newcommand{\fp}{\mathfrak{p}}
\newcommand{\ad}{\operatorname{ad}}
\newcommand{\Arf}{\mathrm{Arf}}
\newcommand{\at}{\mathrm{at}}
\newcommand{\Aut}{\mathrm{Aut}}
\newcommand{\Br}{\mathrm{Br}}
\newcommand{\ch}{\mathrm{ch}}
\newcommand{\coev}{\mathrm{coev}}
\newcommand{\colim}{\operatornamewithlimits{colim}}
\newcommand{\coMod}{\mathrm{coMod}}
\newcommand{\detgr}{\mathrm{det}^{\mathrm{gr}}}
\newcommand{\Dol}{\mathrm{Dol}}
\newcommand{\dR}{\mathrm{dR}}
\newcommand{\Eul}{\mathrm{Eul}}
\newcommand{\ev}{\mathrm{ev}}
\newcommand{\Fun}{\mathrm{Fun}}
\newcommand{\GL}{\mathrm{GL}}
\newcommand{\gr}{\operatorname{gr}}
\newcommand{\GW}{\mathrm{GW}}
\newcommand{\uGW}{\underline{\mathrm{GW}}}
\newcommand{\hGa}{\widehat{\mathbf{G}}_{\mathrm{a}}}
\newcommand{\Hom}{\mathrm{Hom}}
\newcommand{\id}{\mathrm{id}}
\newcommand{\Ind}{\operatorname{Ind}}
\newcommand{\IndCoh}{\mathrm{IndCoh}}
\newcommand{\uK}{\underline{\mathrm{K}}}
\newcommand{\Loc}{\mathrm{Loc}}
\newcommand{\loccit}{\emph{loc. cit.}}
\newcommand{\LocSys}{\mathrm{LocSys}}
\newcommand{\Map}{\mathrm{Map}}
\newcommand{\Mod}{\mathrm{Mod}}
\newcommand{\motimes}{\otimes^*}
\newcommand{\op}{\mathrm{op}}
\newcommand{\Perf}{\mathrm{Perf}}
\newcommand{\uPerf}{\underline{\mathrm{Perf}}}
\newcommand{\PGL}{\mathrm{PGL}}
\newcommand{\Pic}{\mathrm{Pic}}
\newcommand{\uPic}{\underline{\mathrm{Pic}}}
\newcommand{\Pn}{\mathrm{Pn}}
\newcommand{\PrSt}{\mathrm{Pr}^{\mathrm{St}}}
\newcommand{\pt}{\mathrm{pt}}
\newcommand{\QCoh}{\mathrm{QCoh}}
\newcommand{\red}{\mathrm{red}}
\newcommand{\Ring}{\mathrm{Ring}}
\newcommand{\rk}{\operatorname{rk}}
\newcommand{\sgn}{\mathrm{sgn}}
\newcommand{\sh}{\mathrm{sh}}
\newcommand{\SL}{\mathrm{SL}}
\newcommand{\SO}{\mathrm{SO}}
\newcommand{\Sp}{\mathrm{Sp}}
\newcommand{\Spec}{\operatorname{Spec}}
\newcommand{\Spinc}{\mathrm{Spin}^c}
\newcommand{\Sym}{\operatorname{Sym}}
\newcommand{\Td}{\mathrm{Td}}
\newcommand{\tens}{\mathrm{tens}}
\newcommand{\Vect}{\mathrm{Vect}}
\newcommand{\uVect}{\underline{\mathrm{Vect}}}
\newcommand{\vol}{\mathrm{vol}}
\newcommand{\Wh}{\mathrm{Wh}}
\newcommand{\defterm}[1]{\textbf{\emph{#1}}}
\newcommand{\cosimp}[3]{
\xymatrix{
#1 \ar@<.5ex>[r] \ar@<-.5ex>[r] & #2 \ar@<.8ex>[r] \ar[r] \ar@<-.8ex>[r] & #3 \ar@<1.2ex>[r] \ar@<.4ex>[r] \ar@<-.4ex>[r] \ar@<-1.2ex>[r] & \ldots
}
}
\newcommand{\simp}[3]{
\xymatrix{
#1 & #2 \ar@<.5ex>[l] \ar@<-.5ex>[l] & #3 \ar@<.8ex>[l] \ar[l] \ar@<-.8ex>[l] & \ldots \ar@<1.2ex>[l] \ar@<.4ex>[l] \ar@<-.4ex>[l] \ar@<-1.2ex>[l]
}
}
\begin{document}
\title{Torsion volume forms}
\address{Trinity College Dublin, Dublin, Ireland}
\email{naeff@tcd.ie}
\author{Florian Naef}
\address{School of Mathematics, University of Edinburgh, Edinburgh, UK}
\email{p.safronov@ed.ac.uk}
\author{Pavel Safronov}
\begin{abstract}
We introduce volume forms on mapping stacks in derived algebraic geometry using a parametrized version of the Reidemeister--Turaev torsion. In the case of derived loop stacks we describe this volume form in terms of the Todd class. In the case of mapping stacks from surfaces, we compare it to the symplectic volume form. As an application of these ideas, we construct canonical orientation data for cohomological DT invariants of closed oriented 3-manifolds.
\end{abstract}
\maketitle

\section*{Introduction}

\subsection*{Simple homotopy types}

The key input to the construction of volume forms on mapping stacks in this paper is a local factorization of the determinant of cohomology on a stack using what we call a \emph{simple structure}. Our motivation comes from the theory of simple homotopy types which we briefly recall.

Consider a homotopy type $M$. To define its Euler characteristic $\chi(M)\in\Z$ one has to assume a finiteness condition on $M$, i.e. that $M$ is finitely dominated, which can be phrased in a homotopy-invariant way by saying that $M$ is a compact object of the $\infty$-category of spaces $\cS$. This ensures, for instance, that given any local system $\cL$ on $M$ whose fiber at any point is finite dimensional (a perfect complex), the homology $\rH_\bullet(M; \cL)$ is bounded and finite-dimensional in each degree. In particular, one may consider its Euler characteristic.

If $M$ is a finite CW complex, there is a \emph{local} formula for the Euler characteristic obtained by computing the homology $\rH_\bullet(M; \cL)$ using the cellular chain complex $\C_\bullet(M; \cL)$:
\[\chi(\rH_\bullet(M; \cL)) = \sum_\sigma (-1)^{\dim(\sigma)} \dim(\cL_{\alpha_\sigma}),\]
where the sum is over cells $\sigma$ of $M$ and $\alpha_\sigma\in M$ is a point in the interior of $\sigma$. One may ask what extra structure on the homotopy type $M$ allows for such a local description (i.e. which only involves information about the individual fibers of $\cL$, but not the parallel transport maps) of the Euler characteristic. To have a universal answer instead of computing the Euler characteristic of $\C_\bullet(M; \cL)$, we will describe the point in the $K$-theory space. Moreover, we will work over the sphere spectrum.

The corresponding $K$-theory is $A$-theory $\A(M)$ \cite{Waldhausen} and, if $M$ is finitely dominated, there is a homotopy-invariant Euler characteristic which defines a point $[\bS_M]\in\A(M)$. There is, moreover, a canonical \defterm{assembly} map
\[\alpha\colon \C_\bullet(M; \A(\pt))=\Sigma^\infty_+ M\otimes \A(\pt)\longrightarrow \A(M).\]
The structure of a finite CW complex on $M$ allows one to lift $[\bS_M]\in\A(M)$ to $e_\A(M)\in\Omega^\infty\C_\bullet(M; \A(\pt))$ supported at the points $\alpha_\sigma$ and this allows one to obtain a homotopy
\[[\C_\bullet(M; \cL)]\sim \sum_\sigma (-1)^{\dim(\sigma)} [\cL_{\alpha_\sigma}]\]
in $\Omega^\infty \A(M)$ for any (dualizable) parametrized spectrum $\cL$ over $M$. Let us recall some known results:
\begin{itemize}
    \item For a given finitely dominated space $M\in\cS$ the obstruction to the existence of a lift of $[\bS_M]$ along the assembly map is Wall's finiteness obstruction \cite{WallFiniteness}. It vanishes if, and only if, $M$ is homotopy equivalent to a finite CW complex.
    \item Given two finite CW complexes $M_1, M_2$ together with a homotopy equivalence $f\colon M_1\rightarrow M_2$ the difference between the corresponding lifts is captured by the Whitehead torsion $\Wh(f)$ of the homotopy equivalence $f$.
\end{itemize}

We will think of the space of lifts of $[\bS_M]$ along the assembly map as the space of ways of endowing $M$ with the structure of a simple homotopy type. The description of the homotopy type of this space is the content of the stable parametrized $h$-cobordism theorem \cite{JahrenRognesWaldhausen}.

In this paper we transport this notion of a simple homotopy type to the world of derived stacks over a ground ring $k$. Given such a derived stack $X$ satisfying a finiteness assumption (analogous to finite domination in the topological setting, see \cref{mainassumptionassembly}) there is an assembly map
\[\C_\bullet(X(k); \K(k))\longrightarrow \K^\omega(X),\]
where $\K^\omega(X)$ is the $\K$-theory of the stable $\infty$-category $\QCoh(X)^\omega$ of compact quasi-coherent complexes over $X$ together with a canonical point $[\cO_X]\in\Omega^\infty\K^\omega(X)$. A \defterm{simple structure} on $X$ (see \cref{def:simplestructure}) is then a lift of $[\cO_X]$ along the assembly map. For the Betti stack $X=M_\B$ of a homotopy type, we get exactly the notion of a simple homotopy type from before and we describe this lift in concrete terms in \cref{sect:Bettilifts}. We also use the theory of de Rham $\epsilon$-factors \cite{Patel,Groechenig} to define simple structures on de Rham and Dolbeault stacks $M_{\dR}$ and $M_{\Dol}$ in \cref{sect:DolbeaultdeRham}. We also expect that simple structures can be defined in other sheaf contexts with a 6-functor functoriality, e.g. in the arithmetic setting.

With the definition of simple structure the following theorem is then straightforward which provides a local description (an $\epsilon$-factorization in the sense of \cite{Beilinson}) of cohomology of stacks.

\begin{theorem}[See \cref{thm:indextheorem} for a complete statement]
Let $X$ be a derived stack equipped with a simple structure and $\cF\in\Perf(X)$. Then there is a homotopy
\[[p_\sharp \cF]\sim\sum_i [\cF_{x_i}]\alpha_i\]
in $\Omega^\infty\K(k)$ for some points $x_i\in X(k)$ and $\alpha_i\in\Omega^\infty\K(k)$. Here $p\colon X\rightarrow \pt$ and $p_\sharp$ is the functor of homology of $X$.
\end{theorem}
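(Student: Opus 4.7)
The plan is to reduce the statement to a projection formula for the assembly map. For any $\cF\in\Perf(X)$, evaluation at a point $x\colon\pt\to X$ gives $[\cF_x]=[x^*\cF]\in\Omega^\infty\K(k)$; letting $x$ vary, stabilizing and extending $\K(k)$-linearly produces a map of spectra
$$\mu_\cF\colon \C_\bullet(X(k);\K(k))\longrightarrow \K(k).$$
By construction, if an element $e\in\Omega^\infty\C_\bullet(X(k);\K(k))$ is represented by a finite combination $\sum_i \alpha_i\cdot x_i$ with $x_i\in X(k)$ and $\alpha_i\in\Omega^\infty\K(k)$ — the structure packaged in a simple structure in analogy with the cells of a finite CW complex — then $\mu_\cF(e)=\sum_i[\cF_{x_i}]\alpha_i$, which is the right-hand side of the theorem.

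The key ingredient is a projection formula for the assembly map, namely a canonical homotopy
$$p_\sharp\bigl([\cF]\cdot \mathrm{asm}(-)\bigr)\;\simeq\;\mu_\cF(-)$$
of maps $\C_\bullet(X(k);\K(k))\to\K(k)$, where $\mathrm{asm}$ denotes the assembly and the product is taken in the ring spectrum $\K^\omega(X)$. The pointwise content is transparent: $\mathrm{asm}$ sends a point $x\in X(k)$ to $[x_*k]\in\K^\omega(X)$, and the standard projection formula for $x_*$, together with the identity $p\circ x=\id_{\pt}$, gives
$$p_\sharp\bigl(\cF\otimes x_*k\bigr)\simeq p_\sharp\bigl(x_*(x^*\cF)\bigr)\simeq x^*\cF=\cF_x.$$
Promoting this pointwise identity to a coherent homotopy of spectrum-level maps — verifying naturality over the colimit defining $\Sigma^\infty_+ X(k)$ and compatibility with $\K(k)$-linearization — is the main technical step. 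I expect it to reduce to the naturality of the projection formula in the 6-functor formalism for $\QCoh$, but keeping track of the coherences is the principal obstacle.

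Granted the projection formula, the theorem is a one-line computation. By definition of a simple structure, $\mathrm{asm}(e)\simeq[\cO_X]$, and using $[\cF]=[\cF]\cdot[\cO_X]$ in $\K^\omega(X)$ one obtains
$$[p_\sharp\cF]=p_\sharp[\cF]\simeq p_\sharp\bigl([\cF]\cdot \mathrm{asm}(e)\bigr)\simeq \mu_\cF(e)=\sum_i[\cF_{x_i}]\alpha_i,$$
which is the asserted homotopy in $\Omega^\infty\K(k)$.
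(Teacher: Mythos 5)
Your proposal follows the paper's proof of \cref{thm:indextheorem} essentially step for step: you multiply by $[\cO_X]$ in the $\K(X)$-module $\K^\omega(X)$, replace $[\cO_X]$ by $\alpha(e_\K(X))$ using the simple structure, and then commute the multiplication through the assembly map via the projection formula — the ``projection formula for the assembly map'' you isolate is exactly the commutative square of spectra that the paper derives from the projection formula of \cref{mainassumptionassembly}, which also supplies the coherence you flag as the main technical step (and one should write $x_\sharp$, the left adjoint, not $x_*$, since the assembly map is built from $i_\sharp$).
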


In many settings Poincar\'e duality allows one to state the above index theorem for cohomology (i.e. for $[p_* \cF]$) in a similar way. The approach to (parametrized) topological index theorems via lifts along the assembly map was described in \cite{DwyerWeissWilliams} and our construction and setup are directly inspired by that work.

\subsection*{Volume forms on mapping stacks}

For a derived stack $Y$ with a perfect cotangent complex $\bL_Y$ the analog of the sheaf of volume forms is the determinant line $\det(\bL_Y)$. So, we define a volume form on $Y$ as a trivialization of $\det(\bL_Y)$ (see \cref{def:volumeform}). Smooth schemes with a trivial canonical bundle provide examples. There are also more interesting examples defined as follows. Recall that given a smooth symplectic scheme $(X, \omega)$, the symplectic volume form is $\frac{\omega^{\dim X}}{(\dim X)!}$. We generalize this construction to the derived setting (where $\omega^{\dim X}$ no longer defines a section of the determinant line) as follows. Recall the notion of an $n$-shifted symplectic structure on a derived stack from \cite{PTVV}. Using the formalism of Grothendieck--Witt spectra of stable $\infty$-categories with duality (or Poincare $\infty$-categories) from \cite{Schlichting,CDHHLMNNS1} in \cref{sect:symplecticvolume} we define \defterm{symplectic volume forms} on $n$-shifted symplectic stacks for any $n\in\Z$ divisible by $4$.

One of our main theorems establishes the existence of volume forms on derived mapping stacks $\Map(X, Y)$.

\begin{theorem}[See \cref{thm:volumelift} for the precise statement]
Let $X, Y$ be derived stacks, where $X$ is equipped with a simple structure. Either suppose $\dim(Y) = 0$ or choose an isomorphism $\det(p_\sharp \cO_X)\cong k$. Moreover, choose either a volume form on $Y$ or a trivialization of the Euler class $e(X)\in\C_\bullet(X(k); \Z)$ (see \cref{def:simplestructure}). Then $\Map(X, Y)$ carries a canonical \defterm{torsion volume form}.
\end{theorem}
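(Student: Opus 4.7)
The approach is to express $\det(\bL_{\Map(X,Y)})$ as a tensor product of local contributions by applying the stated index theorem to the universal perfect complex $\ev^*\bL_Y$ on $X\times\Map(X,Y)$, and then trivialize each contribution using the given data. To begin, I identify the cotangent complex: with $p\colon X\times\Map(X,Y)\to\Map(X,Y)$ and $\ev\colon X\times\Map(X,Y)\to Y$, standard derived deformation theory gives $\bT_{\Map(X,Y)}\cong p_*\ev^*\bT_Y$ and dually $\bL_{\Map(X,Y)}\cong p_\sharp\ev^*\bL_Y$. The goal is thus to trivialize the graded line $\det(p_\sharp\ev^*\bL_Y)$ on $\Map(X,Y)$.

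Next, I apply the index theorem in a parametrized form over $\Map(X,Y)$. The simple structure on $X$ is a lift $e\in\Omega^\infty\C_\bullet(X(k);\K(k))$ of $[\cO_X]$ along the assembly map; pulling $e$ back to $X\times\Map(X,Y)$ and tensoring with $\ev^*\bL_Y$ produces a lift of $[\ev^*\bL_Y]$ along the assembly map of the family, and the parametrized index theorem yields a canonical homotopy
\[[p_\sharp\ev^*\bL_Y]\;\sim\;\sum_i[\ev_{x_i}^*\bL_Y]\cdot\alpha_i\]
in $\Omega^\infty\K^\omega(\Map(X,Y))$, with points $x_i\in X(k)$ and classes $\alpha_i\in\Omega^\infty\K(k)$ determined by $e$. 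Applying the determinant functor $\K\to\Pic$ and using multiplicativity together with $\rk(\ev_{x_i}^*\bL_Y)=\dim(Y)$ gives a canonical isomorphism
\[\det(\bL_{\Map(X,Y)})\;\cong\;\bigotimes_i(\ev_{x_i}^*\det(\bL_Y))^{\otimes\rk(\alpha_i)}\;\otimes\;\bigotimes_i\det(\alpha_i)^{\otimes\dim(Y)}\]
of graded lines on $\Map(X,Y)$.

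The two factors now trivialize independently using the hypotheses. For the second factor, if $\dim(Y)=0$ it is automatically trivial; otherwise, applying the index theorem to $\cF=\cO_X$ gives $\bigotimes_i\det(\alpha_i)\cong \det(p_\sharp\cO_X)$, so the chosen isomorphism $\det(p_\sharp\cO_X)\cong k$ trivializes $\det(p_\sharp\cO_X)^{\otimes\dim(Y)}$. For the first factor, a volume form on $Y$ trivializes $\det(\bL_Y)\cong\cO_Y$ and hence each $\ev_{x_i}^*\det(\bL_Y)$; alternatively, the assignment $[x]\mapsto \ev_x^*\det(\bL_Y)$ extends to a map $\C_\bullet(X(k);\Z)\to \Pic(\Map(X,Y))$ sending the Euler class $e(X)=\sum_i\rk(\alpha_i)[x_i]$ precisely to the first factor, so a nullhomotopy of $e(X)$ trivializes it. Combining the two trivializations yields the desired volume form on $\Map(X,Y)$.

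The main obstacle is the parametrized form of the index theorem used in the second step and the coherent passage of the resulting homotopy through the determinant map in a family over $\Map(X,Y)$. The assembly map, the simple structure, and $\det\colon\K\to\Pic$ must all be compatible in a way that respects base change over $\Map(X,Y)$; this is what makes the resulting volume form \emph{canonical} rather than merely well-defined up to isomorphism of graded lines. Checking that the four combinations of hypotheses produce mutually compatible torsion volume forms when more than one applies should then follow from naturality of the index theorem.
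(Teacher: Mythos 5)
Your proposal matches the paper's proof of \cref{thm:volumelift} in all essentials: identify $\bL_{\Map(X,Y)}\cong\pi_\sharp\ev^*\bL_Y$ (\cref{prop:mappingcotangent}), apply \cref{thm:indextheorem} to $\ev^*\bL_Y$ in a family over $\Map(X,Y)$, pass through the $\bE_\infty$-ring determinant $\detgr\colon\uK\to\uPic^\Z\cong\uPic\times\Z$ (\cref{thm:determinantmap}), and split the resulting graded line into the factor $\langle\epsilon(\ev^*\det\bL_Y),e(X)\rangle$ and the factor $\det(p_\sharp\cO_X)^{\otimes\dim(Y)}$, trivializing each with the stated data. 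The paper's write-up is more careful to track everything at the level of spectra and chains (rather than using the informal decomposition $e_\K(X)=\sum_i x_i\alpha_i$, which only makes literal sense on $\pi_0$), but your plan is the same argument.
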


The construction of the torsion volume form (and the name) is directly inspired by the theory of Reidemeister torsion with a refinement by Turaev \cite{TuraevReidemeister,TuraevEuler}. We refer to \cite{TuraevBook,Nicolaescu} for a pedagogical introduction. In fact, it directly reduces to the adjoint Reidemeister--Turaev torsion in the following important example (see \cref{sect:Reidemeister} for more details). Let $M$ be a finite CW complex and $G$ an algebraic group and consider the derived character stack $\Loc_G(M) = \Map(M_\B, \B G)$ whose classical stack parametrizes representations of the fundamental group $\pi_1(M)$. Then the fiber of $\det(\bL_{\Loc_G(M)})$ at a given $G$-local system $\cL$ is given by the determinant of the cohomology of the adjoint local system $\ad \cL$ and the torsion volume form is given by its Reidemeister torsion. This extends the well-known construction (see \cite{Witten2d,HeusenerPorti}) of a volume form on (an open subset of good representations of) the character variety of a surface for a semisimple group $G$ to the full derived moduli stack.

When $M=\Sigma$ is a closed oriented surface, we prove the following related results connecting the torsion volume form to the symplectic volume form:
\begin{itemize}
    \item If $G$ is a unimodular algebraic group with a nondegenerate pairing on its Lie algebra, we show in \cref{thm:symplectictorsionvolumeform} that the torsion volume form on the derived character stack $\Loc_G(\Sigma)$ differs from the symplectic volume form by a sign determined by the second Stiefel--Whitney class of the adjoint representation (which is trivial when $G$ is simply-connected).
    \item If $\cL$ is an orthogonal rank 1 local system over $\Sigma$, the torsion element $\tau_s(\cL)$ of $\det\rH_\bullet(\Sigma; \cL)$ depends on a spin structure $s$ on $\Sigma$, while the symplectic volume form $\vol_\cL\in\det\rH_\bullet(\Sigma; \cL)$ is defined canonically. We show in \cref{prop:Johnsonsign} that the map $\rH^1(\Sigma; \mu_2)\rightarrow \mu_2$ given by $\cL\mapsto \tau_s(\cL)/\vol_\cL$ is given by Johnson's quadratic refinement \cite{Johnson} of the intersection pairing on $\Sigma$, where $\mu_2$ is the algebraic group of second roots of unity. This gives an interesting new perspective on this function.
\end{itemize}

When $M$ is a closed oriented 3-manifold, we give an application of our results to the theory of cohomological Donaldson--Thomas (DT) invariants. Let us briefly recall the setting. For any complex $(-1)$-shifted symplectic stack $X$ equipped with an \emph{orientation data} (the choice of a square root of $\det(\bL_X)$) the authors of \cite{BBBBJ} have defined a perverse sheaf $\phi_X$ on the underlying classical stack $t_0(X)$ whose local Euler characteristic gives the Behrend function. The cohomology $\rH^\bullet(t_0(X), \phi_X)$ is the cohomological DT invariant of $X$. We refer to \cite{JoyceUpmeier1,JoyceUpmeier2} for a construction of orientation data for many moduli spaces using techniques from differential geometry.

\begin{theorem}[See \cref{prop:LocGoriented} and \cref{thm:Locparabolicinductionorientation}]
Let $M$ be a closed oriented 3-manifold and $G$ a split connected reductive group. Then the $(-1)$-shifted symplectic stack $\Loc_G(M)=\Map(M_\B, \B G)$ of $G$-local systems on $M$ has a canonical orientation data.

Suppose $P\subset G$ is a parabolic subgroup and $L$ the Levi factor. Assume that either the modular character of $P$ admits a square root or that $M$ is equipped with a spin structure. Then the $(-1)$-shifted Lagrangian correspondence $\Loc_L(M)\leftarrow \Loc_P(M)\rightarrow \Loc_G(M)$ has a canonical orientation data.
\end{theorem}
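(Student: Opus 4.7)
The strategy is to deduce both claims from the torsion volume form theorem applied to $X = M_\B$ with targets $Y = \B G$, $\B P$, and $\B L$. I first verify the hypotheses of the theorem uniformly, then produce the orientation data on $\Loc_G(M)$, and finally assemble the orientation data on the Lagrangian correspondence by analyzing a $\mu_2$-valued compatibility.

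To invoke the theorem one needs a simple structure on $M_\B$, an isomorphism $\det(p_\sharp\cO_{M_\B})\cong k$, and a trivialization of the Euler class $e(M_\B)\in \C_\bullet(M;\Z)$. A smooth triangulation of $M$ furnishes the simple structure via the Betti-case construction of \cref{sect:Bettilifts}. Poincar\'e duality together with the orientation of $M$ yields the canonical trivialization of $\det(p_\sharp\cO_{M_\B})$, since $\rH^\bullet(M;k)$ is canonically self-dual up to a shift by $3$. For the Euler class, the vanishing of $\chi(M)$ for a closed oriented odd-dimensional manifold is canonically refined to a chain-level nullhomotopy by pairing the orientation class against the diagonal of $M$. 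With these choices fixed once and for all, applying the theorem with $Y=\B G$ produces a canonical torsion volume form $\tau_G$ on $\Loc_G(M)$; taking the trivial square root of a trivialized line then yields canonical orientation data, proving the first assertion.

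The same input applied with $Y=\B P$ and $Y=\B L$ yields canonical torsion volume forms $\tau_P$ and $\tau_L$. The $(-1)$-shifted Lagrangian correspondence $\Loc_L(M) \leftarrow \Loc_P(M)\rightarrow \Loc_G(M)$ provides a canonical isomorphism
\[
\det(\bL_{\Loc_P(M)})^{\otimes 2} \cong f_L^* \det(\bL_{\Loc_L(M)}) \otimes f_G^*\det(\bL_{\Loc_G(M)})^{\otimes -1},
\]
and orientation data on the correspondence amount to matching this isomorphism with $\tau_P^{\otimes 2} \cong f_L^*\tau_L \otimes f_G^*\tau_G^{-1}$.

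The main obstacle is the analysis of the resulting $\mu_2$-valued discrepancy. Filter the adjoint representation of $P$ as $\ad(L) \subset \ad(P)$ with quotient $\mathrm{ad}(\mathfrak{u})$, whose determinant is the modular character $\chi\colon P\to \Gm$ (factoring through $L$). Tracing through the local $\epsilon$-factorizations that define the torsion volume forms, the mismatch between $\tau_P^{\otimes 2}$ and $f_L^*\tau_L\otimes f_G^*\tau_G^{-1}$ is controlled, via Poincar\'e duality on $M$, by a $\mu_2$-torsor on $\Loc_L(M)$ built from $\chi$ paired against a characteristic class in $\rH^1(M;\Z/2)$. A square root of $\chi$ in the character lattice of $P$ trivializes this torsor directly through the modular factor; alternatively, a spin structure on $M$ trivializes the $\rH^1(M;\Z/2)$-contribution in the spirit of Johnson's quadratic refinement appearing in \cref{prop:Johnsonsign}. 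In either case the obstruction vanishes and the three torsion volume forms assemble into the desired orientation data on the correspondence.
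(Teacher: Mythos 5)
Your overall strategy — build torsion volume forms on $\Loc_G(M)$, $\Loc_P(M)$, $\Loc_L(M)$ from \cref{thm:volumelift}, use them as orientation data, and check compatibility on the correspondence — is the right one, and your identification of where the modular character / spin structure enters is in the right spirit. But there are two genuine gaps.

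First, your claim that the vanishing of $\chi(M)$ is ``canonically refined to a chain-level nullhomotopy by pairing the orientation class against the diagonal'' is not justified and indeed cannot be true as stated: the set of Euler structures on a closed oriented $3$-manifold is a nonempty torsor over $\rH_1(M;\Z)$ (equivalently, the set of $\Spinc$-structures by \cref{prop:SpincEuler}), and there is no distinguished point in general. The paper's proof of \cref{prop:LocGoriented} sidesteps this entirely: since $G$ is reductive hence unimodular, $\B G$ carries a volume form, and one invokes \cref{thm:volumelift} with the homology orientation of \cref{rmk:canonicalhomologyorientation} together with the volume form on $\B G$ — no Euler structure is needed. (Alternatively one can observe, via the formula in \cref{prop:LocGvolumeform}, that $\vol_{\Loc_G}$ is insensitive to the Euler structure because $\Delta_G=1$; but you would then still have to make this observation explicitly since you want to apply the \emph{same} Euler-structure input to $\B P$ as well.) For $\B P$ there is no volume form, so an Euler structure is genuinely needed there, and your ``once and for all'' choice carries a real ambiguity that must be controlled.

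Second, the central compatibility $\vol_{\Loc_G}\vol_{\Loc_L}=\vol_{\Loc_P}^2$ under the canonical isomorphism~\eqref{eq:Lagrangiansquare} is not a $\mu_2$-valued discrepancy that one trivializes afterwards — it is an honest equality, proved using \cref{prop:determinantReidemeister} and the multiplicativity and duality theorems for Reidemeister--Turaev torsion (\cite[Theorems 7.1 and 7.2]{FarberTuraevPR}), applied with the exact sequence $0\to\fp\to\g\oplus\fl\to\fp^*\to 0$ rather than the filtration $\ad(\fl)\subset\ad(\fp)$ you use. The duality theorem in turn requires the Euler structure to have vanishing characteristic class (a ``canonical'' Euler structure in Turaev's sense). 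The only remaining ambiguity is then the $2$-torsion in $\rH_1(M;\Z)$ acting on canonical Euler structures, and \emph{that} is what the two hypotheses eliminate: a spin structure singles out a $\Spinc$-structure with $c_1=0$ (hence a specific Euler structure), while a square root of $\Delta_P$ makes $\vol_{\Loc_P}$ invariant under $2$-torsion shifts by the computation $\langle h,\Delta_P\rangle=\langle 2h,\Delta_P^{1/2}\rangle=1$. Your appeal to Johnson's quadratic form (\cref{prop:Johnsonsign}) is not what the spin structure is used for here — that result lives on a surface and plays no role in the $3$-manifold argument.
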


The orientation data on the Lagrangian correspondence $\Loc_L(M)\leftarrow \Loc_P(M)\rightarrow \Loc_G(M)$ gives, assuming a certain functoriality of the perverse sheaf $\phi_X$ conjectured by Joyce, a parabolic induction map \[\rH^\bullet(t_0(\Loc_L(M)), \phi_{\Loc_L(M)})\rightarrow \rH^\bullet(t_0(\Loc_G(M)), \phi_{\Loc_G(M)})\]
as we explain in \cref{thm:DTinduction}.

\subsection*{Derived loop stacks}

Given a morphism $f\colon X\rightarrow Y$ of smooth and proper schemes (over a field $k$ of characteristic zero) the Grothendieck--Riemann--Roch theorem asserts that the commutativity of the diagram
\[
\xymatrix{
\K_0(X) \ar^-{\ch}[r] \ar^{f_*(-)}[d] & \bigoplus_n \rH^n(X, \Omega^n_X) \ar^{\int_f \Td_{X/Y}\cup(-)}[d] \\
\K_0(Y) \ar^-{\ch}[r] & \bigoplus_n \rH^n(Y, \Omega^n_Y)
}
\]
where $\ch$ is the Chern character and $\Td_{X/Y}$ is the relative Todd class. The correction by the Todd class has the following interpretation in derived algebraic geometry \cite{Markarian,BZNNonlinear,KondyrevPrikhodko}. There is a commutative diagram
\[
\xymatrix{
\K_0(X) \ar^-{\ch}[r] \ar^{f_*(-)}[d] & \rH^0(LX, \cO_{LX}) \ar^{\int_f(-)}[d] \\
\K_0(Y) \ar^-{\ch}[r] & \rH^0(\cL Y, \cO_{LY})
}
\]
where $LX = \Map(S^1_\B, X)\cong X\times_{X\times X} X$ is the derived loop space of $X$ and $\int_f\colon \rH^0(LX, \cO_{LX})\rightarrow \rH^0(LY, \cO_{LY})$ is a certain natural integration map constructed using the formalism of traces. As this is an integration map of functions, it is determined by a relative volume form along the fibers. As shown in \cite{KondyrevPrikhodko}, this volume form comes from the natural structure of $\cL X\rightarrow X$ as a derived group scheme (with the group structure given by loop composition). The Todd class then appears when we use the Hochschild--Kostant--Rosenberg (HKR) isomorphism to identify $LX\cong \T[-1] X$ with the shifted tangent bundle, where the natural volume form on $\T[-1] X$ comes from its structure as a derived vector bundle over $X$.

We provide an interpretation of the two volume forms using torsion volume forms. We identify the derived loop space as $LX = \Map(S^1_\B, X)$ and the shifted tangent bundle as $\T[-1] X = \Map(\B\hGa, X)$, where $\hGa$ is the formal additive group. The circle $S^1$ is a finite CW complex with $\chi(S^1) = 0$, so there is a natural torsion volume form $\vol_{LY}$ on $LX$. There is also a (unique) simple structure on $\B\hGa$ which induces a torsion volume form $\vol_{\T[-1] X}$ on $\T[-1] X$.

\begin{theorem}[See \cref{thm:Toddclass}]
The ratio $\vol_{LX} / \vol_{\T[-1] X}$ is given by an invertible function on $\T[-1] X$ which is the Todd class $\Td(X)$.
\end{theorem}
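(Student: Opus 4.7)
The plan is to identify each of the two torsion volume forms with a classical one coming from extra structure on the target stack---the derived vector bundle structure on $\T[-1]X = \Spec_X\Sym(\bL_X[1])$ and the derived group-scheme structure on $LX$ via loop composition---and then to invoke the Markarian--Kondyrev--Prikhodko theorem that compares these two under the HKR equivalence $LX \simeq \T[-1]X$.

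First I would unpack $\vol_{\T[-1]X}$. Since $\T[-1]X$ is a derived vector bundle over $X$, its relative cotangent complex is pulled back from $X$ and $\det(\bL_{\T[-1]X/X})$ has a tautological linear trivialization. The (unique) simple structure on $\B\hGa$ factorizes $p_\sharp$ through the basepoint inclusion $\pt \to \B\hGa$, and unraveling the construction of the torsion volume form shows that $\vol_{\T[-1]X}$ is precisely this tautological trivialization. Next I would unpack $\vol_{LX}$ using the minimal CW decomposition of $S^1$ (one $0$-cell, one $1$-cell, whence $\chi(S^1)=0$): the associated simple structure produces the cellular factorization $p_\sharp(\ev^*\bL_X) \simeq \bL_X|_{LX} \oplus \bL_X|_{LX}[1]$, whose induced trivialization of $\det(\bL_{LX/X})$ is obtained by cancelling the two copies of $\det(\bL_X)$. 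Tracing through the identification $LX \simeq X \times_{X \times X} X$ coming from this same cell decomposition, one sees that the resulting volume form coincides with the one used in \cite{KondyrevPrikhodko} that arises from the derived group-scheme structure of $LX \to X$.

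Once both torsion volume forms have been identified with the classical ones, the theorem reduces to the main result of \cite{KondyrevPrikhodko} (a derived categorified form of Grothendieck--Riemann--Roch, building on \cite{Markarian,BZNNonlinear}): the HKR equivalence carries the group-theoretic volume form on $LX$ to the linear volume form on $\T[-1]X$ multiplied by the Todd class $\Td(X)$. The hard part is Step 1: the simple structure on $\B\hGa$ must be produced via the formal/de Rham $\epsilon$-factor framework from \cref{sect:DolbeaultdeRham} (since $\B\hGa$ is not a finite CW complex), and one must trace through that construction to verify that it reproduces the canonical linear trivialization on the derived vector bundle $\T[-1]X$. The asserted uniqueness of the simple structure on $\B\hGa$ is helpful here, since it essentially forces the identification to be the canonical one; alternatively, one can reduce to the affine formal case and check the matching by a direct Koszul/Duflo computation on the fiber $\T_x X[-1]$ at a point $x \in X$.
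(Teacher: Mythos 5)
Your high-level strategy---identify $\vol_{\T[-1]X}$ with the canonical linear trivialization from the derived vector bundle structure, identify $\vol_{LX}$ with the group-theoretic orientation from Kondyrev--Prikhodko's Construction 3.2.1, and then invoke their Grothendieck--Riemann--Roch theorem---is a genuine alternative route, and the two identifications you propose are in fact verified in the paper (the analogue of \cref{prop:LYvolume} for $\T[-1]Y$ uses the left-invariant, i.e.\ vector-bundle, trivialization; a remark after \cref{prop:LYvolume} confirms agreement with \cite[Construction 3.2.1]{KondyrevPrikhodko}). However, this route has two real problems. First, your reduction rests on ``the HKR equivalence $LX\simeq \T[-1]X$'', which exists only when $X$ is a smooth scheme; the paper's \cref{thm:Toddclass} is stated for an arbitrary derived prestack $Y$ with perfect cotangent complex, where $LY$ and $\T[-1]Y$ are not equivalent and the two volume forms can only be compared by pulling back along the formally \'etale correspondence $\T[-1]Y\leftarrow L^u Y = \Map(\B\Ga,Y)\rightarrow LY$. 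Your proposal doesn't mention the unipotent loop space at all, so it proves at best the smooth-scheme special case. Second, even in that special case, you are essentially deferring the heart of the theorem to the cited reference rather than proving it, whereas the paper gives a self-contained Duflo-type computation.

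There is also a technical misidentification in your ``hard part''. The simple structure on $\B\hGa$ is \emph{not} produced by the de Rham/Dolbeault $\epsilon$-factor machinery of \cref{sect:DolbeaultdeRham}; that framework concerns $M_{\dR}$ and $M_{\Dol}$ for a smooth proper scheme $M$, and $\B\hGa$ is neither. Instead the paper identifies $\QCoh(\B\hGa)\simeq \Mod_{k[x]}$ via formal group/Lie algebra correspondence (\cref{prop:QCohBhGa}) and trivializes $[\cO_{\B\hGa}]=[k]\in\K^\omega(\B\hGa)$ directly by the Koszul resolution $k[x]\xrightarrow{x}k[x]$. The actual comparison then happens in $\K(k[\![x]\!])$: \cref{prop:Duflodifference} computes the difference of the nullhomotopies $h_{S^1_\B}$ (from the cellular complex $k[z,z^{-1}]\xrightarrow{z-1}k[z,z^{-1}]$) and $h_{\B\hGa}$ (from the Koszul complex) to be $J(x)=x/(\exp(x)-1)\in k[\![x]\!]^\times\cong \K_1(k[\![x]\!])$. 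Globalizing this along the Atiyah class of $\bL_Y$ gives the Todd class. This explicit $\K_1$ computation is the step your proposal replaces by a citation, and it is also what makes the argument valid beyond the smooth-scheme setting. If you want to pursue your route, you should restrict the claim to smooth schemes over a characteristic-zero field, verify the matching of volume forms carefully (this is a short but nontrivial check, not automatic from ``uniqueness''), and accept the loss of generality relative to the paper's theorem.
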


In fact, we prove the above statement for any derived stack $X$ (with a perfect cotangent complex). In this case there is no isomorphism between $LX$ and $\T[-1] X$, but, nevertheless, there is a correspondence $\T[-1] X\leftarrow \Map(\B\Ga, X)\rightarrow LX$ with both maps formally \'etale, which allows us to compare the volume forms.

\subsection*{Cotangent AKSZ theories}

Given a $(d-1)$-shifted symplectic stack $Z$, there is an associated $d$-dimensional topological classical field theory defined using the ASKZ formalism \cite{AKSZ,PTVV,CalaqueHaugsengScheimbauer}. Let us discuss the case when $Z=\T^*[d-1] Y$ is a shifted cotangent bundle. For instance, we have the following topological field theories of this kind:
\begin{itemize}
    \item The BF theory for $Y=\B G$, the classifying stack of a group $G$, and any $d$.
    \item The 2d $B$-model with target a smooth complex variety $Y$.
    \item The 3d Rozansky--Witten theory with target $\T^* Y$ for $Y$ a smooth complex variety.
\end{itemize}

As explained in \cite{Witten2d,CostelloWitten}, for cotangent AKSZ theories the quantization (for a closed oriented $d$-dimensional spacetime manifold $M$) is one-loop-exact, the one-loop determinant defines a volume form on $\Map(M_\B, Y)$ and the partition function is given by the volume of $\Map(M_\B, Y)$ with respect to this volume form. We expect that the torsion volume form on $\Map(M_\B, Y)$ provides a version of this volume form in the world of derived algebraic geometry. For instance, the conditions for the existence of the torsion volume form on $\Map(M_\B, \B G)$ match the anomaly cancellation condition in BF theory: either $G$ is unimodular or $\chi(M) = 0$.

\subsection*{Acknowledgements}

We would like to thank Benjamin Hennion and Markus Land for useful discussions.

\subsection*{Notation}

Throughout the paper we use the formalism of $\infty$-categories.

\begin{itemize}
    \item We denote the smash product of spectra by $\otimes$.
    \item For a topological space $X$ and a spectrum $Y$ we denote by $\C_\bullet(X; Y)$ the spectrum $\Sigma^\infty_+ X\otimes Y$. We denote by $\rH_n(X; Y)$ its $n$-th homotopy group.
    \item For a topological space $X$ and a spectrum $Y$ we denote by $\C^\bullet(X; Y)$ the spectrum $\Map(X, Y)$. We denote by $\rH^n(X; Y)$ its $(-n)$-th homotopy group.
    \item For a spectrum $Y$ we denote by $\tau_{\geq 0} Y$ the connective cover of $Y$ and by $\Omega^\infty Y$ the underlying space.
    \item Throughout the paper we work over a commutative ring $k$.
\end{itemize}

\section{Preliminaries}

In this section we introduce basic constructions in the paper: a finiteness condition on derived stacks we will use in the paper and some operations on their $K$-theory.

\subsection{Finiteness}

For a commutative dg algebra $A$ over $k$ we denote by $\Mod_A$ the stable $\infty$-category of $A$-modules.

Let $X$ be a derived prestack over $k$. Recall the following $\infty$-categories associated to it:
\begin{itemize}
    \item The symmetric monoidal $\infty$-category of quasi-coherent sheaves
    \[\QCoh(X) = \lim_{S\rightarrow X} \Mod_{\cO(S)},\]
    where the limit is taken over all derived affine schemes with a map to $X$. For a morphism $f\colon X\rightarrow Y$ of derived prestacks there is a symmetric monoidal pullback functor $f^*\colon \QCoh(Y)\rightarrow \QCoh(X)$.
    \item Let $\QCoh^\omega(X)\subset \QCoh(X)$ be the full subcategory of compact objects.
    \item The full subcategory $\Perf(X)\subset \QCoh(X)$ of perfect complexes, i.e. dualizable objects.
\end{itemize}

Dualizable objects are closed under the tensor product, so $\Perf(X)$ is a symmetric monoidal $\infty$-category. Moreover, if $\cF$ is perfect and $\cG$ is compact, the functor
\[\Hom_{\QCoh(X)}(\cF\otimes \cG, -)\cong \Hom_{\QCoh(X)}(\cG, \cF^\vee\otimes (-))\]
preserves colimits, so $\QCoh(X)^\omega$ is a $\Perf(X)$-module category.

In the paper we will be interested in the following finiteness conditions on a derived prestack.

\begin{assumption}
Let $X$ be a derived prestack satisfying the following conditions:
\begin{enumerate}
    \item The structure sheaf $\cO_X\in\QCoh(X)$ is compact, i.e. the pullback $p^*\colon \Mod_k\rightarrow \QCoh(X)$ along $p\colon X\rightarrow \pt$ admits a colimit-preserving right adjoint $p_*$.
    \item The pullback $p^*\colon \Mod_k\rightarrow \QCoh(X)$ admits a left adjoint $p_\sharp$.
\end{enumerate}
\label{mainassumption}
\end{assumption}

\begin{remark}
We think of $p_*\colon \QCoh(X)\rightarrow \Mod_k$ as the functor of \emph{cohomology} while $p_\sharp\colon \QCoh(X)\rightarrow \Mod_k$ as the functor of \emph{homology}.
\end{remark}

For a derived prestack $X$ satisfying \cref{mainassumption} the functor $p_\sharp\colon \QCoh(X)\rightarrow \Mod_k$ preserves compact objects and hence $p_\sharp \cO_X$ is a perfect complex.

\begin{defn}
Let $X$ be a derived prestack satisfying \cref{mainassumption}. Its \defterm{Euler characteristic} is
\[\chi(X) = \chi(p_\sharp \cO_X).\]
\end{defn}

Let us now present several corollaries of this assumption. Recall the notion of an \defterm{$\cO$-compact prestack} from \cite[Definition 2.1]{PTVV}.

\begin{prop}
Let $X$ be a derived prestack satisfying \cref{mainassumption}. Then:
\begin{enumerate}
    \item $\Perf(X)\subset \QCoh(X)^\omega$.
    \item Let $S$ be a derived affine scheme and let $\pi\colon S\times X\rightarrow S$ be the projection. Then there are colimit-preserving functors $\pi_\sharp, \pi_*\colon \QCoh(S\times X)\rightarrow \QCoh(S)$, where $\pi_\sharp\dashv \pi^*\dashv \pi_*$, which satisfy the projection formulas: the natural morphisms
    \[\pi_\sharp(\cF\otimes \pi^*\cG)\longrightarrow \pi_\sharp(\cF)\otimes \cG\]
    and
    \[\pi_*(\cF)\otimes \cG\longrightarrow \pi_*(\cF\otimes \pi^*\cG)\]
    are isomorphisms.
    \item $X$ is $\cO$-compact. In particular, for a derived affine scheme $S$ the functors $\pi_\sharp, \pi_*$ preserve perfect complexes, i.e. they restrict to functors
    \[\pi_\sharp,\pi_*\colon \Perf(S\times X)\longrightarrow \Perf(S).\]
\end{enumerate}
\label{prop:assumptioncorollaries}
\end{prop}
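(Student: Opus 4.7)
For part (1), the plan is to exploit the tensor-hom adjunction together with compactness of $\cO_X$: if $\cF\in\Perf(X)$, then dualizability gives a natural equivalence
\[\Hom_{\QCoh(X)}(\cF,-)\cong \Hom_{\QCoh(X)}(\cO_X,\cF^\vee\otimes(-)),\]
and the right hand side preserves colimits because tensor product commutes with colimits in each variable and $\cO_X$ is compact by hypothesis.

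For part (2), I would pass to the base-change identification $\QCoh(S\times X)\cong \Mod_{p^*\cO(S)}(\QCoh(X))$ valid for affine $S$. Under this equivalence $\pi^*$ is extension of scalars, and the adjoints $\pi_\sharp$ and $\pi_*$ are obtained by applying $p_\sharp$ and $p_*$ to the underlying object in $\QCoh(X)$; the $\cO(S)$-action is transported using the projection formulas for $p$. The colimit-preservation of these functors and the adjunction chain $\pi_\sharp\dashv\pi^*\dashv\pi_*$ are then immediate from the corresponding properties of $p$. For the projection formulas themselves, I would first establish them for $p$: the formula for $p_\sharp$ follows from
\[\Hom(p_\sharp(\cF\otimes p^*\cG),\cH)\cong \Hom(\cF, p^*(\cG^\vee\otimes \cH))\cong \Hom(p_\sharp \cF\otimes \cG,\cH)\]
for dualizable $\cG$, extended to arbitrary $\cG$ by colimit-preservation of both sides in $\cG$; the formula for $p_*$ is tautological at $\cG=k$ and both sides preserve colimits in $\cG$ (using the hypothesis that $p_*$ preserves colimits), so it holds in general. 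The projection formulas for $\pi$ then follow.

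For part (3), observe that $\pi^*$, being a left adjoint to $\pi_*$, preserves colimits; hence its own left adjoint $\pi_\sharp$ preserves compact objects. Combined with part (1) applied to $S\times X$ and the identification of perfect with compact over affine $S$, this shows $\pi_\sharp$ sends $\Perf(S\times X)$ into $\Perf(S)$. For $\pi_*$ the plan is to identify $\pi_*(\cF)\cong\pi_\sharp(\cF^\vee)^\vee$ for $\cF$ perfect via the sequence of natural equivalences
\[\Hom_S(\pi_\sharp(\cF^\vee),\cG)\cong\Hom_{S\times X}(\cO_{S\times X},\cF\otimes \pi^*\cG)\cong \Hom_S(\cG^\vee,\pi_*(\cF))\]
valid for dualizable $\cG$, where the first equivalence uses the $\pi_\sharp\dashv\pi^*$ adjunction and dualizability of $\cF$, and the second uses the $\pi^*\dashv\pi_*$ adjunction together with the projection formula from part (2). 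This exhibits $\pi_*(\cF)$ as the dual of the perfect object $\pi_\sharp(\cF^\vee)$, hence itself perfect. The $\cO$-compactness statement is a direct consequence of the base-change identification together with these colimit and perfect preservation properties.

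The main obstacle is the base-change identification $\QCoh(S\times X)\cong \Mod_{p^*\cO(S)}(\QCoh(X))$, which I would either invoke as a standard property of $\QCoh$ viewed as a sheaf of presentable $k$-linear $\infty$-categories, or prove directly using the tensor product of presentable $\infty$-categories and the fact that $\QCoh(S)=\Mod_{\cO(S)}$. The subtler step in the body of the argument is the duality $\pi_*(\cF)\cong\pi_\sharp(\cF^\vee)^\vee$: it is this identification, rather than any explicit Serre-duality hypothesis, that forces $\pi_*$ to land in $\Perf(S)$.
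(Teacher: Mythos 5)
Your proposal is correct and follows essentially the same route as the paper. The paper works with the identification $\QCoh(S\times X)\cong \QCoh(S)\otimes\QCoh(X)$ (equivalent to your $\Mod_{p^*\cO(S)}(\QCoh(X))$ description for affine $S$), from which $\pi_\sharp=\id\otimes p_\sharp$ and $\pi_*=\id\otimes p_*$; the projection formulas are then immediate by $\QCoh(S)$-linearity rather than argued by hand on dualizable $\cG$ and extended by colimits as you do, but the content is the same. Your part (1) (tensoring a compact with a dualizable stays compact), your observation that $\pi_\sharp$ preserves compacts because $\pi^*$ is colimit-preserving, and your duality $\pi_*\cF\cong(\pi_\sharp\cF^\vee)^\vee$ to conclude $\pi_*$ preserves perfects are all exactly the steps in the paper's proof.
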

\begin{proof}
Let $S$ be a derived affine scheme. Let $p\colon X\rightarrow \pt$ and $\pi\colon S\times X\rightarrow S$ be the projections, so that $\pi = \id\times p$. As $\QCoh(S\times X)\cong \QCoh(S)\otimes \QCoh(X)$, the functor $(\id\times p)^*\colon \QCoh(S)\rightarrow \QCoh(S\times X)$ admits a colimit-preserving right adjoint $\pi_*=\id\otimes p_*$ and a left adjoint $\pi_\sharp=\id\otimes p_\sharp$. The forgetful functor $\QCoh(S)\rightarrow \Mod_k$ is colimit-preserving, so the total pushforward functor $\QCoh(S\times X)\rightarrow \Mod_k$ is colimit-preserving as well. Therefore, $\cO_{S\times X}$ is compact. The tensor product of a perfect complex and a compact object is still compact. So, $\Perf(S\times X)\subset \QCoh(S\times X)^\omega$.

As $\pi^*$ is colimit-preserving, $\pi_\sharp$ preserves compact objects. As compact and perfect objects in $\QCoh(S)$ coincide, we see that $\pi_\sharp$ preserves perfect objects. But then if $\cF\in\Perf(S\times X)$, we have
\[\Hom_{\QCoh(S)}(\pi_\sharp\cF^\vee, \cO_S)\cong \Hom_{\QCoh(S\times X)}(\cF^\vee, \cO_{S\times X})\cong \pi_*\cF\]
and hence $\pi_*\cF$ is a perfect complex on $S$.
\end{proof}

Finally, recall from \cite[Chapter 1, Definition 7.1.2]{GaitsgoryRozenblyum2} the notion of a derived prestack $X$ admitting a representable deformation theory. In this case there is a cotangent complex $\bL_X\in\QCoh(X)$. For two derived prestacks $X,Y$ we may consider the mapping prestack $\Map(X, Y)$ together with the evaluation morphism
\[\ev\colon \Map(X, Y)\times X\longrightarrow Y\]
and the projection
\[\pi\colon \Map(X, Y)\times X\longrightarrow \Map(X, Y)\]
on the first factor.

\begin{prop}
Suppose $X,Y$ are derived prestacks, where $Y$ admits a perfect cotangent complex and $X$ satisfies \cref{mainassumption}. Then $\Map(X, Y)$ admits a perfect cotangent complex given by the formula
\[\bL_{\Map(X, Y)} = \pi_\sharp\ev^* \bL_Y.\]
\label{prop:mappingcotangent}
\end{prop}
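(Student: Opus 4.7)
The plan is to verify the defining adjunction of the cotangent complex by a direct adjunction chase, using only the two adjunctions $\pi_\sharp\dashv \pi^*$ and $\pi^*\dashv \pi_*$ from \cref{prop:assumptioncorollaries}(2). For a derived affine scheme $S$ and a map $f\colon S\to \Map(X,Y)$, corresponding by adjunction to $\tilde f\colon S\times X\to Y$, and for $\cF\in \QCoh(S)^{\leq 0}$, I would identify
\[\Map_{\Map(X,Y)/}\bigl(S[\cF],\,\Map(X,Y)\bigr)\cong \Map_{Y/}\bigl((S\times X)[\pi^*\cF],\,Y\bigr),\]
where the left hand side is the fiber over $f$ of $\Map(X,Y)(S[\cF])\to \Map(X,Y)(S)$ and the right hand side is the corresponding fiber for $Y$. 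This uses that the functor $X\times(-)$ preserves square-zero extensions (which holds because $\pi^*$ is symmetric monoidal and preserves $\cO_S\oplus \cF$).

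Since $Y$ admits a representable deformation theory with perfect cotangent complex $\bL_Y$, the right hand side is identified with
\[\Map_{\QCoh(S\times X)}\bigl(\tilde f^*\bL_Y,\,\pi^*\cF\bigr)=\Map_{\QCoh(S\times X)}\bigl(\ev^*\bL_Y|_{S\times X},\,\pi^*\cF\bigr).\]
Applying the adjunction $\pi_\sharp\dashv \pi^*$, this becomes
\[\Map_{\QCoh(S)}\bigl(\pi_\sharp\ev^*\bL_Y,\,\cF\bigr).\]
Naturality of all these identifications in $\cF$ and in pullback along maps $S'\to S$ (where the projection formula from \cref{prop:assumptioncorollaries}(2) gives base change for $\pi_\sharp$) shows that the sheaf on $\Map(X,Y)$ with value $\pi_\sharp\ev^*\bL_Y$ at the $S$-point $f$ is well defined and corepresents the deformation functor, which is exactly the definition of $\bL_{\Map(X,Y)}$.

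For perfectness: $\bL_Y$ is perfect on $Y$, hence $\ev^*\bL_Y$ is perfect on $\Map(X,Y)\times X$, and by \cref{prop:assumptioncorollaries}(3) the functor $\pi_\sharp$ preserves perfect complexes, so $\pi_\sharp\ev^*\bL_Y\in \Perf(\Map(X,Y))$.

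The only real subtlety I anticipate is checking that $\Map(X,Y)$ genuinely admits a representable deformation theory in the sense of \cite{GaitsgoryRozenblyum2}, i.e. that the corepresenting object assembles into a quasi-coherent sheaf with the correct base change behaviour under arbitrary morphisms of affine schemes over $\Map(X,Y)$. This reduces to base change for $\pi_\sharp$ along maps $S'\to S$, which is the projection formula combined with the compatibility $\pi_\sharp(-)\otimes g^*(-)\cong \pi'_\sharp(g\times \id_X)^*(-)$ for $g\colon S'\to S$; this in turn follows from the fact that $\QCoh(S\times X)\cong \QCoh(S)\otimes \QCoh(X)$ under \cref{mainassumption}, as already used in the proof of \cref{prop:assumptioncorollaries}.
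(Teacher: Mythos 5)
The paper's own proof is very short: it cites Rozenblyum [Proposition B.3.5] for the formula $\bL_{\Map(X,Y)}=\pi_\sharp\ev^*\bL_Y$ and only supplies the perfectness argument via \cref{prop:assumptioncorollaries}. You instead re-derive the cited result from scratch by the adjunction chain
\[
\Map_{f/}(S[\cF],\Map(X,Y))\;\cong\;\Map_{\tilde f/}(S[\cF]\times X, Y)\;\cong\;\Map_{\QCoh(S\times X)}\bigl(\tilde f^*\bL_Y,\pi^*\cF\bigr)\;\cong\;\Map_{\QCoh(S)}\bigl(\pi_\sharp\tilde f^*\bL_Y,\cF\bigr),
\]
which is correct. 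The second identification is the only one requiring justification for non-affine $S\times X$: writing $X=\colim_{T\to X}T$ over affines, one gets $S[\cF]\times X=\colim_T(S\times T)[\pi_T^*\cF]$ and $\QCoh(S\times X)=\lim_T\QCoh(S\times T)$, so the corepresentability of $\bL_Y$ on affines passes to the limit. The base-change needed to assemble the pointwise answers $\pi_\sharp\tilde f^*\bL_Y=f^*\pi_\sharp\ev^*\bL_Y$ into a single sheaf on $\Map(X,Y)$ is, as you note, immediate from $\pi_\sharp=\id_{\QCoh(S)}\otimes p_\sharp$ under $\QCoh(S\times X)\cong\QCoh(S)\otimes\QCoh(X)$. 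Your perfectness argument agrees with the paper's. What your route buys is self-containment; what the citation buys the authors is not having to verify the remaining book-keeping in the Gaitsgory--Rozenblyum definition of ``admits deformation theory'' (infinitesimal cohesiveness and functoriality of the pro-cotangent complex along arbitrary affine morphisms), which you correctly identify as the residual subtlety in your last paragraph but do not fully discharge.
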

\begin{proof}
The formula for $\bL_{\Map(X, Y)}$ is proven in \cite[Proposition B.3.5]{Rozenblyum}. The fact that it is perfect follows from \cref{prop:assumptioncorollaries}: for any derived affine scheme $S$ the pushforward $\pi_\sharp\colon \QCoh(S\times X)\rightarrow \QCoh(S)$ preserves perfect complexes.
\end{proof}

\subsection{K-theory}

Let $X$ be a derived prestack. Consider the following objects:
\begin{itemize}
\item The prestack $\uPerf$ of symmetric monoidal stable $\infty$-categories which assigns $\Perf(S)$ to $S$. We denote by $\uPerf^\sim$ the underlying $\infty$-groupoid.
\item The prestack $\uPerf(X) = \Map(X, \uPerf)$ of stable $\infty$-categories which assigns $\Perf(S\times X)$ to $S$.
\item The stable $\infty$-category $\Perf^\vee(X)$ of exact functors $\Perf(S\times X)\rightarrow \Perf(S)$ natural in $S$ (i.e. compatible with base change); explicitly,
\[\Perf^\vee(X) = \Fun^{ex}(\uPerf(X), \uPerf).\]
\end{itemize}

There is a natural evaluation functor
\[\Perf^\vee(X)\times \uPerf(X)\longrightarrow \uPerf.\]

In this paper we use the formalism of algebraic $K$-theory of stable $\infty$-categories. Given a stable $\infty$-category $\cC$, there is a connective spectrum $\K(\cC)$. An object $x\in\cC$ defines a point $[x]\in\Omega^\infty\K(\cC)$. Moreover, a fundamental property of $K$-theory is its additivity; we will repeatedly use the following manifestation of this property: given a filtered object $x\in\cC$, there is a canonical homotopy between $[x]\in\Omega^\infty\K(\cC)$ and its associated graded $[\gr x]\in\Omega^\infty\K(\cC)$ which we call the \defterm{additivity homotopy}. For instance, given a fiber sequence $x\rightarrow y\rightarrow z$, one has a canonical homotopy from $[y]$ to $[x] + [z]$, where we think of $x\rightarrow y$ as the data of a two-step filtration on $y$ and $x\oplus z$ as its associated graded.

We will consider several versions of $K$-theory of $X$:
\begin{itemize}
\item $\K(X)$ denotes the connective $K$-theory of the stable $\infty$-category $\Perf(X)$. As $\Perf(X)$ is symmetric monoidal, $\K(X)$ has an $E_\infty$ structure.
\item $\K^\omega(X)$ denotes the connective $K$-theory of the stable $\infty$-category $\QCoh(X)^\omega$. As $\QCoh(X)^\omega$ is a $\Perf(X)$-module category, $\K^\omega(X)$ is a $\K(X)$-module.
\item $\uK$ is the prestack which sends a derived affine scheme $S$ to $\K(S)$.
\item $\uK(X)$ is the prestack which sends a derived affine scheme $S$ to $\K(S\times X)$. Note that there is a natural map $\uK(X) \to \Map(X,\uK)$ sending $\K(S \times X) \to \lim_{A \to X} \K(S \times A)$ that is generally not an equivalence.
\item $\K^\vee(X)$ is the connective $K$-theory of the stable $\infty$-category $\Perf^\vee(X)$.
\end{itemize}

There is a natural evaluation map
\[\K^\vee(X)\otimes \uK(X)\longrightarrow \uK.\]

If $X$ satisfies \cref{mainassumption}, we have several new features:
\begin{itemize}
\item The inclusion $\Perf(X)\subset \QCoh(X)^\omega$ induces a map $\K(X)\rightarrow \K^\omega(X)$ of connective spectra.
\item We may consider the class $[\cO_X]\in \Omega^\infty\K^\omega(X)$ of the structure sheaf $\cO_X\in\QCoh(X)^\omega$.
\item There are pushforward functors $\pi_\sharp,\pi_*\in\Perf^\vee(X)$.
\item There is a functor
\[\tens_X\colon \QCoh(X)^\omega\longrightarrow \Perf^\vee(X)\]
given by integral transform as follows. For a derived affine scheme $S$ it is the functor
\[\QCoh(X)^\omega\longrightarrow \Fun^{ex}(\Perf(S\times X), \Perf(S))\]
given by $\cF\mapsto (\cG\mapsto \pi_\sharp(\cF\otimes\cG))$. Under this functor $\cO_X\in\QCoh(X)^\omega$ is sent to $\pi_\sharp\in\Perf^\vee(X)$.
\end{itemize}

Let us now describe a situation when $\pi_\sharp\colon \uK(X)\rightarrow \uK$ is nullhomotopic.

\begin{defn}
Let $X$ be a derived prestack satisfying \cref{mainassumption}. An \defterm{Euler structure} on $X$ is a nullhomotopy of $[\cO_X]\in\Omega^\infty\K^\omega(X)$.
\end{defn}

\begin{thm}
Suppose $X$ is a derived prestack equipped with an Euler structure. Then $\pi_\sharp\colon \uK(X)\rightarrow \uK$ admits a nullhomotopy.
\label{thm:pushforwardEulertrivial}
\end{thm}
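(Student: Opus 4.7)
The plan is to transport the nullhomotopy of $[\cO_X]$ through the integral transform functor $\tens_X$ to obtain a nullhomotopy of $[\pi_\sharp]\in\Omega^\infty\K^\vee(X)$, and then use the evaluation pairing $\K^\vee(X)\otimes\uK(X)\to\uK$ to produce the desired nullhomotopy of the natural transformation $\pi_\sharp\colon\uK(X)\to\uK$.

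First, I would check that the functor $\tens_X\colon \QCoh(X)^\omega\to\Perf^\vee(X)$ introduced above is exact. Given a fiber sequence $\cF_1\to\cF_2\to\cF_3$ in $\QCoh(X)^\omega$, for any derived affine $S$ and any $\cG\in\Perf(S\times X)$ the sequence $\pi_\sharp(\cF_1\otimes\cG)\to\pi_\sharp(\cF_2\otimes\cG)\to\pi_\sharp(\cF_3\otimes\cG)$ is a fiber sequence in $\Perf(S)$, since tensor product with $\cG$ and the left adjoint $\pi_\sharp$ are both exact; naturality in $S$ follows from base change in \cref{prop:assumptioncorollaries}. Applying connective algebraic $K$-theory to the exact functor $\tens_X$ yields a map of connective spectra $\K^\omega(X)\to\K^\vee(X)$. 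By construction $\tens_X(\cO_X)=\pi_\sharp$, so this map sends $[\cO_X]\mapsto[\pi_\sharp]$ in $\Omega^\infty$. The Euler structure, being by definition a nullhomotopy of $[\cO_X]$, therefore produces a nullhomotopy of $[\pi_\sharp]$ in $\Omega^\infty\K^\vee(X)$.

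To conclude, I would unpack the two equivalent ways of viewing $\pi_\sharp$ as a natural transformation of prestacks of spectra. The object $\pi_\sharp\in\Perf^\vee(X)=\Fun^{\mathrm{ex}}(\uPerf(X),\uPerf)$ defines, after pointwise application of connective $K$-theory, a morphism $\uK(X)\to\uK$ of prestacks of spectra. Equivalently, a point $[\pi_\sharp]\in\Omega^\infty\K^\vee(X)$ corresponds to a map of spectra $\bS\to\K^\vee(X)$; smashing with $\uK(X)$ and composing with the evaluation pairing $\K^\vee(X)\otimes\uK(X)\to\uK$ recovers the same morphism $\pi_\sharp\colon \uK(X)\to\uK$. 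Consequently, the nullhomotopy of $[\pi_\sharp]\in\Omega^\infty\K^\vee(X)$ obtained in the previous step, viewed through this pairing, gives the desired nullhomotopy of $\pi_\sharp\colon\uK(X)\to\uK$.

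The argument is essentially formal once $\tens_X$ and the evaluation pairing are in hand; no deep input beyond \cref{mainassumption} is needed. The main bookkeeping point, and the place to be careful, is the compatibility between the two descriptions of $\pi_\sharp$ as a natural transformation $\uK(X)\to\uK$: one obtained by applying $K$-theory directly to the exact functor $\pi_\sharp$, the other extracted from the pairing $\K^\vee(X)\otimes\uK(X)\to\uK$ via $[\pi_\sharp]$. Verifying that these agree (and that the transport of nullhomotopies through the pairing is well-defined) is the only step that requires genuine care, but it follows from the functoriality of $K$-theory and the fact that the pairing at the categorical level is literally given by evaluation.
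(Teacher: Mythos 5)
Your argument is essentially the same as the paper's: both pass through the induced map $\tens_X\colon \K^\omega(X)\to\K^\vee(X)$, observe that $[\cO_X]\mapsto[\pi_\sharp]$, transport the Euler-structure nullhomotopy there, and then feed this into the evaluation pairing $\K^\vee(X)\otimes\uK(X)\to\uK$. You make explicit the exactness of $\tens_X$ and the final compatibility check between the two descriptions of $\pi_\sharp$ as a natural transformation, which the paper leaves implicit; otherwise the two proofs coincide.
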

\begin{proof}
The functor $\tens_X\colon \QCoh(X)^\omega\rightarrow \Perf^\vee(X)$ descends to a morphism
\[\tens_X\colon \K^\omega(X)\longrightarrow \K^\vee(X).\]
Under this morphism $[\cO_X]\in\Omega^\infty\K^\omega(X)$ is sent to $[\pi_\sharp]\in\Omega^\infty\K^\vee(X)$. Thus, the nullhomotopy of $[\cO_X]\in\K^\omega(X)$ induces a nullhomotopy of $\pi_\sharp\colon \uK(X)\rightarrow \uK$.
\end{proof}

\subsection{Assembly and coassembly}

In some of our examples derived prestacks will not have an Euler structure, but instead a slightly weaker structure. To describe the precise situation let us introduce the assembly and coassembly maps.

Let $S$ be a derived affine scheme. For a point $i\colon \pt\rightarrow X$, i.e. an element $x\in X(k)$, the functor $(\id\times i)^*\colon \Perf(S\times X)\rightarrow \Perf(S)$ induces a map
\[\K(S\times X)\longrightarrow \K(S).\]
This map is natural in $x\in X(k)$ and $S$, so we obtain the \defterm{coassembly map}
\[\epsilon\colon \uK(X)\longrightarrow \C^\bullet(X(k); \uK).\]

We can also define a map ``dual'' to the coassembly map. For this we need a stronger assumption on $X$.

\begin{assumption}
Let $X$ be a derived prestack satisfying \cref{mainassumption} and the following condition: for every point $i\colon \pt\rightarrow X$ the pullback functor $i^*\colon \QCoh(X)\rightarrow \Mod_k$ admits a left adjoint $i_\sharp\colon \Mod_k\rightarrow \QCoh(X)$ satisfying the projection formula, i.e. such that the natural morphism
X)\[i_\sharp(i^* \cF\otimes V)\longrightarrow \cF\otimes i_\sharp V\]
is an isomorphism for every $\cF\in\QCoh(X)$ and $V\in\Mod_k$.
\label{mainassumptionassembly}
\end{assumption}

Let $X$ be a derived prestack satisfying \cref{mainassumptionassembly} and $S$ a derived affine scheme. For any point $i\colon \pt\rightarrow X$ the functor $(\id\times i)_\sharp\colon \QCoh(S)\rightarrow \QCoh(S\times X)$ preserves compact objects as it has a colimit-preserving right adjoint. Therefore, it induces a map
\[i_\sharp\colon \K(S)\longrightarrow \K^\omega(S\times X).\]
It is natural in $x\in X(k)$ and $S$, so we obtain the \defterm{assembly map}
\[\alpha\colon \C_\bullet(X(k); \K(k))\longrightarrow \K^\omega(X).\]

\begin{defn}
Let $X$ be a derived prestack satisfying \cref{mainassumptionassembly}. A \defterm{simple structure} on $X$ is the data of the $K$-theoretic Euler class $e_\K(X)\in\Omega^\infty\C_\bullet(X(k); \K(k))$ together with a homotopy $\alpha(e_\K(X))\sim [\cO_X]$ in $\Omega^\infty\K^\omega(X)$. In this case the \defterm{Euler class} is the image $e(X)\in\C_\bullet(X(k); \Z)$ of $e_\K(X)$ under the map $\chi\colon \K(k)\rightarrow \Z$.
\label{def:simplestructure}
\end{defn}

\begin{remark}
One can think of an Euler structure as a pair of a simple structure together with a trivialization of the Euler class $e_\K(X)\in\C_\bullet(X(k);\K(k))$.
\end{remark}

\begin{remark}
Tracing through the definitions one obtains that the pushforward of the Euler class $e(X)$ along $X(k)\rightarrow \pt$ coincides with the Euler characteristic of $X$.
\end{remark}

We will now state a version of \cref{thm:pushforwardEulertrivial} in the presence of a simple structure on $X$ rather than an Euler structure. Consider the composite
\[\langle -, -\rangle\colon \C^\bullet(X(k); \K(S))\otimes \C_\bullet(X(k); \K(k))\longrightarrow \K(S)\otimes \K(k)\longrightarrow \K(S),\]
where the first map is the natural pairing between chains and cochains on $X(k)$ and the second map is induced by the tensor product.

\begin{thm}
Suppose $X$ is a derived prestack equipped with a simple structure. Then the pushforward
\[\pi_\sharp\colon \uK(X)\longrightarrow \uK\]
factors as
\[\uK(X)\xrightarrow{\epsilon} \C^\bullet(X(k); \uK)\xrightarrow{\langle -, e_\K(X)\rangle} \uK.\]
\label{thm:indextheorem}
\end{thm}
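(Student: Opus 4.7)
The plan is to bootstrap the proof of \cref{thm:pushforwardEulertrivial} from an Euler structure to a simple structure, by applying the functor $\tens_X$ to the homotopy $\alpha(e_\K(X))\sim[\cO_X]$ and then identifying the image of the assembly map under $\tens_X$ with the coassembly-pairing composite.

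\textbf{Step 1: Apply $\tens_X$ to the simple-structure homotopy.} As in the proof of \cref{thm:pushforwardEulertrivial}, the functor $\tens_X\colon\QCoh(X)^\omega\to\Perf^\vee(X)$ descends to a map of connective spectra $\tens_X\colon\K^\omega(X)\to\K^\vee(X)$ sending $[\cO_X]$ to $[\pi_\sharp]\in\Omega^\infty\K^\vee(X)$; under the natural map $\K^\vee(X)\to\Map(\uK(X),\uK)$ this class corresponds to the pushforward $\pi_\sharp\colon\uK(X)\to\uK$. Applying $\tens_X$ to the simple-structure homotopy $\alpha(e_\K(X))\sim[\cO_X]$ in $\Omega^\infty\K^\omega(X)$ therefore produces a homotopy $\tens_X(\alpha(e_\K(X)))\sim\pi_\sharp$ in $\Map(\uK(X),\uK)$.

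\textbf{Step 2: Identify $\tens_X\circ\alpha$.} It remains to show that the composite $\tens_X\circ\alpha\colon \C_\bullet(X(k);\K(k))\to\K^\vee(X)$, viewed in $\Map(\uK(X),\uK)$, agrees with the map $\xi\mapsto\langle\epsilon(-),\xi\rangle$. To establish this I would check the identification on a point $i\colon\pt\to X$ with $V\in\Perf(k)$ and then invoke naturality in $i$ and $V$. By construction the assembly map sends $(i,V)$ to $[i_\sharp V]\in\K^\omega(X)$, and $\tens_X$ then produces the natural transformation whose value on $\cG\in\Perf(S\times X)$ is
\[\pi_\sharp\bigl((\id\times i)_\sharp V\otimes\cG\bigr).\]
The projection formula of \cref{mainassumptionassembly} (applied to $\id\times i$, and combined with $\pi\circ(\id\times i)=\id_S$) rewrites this as $V\otimes(\id\times i)^*\cG$, which is exactly the value of $\langle\epsilon(-),(i,V)\rangle$ at $\cG$.

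\textbf{Conclusion and main obstacle.} Combining Steps~1 and~2 gives a homotopy $\pi_\sharp\sim\langle\epsilon(-),e_\K(X)\rangle$ in $\Map(\uK(X),\uK)$, which is precisely the factorization claimed in the theorem. I expect the main obstacle to be bookkeeping the naturality in the derived affine test scheme $S$ (and in the point $i\in X(k)$) in Step~2, so that the pointwise projection-formula computation assembles to an actual homotopy in $\K^\vee(X)$ rather than merely an equality after evaluation at each $S$. This is handled by performing the same projection-formula manipulation with $\id_S\times(\id\times i)$ in place of $\id\times i$ throughout and using that both $\alpha$ and $\epsilon$ were defined via the base-change functors along $\id\times i$, so naturality in $S$ is built into the construction.
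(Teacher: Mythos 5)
Your argument is essentially the paper's proof reorganized: you apply $\tens_X$ to the simple-structure homotopy directly and then identify $\tens_X\circ\alpha$ with the coassembly/pairing composite by a projection-formula computation, whereas the paper records the same projection-formula identity as a commutative square at the level of a fixed derived affine $S$ and only postcomposes with $\pi_\sharp$ at the end. The underlying mechanism---$\tens_X$ carrying $[\cO_X]$ to $\pi_\sharp$ (recycled from \cref{thm:pushforwardEulertrivial}), plus a projection-formula swap relating $\alpha$ to $\epsilon$---is the same in both, so this is a correct proof along the paper's own lines.
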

\begin{proof}
The map
\[\K(S\times X)\longrightarrow \K^\omega(S\times X)\]
factors as 
\[\K(S\times X)\xrightarrow{\id\otimes [\cO_X]} \K(S\times X)\otimes \K^\omega(X)\longrightarrow \K^\omega(S\times X).\]
Given a simple structure on $X$, the latter map factors as
\begin{align*}
\K(S\times X)&\xrightarrow{\id\otimes e_\K(X)} \K(S\times X)\otimes \C_\bullet(X(k);\K(k))\\
&\xrightarrow{\alpha} \K(S\times X)\otimes \K^\omega(X)\\
&\longrightarrow \K^\omega(S\times X).
\end{align*}

The projection formula gives a commutative diagram
\[
\xymatrix{
& \K(S\times X)\otimes \C_\bullet(X(k); \K(k)) \ar_{\epsilon}[dl] \ar^{\alpha}[dr] & \\
\C^\bullet(X; \K(S)) \otimes \C_\bullet(X; \K(k)) \ar^{\otimes}[d] && \K(S\times X)\otimes \K^\omega(X) \ar^{\otimes}[d] \\
\C_\bullet(X; \K(S)) \ar^{\alpha}[rr] && \K^\omega(S\times X)
}
\]
Thus, the original map factors as
\begin{align*}
\K(S\times X)&\xrightarrow{\id\otimes e_\K(X)} \K(S\times X)\otimes \C_\bullet(X(k);\K(k))\\
&\xrightarrow{\epsilon} \C^\bullet(X(k);\K(S))\otimes \C_\bullet(X(k);\K(k))\\
&\xrightarrow{\otimes} \C_\bullet(X(k);\K(S))\\
&\xrightarrow{\alpha} \K^\omega(S\times X).
\end{align*}
Postcomposing with the pushforward map $\pi_\sharp\colon \K^\omega(S\times X)\rightarrow \K(S)$ and identifying the composite
\[\C_\bullet(X(k);\K(S))\xrightarrow{\alpha} \K^\omega(S\times X)\xrightarrow{\pi_\sharp} \K(S)\]
with the homology along $X(k)$ we get the claim.
\end{proof}

\begin{remark}
Consider a derived prestack $X$ with a simple structure and write
\[e_\K(X) = \sum_i x_i\alpha_i\]
for some points $x_i\in X(k)$ and $\alpha_i\in \Omega^\infty\K(k)$. Let $\cF\in\Perf(X)$ be a perfect complex. Then \cref{thm:indextheorem} provides a homotopy
\[[\pi_\sharp \cF]\sim \sum_i [\cF_{x_i}] \alpha_i\]
in $\Omega^\infty\K(k)$. This is an example of an $\epsilon$-factorization in the sense of \cite[Proposition 4.1]{Beilinson}.
\end{remark}

By taking pushouts we may glue derived prestacks with simple structures to obtain new derived prestacks with a simple structure as follows.

\begin{prop}
Consider a pushout diagram of derived prestacks
\[
\xymatrix{
X_0 \ar[r]^f \ar[d]_g & X_1 \ar[d]^{g'} \\
X_2 \ar[r]_{f'} & X
}
\]
Moreover, assume $X_0, X_1,X_2$ carry simple structures and suppose the functors $f^*\colon \QCoh(X_1)\rightarrow \QCoh(X_0)$ and $g^*\colon \QCoh(X_2)\rightarrow \QCoh(X_0)$ admit left adjoints $f_\sharp$ and $g_\sharp$ satisfying the projection formula. Then $X$ carries a \defterm{glued simple structure}.
\label{prop:gluedsimple}
\end{prop}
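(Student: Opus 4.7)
The plan is to build the glued Euler class as a Mayer--Vietoris sum on $k$-points, and then match its assembly to $[\cO_X]$ using a codescent cofiber sequence for the structure sheaf. Since the functor $(-)(k)$ preserves colimits, $X(k)$ is the homotopy pushout of $X_1(k)\leftarrow X_0(k)\to X_2(k)$, and applying the colimit-preserving functor $\C_\bullet(-;\K(k))$ gives a pushout square of spectra with edges $g'_*, f'_*, f_*, g_*$. Using the canonical commutativity $g'\circ f\simeq f'\circ g$, I would define
\[
e_\K(X) := g'_*\,e_\K(X_1) + f'_*\,e_\K(X_2) - (g'\circ f)_*\,e_\K(X_0) \in \Omega^\infty \C_\bullet(X(k);\K(k)).
\]
It then remains to produce a homotopy $\alpha_X(e_\K(X))\sim [\cO_X]$ in $\Omega^\infty\K^\omega(X)$.

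For this I would rely on two ingredients. The first is a naturality statement for the assembly map: whenever $h\colon Y\to Z$ is a map of derived prestacks satisfying \cref{mainassumptionassembly} and such that $h^*$ admits a left adjoint $h_\sharp$ compatible with the projection formula, the square
\[
\xymatrix{\C_\bullet(Y(k);\K(k))\ar[r]^-{\alpha_Y}\ar[d]_{h_*} & \K^\omega(Y)\ar[d]^{h_\sharp} \\ \C_\bullet(Z(k);\K(k))\ar[r]_-{\alpha_Z} & \K^\omega(Z)}
\]
commutes, since on a point $y\in Y(k)$ this reduces to the canonical equivalence $h_\sharp(i_{y,\sharp}\cO_\pt)\simeq i_{h(y),\sharp}\cO_\pt$. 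Applied to $h\in\{g', f', g'\circ f\}$ and combined with the simple structures on the $X_i$, this reduces the desired homotopy to
\[
[\cO_X] \sim (g')_\sharp[\cO_{X_1}] + (f')_\sharp[\cO_{X_2}] - (g'\circ f)_\sharp[\cO_{X_0}]
\]
in $\Omega^\infty\K^\omega(X)$. The second ingredient is that the pushout structure on $X$, together with the existence of $f_\sharp, g_\sharp$ and the projection formula, yields a gluing equivalence $\QCoh(X)\simeq \QCoh(X_1)\times_{\QCoh(X_0)}\QCoh(X_2)$. This both produces the required left adjoints $(g')_\sharp, (f')_\sharp, (g'\circ f)_\sharp$ and supplies a Mayer--Vietoris cofiber sequence
\[
(g'\circ f)_\sharp\cO_{X_0}\longrightarrow (g')_\sharp\cO_{X_1}\oplus (f')_\sharp\cO_{X_2}\longrightarrow \cO_X
\]
in $\QCoh(X)^\omega$, from which additivity of $K$-theory gives the remaining homotopy.

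The hard part will be the second ingredient: constructing $(g')_\sharp$ and the codescent cofiber sequence requires Beck--Chevalley identifications such as $(f')^*(g')_\sharp\simeq g_\sharp f^*$, which follow from the pushout square together with the projection formula for $f_\sharp, g_\sharp$ but are not entirely formal. Once these are in place the whole construction becomes a formal consequence of additivity of $K$-theory and naturality of the assembly map.
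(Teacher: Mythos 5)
Your approach matches the paper's: build the glued Euler class by Mayer--Vietoris on $k$-points, obtain the comparison with $[\cO_X]$ from the additivity homotopy applied to the cofiber sequence $(g'f)_\sharp\cO_{X_0}\to g'_\sharp\cO_{X_1}\oplus f'_\sharp\cO_{X_2}\to\cO_X$, and invoke naturality of the assembly map for $g',f',g'f$. The one step you flag as ``not entirely formal'' — producing $g'_\sharp,f'_\sharp$ and the coCartesian square of structure sheaves — is dispatched formally in the paper: since $f^*$ and $g^*$ are right adjoints preserving limits, the defining Cartesian square $\QCoh(X)\simeq\QCoh(X_1)\times_{\QCoh(X_0)}\QCoh(X_2)$ is already a limit in $\mathrm{Pr}^{\mathrm{R}}$ (whose inclusion into $\widehat{\mathrm{Cat}}_\infty$ preserves limits), so passing to left adjoints converts it into a pushout in $\mathrm{Pr}^{\mathrm{L}}$, which delivers the left adjoints, the Beck--Chevalley identifications, and the pushout square of $\cO_{X_i}$'s all at once; the projection formula for $f_\sharp,g_\sharp$ is used separately, to verify the pointwise projection-formula condition in \cref{mainassumptionassembly} for $X$.
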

\begin{proof}
Let us begin by verifying \cref{mainassumptionassembly} for $X$. By definition $\QCoh(-)$ takes colimits of derived prestacks to limits of $\infty$-categories, so we have a Cartesian diagram of $\infty$-categories
\[
\xymatrix{
\QCoh(X) \ar^{(g')^*}[r] \ar^{(f')^*}[d] & \QCoh(X_1) \ar^{f^*}[d] \\
\QCoh(X_2) \ar^{g^*}[r] & \QCoh(X_0)
}
\]
As $f^*$ and $g^*$ preserve limits, so do $(f')^*$ and $(g')^*$ (as the inclusion $\mathrm{Pr}^\mathrm{R}\subset \widehat{\mathrm{Cat}}_\infty$ preserves limits). Therefore, passing to left adjoints we have a coCartesian diagram in $\mathrm{Pr}^{\mathrm{L}}$ (with functors satisfying the projection formula)
\[
\xymatrix{
\QCoh(X_0) \ar^{f_\sharp}[r] \ar^{g_\sharp}[d] & \QCoh(X_1) \ar^{g'_\sharp}[d] \\
\QCoh(X_2) \ar^{f'_\sharp}[r] & \QCoh(X)
}
\]
and a coCartesian square in $\QCoh(X)$
\begin{equation}\label{diag:pushoutofOX}
\xymatrix{    (g' \circ f)_\sharp \cO_{X_0} \ar[r] \ar[d] & g'_\sharp \cO_{X_1} \ar[d] \\
    f'_\sharp \cO_{X_2} \ar[r] & \cO_{X},
}
\end{equation}

As $\cO_{X_i}\in\QCoh(X_i)$ are compact, we get that $\cO_X\in\QCoh(X)$ is compact. Moreover, $p_\sharp\colon\QCoh(X)\rightarrow \Mod_k$ exists and it is tautologically induced by the compatible family of functors $(p_i)_\sharp\colon \QCoh(X_i)\rightarrow \Mod_k$ using the equivalence $\QCoh(X)\cong \QCoh(X_1)\coprod_{\QCoh(X_0)} \QCoh(X_2)$ in $\mathrm{Pr}^{\mathrm{L}}$.

Finally, consider a point of $X_1(k)$ corresponding to a map $i\colon \pt\rightarrow X_1$. Then the composite \[\QCoh(X)\xrightarrow{(f')^*} \QCoh(X_1)\xrightarrow{i^*} \Mod_k\] admits a left adjoint satisfying the projection formula given by $f'_\sharp\circ i_\sharp$ and similarly for points in $X_2(k)$. This immediately implies the claim for points in $X(k)=X_1(k)\coprod_{X_0(k)} X_2(k)$.

We can now produce a simple structure on $X$ by gluing together the simple structures on $X_i$ using the following homotopies in $\Omega^\infty\K^\omega(X)$:
\[
[\cO_{X}] \sim g'_\sharp[\cO_{X_1}] + f'_\sharp[\cO_{X_2}] - (g' \circ f)_\sharp [\cO_{X_0}] \sim \alpha( g' e_\K(X_1) + f' e_\K(X_2) - (g' \circ f) e_\K(X_0)).
\]
Here the first homotopy is obtained from the additivity homotopy by using the fiber sequence
\[f'_\sharp \cO_{X_1}\oplus g'_\sharp \cO_{X_2}\longrightarrow \cO_X\longrightarrow (g' \circ f)_\sharp \cO_{X_0}[1]\]
coming from the pushout square \eqref{diag:pushoutofOX} and the second homotopy is obtained from the simple structures of $X_i$.
\end{proof}

\subsection{Duality}

In this section we consider an even stronger assumption on the derived prestack $X$. Recall that the $\infty$-category $\PrSt_k$ of $k$-linear presentable stable $\infty$-categories has a natural symmetric monoidal structure with the unit given by $\Mod_k$.

\begin{assumption}
Let $X$ be a derived prestack satisfying \cref{mainassumptionassembly} and the following conditions:
\begin{itemize}
    \item The $\infty$-category $\QCoh(X)$ is compactly generated.
    \item The pullback functor $\Delta^*\colon \QCoh(X)\otimes \QCoh(X)\rightarrow \QCoh(X)$ admits a left adjoint
    \[\Delta_\sharp\colon \QCoh(X)\rightarrow \QCoh(X)\otimes \QCoh(X)\]
    satisfying the projection formula, i.e. it is a functor of $\QCoh(X)\otimes \QCoh(X)$-module categories; equivalently, the natural morphism
    \[\Delta_\sharp(\cF\otimes \cG)\longrightarrow \Delta_\sharp(\cF)\otimes (\cG\boxtimes \cO_X)\]
    is an isomorphism.
\end{itemize}
\label{mainassumptionduality}
\end{assumption}

Using the symmetric monoidal structure on $\PrSt_k$ we can talk about dualizable objects in $\PrSt_k$. Given two such dualizable categories $\cC,\cD\in\PrSt_k$ with duals $\cC^\vee,\cD^\vee\in\PrSt_k$ as well as a colimit-preserving functor $F\colon \cC\rightarrow \cD$, there is a naturally defined dual functor $F^\vee\colon \cD^\vee\rightarrow \cC^\vee$ which is uniquely specified by a natural isomorphism
\[\ev_\cD(F(x), y)\cong \ev_\cC(x, F^\vee(y))\]
for $x\in\cC$ and $y\in\cD^\vee$.

\begin{thm}
Let $X$ be a derived prestack satisfying \cref{mainassumptionduality}. Then:
\begin{enumerate}
    \item For any derived prestack $Y$ the natural functor $\boxtimes\colon \QCoh(X)\otimes \QCoh(Y)\rightarrow \QCoh(X\times Y)$ is an equivalence.
    \item The functors
    \[\ev\colon \QCoh(X)\otimes \QCoh(X)\xrightarrow{\Delta^*} \QCoh(X)\xrightarrow{p_\sharp}\Mod_k\]
    and
    \[\coev\colon \Mod_k\xrightarrow{p^*}\QCoh(X)\xrightarrow{\Delta_\sharp}\QCoh(X)\otimes \QCoh(X)\]
    establish a self-duality of $\QCoh(X)$ in $\PrSt_k$.
    \item Under this self-duality of $\QCoh(X)$ the functors $p_\sharp\colon \QCoh(X)\rightarrow \Mod_k$ and $p^*\colon \Mod_k\rightarrow \QCoh(X)$ are dual to each other.
    \item The symmetric bilinear functor
    \[B\colon \QCoh(X)^\omega\otimes \QCoh(X)^\omega\longrightarrow \Sp\]
    given by
    \[(x,y)\mapsto \Hom_{\QCoh(X\times X)}(x\boxtimes y, \Delta_\sharp \cO_X)\]
    is non-degenerate, i.e. there is an equivalence
    \[\D\colon \QCoh(X)^{\omega,\op}\longrightarrow \QCoh(X)^\omega\]
    satisfying $B(x, y)\cong \Hom_{\QCoh(X)}(x, \D(y))$. Moreover,
    \[\tilde{B}(x, y) = \Hom_{\QCoh(X)}(\D(x), y)\cong p_\sharp(x\otimes y)\]
    for every $x,y\in\QCoh(X)^\omega$.
\end{enumerate}
\label{thm:QCohselfdual}
\end{thm}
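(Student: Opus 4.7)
My plan is to address the four parts in the given order, since Part (1) is the Künneth formula needed to interpret $\Delta_\sharp \cO_X$ as an element of $\QCoh(X) \otimes \QCoh(X)$, Part (2) is the main content, and Parts (3) and (4) are formal consequences. For Part (1), compact generation of $\QCoh(X)$ forces $\QCoh(X)$ to be dualizable in $\PrSt_k$, so tensoring with $\QCoh(X)$ commutes with limits in $\PrSt_k$. Writing $\QCoh(Y) = \lim_{S\to Y} \QCoh(S)$ as a limit over affines (and analogously for $\QCoh(X\times Y)$), it therefore suffices to treat the case $Y = \Spec A$, where $\QCoh(X) \otimes \Mod_A \cong \QCoh(X\times \Spec A)$ follows by identifying both sides with $A$-module objects in $\QCoh(X)$.

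For Part (2), Part (1) lets us identify $\QCoh(X)\otimes \QCoh(X) \cong \QCoh(X\times X)$, under which $\coev(k) = \Delta_\sharp \cO_X$ and $\ev(\cH) = p_\sharp \Delta^* \cH$. To verify the triangle identity on $\cF \in \QCoh(X)$, label three copies of $X$ as $X_1, X_2, X_3$: the zigzag sends $\cF$ to $(p\times\id_3)_\sharp (\Delta_{12}\times\id_3)^*(\cF \boxtimes \Delta_\sharp\cO_X)$. The pullback is $\pi_1^* \cF \otimes \Delta_\sharp \cO_X$ on $X \times X_3$; the projection formula for $\Delta_\sharp$ together with $\pi_1\Delta = \id$ reduces this to $\Delta_\sharp \cF$; pushing forward along $(p\times\id_3)\circ\Delta = \id$ returns $\cF$. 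The other triangle identity is analogous, and symmetry of the pairing follows from invariance of $\Delta\colon X\to X\times X$ under the swap.

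Parts (3) and (4) are formal. For Part (3), the projection formula for $p_\sharp$ from \cref{mainassumption} gives
\[
\ev(p_\sharp x, v) = p_\sharp(x)\otimes_k v \cong p_\sharp(x\otimes p^*v) = \ev(x, p^*v),
\]
identifying $p_\sharp^\vee = p^*$. For Part (4), the self-duality of $\QCoh(X)$ restricts on compacts to an equivalence $\QCoh(X)^\omega \cong (\QCoh(X)^\vee)^\omega = (\QCoh(X)^\omega)^\op$ (the latter being the standard identification of compact objects in the dual of a compactly generated category), yielding the equivalence $\D$. The formula $\tilde{B}(x, y) \cong p_\sharp(x\otimes y)$ is then the restriction of $\ev(x, y) \cong \Hom_{\QCoh(X)}(\D x, y)$ to compact objects. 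The identification for $B$ follows from
\[
\Hom_{\QCoh(X\times X)}(x\boxtimes y, \Delta_\sharp\cO_X) = \Hom_{\QCoh(X)\otimes\QCoh(X)}(x\boxtimes y, \coev(k)) \cong \Hom_{\QCoh(X)}(x, \D y),
\]
where the last step uses the identification $\QCoh(X)\otimes\QCoh(X) \cong \Fun^L(\QCoh(X), \QCoh(X))$ under self-duality, in which $\coev(k)$ corresponds to the identity endofunctor and $x\boxtimes y$ corresponds to the endofunctor $\cF \mapsto x\otimes \ev(y, \cF)$.

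The main technical hurdle lies in Part (2): verifying the triangle identity requires combining the full $\QCoh(X)\otimes\QCoh(X)$-linear form of the projection formula for $\Delta_\sharp$ (from \cref{mainassumptionduality}) with base change along the diagonals to justify the reduction $(\Delta_{12}\times\id_3)^*(\cF \boxtimes \Delta_\sharp \cO_X) = \pi_1^*\cF \otimes \Delta_\sharp \cO_X$. Once Part (2) is in place, the remaining parts are formal consequences of the theory of dualizable stable presentable categories.
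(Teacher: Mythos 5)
Your proposal is correct and follows the same strategic blueprint as the paper's proof: compact generation gives dualizability, the Künneth equivalence reduces to the affine case, the $\Delta_\sharp/p_\sharp$ adjunctions furnish the (co)evaluation pair, and part (4) comes from comparing the two duality data on $\QCoh(X)$. The difference is mostly in how much is carried out by hand. Where the paper outsources part (1) to Gaitsgory's Lemma B.2.3 and part (2) to a citation of HSSS, you actually unwind the triangle identity using the pullback $(\Delta_{12}\times\id_3)^*(\cF\boxtimes\Delta_\sharp\cO_X)\cong\pi_1^*\cF\otimes\Delta_\sharp\cO_X$ followed by the $\QCoh(X)\otimes\QCoh(X)$-linear projection formula for $\Delta_\sharp$; this is exactly the content being invoked implicitly and your computation is correct. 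For part (4) the paper instead writes down an explicit witness $\D(y)=\Hom^{\boxtimes}(y,\Delta_\sharp\cO_X)$ (the right adjoint to $z\mapsto y\boxtimes z$), which immediately yields $B(x,y)\cong\Hom(x,\D(y))$ by adjunction, and then compares the two evaluation functors to get $\tilde B(x,y)\cong p_\sharp(x\otimes y)$; your use of the identification $\QCoh(X)\otimes\QCoh(X)\simeq\Fun^L(\QCoh(X),\QCoh(X))$ expresses the same comparison. One small caveat: you should be careful about the direction of $\D$ in the chain "$\ev(x,y)\cong\Hom(\D x,y)$" — the uniqueness-of-duality argument produces an equivalence $\D\colon\Ind(\QCoh(X)^{\omega,\op})\xrightarrow{\sim}\QCoh(X)$, and which side of the evaluation you apply it to determines whether you land on $\Hom(\D x,y)$ or $\Hom(x,\D y)$; as stated your two displayed identifications silently use opposite conventions. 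This is a bookkeeping matter and does not affect the correctness of the argument, but it is worth making explicit, since both formulas appear in the theorem's statement.
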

\begin{proof}
Since $\QCoh(X)$ is compactly generated, it is dualizable by \cite[Proposition D.7.2.3]{LurieSAG}. The first statement then follows from \cite[Lemma B.2.3]{Gaitsgory}. The second statement is standard (see e.g. \cite[Proposition 2.17]{HSSS} for a related statement).

The third statement follows from the obvious isomorphisms
\[\ev_{\QCoh(X)}(p^* V, \cF) = p_\sharp(V\otimes \cO_X\otimes \cF)\cong V\otimes p_\sharp \cF = \ev_{\Mod_k}(V, p_\sharp \cF).\]

Let us now prove the fourth statement. By \cite[Proposition D.7.2.3]{LurieSAG} the category $\QCoh(X)$ has a duality data with the dual $\Ind(\QCoh(X)^{\omega,\op})$ and the evaluation functor given by $\Hom_{\QCoh(X)}(x, y)$ for $x\in\QCoh(X)^{\omega,\op}$ and $y\in\QCoh(X)^\omega$. By (1) there is another duality data. So, by uniqueness of the duality data we obtain an equivalence
\[\D\colon \Ind(\QCoh(X)^{\omega, \op})\cong \QCoh(X)\]
which intertwines the two duality data. Concretely, it is given by the formula
\[\D(x) = \Hom^{\boxtimes}(x, \Delta_\sharp \cO_X),\]
where $\Hom^\boxtimes(x, -)\colon \QCoh(X\times X)\rightarrow \QCoh(X)$ is the right adjoint to the functor $\QCoh(X)\rightarrow \QCoh(X\times X)$ given by $y\mapsto x\boxtimes y$. This formula provides an isomorphism
\[\Hom_{\QCoh(X)}(x, \D(y))\cong \Hom_{\QCoh(X\times X)}(x\boxtimes y, \Delta_\sharp \cO_X).\]
Comparing the two evaluation functors, we get an isomorphism
\[\Hom_{\QCoh(X)}(\D(x), y)\cong p_\sharp(x\otimes y).\]
\end{proof}

\begin{example}
$X=\pt$ satisfies \cref{mainassumptionduality}. In this case the duality functor on $\QCoh(\pt)^\omega\cong \Perf_k$ is the usual linear duality.
\end{example}

We will use the language of Poincare $\infty$-categories from \cite{CDHHLMNNS1} and their Grothendieck--Witt spectra. The main points we will use are the following:
\begin{itemize}
    \item A Poincar\'e structure on a stable $\infty$-category $\cC$ consists of a quadratic functor $\Qoppa\colon \cC^{\op}\rightarrow \Sp$ such that there is an equivalence $\D\colon \cC^{\op}\rightarrow \cC$ satisfying $B(x, y)\cong \Hom_\cC(x, \D(y))$, where $B\colon \cC^{\op}\times \cC^{\op}\rightarrow \Sp$ is the underlying symmetric bilinear functor of $\Qoppa$.
    \item The equivalence $\D\colon \cC^{\op}\rightarrow \cC$ defines the structure of a $C_2$-homotopy fixed point on $\cC$ in the $\infty$-category of categories. In particular, there is an induced $C_2$-action on the $K$-theory spectrum $\K(\cC)$ and we may consider the spectrum of invariants $\K(\cC)^{C_2}$.
    \item Given a Poincar\'e $\infty$-category $(\cC, \Qoppa)$ we may talk about Poincar\'e objects which are objects $x\in\cC$ equipped with an element of $\Omega^\infty\Qoppa(x)$ such that the induced map $x\rightarrow \D(x)$ is an isomorphism. Let $\Pn(\cC, \Qoppa)$ be the space of Poincare objects in $\cC$.
    \item Given a Poincar\'e $\infty$-category $(\cC, \Qoppa)$ there is the corresponding Grothendieck--Witt spectrum $\GW(\cC)$ with a forgetful map $\GW(\cC)\rightarrow \K(\cC)^{C_2}$.
    \item Conversely, given any symmetric bilinear functor $B\colon \cC^{\op}\times \cC^{\op}\rightarrow \Sp$ such that there is an equivalence $\D\colon \cC^{\op}\rightarrow \cC$ satisfying $B(x, y)\cong \Hom_\cC(x, \D(y))$ (this is equivalent to $\cC$ being an $\infty$-category with duality in the sense of \cite{HLAS}) the functor $\Qoppa\colon \cC^{\op}\rightarrow \Sp$ given by $\Qoppa(x) = B(x, x)^{C_2}$ defines a Poincar\'e structure on $\cC$. All our examples will be of this form.
    \item In the previous setting there is a Poincar\'e structure $\Qoppa^{\op}$ on $\cC^{\op}$ with underlying symmetric bilinear functor $\tilde{B}(x, y) = \Hom_\cC(\D(x), y)$. There is a natural equivalence of spaces $\Pn(\cC, \Qoppa)\cong \Pn(\cC^{\op}, \Qoppa^{\op})$.
\end{itemize}

\begin{remark}
Even though we use the formalism of Grothendieck--Witt spectra from \cite{CDHHLMNNS1}, for our purposes it is enough to use earlier definitions of Grothendieck--Witt spectra for dg categories with duality as in \cite{Schlichting}.
\end{remark}

Consider the following Poincar\'e structures:
\begin{itemize}
    \item By the previous proposition the quadratic functor
    \[\Qoppa\colon \QCoh(X)^{\omega,\op}\longrightarrow \Sp\]
    given by $x\mapsto B(x, x)^{C_2} = \Hom(x\boxtimes x, \Delta_\sharp \cO_X)^{C_2}$ is a Poincar\'e functor. We denote its shifts by
    \[\Qoppa^{[n]}(x) = \Qoppa(x)[n].\]
    \item For any derived prestack $\Perf(X)$ is a rigid symmetric monoidal $\infty$-category. In particular, the quadratic functor
    \[\Qoppa\colon \Perf(X)^{\op}\longrightarrow \Sp\]
    given by $x\mapsto \Hom_{\QCoh(X)}(x\otimes x, \cO_X)^{C_2}$ is Poincar\'e. As before, we denote by $\Qoppa^{[n]}$ its shift.
    \item The Poincar\'e structure $\Qoppa^{[n]}$ on $\Perf(X)$ induces one on $\Perf^\vee(X)$ using the internal Hom of Poincar\'e categories described in \cite[Remark 6.2.4]{CDHHLMNNS1}.
\end{itemize}

Using the formalism of \cite{CDHHLMNNS2} we can define the Grothendieck--Witt (alias, hermitian $K$-theory) spectra:
\begin{itemize}
    \item $\GW^{\omega, [n]}(X) = \GW(\QCoh(X)^\omega, \Qoppa^{[n]})$.
    \item $\GW^{[n]}(X) = \GW(\Perf(X), \Qoppa^{[n]})$.
    \item $\uGW^{[n]}(X)$ is the prestack which sends a derived affine scheme $S$ to $\uGW^{[n]}(S\times X)$. $\uGW^{[n]}=\uGW^{[n]}(\pt)$ is the prestack which sends a derived affine scheme $S$ to $\uGW^{[n]}(S)$.
    \item $\GW^{[n], \vee}(X) = \GW(\Perf^\vee(X), \Qoppa^{[n]})$.
\end{itemize}

\begin{remark}
The functor $\cC\rightarrow \cC$ given by $x\mapsto x[2]$ defines an equivalence of Poincar\'e structures $\Qoppa^{[n]}$ and $\Qoppa^{[n-4]}$. So, the Grothendieck--Witt spectra $\GW^{[n]}$ are 4-periodic in $n$.
\end{remark}

We will now show that several natural functors preserve Poincar\'e structures.

\begin{prop}
Let $X$ be a derived prestack satisfying \cref{mainassumptionduality}.
\begin{enumerate}
    \item For a point $i\colon \pt\rightarrow X$ and a derived affine scheme $S$ the pullback functor
    \[(\id\times i)^*\colon \Perf(S\times X)\rightarrow \Perf(S)\]
    is Poincar\'e.
    \item For a point $i\colon \pt\rightarrow X$ the pushforward functor $i_\sharp\colon \Perf(k)\rightarrow \QCoh(X)^\omega$ is Poincar\'e.
    \item The functor $\tens_X\colon \QCoh(X)^\omega\rightarrow \Perf^\vee(X)$ is Poincar\'e.
\end{enumerate}
\label{prop:Poincarefunctors}
\end{prop}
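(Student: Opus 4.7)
The plan is to handle the three claims separately. In each case I produce a natural transformation of the underlying bilinear (equivalently, after $C_2$-invariants, quadratic) functors and check that the induced dualities match. I expect claim (3) to be the main obstacle.

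For claim (1), the functor $(\id\times i)^*$ is exact and symmetric monoidal, and sends $\cO_{S\times X}$ to $\cO_S$. Applied to any morphism $x\otimes x\to \cO_{S\times X}[n]$ it therefore produces a morphism $((\id\times i)^*x)^{\otimes 2}\to \cO_S[n]$, and this assignment is $C_2$-equivariant and natural in $x$, giving the required map of quadratic functors $\Qoppa^{[n]}\to \Qoppa^{[n]}\circ (\id\times i)^*$. Duality preservation is automatic because any symmetric monoidal exact functor commutes with taking duals of dualizable objects.

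For claim (2), I would combine two inputs: the projection formula $i_\sharp x\boxtimes i_\sharp y\cong (i\times i)_\sharp(x\otimes y)$, and the identity $\Delta\circ i = i\times i$ of maps $\pt\to X\times X$. Applying $i^*$ to the unit $\cO_X\to \Delta^*\Delta_\sharp\cO_X$ of the $\Delta_\sharp\dashv\Delta^*$ adjunction produces a canonical morphism $k\to (i\times i)^*\Delta_\sharp\cO_X$, and together with the $(i\times i)_\sharp\dashv (i\times i)^*$ adjunction this gives a natural transformation
\[\Hom_k(x\otimes y, k)\longrightarrow \Hom_{X\times X}(i_\sharp x\boxtimes i_\sharp y, \Delta_\sharp\cO_X),\]
which is the required map of bilinear functors; taking $C_2$-invariants yields the hermitian-functor structure. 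For duality preservation I use the alternative bilinear form from \cref{thm:QCohselfdual}(4): the projection formula for $i_\sharp$ together with $p\circ i = \id$ gives $\tilde B(i_\sharp x, y) = p_\sharp(i_\sharp x\otimes y)\cong x\otimes i^*y\cong \Hom_{\QCoh(X)}(i_\sharp(x^\vee), y)$, so $\D(i_\sharp x)\cong i_\sharp(x^\vee)$, identifying $i_\sharp$ as duality-preserving.

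Claim (3) is the most delicate. Conceptually, $\tens_X$ ought to implement on the level of Poincar\'e $\infty$-categories the self-duality of $\QCoh(X)$ established in \cref{thm:QCohselfdual}: it sends $\cO_X$ to $\pi_\sharp$, and both are Poincar\'e objects in their respective categories. The obstacle is that the Poincar\'e structure on $\Perf^\vee(X)$ is defined abstractly by the internal-Hom construction of \cite{CDHHLMNNS1}, whereas the Poincar\'e structure on $\QCoh(X)^\omega$ is described concretely via $\Delta_\sharp\cO_X$. The key step is to unpack the internal-Hom bilinear form on $\Perf^\vee(X)$ and match it, via $\tens_X$, with the bilinear form $(\cF,\cG)\mapsto \Hom_{X\times X}(\cF\boxtimes \cG, \Delta_\sharp\cO_X)$ on $\QCoh(X)^\omega$ of \cref{thm:QCohselfdual}(4). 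Once this identification is in place, both the hermitian-functor structure and compatibility with duality become tautological, and the claim follows.
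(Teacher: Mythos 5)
Your treatment of claim (1) matches the paper exactly. Your treatment of claim (2) takes a slightly different route to construct the hermitian-functor structure: you go through the external Künneth identification $i_\sharp x\boxtimes i_\sharp y\cong (i\times i)_\sharp(x\otimes y)$ together with the unit of $\Delta_\sharp\dashv\Delta^*$ and the composite $i^*\Delta^* = (i\times i)^*$, whereas the paper uses the shorter chain $V\otimes W\cong p_\sharp i_\sharp(V\otimes W)\to p_\sharp(i_\sharp V\otimes i_\sharp W)$ coming from the oplax symmetric monoidal structure on $i_\sharp$ and $p\circ i=\id$ (i.e.\ it phrases the bilinear map in terms of the form $\tilde B(x,y)=p_\sharp(x\otimes y)$ rather than $B(x,y) = \Hom(x\boxtimes y, \Delta_\sharp\cO_X)$). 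Both work, and in both accounts the check that $i_\sharp$ preserves the duality is essentially the identical Hom-chain $\Hom(\D(i_\sharp V), x)\cong p_\sharp(i_\sharp V\otimes x)\cong V\otimes i^*x\cong\Hom(i_\sharp V^\vee, x)$. One minor point: you should verify that your bilinear transformation is $C_2$-equivariant before passing to $C_2$-fixed points; this is true but not automatic from what you wrote.

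For claim (3) there is a genuine gap: you correctly identify the strategy — unpack the internal-Hom Poincar\'e structure on $\Perf^\vee(X)$ and match it to the concrete one on $\QCoh(X)^\omega$ — but you never execute it. You explicitly defer the key step (\textquotedblleft{}the key step is to unpack the internal-Hom bilinear form on $\Perf^\vee(X)$\textquotedblright{}), and the assertion that everything afterward is \textquotedblleft{}tautological\textquotedblright{} is not substantiated. In the paper this is where the real work happens. One reduces to showing that the bifunctor
\[
\QCoh(X)^\omega\otimes\Perf(S\times X)\longrightarrow\Perf(S),\qquad (x,y)\mapsto\pi_\sharp(x\otimes y),
\]
is a Poincar\'e functor for the tensor-product Poincar\'e structure; this requires constructing a natural $S_2$-equivariant transformation
\[
p_\sharp(x_1\otimes x_2)\otimes\pi_*(y_1\otimes y_2)\longrightarrow\pi_\sharp(x_1\otimes y_1)\otimes\pi_\sharp(x_2\otimes y_2),
\]
which the paper builds from the coevaluation on $y_1,y_2$ together with the oplax symmetric monoidal structure on $\pi_\sharp$, and then a separate computation showing $\pi_\sharp(x\otimes y)^\vee\cong\pi_\sharp(\D(x)\otimes y^\vee)$ via the chain
\begin{align*}
\Hom_{\QCoh(S)}(z,\pi_\sharp(\D(x)\otimes y^\vee))&\cong\Hom_{\QCoh(S\times X)}(x\otimes z, y^\vee)\\
&\cong\Hom_{\QCoh(S\times X)}(x\otimes y, z^\vee\boxtimes\cO_X)\\
&\cong\Hom_{\QCoh(S)}(\pi_\sharp(x\otimes y), z^\vee).
\end{align*}
None of this is automatic from the high-level self-duality of $\QCoh(X)$ alone; you need to actually produce the bilinear transformation and check the duality match. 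As written, your part (3) is a plan, not a proof.
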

\begin{proof}
The pullback functor $(\id\times i)^*\colon \Perf(S\times X)\rightarrow \Perf(S)$ is symmetric monoidal, so it is a Poincar\'e functor.

Let us now show that $i_\sharp\colon \Perf(k)\rightarrow \QCoh(X)^\omega$ is Poincar\'e. For this we need to show that it intertwines the symmetric bilinear functors and preserves the duality strictly. Indeed, the functor $i_\sharp$ has an oplax symmetric monoidal structure. Therefore, the composite
\[V\otimes W\cong p_\sharp(i_\sharp(V\otimes W))\longrightarrow p_\sharp(i_\sharp V\otimes i_\sharp W)\]
for $V,W\in\Perf(k)$ defines a natural transformation of symmetric bilinear functors underlying Poincar\'e structures on $\Perf(k)^{\op}$ and $\QCoh(X)^{\omega, \op}$. The corresponding dualities are preserved strictly as shown by the sequence of isomorphisms
\[\Hom_{\QCoh(X)}(\D(i_\sharp V), x)\cong p_\sharp(i_\sharp V\otimes x)\xleftarrow{\sim} p_\sharp i_\sharp(V\otimes i^* x)\cong V\otimes i^* x\cong \Hom_{\QCoh(X)}(i_\sharp V^\vee, x).\]

Finally, to show that $\tens_X\colon \QCoh(X)^\omega\rightarrow \Perf^\vee(X)$ is Poincar\'e, we have to show that the functor $\QCoh(X)^\omega\rightarrow \Fun^{ex}(\Perf(S\times X), \Perf(S))$ given by $x\mapsto (y\mapsto \pi_\sharp(x\otimes y))$ is Poincar\'e naturally in $S$. In turn, this is equivalent to showing that
\[\QCoh(X)^\omega\otimes \Perf(S\times X)\longrightarrow \Perf(S)\]
given by $x,y\mapsto \pi_\sharp(x\otimes y)$ is Poincar\'e with respect to the tensor product of the Poincar\'e structures on the left. For this we need to construct a natural transformation
\[p_\sharp(x_1\otimes x_2)\otimes \pi_*(y_1\otimes y_2)\longrightarrow \pi_\sharp(x_1\otimes y_1)\otimes \pi_\sharp(x_2\otimes y_2)\]
which is natural in $x_1,x_2\in\QCoh(X)^\omega$, $y_1,y_2\in\Perf(S\times X)$ and symmetric under the permutation $(x_1, y_1)\leftrightarrow (x_2, y_2)$. By duality this is equivalent to providing a map
\[\pi_\sharp(\cO_S\otimes x_1\otimes x_2)\longrightarrow \pi_\sharp(x_1\otimes y_1)\otimes \pi_\sharp(x_2\otimes y_2)\otimes \pi_\sharp(y_1^\vee\otimes y_2^\vee).\]
This map arises by applying the coevaluation for $y_1, y_2$ and the oplax symmetric monoidal structure on $\pi_\sharp$. To show that this hermitian functor is Poincar\'e we have to check that the corresponding dualities are preserved strictly. This follows from the following sequence of natural isomorphisms in $x\in\QCoh(X)^\omega$, $y\in \Perf(S\times X)$ and $z\in\Perf(S)$:
\begin{align*}
\Hom_{\QCoh(S)}(z, \pi_\sharp(\D(x)\otimes y^\vee))&\cong \Hom_{\QCoh(S\times X)}(x\otimes z, y^\vee)\\
&\cong \Hom_{\QCoh(S\times X)}(x\otimes y, z^\vee\boxtimes \cO_X)\\
&\cong \Hom_{\QCoh(S)}(\pi_\sharp(x\otimes y), z^\vee),
\end{align*}
which shows that $\pi_\sharp(x\otimes y)^\vee\cong \pi_\sharp(\D(x)\otimes y^\vee)$ in $\Perf(S)$.
\end{proof}

In particular, by the above proposition we obtain the assembly
\[\alpha\colon \C_\bullet(X(k); \GW^{[n]}(k))\longrightarrow \GW^{\omega, [n]}(X)\]
and coassembly
\[\epsilon\colon \uGW^{[n]}(X)\longrightarrow \C^\bullet(X(k); \uGW^{[n]})\]
maps which fit into commutative diagrams
\[
\xymatrix{
\C_\bullet(X(k); \GW^{[n]}(k)) \ar[r] \ar[d] & \GW^{\omega, [n]}(X) \ar[d] \\
\C_\bullet(X(k); \K(k)) \ar[r] & \K^\omega(X) 
}
\]
and
\[
\xymatrix{
\uGW^{[n]}(X) \ar[d] \ar[r] & \C^\bullet(X(k); \uGW^{[n]}) \ar[d] \\
\uK(X) \ar[r] & \C^\bullet(X(k); \uK).
}
\]

We will now define analogs of Poincare duality spaces.

\begin{defn}
Let $d\in\Z$ and $X$ a derived prestack satisfying \cref{mainassumptionduality}. A \defterm{fundamental class of $X$ of degree $d$} is a map
\[[X]\colon k\longrightarrow p_\sharp\cO_X[-d]\]
which is a unit of an adjunction $p^*[-d]\dashv p_\sharp$.
\end{defn}

Recall from \cite[Definition 2.4]{PTVV} the notion of an \defterm{$\cO$-orientation of degree $d$} on an $\cO$-compact derived prestack $X$ which is a morphism $p_*\cO_X\rightarrow k[-d]$ satisfying a nondegeneracy property. By \cref{prop:assumptioncorollaries} \cref{mainassumptionduality} implies that $X$ is $\cO$-compact.

\begin{prop}
Suppose $2$ is invertible in $k$. Let $X$ be a derived prestack satisfying \cref{mainassumptionduality}. The following pieces of data are equivalent:
\begin{enumerate}
    \item The structure of a Poincar\'e object on $\cO_X\in \QCoh(X)^\omega$ with respect to the Poincar\'e structure $\Qoppa^{[d]}$.
    \item A fundamental class of $X$ of degree $d$.
\end{enumerate}
Moreover, either of them gives rise to an $\cO$-orientation of degree $d$ on $X$.
\label{prop:fundamentalclassorientation}
\end{prop}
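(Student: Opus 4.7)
My plan is to unpack the Poincar\'e structure on $\cO_X$ via \cref{thm:QCohselfdual}, match the resulting hermitian datum with the definition of a fundamental class, and then extract the $\cO$-orientation from the Serre duality that the fundamental class encodes.

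First, using the equivalent presentation of the Poincar\'e structure on $\QCoh(X)^\omega$ via $\Qoppa^{\op}$ (which has the same space of Poincar\'e objects by the bullet points preceding the statement), the underlying symmetric bilinear functor is $\tilde B(x,y) \simeq p_\sharp(x\otimes y)$ from \cref{thm:QCohselfdual}. Evaluating at $x = y = \cO_X$ gives $p_\sharp\cO_X$, and since $\cO_X$ is the tensor unit the $C_2$-swap on $\cO_X \otimes \cO_X$ acts as the identity; combined with $2$ being invertible in $k$, the $C_2$-fixed points reduce to $p_\sharp \cO_X$ (up to matching the Poincar\'e shift convention). So a hermitian structure on $\cO_X$ is precisely a morphism $[X] \colon k \to p_\sharp\cO_X[-d]$ in $\Mod_k$, which is the bare data of a fundamental class.

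Next I would match the non-degeneracy conditions. The hermitian structure is Poincar\'e exactly when the induced self-pairing $\cO_X \to \D(\cO_X)[d]$ is an equivalence in $\QCoh(X)^\omega$. Using the identification $\Hom_{\QCoh(X)}(\D(\cO_X), \cF) \simeq p_\sharp\cF$ from \cref{thm:QCohselfdual}, this equivalence is equivalent, by Yoneda and the fact that both sides preserve colimits in $\cF$, to the natural transformation
\[
\Hom_{\QCoh(X)}(\cO_X[-d], \cF) \longrightarrow p_\sharp \cF
\]
induced by $[X]$ being an equivalence for every $\cF \in \QCoh(X)$. This is precisely the hom-set formula for an adjunction $p^*[-d] \dashv p_\sharp$ with unit $[X]$, matching the definition of fundamental class. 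The hardest part here will be the careful bookkeeping of shifts between the Poincar\'e formalism and the conventions in the statement; the essential content is the bilinear form computation above.

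Finally, from a fundamental class the adjunction $p^*[-d] \dashv p_\sharp$ gives a functorial equivalence $p_\sharp \simeq p_*[d]$ by uniqueness of right adjoints to $p^*$. This transports the fundamental class to a morphism $k \to p_*\cO_X$. Using $\cO$-compactness of $X$ (from \cref{prop:assumptioncorollaries}) together with the internal-hom identity $(p_\sharp\cF)^\vee \simeq p_*(\cF^\vee)$, the relation $p_\sharp \simeq p_*[d]$ yields a self-duality $(p_*\cO_X)^\vee \simeq p_*\cO_X[d]$. Dualizing the morphism $k \to p_*\cO_X$ through this self-duality produces the desired $\cO$-orientation $p_*\cO_X \to k[-d]$; the non-degeneracy of the PTVV pairing $p_*\cF \otimes p_*\cF^\vee \to p_*\cO_X \to k[-d]$ reduces, by the projection formula and the same internal-hom identity, to the equivalence $p_\sharp\cF \simeq p_*\cF[d]$ guaranteed by the fundamental class.
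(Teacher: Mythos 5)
Your proposal is correct and follows essentially the same route as the paper: both unpack the Poincar\'e structure via \cref{thm:QCohselfdual} to reduce the hermitian datum on $\cO_X$ to a map $k\to p_\sharp\cO_X[-d]$, both use the unit observation plus $2\in k^\times$ to drop the $C_2$-fixed points, and both derive the $\cO$-orientation from the resulting identification $p_*\cong p_\sharp[-d]$. The only presentational difference is that you package the nondegeneracy/adjunction matching through the $\Qoppa^{\op}$ formula $\tilde B(x,y)\simeq p_\sharp(x\otimes y)$ and a Yoneda argument, whereas the paper spells out the counit $\cO_X\boxtimes\cO_X\to\Delta_\sharp\cO_X[d]$ and verifies the triangle identity explicitly; your version is slightly terser on checking that $[X]$ is \emph{the} unit (not just that $p_\sharp\simeq p_*[d]$), but the content is the same.
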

\begin{proof}
Recall from \cref{thm:QCohselfdual} that there are natural isomorphisms
\[\Hom(x\boxtimes y, \Delta_\sharp \cO_X)\cong \Hom(x, \D(y)),\qquad p_\sharp(x\otimes y)\cong \Hom(\D(x), y).\]

The structure of a Poincar\'e object on $\cO_X$ is that of a symmetric map
\[\cO_X\boxtimes \cO_X\rightarrow\Delta_\sharp \cO_X[d]\]
such that the induced map $\cO_X\rightarrow \D(\cO_X)[d]$ is an isomorphism. Recall from \cref{thm:QCohselfdual} that there is a symmetric self-duality data on $\QCoh(X)\in\PrSt_k$ with
\[\coev(k) = \Delta_\sharp\cO_X[d],\qquad \ev(x, y) = p_\sharp(x\otimes y).\]
Then the data of a Poincare object is that of a nondegenerate symmetric map
\[\cO_X\boxtimes \cO_X\rightarrow \coev(k)[d].\]
It is equivalent to the data of a nondegenerate symmetric map
\[k\rightarrow \ev(\cO_X, \cO_X)[-d] = p_\sharp \cO_X[-d],\]
where nondegeneracy means that the induced map $\D(\cO_X)\rightarrow \cO_X[-d]$ is an isomorphism. As $\cO_X$ is the unit, the $C_2$-action on $p_\sharp\cO_X$ is trivial and, since $2$ is invertible, $(p_\sharp \cO_X)^{C_2}\cong p_\sharp \cO_X$.

For $k\rightarrow p_\sharp \cO_X[-d]$ to be a fundamental class we need to ensure existence of the counit of the adjunction, i.e. a natural transformation
\[\cO_X\otimes p_\sharp \cF\rightarrow \cF[d]\]
of endofunctors of $\QCoh(X)$. Identifying $\Fun(\QCoh(X), \QCoh(X))\cong \QCoh(X\times X)$ using the self-duality of $\QCoh(X)$, such a counit is the same as a morphism
\[\cO_X\boxtimes \cO_X\rightarrow \Delta_\sharp \cO_X[d].\]
The adjunction axioms boil down to the condition that the map $\D(\cO_X)\rightarrow \cO_X[-d]$ induced by $k\rightarrow p_\sharp \cO_X[-d]$ is inverse to the map $\cO_X[-d]\rightarrow \D(\cO_X)$ induced by $\cO_X\boxtimes \cO_X\rightarrow \Delta_\sharp \cO_X[d]$. This shows the equivalence of the first two pieces of structure.

An $\cO$-orientation is the data of a morphism $p_*\cO_X\rightarrow k[-d]$ such that for derived affine scheme $S$ and a perfect complex $\cF\in\Perf(S\times X)$ the natural morphism
\[\pi_*\cF\rightarrow (\pi_*(\cF^\vee))^\vee[-d]\]
induced by $[X]$ is an isomorphism, where $\pi=\id\times p\colon S\times X\rightarrow S$ is the projection on the first factor. We may identify $(\pi_\sharp \cF)^\vee\cong \pi_* (\cF^\vee)$. Thus, to check that a given morphism $p_*\cO_X\rightarrow k[-d]$ is an $\cO$-orientation, we need to show that the natural morphism
\[\pi_*\cF\rightarrow \pi_\sharp \cF[-d]\]
is an isomorphism for every $\cF\in\Perf(S\times X)$.

Now fix a fundamental class on $X$ of degree $d$. The dual of $[X]\colon k\rightarrow p_\sharp \cO_X[-d]$ is a morphism $p_*\cO_X[d]\rightarrow k$. The fundamental class $[X]$ provides a natural isomorphism $p_*\rightarrow p_\sharp[-d]$ as $p_*$ is defined to be the right adjoint of $p^*$. We have $\pi_\sharp = (\id\otimes p_\sharp)$ and $\pi_*=(\id\otimes p_*)$ and it is easy to see that the morphism $\pi_*\cF\rightarrow \pi_\sharp \cF[-d]$ appearing in the definition of $\cO$-orientation is induced by the isomorphism $p_*\rightarrow p_\sharp[-d]$ and is, therefore, an isomorphism.
\end{proof}

So, a fundamental class on $X$ defines a point $[\cO_X]\in\Omega^\infty\GW^{\omega, [d]}(X)$. By \cref{prop:Poincarefunctors} the functor $\tens_X\colon \QCoh^\omega(X)\rightarrow \Perf^\vee(X)$ is Poincar\'e, so $\pi_\sharp\colon (\uPerf(X), \Qoppa^{[n]})\rightarrow (\uPerf, \Qoppa^{[n+d]})$ preserves Poincar\'e structures for any $n\in\Z$. Thus, in this case $\pi_\sharp$ descends to a map
\[\pi_\sharp\colon \uGW^{[n]}(X)\longrightarrow \uGW^{[n+d]}\]
of Grothendieck--Witt spectra.

We will now investigate what happens to the factorization \cref{thm:indextheorem} in the presence of a compatible fundamental class.

\begin{defn}
Let $X$ be a derived prestack equipped with a fundamental class $[X]$ of degree $d$ and a simple structure. We say the simple structure \defterm{is compatible with Poincar\'e duality} if we are given the Euler class $e_{\GW}(X)\in\Omega^\infty\C_\bullet(X(k); \GW^{[d]}(k))$ together with a homotopy $\alpha(e_{\GW}(X))\sim [\cO_X]$ in $\Omega^\infty\GW^{\omega, [d]}(X)$ which projects to the given simple structure in $\Omega^\infty\K^\omega(X)$.
\end{defn}

\begin{remark}
Note that the natural map
\[\C_\bullet(X(k); \tau_{\geq 0} \GW^{[d]}(k))\longrightarrow \tau_{\geq 0}\C_\bullet(X(k); \GW^{[d]}(k))\]
is not an equivalence as $\GW^{[d]}(k)$ is not connective (its negative homotopy groups are, up to a shift, the $L$-groups of $k$).
\end{remark}

For a derived affine scheme $S$ consider the composite
\[\langle-, -\rangle\colon \C^\bullet(X(k); \GW^{[n]}(S))\otimes \C_\bullet(X(k); \GW^{[d]}(k))\longrightarrow \GW^{[n]}(S)\otimes \GW^{[d]}(k)\longrightarrow \GW^{[n+d]}(S),\]
where the first map is given by the natural pairing between chains and cochains on $X(k)$ and the multiplication map on the Grothendieck--Witt spectra induced by the tensor product. The following statement is proven analogously to \cref{thm:indextheorem}.

\begin{thm}
Suppose $X$ is a derived prestack equipped with a fundamental class of degree $d$ and a simple structure compatible with Poincar\'e duality. Then the pushforward
\[\pi_\sharp\colon \uGW^{[n]}(X)\longrightarrow \uGW^{[n+d]}\]
factors as
\[\uGW^{[n]}(X)\xrightarrow{\epsilon} \C^\bullet(X(k); \uGW^{[n]})\xrightarrow{\langle -, e_{\GW}(X)\rangle} \uGW^{[n+d]}.\]
\label{thm:indextheoremwithPoincare}
\end{thm}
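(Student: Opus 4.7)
The plan is to mimic, step by step, the proof of \cref{thm:indextheorem}, upgrading each stage from $K$-theory to Grothendieck--Witt theory using the Poincar\'e-categorical structures already established. Concretely, every functor appearing in the $K$-theoretic argument (the pushforwards $i_\sharp$ used in the assembly, the pullbacks $i^*$ used in the coassembly, the integral transform $\tens_X$, and the pushforward $\pi_\sharp$) has been shown to be Poincar\'e in \cref{prop:Poincarefunctors}, and the hypothesis of compatibility with Poincar\'e duality ensures that the homotopy $\alpha(e_\K(X))\sim [\cO_X]$ in $\Omega^\infty\K^\omega(X)$ is itself the image of a homotopy $\alpha(e_{\GW}(X))\sim [\cO_X]$ in $\Omega^\infty \GW^{\omega,[d]}(X)$.

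First I would write the map $\GW^{[n]}(S\times X)\to \GW^{\omega,[n+d]}(S\times X)$ as the composite
\[\GW^{[n]}(S\times X)\xrightarrow{\id\otimes [\cO_X]} \GW^{[n]}(S\times X)\otimes \GW^{\omega,[d]}(X)\longrightarrow \GW^{\omega,[n+d]}(S\times X),\]
which makes sense because by \cref{prop:fundamentalclassorientation} the fundamental class endows $\cO_X$ with the structure of a Poincar\'e object in degree $d$, and by \cref{prop:Poincarefunctors}(3) the functor $\tens_X$ is Poincar\'e, so it descends to a pairing of $\GW$-spectra. Then I would use the assumed compatibility to replace $[\cO_X]$ by $\alpha(e_{\GW}(X))$, obtaining a factorization through $\GW^{[n]}(S\times X)\otimes \C_\bullet(X(k); \GW^{[d]}(k))$.

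Next I would apply the projection-formula diagram from the proof of \cref{thm:indextheorem}, but now in the Poincar\'e setting: parts (1) and (2) of \cref{prop:Poincarefunctors} guarantee that the coassembly map $\epsilon$ and the assembly map $\alpha$ lift to $\GW$-spectra, and the two squares in that diagram admit the same natural upgrades. This produces a refactorization through
\[\C^\bullet(X(k);\GW^{[n]}(S))\otimes \C_\bullet(X(k);\GW^{[d]}(k))\xrightarrow{\langle -,-\rangle} \C_\bullet(X(k);\GW^{[n+d]}(S))\xrightarrow{\alpha}\GW^{\omega,[n+d]}(S\times X).\]
Postcomposing with $\pi_\sharp\colon \GW^{\omega,[n+d]}(S\times X)\to \GW^{[n+d]}(S)$ (which is Poincar\'e precisely because the fundamental class shifts the Poincar\'e structure by $d$) and identifying $\pi_\sharp\circ\alpha$ with the homology pushforward along $X(k)\to \pt$ yields the stated factorization, naturally in $S$.

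The step I expect to require the most care is verifying that the chain of homotopies and natural transformations used in the $K$-theoretic proof (the projection-formula square and the additivity homotopy underlying $[\cO_X]\sim \alpha(e_\K(X))$) indeed lifts coherently to the Poincar\'e level rather than merely to $\K(\cC)^{C_2}$. This reduces to checking that the symmetric bilinear functors match on both sides of each square, which is exactly the content of \cref{prop:Poincarefunctors} together with the definition of a simple structure compatible with Poincar\'e duality; the remainder of the argument is then a direct transcription of the proof of \cref{thm:indextheorem}.
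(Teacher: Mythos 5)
Your proposal is correct and follows essentially the same route the paper itself intends: the paper states \cref{thm:indextheoremwithPoincare} with no written-out argument, remarking only that ``the following statement is proven analogously to \cref{thm:indextheorem},'' and your proposal is precisely that analogue, correctly identifying \cref{prop:Poincarefunctors} and the pre-built $\GW$-assembly/coassembly maps and the $\GW$-lifted homotopy $\alpha(e_{\GW}(X))\sim [\cO_X]$ from the compatibility-with-Poincar\'e-duality hypothesis as the ingredients that let each step of the $K$-theoretic argument transport to the Poincar\'e setting.
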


\section{Determinant lines and volume forms}
\label{sect:determinant}

In this section we construct the determinant line over the moduli stack of perfect complexes and the torsion volume form on the mapping stack.

\subsection{Perfect complexes and their determinants}

Let $R$ be a connective commutative dg $k$-algebra. There is a weight structure on the stable $\infty$-category $\Perf(R)$ whose heart is $\Vect(R)\subset \Perf(R)$, the subcategory of projective finitely generated $R$-modules (i.e. retracts of $R$-modules of the form $R^{\oplus n}$). The $\infty$-category $\Perf(R)$ has a natural $\bE_\infty$ semiring structure in the sense of \cite{GGN} with respect to the symmetric monoidal structures $\oplus$ and $\otimes$, see \cite[Example 8.12]{GGN}.

Consider the following derived stacks:
\begin{itemize}
\item $\uVect$ is the derived stack of vector bundles which sends $R\mapsto \Vect(R)$.
\item $\uPic=\B\GL_1$ is the derived stack of line bundles.
\item $\uPic^\Z=\uPic \times \Z$ is the derived stack of graded line bundles, where $\Z$ is the \'etale sheafification of the constant presheaf with value $\Z$.
\end{itemize}

If $R$ is a (discrete) commutative $k$-algebra, the groupoid $\Pic^\Z(R)$ has a $\bE_\infty$ ring structure. This can be modeled by a $\bE_\infty$-algebra object (we denote the corresponding symmetric monoidal structure by $\motimes$) in the (2, 1)-category of Picard groupoids (we denote the symmetric monoidal structure of the Picard groupoid by $\otimes$). We refer to \cite{Laplaza} for the distributivity conditions and axioms that $\otimes$ and $\motimes$ have to satisfy. In the case $\Pic^\Z(R)$ the data is as follows:
\begin{itemize}
    \item The first tensor product is $(\cL_1, n_1)\otimes (\cL_2, n_2) = (\cL_1\otimes \cL_2, n_1+n_2)$. It has an obvious associator and the braiding given by the flip on $\cL_1\otimes \cL_2$ multiplied by the sign $(-1)^{n_1n_2}$. The unit is $(\cO, 0)$.
    \item The second tensor product is $(\cL_1, n_1)\motimes (\cL_2, n_2) = (\cL_1^{n_2}\otimes \cL_2^{n_1}, n_1n_2)$. It has an obvious associator and the braiding given by the flip on $\cL_1^{n_2}\otimes \cL_2^{n_1}$ multiplied by the sign $(-1)^{(n_1(n_1-1)/2)(n_2(n_2-1)/2)}$. The unit is $(\cO, 1)$.
    \item For $(\cL_1, n_1), (\cL_2, n_2), (\cL_3, n_3)\in\Pic^\Z(R)$ we let the left distributivity isomorphism
    \[\cL_1^{n_2+n_3}\otimes (\cL_2\otimes \cL_3)^{n_1}\cong \cL_1^{n_2}\otimes \cL_2^{n_1}\otimes \cL_1^{n_3}\otimes \cL_3^{n_1}\]
    be the obvious isomorphism of line bundles multiplied by the sign $(-1)^{n_2n_3n_1(n_1-1)/2}$.
    \item For $(\cL_1, n_1), (\cL_2, n_2), (\cL_3, n_3)\in\Pic^\Z(R)$ we let the right distributivity isomorphism
    \[(\cL_1\otimes \cL_2)^{n_3}\otimes \cL_3^{n_1+n_2}\cong \cL_1^{n_3}\otimes \cL_3^{n_1}\otimes \cL_2^{n_3}\otimes \cL_3^{n_2}\]
    be the obvious isomorphism of line bundles.
\end{itemize}

\begin{prop}
Let $R$ be a (discrete) commutative $k$-algebra. There is a functor
\[\detgr\colon \Vect(R)^\sim\longrightarrow \Pic^{\Z}(R),\]
natural in $R$, of $\bE_\infty$ semiring categories sending $V\in\Vect(R)$ to $(\det(V)=\wedge^{\rk V} V, \rk V)$. The monoidal structure with respect to $\otimes$ is given by the isomorphism
\[\detgr(V)\otimes \detgr(W)\longrightarrow \detgr(V\oplus W)\]
given by
\[\wedge_{i=1}^{\rk V} v_i\otimes \wedge_{j=1}^{\rk W} w_i\mapsto (\wedge_{i=1}^{\rk V} v_i)\wedge (\wedge_{j=1}^{\rk W} w_i).\]
The monoidal structure with respect to $\motimes$ is given by the isomorphism
\[\detgr(V)\motimes \detgr(W)\longrightarrow \detgr(V\otimes W)\]
which is an isomorphism of line bundles
\[\det(V)^{\rk W}\otimes \det(W)^{\rk V}\longrightarrow \det(V\otimes W)\]
given by
\[(\otimes_{j=1}^{\rk W}\wedge_{i=1}^{\rk V} v_{ij})\otimes (\otimes_{i=1}^{\rk V}\wedge_{j=1}^{\rk W} w_{ij})\mapsto \wedge_{i=1}^{\rk V}\wedge_{j=1}^{\rk W} (v_{ij}\otimes w_{ij}).\]
\label{prop:determinantclassical}
\end{prop}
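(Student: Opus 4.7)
The plan is to construct the functor underlying $\detgr$ first, then equip it with the two monoidal structures, and finally verify Laplaza's coherence axioms reducing everything locally to the case of free modules. On objects, a finitely generated projective $R$-module $V$ has a locally constant rank function $\rk V$, i.e. a section of the \'etale sheaf $\Z$, together with its top exterior power $\det V=\wedge^{\rk V} V$ which, computed locally on $\Spec R$ where the rank is constant, is a line bundle. On isomorphisms, $\phi\colon V\xrightarrow{\sim} W$ is sent to $(\wedge^{\rk V}\phi, \rk(\phi))$. Naturality in $R$ is immediate since both $\det$ and $\rk$ commute with base change $V\mapsto V\otimes_R R'$.

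Next, I would construct the $\otimes$-monoidal structure. On lines it is the classical concatenation isomorphism $\det V\otimes \det W\cong \det(V\oplus W)$ defined by $(v_1\wedge\dots\wedge v_n)\otimes(w_1\wedge\dots\wedge w_m)\mapsto v_1\wedge\dots\wedge v_n\wedge w_1\wedge\dots\wedge w_m$, and on ranks the identity $\rk V+\rk W=\rk(V\oplus W)$. Associativity follows from associativity of $\wedge$, and the braiding sign $(-1)^{n_1 n_2}$ in $\Pic^\Z$ is precisely the Koszul sign that appears when permuting the two blocks of wedges.

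For the $\motimes$-structure I would work locally on $\Spec R$ where $V, W$ are free of fixed ranks $n$ and $m$; after choosing bases $\{v_i\}$ and $\{w_j\}$, the displayed formula sending $(\otimes_j\wedge_i v_{ij})\otimes(\otimes_i\wedge_j w_{ij})$ to $\wedge_{ij}(v_{ij}\otimes w_{ij})$ defines an isomorphism $\det(V)^{\otimes m}\otimes \det(W)^{\otimes n}\cong \det(V\otimes W)$. A direct check shows that changing bases by matrices $A\in\GL(V)$, $B\in\GL(W)$ multiplies each side by $\det(A)^m\det(B)^n$, so the isomorphism is basis-independent and hence glues globally. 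The $\motimes$-braiding sign $(-1)^{\binom{n_1}{2}\binom{n_2}{2}}$ is the sign by which the permutation of tensor factors $V\otimes W\cong W\otimes V$ acts on $\wedge^{nm}(V\otimes W)$, as one computes from the parity of the permutation that groups the basis $\{v_i\otimes w_j\}$ by one factor versus the other.

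The main obstacle, and the bulk of the genuine work, is verifying Laplaza's coherence diagrams, in particular the two distributivity isomorphisms and their compatibility with the associators and braidings of both $\otimes$ and $\motimes$. My plan is to reduce all of these to the case of free modules by locality and then to explicit formulas on wedge-monomials, at which point each diagram becomes an equality of signs that is a direct Koszul-sign computation: for example, the left distributivity sign $(-1)^{n_2 n_3 n_1(n_1-1)/2}$ arises from repeatedly commuting copies of $\det V_2$ past copies of $\det V_3$ inside $(\det V_2\otimes \det V_3)^{\otimes n_1}$ before reassociating, while the right distributivity carries no sign because the relevant exchanges occur between distinct line bundles placed side-by-side. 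These sign checks are tedious but formal; the $\bE_\infty$ refinement follows automatically from the fact that both source and target are underlying $\bE_\infty$ semirings in groupoids and the constructed symmetric bimonoidal functor is canonical, so no further higher coherence data need be specified.
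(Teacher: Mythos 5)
The paper's own proof is a one-liner: it cites Knudsen--Mumford for the fact that $\detgr\colon(\Vect(R)^\sim,\oplus)\to(\Pic^\Z(R),\otimes)$ is symmetric monoidal, and asserts that compatibility with the $\bE_\infty$ semiring structure is straightforward. You instead build everything from scratch and propose to verify the Laplaza coherence diagrams by local Koszul-sign computations. Both routes are legitimate; yours is more self-contained, while the paper's buys brevity by deferring the $\oplus$-monoidal part to a standard reference. Your treatment of the $\otimes$-braiding sign $(-1)^{n_1n_2}$, the $\motimes$-braiding sign $(-1)^{\binom{n_1}{2}\binom{n_2}{2}}$ (the sign of the transpose permutation on an $n_1\times n_2$ grid), and the basis-independence of the $\motimes$-isomorphism (both sides scale by $\det(A)^{m}\det(B)^{n}$) are all correct. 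Your closing observation that no higher coherence data is needed because the source and target are $1$-groupoids is the right point, and is essentially what makes the paper's ``straightforward'' assertion true.

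One thing to tighten: your heuristic explanations of the two distributivity signs are imprecise. For the left distributivity, the sign $(-1)^{n_2n_3\,n_1(n_1-1)/2}$ does indeed come from the $n_1(n_1-1)/2$ transpositions of a $\det V_2$ past a $\det V_3$ (each costing $(-1)^{n_2n_3}$) when unshuffling $(\det V_2\otimes\det V_3)^{\otimes n_1}$, and you should say that each swap contributes the \emph{graded} braiding sign in $\Pic^{\Z}$, not merely a Koszul sign on wedge monomials. For the right distributivity, your assertion that ``the relevant exchanges occur between distinct line bundles placed side-by-side'' so that no sign appears is not a reason: if you naively unshuffle $(\cL_1\otimes\cL_2)^{n_3}\otimes\cL_3^{n_1+n_2}$ into $\cL_1^{n_3}\otimes\cL_3^{n_1}\otimes\cL_2^{n_3}\otimes\cL_3^{n_2}$ you do pick up graded braiding signs. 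The absence of a sign there is a \emph{convention} fixed in the definition of $\Pic^\Z$; what you must actually check is that the composite
\[
\detgr(V_1)\motimes\bigl(\detgr(V_2)\otimes\detgr(V_3)\bigr)\longrightarrow\detgr\bigl(V_1\otimes(V_2\oplus V_3)\bigr)\longrightarrow\bigl(\detgr(V_1)\motimes\detgr(V_2)\bigr)\otimes\bigl(\detgr(V_1)\motimes\detgr(V_3)\bigr)
\]
built out of your stated $\otimes$- and $\motimes$-structure maps matches that convention, and similarly for the other side. This is a finite computation on a chosen ordering of a basis of $V_1\otimes(V_2\oplus V_3)$, and once carried out it does produce the stated signs, but the verification should be presented as a comparison of two explicitly composed isomorphisms rather than as a count of ``relevant exchanges.''
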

\begin{proof}
The fact that $\detgr\colon (\Vect(R)^\sim, \oplus)\rightarrow (\Pic^\Z(R), \otimes)$ is a symmetric monoidal functor is shown in \cite{KnudsenMumford}. The compatibility with the $\bE_\infty$ semiring structure is straightforward.
\end{proof}

Let us now extend the determinant functor to $K$-theory following \cite[Section 3.1]{STV} and \cite[Theorem 3.9]{Heleodoro}. Since $(\Pic^\Z(R), \otimes)$ is a Picard groupoid, we may regard it as a connective spectrum.

\begin{thm}
$\uPic^\Z$ is a stack of $\bE_\infty$ ring spectra on the site of derived affine schemes. Moreover, there is a morphism of prestacks of $\bE_\infty$ ring spectra
\[\detgr\colon \uK\longrightarrow \uPic^\Z\]
such that $\uVect\rightarrow \uK\rightarrow \uPic^\Z$ coincides with the classical determinant functor from \cref{prop:determinantclassical} when restricted to discrete commutative $k$-algebras.
\label{thm:determinantmap}
\end{thm}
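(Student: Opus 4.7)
The plan is to establish first that the targets form sheaves of $\bE_\infty$ ring spectra, then extend the classical determinant along a group completion map. To see that $\uPic^\Z$ is a sheaf of $\bE_\infty$ ring spectra, note that for each connective cdga $R$ the Picard groupoid $(\Pic^\Z(R), \otimes)$ is group-like, hence its delooping is a connective spectrum; the second tensor product $\motimes$ together with the Laplaza distributivity data of \cref{prop:determinantclassical} then promotes this to an $\bE_\infty$ ring spectrum, naturally in $R$. The sheaf condition in the \'etale topology reduces to fpqc descent of line bundles (so $\uPic = \B\GL_1$ is a sheaf) together with the fact that $\Z$ is already an \'etale sheaf by hypothesis.

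To construct $\detgr$, the strategy is to invoke the universal property of group completion of $\bE_\infty$ semirings, in the spirit of \cite[Section 3.1]{STV} and \cite[Theorem 3.9]{Heleodoro}. For any connective cdga $R$, \cref{prop:determinantclassical} equips $\Vect(R)^\sim$ with the structure of an $\bE_\infty$ semiring under $(\oplus, \otimes)$ and produces a semiring map to $(\Pic^\Z(R), \otimes, \motimes)$ when $R$ is discrete; the non-discrete case is handled by the same formulas applied to flat dualizable modules concentrated in degree zero, with the coherences pulled back from the discrete case via Yoneda. Since $\Pic^\Z(R)$ is already group-like under $\otimes$, the universal property of additive semiring group completion from \cite{GGN} yields a unique factorization through the group completion $\K^{\oplus}(\Vect(R))$ as a map of $\bE_\infty$ ring spectra, natural in $R$. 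To identify the source with $\K(R) = \K(\Perf(R))$, I would use the weight structure on $\Perf(R)$ with heart $\Vect(R)$ recalled at the start of this section: Waldhausen's additivity theorem upgrades this to an equivalence $\K^{\oplus}(\Vect(R)) \simeq \K(\Perf(R))$ of $\bE_\infty$ ring spectra, after which the composition produces the desired $\detgr\colon \uK \to \uPic^\Z$.

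Compatibility with the classical determinant when restricted to $\uVect \to \uK \to \uPic^\Z$ over discrete rings is immediate from the construction, since by design the semiring map $\Vect(R)^\sim \to \Pic^\Z(R)$ factors through $\K(R)$ and the induced map agrees with the original on the generating objects. The main obstacle is the coherence bookkeeping: one must verify that the multiplicative $\bE_\infty$ structure on $\K(R)$ obtained from semiring group completion of $(\Vect(R), \oplus, \otimes)$ coincides with the standard one inherited from the tensor product on $\Perf(R)$, and that the extension of $\detgr$ from discrete to arbitrary connective cdgas preserves all the Laplaza coherences needed to descend to a map of $\bE_\infty$ ring prestacks. This is precisely what is achieved in \cite{STV} and \cite{Heleodoro}, so the result follows by directly applying their constructions to the input data furnished by \cref{prop:determinantclassical}.
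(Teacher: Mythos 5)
Your broad strategy mirrors the paper's: prove $\K(\Vect(R)) \to \K(\Perf(R))$ is an equivalence, and then use the universal property of group completion to reduce the construction of $\detgr$ to a map of $\bE_\infty$ semiring spaces $\Vect(R)^\sim \to \Pic^\Z(R)$. But the crucial step --- passing from the explicit formulas of \cref{prop:determinantclassical}, which are only given when $R$ is a \emph{discrete} commutative $k$-algebra, to an $\bE_\infty$ semiring map defined and coherent over \emph{all} connective cdgas --- is where your argument has a real gap. You assert that ``the same formulas applied to flat dualizable modules concentrated in degree zero, with the coherences pulled back from the discrete case via Yoneda'' handle the non-discrete case, but this does not work: for a general connective cdga $R$ the mapping spaces in $\Vect(R)^\sim$ are not discrete, so the classical formulas define neither the higher structure on $\detgr$ nor the $\bE_\infty$ ring structure on $\Pic^\Z(R)$ itself, and ``Yoneda'' provides no mechanism for transporting this data. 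The paper's actual mechanism is that both $\uVect$ and $\uPic^\Z$ are left Kan extended from discrete smooth $k$-algebras (citing \cite[Section 2]{Heleodoro} and \cite[Appendix A]{EHKSY}), with the pointwise Kan extension formula given by a \emph{sifted} colimit, and the forgetful functor from $\bE_\infty$ rings (or semirings) to spaces preserves sifted colimits; this is precisely what allows transporting the algebraic structure and the determinant map from the discrete smooth case to arbitrary connective cdgas. Without an argument of this type, the extension is not established.

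A secondary inaccuracy: the identification $\K(\Vect(R)) \simeq \K(\Perf(R))$ does come from the weight structure with heart $\Vect(R)$, but attributing it to ``Waldhausen's additivity theorem'' undersells what is needed; the paper cites \cite[Corollary 8.1.3]{HebestreitSteimle} (the theorem of the heart for weight structures), which is the correct reference and is genuinely stronger than bare additivity. This is a documentation issue rather than a gap in logic, but worth correcting.
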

\begin{proof}
Let us first construct an $\bE_\infty$ ring structure on $\uPic^\Z$. As shown in \cite[Section 2]{Heleodoro} and \cite[Appendix A]{EHKSY}, the stacks $\uVect$ and $\uPic^\Z$ are left Kan extended from smooth commutative $k$-algebras (in particular, discrete). Note that the corresponding pointwise formula for the left Kan extension is given by a sifted colimit as observed in \cite[Lemma A.0.5]{EHKSY}. The forgetful map $\Ring_{\bE_\infty}(\cS)\rightarrow \cS$ from $\bE_\infty$-ring spaces to spaces preserves sifted colimits, so to endow $\uPic^\Z$ with an $\bE_\infty$ ring structure it is enough to endow the stack $R\mapsto \Pic^\Z(R)$ for $R$ a smooth commutative $k$-algebra with an $\bE_\infty$ ring structure, which we have already defined above.

By \cite[Corollary 8.1.3]{HebestreitSteimle} (see also \cite[Corollary 1.40]{Heleodoro} and \cite{Fontes}) the natural map of spectra $\K(\Vect(R))\rightarrow \K(\Perf(R))$ is an equivalence for any connective commutative dg $k$-algebra $R$. So, it is enough to construct an $\bE_\infty$ ring structure on $\uPic^\Z$ as well as a morphism of $\bE_\infty$ ring spectra
\[\detgr\colon \K(\Vect(R))\longrightarrow \Pic^\Z(R)\]
natural in $R$. As $\uPic^\Z$ is an $\bE_\infty$ ring, this is the same as a morphism of $\bE_\infty$ semiring spaces
\[\detgr\colon \Vect(R)\longrightarrow \Pic^\Z(R).\]
As $\uVect$ is left Kan extended from smooth commutative $k$-algebras, it is enough to construct this morphism for $R$ classical which was done in \cref{prop:determinantclassical}.
\end{proof}

By precomposition with $\uPerf^\sim\rightarrow \uK$ the determinant map gives rise to a determinant morphism
\[\detgr\colon \uPerf^\sim\longrightarrow \uPic^{\Z}.\]
It splits into a pair of maps
\[\det\colon \uPerf^\sim\longrightarrow \uPic,\qquad \chi\colon \uPerf^\sim\longrightarrow \Z,\]
where we note that $\det\colon \uPerf^\sim\rightarrow \uPic$ is merely an $\bE_1$ map of spaces. The following statement is well-known.

\begin{prop}
Let $R$ be a (discrete) $k$-algebra. Suppose $V^\bullet = (V^{-m}\rightarrow V^{-m+1}\rightarrow\dots\rightarrow V^0)$ is a chain complex of projective finitely-generated $R$-modules. Then there is a canonical homotopy
\[[V^\bullet] \sim \sum_{n=0}^m (-1)^n [V^{-n}]\in\Omega^\infty\K(R).\]
Therefore, if $R$ is commutative, there is a canonical isomorphism
\[\detgr(V^\bullet)\cong \detgr(V^0)\otimes \detgr(V^{-1})^{-1}\otimes \dots\otimes \detgr(V^{-m})^{(-1)^m}.\]
\label{prop:chaincomplexfiltration}
\end{prop}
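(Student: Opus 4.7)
The strategy is to exhibit $V^\bullet$ as a filtered object in $\Perf(R)$ whose associated graded is the direct sum of its shifted terms, and then apply the additivity homotopy in $K$-theory recalled earlier.

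Concretely, I would use the stupid (b\^ete) filtration: write $F^p V^\bullet$ for the subcomplex consisting of $V^{-n}$ in degree $-n$ for $n \leq p$ and zero otherwise. This is a finite increasing filtration $0 = F^{-1}V^\bullet \subset F^0 V^\bullet \subset \dots \subset F^m V^\bullet = V^\bullet$ in $\Perf(R)$, whose associated graded pieces are the shifted modules $V^{-n}[n]$. The additivity property of $K$-theory (applied iteratively to the fiber sequences $F^{p-1}V^\bullet \to F^p V^\bullet \to V^{-p}[p]$) provides the canonical additivity homotopy
\[
[V^\bullet] \;\sim\; \sum_{n=0}^m [V^{-n}[n]] \;\sim\; \sum_{n=0}^m (-1)^n [V^{-n}]
\]
in $\Omega^\infty \K(R)$, where in the last step we use that the suspension automorphism of $\Perf(R)$ acts on $\Omega^\infty\K(R)$ by multiplication by $-1$ (this is itself an additivity homotopy applied to the fiber sequence $X \to 0 \to X[1]$).

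For the second statement, suppose $R$ is commutative. Applying the morphism of $\bE_\infty$ ring spectra $\detgr \colon \uK \to \uPic^\Z$ from \cref{thm:determinantmap} to the homotopy above produces the claimed isomorphism
\[
\detgr(V^\bullet) \;\cong\; \detgr(V^0)\otimes \detgr(V^{-1})^{-1}\otimes \dots \otimes \detgr(V^{-m})^{(-1)^m}
\]
in $\Pic^\Z(R)$, using that $\detgr$ sends $[X[1]]$ to $\detgr(X)^{-1}$ (and, in the graded component, negates the rank).

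The main content is really the existence of the functorial stupid filtration and the identification of its graded pieces; everything else is a formal consequence of the additivity of $K$-theory and the ring-spectrum structure of $\detgr$. The only subtle point to keep in mind is that the signs in the symmetric monoidal structure on $\Pic^\Z$ (involving the Koszul sign rule built into the braiding on graded line bundles) are exactly those needed so that the alternating tensor product above is canonically well-defined; this is guaranteed by the fact that $\detgr$ is a morphism of $\bE_\infty$ ring spectra, so no ad hoc sign conventions need to be introduced by hand.
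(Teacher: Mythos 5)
Your proof is correct and follows essentially the same approach as the paper: both use the brutal (stupid) filtration on cohomological degree, identify the associated graded with $\bigoplus_n V^{-n}[n]$, invoke the additivity homotopy in $K$-theory, and then apply the $\bE_\infty$ ring map $\detgr$ for the second claim. Your version just spells out more explicitly the step $[V^{-n}[n]] \sim (-1)^n[V^{-n}]$ via the fiber sequence $X \to 0 \to X[1]$, which the paper leaves implicit.
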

\begin{proof}
Consider a filtration on $V^\bullet$ by the brutal truncation on cohomological degree. Its associated graded $V^\bullet$ with the zero differential. As the class of any filtered object is equivalent to the class of its associated graded in $K$-theory, this provides the relevant homotopy.

The isomorphism on the level of determinant lines is obtained after applying $\detgr$ to the homotopy in $\K(R)$.
\end{proof}

The following construction will be useful. Suppose $V^\bullet$ is a chain complex as in \cref{prop:chaincomplexfiltration}, its cohomology $\rH^\bullet(V^\bullet)$ consists of projective modules and there is a quasi-isomorphism $\rH^\bullet(V^\bullet)\rightarrow V^\bullet$ (e.g. $R$ is a field). Then we obtain an isomorphism of the determinant lines
\begin{equation}
\phi_V\colon \detgr(V^\bullet)\longrightarrow \detgr(\rH^\bullet(V^\bullet)).
\label{eq:Eulermap}
\end{equation}
We refer to \cite[Section 2.2]{FarberTuraevPR} for explicit formulas.

\begin{defn}
Let $X$ be a derived prestack satisfying \cref{mainassumption}. The \defterm{determinant line bundle} is the graded line bundle on $\uPerf(X)^\sim$ given by the composite
\[\uPerf(X)^\sim\xrightarrow{\pi_\sharp} \uPerf^\sim\xrightarrow{\detgr} \uPic^{\Z}.\]
\end{defn}

Explicitly, for an object $\cF\in\Perf(X)$ the fiber of the determinant line bundle on $\uPerf(X)$ is
\[\cD_\cF = \detgr(\pi_\sharp \cF),\]
the determinant of the homology of $\cF$. If $X$ is equipped with an Euler structure, by \cref{thm:pushforwardEulertrivial} we obtain a \defterm{determinant section} of $\cD$ giving rise to an isomorphism $\cD\cong \cO_{\uPerf(X)}$.

\subsection{Volume forms}

\begin{defn}
Let $Y$ be a derived prestack which admits deformation theory whose cotangent complex $\bL_Y$ is perfect.
\begin{itemize}
\item The \defterm{virtual dimension} of $Y$ is the locally constant function $\dim(Y)\colon Y\rightarrow \Z$ given by $\chi(\bL_Y)$. We say $Y$ has pure dimension $n\in\Z$ if $\dim(Y)$ is the constant function with value $n$.
\item A \defterm{volume form} on $Y$ is an isomorphism $\det(\bL_Y)\cong \cO_Y$.
\item A \defterm{squared volume form} on $Y$ is an isomorphism $\det(\bL_Y)^{\otimes 2}\cong \cO_Y$.
\end{itemize}
\label{def:volumeform}
\end{defn}

\begin{example}
Let $G$ be an algebraic group equipped with a $G$-invariant volume form on the Lie algebra $\g$. Suppose $Y$ is a derived prestack equipped with a $G$-invariant volume form $\vol_Y$. Then there is a natural volume form on the quotient $[Y/G]$. Indeed, under the identification of $\QCoh([Y/G])$ with $G$-equivariant quasi-coherent complexes on $Y$ we have
\[\bL_{[Y/G]}\mapsto (\bL_Y\rightarrow \cO_Y\otimes \g^*).\]
Therefore, the $G$-invariant trivialization $\vol_Y$ of $\det(\bL_Y)$ and a $G$-invariant trivialization of $\det(\g^*)$ induce a trivialization of $\det(\bL_{[Y/G]})$.
\label{ex:quotientvolumeform}
\end{example}

Volume forms can be glued as follows. Suppose $Y_1,Y_2$ and $Y_0$ are derived prestacks equipped with volume forms and consider a pullback diagram
\[
\xymatrix{
Y \ar[r]^{g'} \ar[d]^{f'} & Y_1 \ar[d]^{f} \\
Y_2 \ar[r]^{g} & Y_0.
}
\]

The cotangent complex of $Y$ fits into a Cartesian square
\[
\xymatrix{
    \bL_Y \ar[r] \ar[d] & g'^*\bL_{Y_1} \ar[d] \\
    f'^*\bL_{Y_2} \ar[r] & (gf')^* \bL_{Y_0}.
}
\]
Applying determinants we get an isomorphism
\[
\det(\bL_Y) \cong \det(g'^*\bL_{Y_1}) \otimes \det(f'^*\bL_{Y_2}) \otimes \det((g f')^*\bL_{Y_0})^{-1}.
\]
The volume forms on $Y_1, Y_2, Y_0$ then give a volume form on $Y$. We call it the \defterm{glued volume form}.

In this section we consider mapping prestacks $\Map(X, Y)$, where $Y$ is a derived prestack with a perfect cotangent complex and $X$ a derived prestack satisfying \cref{mainassumption}. By \cref{prop:mappingcotangent} we have
\[
\bL_{\Map(X,Y)} = \pi_\sharp \ev^* \bL_Y
\]
in that case and hence, using that by definition $\cD = \detgr \circ \pi_\sharp$, it follows that
\[
\detgr(\bL_{\Map(X,Y)}) = \detgr(\pi_\sharp \ev^*\bL_Y) = \cD(\ev^*\bL_Y) = g^*\cD,
\]
where $g \colon \Map(X,Y) \to \uPerf$ is the classifying map of $\ev^*\bL_Y$, i.e. the map which sends a morphism $f\colon S\times X\rightarrow Y$ to $f^* \bL_Y\in\Perf(S\times X)$. This observation will allow us to apply the results of the previous section. Applying \cref{thm:pushforwardEulertrivial} we get the following.

\begin{thm}
Let $Y$ be a derived prestack with a perfect cotangent complex. Let $X$ be a derived prestack with an Euler structure. Then $\Map(X, Y)$ carries a canonical volume form and $\dim(\Map(X, Y)) = 0$.
\label{thm:volumeEuler}    
\end{thm}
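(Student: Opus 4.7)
The plan is to deduce this as an immediate consequence of \cref{thm:pushforwardEulertrivial}, by applying it to the universal perfect complex $\ev^*\bL_Y$ and transporting the resulting trivialization through the determinant morphism. To set up the application, I would use the identification recalled just before the theorem: $\bL_{\Map(X,Y)}=\pi_\sharp\ev^*\bL_Y$, so that $\detgr(\bL_{\Map(X,Y)})=g^*\cD$, where $g$ is the classifying map of $\ev^*\bL_Y$ and $\cD=\detgr\circ\pi_\sharp$ is the determinant line bundle on $\uPerf(X)^\sim$. This reduces the problem to producing a trivialization of $\cD$ at the level of prestacks.

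Next, I would invoke \cref{thm:pushforwardEulertrivial}: the Euler structure on $X$ produces a nullhomotopy of the map $\pi_\sharp\colon\uK(X)\to\uK$ of prestacks of connective spectra. Post-composing with the $\bE_\infty$ ring map $\detgr\colon\uK\to\uPic^\Z$ from \cref{thm:determinantmap} and restricting along $\uPerf(X)^\sim\to\uK(X)$, this yields a canonical nullhomotopy of $\cD\colon\uPerf(X)^\sim\to\uPic^\Z$. Pulling back along $g$ then gives a nullhomotopy of $\detgr(\bL_{\Map(X,Y)})$ viewed as a section of $\uPic^\Z$ over $\Map(X,Y)$.

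Finally, unpacking $\uPic^\Z=\uPic\times\Z$, such a nullhomotopy decomposes into a trivialization of the underlying line bundle $\det(\bL_{\Map(X,Y)})\cong\cO_{\Map(X,Y)}$, which is the desired volume form, together with a trivialization of the integer component, i.e.\ the assertion $\dim(\Map(X,Y))=\chi(\pi_\sharp\ev^*\bL_Y)=0$. I do not expect a genuinely hard step: the argument is entirely formal once one has \cref{thm:pushforwardEulertrivial} and the factorization of $\detgr(\bL_{\Map(X,Y)})$ through the universal determinant. The only point requiring care is that the nullhomotopy in \cref{thm:pushforwardEulertrivial} must live at the level of prestacks (rather than just on global sections) so that it actually pulls back along $g$; this is built into the formulation of that theorem, but it is the reason it was phrased in terms of the prestacks $\uK(X),\uK$ in the first place.
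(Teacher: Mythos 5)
Your proposal is correct and follows precisely the route the paper takes: it uses $\bL_{\Map(X,Y)}\cong\pi_\sharp\ev^*\bL_Y$ to identify $\detgr(\bL_{\Map(X,Y)})$ with the pullback of $\cD=\detgr\circ\pi_\sharp$ along the classifying map of $\ev^*\bL_Y$, applies the nullhomotopy of $\pi_\sharp\colon\uK(X)\to\uK$ furnished by the Euler structure (\cref{thm:pushforwardEulertrivial}), and transports it through the $\bE_\infty$ map $\detgr$ to get both the trivialization of $\det(\bL_{\Map(X,Y)})$ and the vanishing of $\dim(\Map(X,Y))$. There is no meaningful difference from the paper's argument.
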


Often one does not have a full Euler structure, but instead a simple structure. In that case we have the following statement.

\begin{thm}
Let $Y$ be a derived prestack of pure dimension $\dim(Y)$. Let $X$ be a derived prestack equipped with a simple structure. Consider either of the following data:
\begin{enumerate}
\item An equality $\dim(Y) = 0$.
\item An isomorphism $\det(p_\sharp\cO_X)\cong k$.
\item An isomorphism $\det(p_\sharp\cO_X)^{\otimes 2}\cong k$ with $\dim(Y)$ even.
\end{enumerate}
\emph{and} either of the following data:
\begin{enumerate}
\item A volume form on $Y$.
\item A trivialization of the Euler class $e(X)\in\C_\bullet(X(k); \Z)$.
\item A squared volume form on $Y$ as well as a a trivialization of the mod 2 Euler class $e(X)\in\C_\bullet(X(k);\Z/2)$.
\end{enumerate}
Then $\Map(X, Y)$ carries a canonical volume form, the \defterm{torsion volume form}, and
\[\dim(\Map(X, Y)) = \dim(Y) \chi(X).\]
\label{thm:volumelift}
\end{thm}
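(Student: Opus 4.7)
The plan is to reduce the existence of the torsion volume form to \cref{thm:indextheorem} applied to the evaluation of $\bL_Y$. By \cref{prop:mappingcotangent} one has $\bL_{\Map(X,Y)} \cong \pi_\sharp \ev^*\bL_Y$, and if $g\colon \Map(X,Y) \to \uPerf(X)^\sim$ denotes the classifying map of $\ev^*\bL_Y$, then $\detgr(\bL_{\Map(X,Y)}) = g^*\cD$, where $\cD = \detgr \circ \pi_\sharp$ is the graded determinant line on $\uPerf(X)^\sim$. Consequently, any factorization of $[\pi_\sharp \ev^*\bL_Y] \in \Omega^\infty \uK(\Map(X,Y))$ produced by the simple structure on $X$ transports, along the $\bE_\infty$-ring map $\detgr \colon \uK \to \uPic^\Z$, to a factorization of $\det \bL_{\Map(X,Y)}$.

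Writing $e_\K(X) = \sum_i x_i \alpha_i$ informally, with $x_i \in X(k)$ and $\alpha_i \in \Omega^\infty \K(k)$, \cref{thm:indextheorem} applied to $\ev^*\bL_Y$ yields a canonical homotopy
\[[\pi_\sharp \ev^* \bL_Y] \sim \sum_i [\ev_{x_i}^*\bL_Y] \cdot \alpha_i.\]
Pushing through $\detgr$ and using the semiring formula $(\cL_1, n_1) \motimes (\cL_2, n_2) = (\cL_1^{n_2} \otimes \cL_2^{n_1},\, n_1 n_2)$ together with the identity $\chi(\ev_{x_i}^*\bL_Y) = \dim(Y)$, one obtains a canonical isomorphism of graded lines
\[\det \bL_{\Map(X,Y)} \cong \bigotimes_i \det(\ev_{x_i}^*\bL_Y)^{\otimes \chi(\alpha_i)} \otimes \det(\alpha_i)^{\otimes \dim(Y)}\]
of total grading $\dim(Y) \sum_i \chi(\alpha_i) = \dim(Y) \chi(X)$, which already gives the dimension assertion.

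It remains to trivialize the two families of factors in each of the nine combinations specified. Pushing the simple structure along $X(k) \to \pt$ identifies $\bigotimes_i \det(\alpha_i)$ with $\det(p_\sharp \cO_X)$, so the second family $\det(p_\sharp \cO_X)^{\otimes \dim(Y)}$ is trivialized by either $\dim(Y) = 0$, the given trivialization of $\det(p_\sharp \cO_X)$, or the given square trivialization when $\dim(Y)$ is even. The first family is the pairing of the $\uPic$-valued $0$-cocycle $x \mapsto \det(\ev_x^*\bL_Y)$ on $X(k)$ with the integer chain $e(X) = \chi_*(e_\K(X)) \in \C_\bullet(X(k); \Z)$: a volume form on $Y$ trivializes the cocycle pointwise, a trivialization of $e(X)$ trivializes the chain against which it is paired, and in the mixed squared case a mod-$2$ nullhomotopy of $e(X)$ rewrites the pairing as that of the squared cocycle (which the squared volume form trivializes) with a chain lift of the nullhomotopy. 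The resulting trivialization of $\det \bL_{\Map(X,Y)}$ is the torsion volume form.

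The main obstacle is consistent bookkeeping of signs: the symmetric monoidal structure on $\uPic^\Z$ involves the braiding $(-1)^{n_1 n_2}$ for $\otimes$, the braiding $(-1)^{(n_1(n_1-1)/2)(n_2(n_2-1)/2)}$ for $\motimes$, and a nontrivial distributivity isomorphism, all of which must be tracked carefully to ensure that the factorization is canonical rather than merely well-defined up to a sign. A related technical point is to verify that the first family factors through the rank map $\chi \colon \K(k) \to \Z$, so that trivializing the integral Euler class $e(X)$ rather than its refinement $e_\K(X)$ is indeed sufficient.
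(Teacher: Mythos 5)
Your proposal is essentially the paper's proof: identify $\bL_{\Map(X,Y)} = \pi_\sharp\ev^*\bL_Y$, apply \cref{thm:indextheorem} to factor $[\pi_\sharp\ev^*\bL_Y]$ through the coassembly map and the pairing with $e_\K(X)$, push the resulting homotopy through $\detgr$, and split the $\uPic^\Z$-valued expression into the two tensor factors using the semiring formula $(\cL_1,n_1)\motimes(\cL_2,n_2)=(\cL_1^{n_2}\otimes\cL_2^{n_1},n_1n_2)$, after which the two assumed trivializations finish the job.

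The two ``obstacles'' you flag at the end are real but are already dealt with in the paper, and you should recognize this rather than leave them as open concerns. The sign bookkeeping is not something to re-do by hand: the point of \cref{prop:determinantclassical} and \cref{thm:determinantmap} is precisely that $\detgr\colon\uK\rightarrow\uPic^\Z$ is an $\bE_\infty$-ring map between prestacks of $\bE_\infty$-ring spectra, so all compatibilities of signs, braidings, and distributors are subsumed in the statement that one is taking a map of $\bE_\infty$-objects and pairings thereof. Your second point — that the first family should depend only on the integral Euler class $e(X)$ rather than on $e_\K(X)$ — is exactly answered by reading off the exponent in the $\motimes$-formula: the factor $\det(\ev_{x_i}^*\bL_Y)$ appears raised to the power $n_2=\chi(\alpha_i)$, which is the $x_i$-coefficient of $e(X)=\chi_*e_\K(X)$, so the dependence on $\det(\alpha_i)$ is cleanly separated into the other factor. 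One more small remark: your informal ``$e_\K(X)=\sum_i x_i\alpha_i$'' presentation needs the naturality-of-coassembly square (which the paper records explicitly) to make the passage from $\uK(X)$ to $\C^\bullet(X(k);\uPic^\Z)$ an actual commutative diagram of prestacks, not just an equality of classes at the level of $\pi_0$; this is what makes the resulting trivialization canonical.
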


\begin{proof}
By \cref{prop:mappingcotangent} we can write $\det\bL_{\Map(X,Y)}$ as the map of prestacks
\[
\xymatrix{
\Map(X,Y) \ar[r]^{\bL_Y} & \Map(X, \uPerf^\sim) \ar[r]^-{\pi_\sharp} &\uPerf^\sim \ar[r]^{\det} & \uPic
}
\]
We can extend this to a commuting diagram
\[
\xymatrix{
\Map(X,Y) \ar[r]^-{\bL_Y} & \Map(X, \uPerf^\sim) \ar[r]^-{\pi_\sharp} \ar[d] &\uPerf^\sim \ar[r]^{\det}  \ar[d] & \uPic \\
& \uK(X) \ar[r]^{\pi_\sharp} \ar[d]^{\epsilon} & \uK \ar[r]^{\detgr} & \uPic^\Z \ar[u] \\
& \C^\bullet(X(k) ; \uK) \ar@{=}[r] & \C^\bullet(X(k) ; \uK) \ar[u]_-{\langle - , e_\K(X) \rangle} \ar[r]^\detgr & \C^\bullet(X(k) ; \uPic^\Z) \ar[u]_-{\langle - , \detgr(e_\K(X)) \rangle}
}
\]
where the top two squares commute by definition (of $\pi_\sharp \colon \uK(X) \to \uK$ and $\detgr$), the bottom-left square commutes by \cref{thm:indextheorem} and the bottom-right square commutes by \cref{thm:determinantmap}. 
From naturality of coassembly applied to 
\[\uK(X) \to \Map(X, \uK) \to \Map(X, \uPic^\Z)\]
we obtain the commuting diagram
\[
\xymatrix{
\Map(X, \uPerf^\sim) \ar[r]^{\detgr} \ar[d] & \Map(X,\uPic^\Z)  \ar[dd]^\epsilon \\
\uK(X) \ar[d] & \\
\C^\bullet(X(k) ; \uK) \ar[r]^{\detgr} & \C^\bullet(X(k); \uPic^\Z).
}
\]
We have thus obtained the isomorphism
\[\detgr(\bL_{\Map(X, Y)})\cong \langle \epsilon ( \ev^*\detgr(\bL_Y)), \detgr(e_\K(X))\rangle\in\Pic^{\Z}(\Map(X,Y)),\]
where 
\[\langle -, -\rangle\colon \C^\bullet(X(k); \Pic^\Z(\Map(X,Y))\otimes \C_\bullet(X(k); \Pic^\Z(k))\longrightarrow \Pic^\Z(\Map(X,Y))\]
is given by the natural pairing between chains and cochains on $X(k)$ and the tensor product map
\[\motimes\colon \Pic^\Z(\Map(X,Y))\otimes \Pic^\Z(k)\rightarrow \Pic^\Z(\Map(X,Y)).\]

Under the equivalence $\uPic^\Z \cong \uPic \times \Z$ we decompose the individual determinants as follows:
\begin{itemize}
\item $\detgr(\bL_Y) = (\det(\bL_Y), \dim(Y)) \in \Pic^\Z(Y)$ where we note that $\dim(Y)$ is constant by assumption.
\item $\detgr(e_\K(X)) = (\det(e_\K(X)), e(X))\in\C_\bullet(X(k); \Pic^{\Z}(k))$. Its pushforward under $X(k)\rightarrow \pt$ is $(\det(p_\sharp \cO_X), \chi(X))\in\Pic^{\Z}(k)$.
\end{itemize}
We write (recall that we write addition and multiplication in $\uPic^\Z$ as $\otimes$ and $\otimes^*$, respectively)
\begin{align*}
\detgr(\bL_{\Map(X, Y)}) &\cong \langle \epsilon ( \ev^*\detgr(\bL_Y)), \detgr(e_\K(X))\rangle \\
&= \langle \epsilon ( \ev^*\det(\bL_Y)),  e(X) \rangle \otimes \langle \dim(Y), \detgr(e_\K(X))\rangle\\
&= \langle \epsilon ( \ev^*\det(\bL_Y)),  e(X) \rangle \otimes \left( \dim(Y) \otimes^* p_\sharp\detgr(e_\K(X) \right)\\
&= \left( \langle \epsilon ( \ev^*\det(\bL_Y)),  e(X) \rangle \otimes \det(p_\sharp \cO_X)^{\otimes \dim(Y)}, \dim(Y) \chi(X) \right)
\end{align*}
where by the description of $\otimes^*$ the pairing $\langle -, -\rangle$ used in the last two lines is
\[\C^\bullet(X(k); \Pic(\Map(X,Y))\otimes \C_\bullet(X(k); \Z)\longrightarrow \Pic(\Map(X,Y))\otimes\Z\longrightarrow \Pic(\Map(X,Y)),\]
where the last map is $(\cL, n)\mapsto \cL^{\otimes n}$.

The two tensor factors of $\det(\bL_{\Map(X,Y)})$ are trivialized using the two pieces of data assumed in the statement. In this way we have constructed a canonical trivialization of $\det(\bL_{\Map(X, Y)})$ and hence a volume form on $\Map(X, Y)$.
\end{proof}

\begin{remark}
Rescaling the volume form on $Y$ by $A\in k^*$, the torsion volume form on $\Map(X, Y)$ gets rescaled by $A^{\chi(X)}$. Similarly, rescaling the trivialization of $\det(p_\sharp\cO_X)$ by $B\in k^*$, the torsion volume form on $\Map(X, Y)$ gets rescaled by $B^{\dim(Y)}$.
\end{remark}

Let us now show that the construction of the torsion volume form on mapping stacks is compatible with gluing.

\begin{prop}
Consider a diagram of derived prestacks $X_1\leftarrow X_0\rightarrow X_2$ equipped with simple structures and equip $X=X_1\coprod_{X_0} X_2$ with the glued simple structure. Consider isomorphisms $\det(p_\sharp \cO_{X_i})\cong k$ for $i=0,1,2$ and consider the isomorphism $\det(p_\sharp \cO_X)\cong k$ induced by the pushout square \eqref{diag:pushoutofOX}. Let $Y$ be a derived prestack of pure dimension $\dim(Y)$ and choose a volume form on $Y$. Then the torsion volume form on $\Map(X, Y)$ coincides with the volume form on
\[
\Map(X,Y) = \Map(X_1,Y) \times_{\Map(X_0, Y)} \Map(X_2,Y)
\]
glued from the torsion volume forms on $\Map(X_i, Y)$.
\label{prop:gluedvolonmappingstacks}
\end{prop}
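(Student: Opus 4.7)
The plan is to unpack both volume forms on $\Map(X, Y)$ into explicit descriptions in terms of the three prestacks $X_0, X_1, X_2$ and verify that they agree.

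From the pullback square $\Map(X, Y) = \Map(X_1, Y) \times_{\Map(X_0, Y)} \Map(X_2, Y)$, the cotangent complex $\bL_{\Map(X, Y)}$ sits in a Cartesian square, yielding an isomorphism
\[\det \bL_{\Map(X, Y)} \cong \det \bL_{\Map(X_1, Y)} \otimes \det \bL_{\Map(X_2, Y)} \otimes \det \bL_{\Map(X_0, Y)}^{\otimes -1}\]
(pullbacks suppressed), under which the glued volume form is by definition the tensor product of the torsion volume forms on the three factors. On the other hand, applying the proof of \cref{thm:volumelift} to each $Z \in \{X, X_0, X_1, X_2\}$ expresses $\det \bL_{\Map(Z, Y)}$ as
\[\langle \epsilon(\ev^* \det \bL_Y), e(Z) \rangle \otimes \det(p_\sharp \cO_Z)^{\otimes \dim(Y)},\]
with the torsion volume form assembled from the given trivializations of $\det \bL_Y$ and $\det(p_\sharp \cO_Z)$.

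The key step is to check that the two natural additive decompositions arising from the pushout match up. By \cref{prop:gluedsimple}, the glued $K$-theoretic Euler class $e_\K(X)$ is constructed as the alternating combination of the pushforwards of the $e_\K(X_i)$ to $\C_\bullet(X(k); \K(k))$, via the additivity homotopy for the fiber sequence relating $\cO_X$ to the pushforwards of the $\cO_{X_i}$ in \eqref{diag:pushoutofOX}. Projecting to integers and invoking bilinearity of $\langle -, - \rangle$ splits $\langle \epsilon(\ev^* \det \bL_Y), e(X) \rangle$ as the corresponding tensor product of the three factor pieces (with inversion on the $X_0$ piece). Applying $\detgr \circ p_\sharp$ to the same fiber sequence and invoking \cref{prop:chaincomplexfiltration}, the trivialization $\det(p_\sharp \cO_X) \cong k$ assumed in the proposition decomposes as the tensor product (again with inversion on $X_0$) of the three trivializations $\det(p_\sharp \cO_{X_i}) \cong k$. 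Combining these two splittings identifies the torsion volume form on $\Map(X, Y)$ with the glued volume form on the right-hand side of the cotangent isomorphism above.

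The main obstacle is the bookkeeping in the graded Picard groupoid $\Pic^\Z$: one must track signs, associators, and braidings from \cref{prop:determinantclassical} so that the three-term decomposition carries the correct orientation, including any $(-1)^{\dim(Y)}$-type signs arising when $\dim(Y)$ is odd. This reduces to the naturality of $\detgr$ as a map of $\bE_\infty$ ring prestacks (\cref{thm:determinantmap}) and of the coassembly map $\epsilon$ under pushouts, both of which are formal consequences of the setup in Section~1.
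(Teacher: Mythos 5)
Your proposal follows essentially the same strategy as the paper's proof: decompose $e_\K(X)$ and the trivialization of $\det(p_\sharp\cO_X)$ via the additivity homotopy coming from the pushout square \eqref{diag:pushoutofOX}, and match this against the Cartesian-square decomposition of $\det\bL_{\Map(X,Y)}$. The overall plan is sound, and your identification of the ``key step'' is correct.

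However, the justification of that key step is where you are too loose. The crux is to see that the additivity homotopy on $[\cO_X]\in\Omega^\infty\K^\omega(X)$ (for the fiber sequence $f'_\sharp\cO_{X_1}\oplus g'_\sharp\cO_{X_2}\to\cO_X\to(g'f)_\sharp\cO_{X_0}[1]$) is carried, after applying the functor $\cF\mapsto\detgr\pi_\sharp(\ev^*\bL_Y\otimes\cF)$, to exactly the determinant-line isomorphism coming from the Cartesian square of cotangent complexes that \emph{defines} the glued volume form. Appealing to ``naturality of $\detgr$ and of the coassembly map $\epsilon$ under pushouts'' is a hint at this but doesn't actually establish it: the issue is not naturality of the determinant functor but compatibility of two different decompositions of the \emph{same} graded line. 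The paper handles this by introducing the map $F\colon\K^\omega(X)\to\Pic^\Z(\Map(X,Y))$ and observing that it factorizes through $\K(\Map(X,Y))$; this factorization is what lets you push the additivity homotopy in $\K^\omega(X)$ forward to the additivity homotopy in $\K(\Map(X,Y))$ for the complexes $\tilde{g}'^*\bL_{\Map(X_1,Y)}$, $\tilde{f}'^*\bL_{\Map(X_2,Y)}$, $(\tilde{g}'\tilde{f})^*\bL_{\Map(X_0,Y)}$, $\bL_{\Map(X,Y)}$, and the latter, after $\detgr$, is the glued-volume-form isomorphism. Without that observation, you have two splittings and no bridge between them. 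A secondary note: you flag the $\Pic^\Z$ sign bookkeeping as the main obstacle, but this is effectively automated by the fact that $F$ is a single map of spectra and $\detgr$ is an $\bE_\infty$-ring morphism, so the real content is the factorization step, not the signs.
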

\begin{proof}
We have a pushout diagram
\[
\xymatrix{
X_0 \ar[r]^f \ar[d]_g & X_1 \ar[d]^{g'} \\
X_2 \ar[r]_{f'} & X
}
\]
Let
\[
\xymatrix{
\Map(X, Y) \ar^{\tilde{g}'}[r] \ar^{\tilde{f}'}[d] & \Map(X_1, Y) \ar^{\tilde{f}}[d] \\
\Map(X_2, Y) \ar^{\tilde{g}}[r] & \Map(X_0, Y)
}
\]
be the induced pullback diagram of mapping prestacks.

Let us spell out the canonical volume form obtained from the glued simple structure. We first write
\[
\detgr(\bL_{\Map(X,Y)}) = \det (\pi_\sharp( \ev^*\bL_Y \otimes \cO_X )),
\]
as the image of $[\cO_X]$ under the map $F \colon \K^\omega(X) \to \Pic^\Z(\Map(X,Y))$ induced by the functor
\[\cF\mapsto \detgr (\pi_\sharp( \ev^*\bL_Y \otimes \cF )).\]
From the commutative diagram
\begin{equation}
\xymatrix{
\C_\bullet(X(k), \K(k)) \ar[r] \ar[d]^\alpha & \C_\bullet(X(k), \Pic^\Z(k)) \ar[d]_{ \langle \epsilon(\ev^*\bL_Y), - \rangle} \\
\K^\omega(X) \ar[r]^-F & \Pic^\Z(\Map(X,Y))
}
\label{diag:volformfromassembly}
\end{equation}
we obtained
\[
\detgr(\bL_{\Map(X,Y)}) = F([\cO_X]) \cong F(\alpha e_\K(X)) = \langle \detgr(\ev^*\bL_Y), e_\K(X) \rangle \in \Pic^\Z(\Map(X,Y)).
\]
We then split the projection to $\Pic(\Map(X,Y))$ into two components using the description of the tensor product $\otimes^*$ to obtain the two terms
\[
\langle \det(\ev^*\bL_Y), e(X) \rangle, \quad \langle \dim(Y), \det(p_\sharp \cO_X) \rangle
\]
which we trivialized separately: the first one using the volume form on $Y$ and the second one by using $\det(p_\sharp \cO_X) \cong \det(p_\sharp \cO_{X_1}) \otimes \det(p_\sharp \cO_{X_2}) \otimes \det(p_\sharp \cO_{X_0})^{-1}$.

Note that the diagram \eqref{diag:volformfromassembly} is natural in $X$. Moreover, the map $F$ factorizes as
\[
\K^\omega(X) \longrightarrow \K(\Map(X, Y))\xrightarrow{\detgr} \Pic^\Z(\Map(X,Y)),
\]
so we obtain the commuting diagram
\[
\xymatrix{
F([\cO_X]) \ar@{=}[r]^-{\sim} \ar@{=}[dr]^-{\sim} & F(g'_\sharp[\cO_{X_1}] + f'_\sharp [\cO_{X_2}] - (g'f)_\sharp [\cO_{X_0}]) \ar@{=}[d]^-{\sim} \\
 & \tilde{g}'^*F([\cO_{X_1}]) \otimes \tilde{f}'^* F([\cO_{X_2}]) \otimes (\tilde{g}'\tilde{f})^* F([\cO_{X_0}])^{-1}
}
\]
where the horizontal arrow is given by the additivity homotopy in $\K^\omega(X)$ and the diagonal arrow is the isomorphism
\[
\detgr(\bL_{\Map(X,Y)}) \cong \tilde{g}'^*\detgr(\bL_{\Map(X_1,Y)}) \otimes \tilde{f}'^*\detgr(\bL_{\Map(X_2,Y)}) \otimes (\tilde{g}'\tilde{f})^*\detgr(\bL_{\Map(X_0,Y)})^{-1}.
\]
We thus get that the isomorphism $F([\cO_X]) \cong F(\alpha e_\K(X)) \cong \langle \detgr(\ev^*\bL_Y), e_\K(X) \rangle$ can be factored as
\begin{align*}
F([\cO_X]) &\cong g'^*F([\cO_{X_1}]) \otimes f'^* F([\cO_{X_2}]) \otimes (g'f)^* F([\cO_{X_0}])^{-1} \\
&\cong g'^*F(\alpha(e_\K(X_1))) \otimes f'^* F(\alpha(e_\K(X_2))) \otimes (g'f)^* F(\alpha(e_\K(X_0)))^{-1} \\
&\cong \langle \detgr(\ev^*\bL_Y), e_\K(X_1) - e_\K(X_2) - e_\K(X_0) \rangle
\end{align*}
Finally, the trivialization of the two summands in the $\Pic(\Map(X,Y))$-component of
\[
\langle \detgr(\ev^*\bL_Y), e_\K(X_1) - e_\K(X_2) - e_\K(X_0) \rangle
\]
we used above are equivalent to the trivialization for each $\langle \detgr(\ev^*\bL_Y), e_\K(X_i)\rangle$ separately (clear for the first summand and by definition for the second). This finishes the proof.
\end{proof}

\subsection{Poincar\'e duality for volume forms}
\label{sect:Poincarevolumeform}

Assume $2$ is invertible in $k$ throughout this section. Let $R$ be a connective commutative dg $k$-algebra. Consider the following two Poincar\'e structures on $\Perf(R)$:
\begin{itemize}
    \item $\Qoppa^+ = \Qoppa^{[0]}$ given by $x\mapsto \Hom(x\otimes x, \cO_X)^{C_2}$. We denote $\GW^+(R) = \GW^{[0]}(R)$.
    \item $\Qoppa^-$ given by $x\mapsto (\Hom(x\otimes x, \cO_X)\otimes \sgn)^{C_2}$, where $\sgn$ is the sign representation of $C_2$. The functor $\Perf(R)\rightarrow \Perf(R)$ given by $x\mapsto x[1]$ defines an equivalence of Poincar\'e categories
    \[(\Perf(R), \Qoppa^-)\rightarrow (\Perf(R), \Qoppa^{[2]}).\]
    We denote by $\GW^-(R)$ the Grothendieck--Witt spectrum of this Poincar\'e structure which, using this equivalence, may be identified with $\GW^{[2]}(R)$.
\end{itemize}

The two Poincar\'e structures $\Qoppa^\pm$ define two $C_2$-actions on $\Perf(R)$ (we denote them by $\Perf^\epsilon(R)$ for $\epsilon=\pm 1$), where the underlying duality functor is $x\mapsto x^\vee$. The two are distinguished by the isomorphism $x\rightarrow (x^\vee)^\vee$, where it is either the canonical pivotal element of the symmetric monoidal structure on $\Perf(R)$ (in the case of $\Qoppa^+$) or the canonical pivotal element multiplied by $(-1)$ (in the case of $\Qoppa^-$). Moreover, both Poincar\'e structures restrict to Poincar\'e structures on the heart $\Vect(R)\subset \Perf(R)$ of the weight structure. We have that $(\Perf(R), \Qoppa^+)$ is a symmetric monoidal Poincar\'e $\infty$-category and $(\Perf(R), \Qoppa^-)$ is a $(\Perf(R), \Qoppa^+)$-module $\infty$-category.

\begin{prop}
Let $\epsilon=\pm 1$ be a sign. There is a $C_2$-action on $\uPic^\Z$ as a stack of $\bE_\infty$ spaces which we denote $\uPic^{\Z, \epsilon}$. On the level of underlying stacks it is given by sending $(\cL, n)\mapsto (\cL^*, n)$ with the isomorphism $(\cL, n)\rightarrow (\cL^{**}, n)$ given by the pivotal structure $\cL\cong \cL^{**}$ multiplied by $\epsilon^n$ and with the monoidal structure with respect to $\otimes$
\[\cL_1^*\otimes \cL_2^*\cong (\cL_1\otimes \cL_2)^*\]
determined by the braiding of line bundles with no extra signs. It has the following properties:
\begin{enumerate}
    \item The ring structure
\[\motimes\colon \uPic^{\Z, \epsilon_1}\otimes \uPic^{\Z, \epsilon_2}\longrightarrow \uPic^{\Z, \epsilon_1\epsilon_2}\]
is compatible with involutions via the isomorphism
\[(\cL_1^*)^{n_2}\otimes (\cL_2^*)^{n_1}\cong (\cL_1^{n_2}\otimes \cL_2^{n_1})^*\]
which is again determined by the braiding with no extra signs.
    \item The determinant functor
\[\detgr\colon \uPerf^{\sim, \epsilon}\longrightarrow \uPic^{\Z, \epsilon}\]
is $C_2$-equivariant, where the isomorphism
\[\det(V^*)\cong \det(V)^*\]
is given, for $V\in\Vect(R)$ and $R$ discrete, by the pairing $\det(V^*)\otimes\det(V)\rightarrow \cO$ which sends
\[(\phi_1\wedge \dots \wedge \phi_n, v_1\wedge \dots\wedge v_n)\mapsto \sum_{\sigma\in S_n} \sgn(\sigma) \prod_{i=1}^n \phi_i(v_{\sigma(i)}).\]
    \item The diagram
    \[
    \xymatrix{
    \uPerf^{\sim, \epsilon_1} \otimes \uPerf^{\sim, \epsilon_2} \ar^{\detgr\otimes\detgr}[d] \ar[r] & \uPerf^{\sim, \epsilon_1\epsilon_2} \ar^{\detgr}[d] \\
    \uPic^{\Z, \epsilon_1}\otimes \uPic^{\Z, \epsilon_2} \ar[r] & \uPic^{\Z, \epsilon_1\epsilon_2}
    }
    \]
    is a commutative diagram of $C_2$-equivariant stacks.
\end{enumerate}
\label{prop:dualitydeterminant}
\end{prop}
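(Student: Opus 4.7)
The plan is to reduce everything to discrete commutative $k$-algebras and then extend by left Kan extension, exactly as in the proof of \cref{thm:determinantmap}. On a discrete $R$, the duality functor $x \mapsto x^\vee$ on $\Perf(R)^\epsilon$ restricts to the standard linear duality $V \mapsto V^*$ on $\Vect(R)^\sim$, and both $\uPerf^\sim$ and $\uPic^\Z$ are left Kan extended from smooth discrete algebras via sifted colimits. Since the forgetful functor from $\bE_\infty$-spaces with $C_2$-action to spaces preserves sifted colimits, it is enough to construct all data and verify all diagrams for smooth discrete $R$, where the structures admit explicit descriptions in terms of bases.

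The construction would proceed by declaring the $C_2$-action on $\Pic^\Z(R)$ exactly as displayed in the statement: the underlying functor is $(\cL, n) \mapsto (\cL^*, n)$; the monoidal structure for $\otimes$ is the untwisted braiding $(\cL_1 \otimes \cL_2)^* \cong \cL_1^* \otimes \cL_2^*$; and the involutivity $\cL \cong \cL^{**}$ is the pivotal structure of line bundles scaled by $\epsilon^n$. Strict symmetric monoidality with respect to $\otimes$ is immediate from the symmetry of the $(-1)^{n_1 n_2}$ braiding under negation of either variable, and $\epsilon^{2n} = 1$ makes this a genuine $C_2$-action. For part~(1), the compatibility of $\motimes$ with the involutions reduces to verifying that the natural isomorphism $(\cL_1^*)^{n_2} \otimes (\cL_2^*)^{n_1} \cong (\cL_1^{n_2} \otimes \cL_2^{n_1})^*$ intertwines the associators, unitors, and left/right distributivity data of the Laplaza structure in a $C_2$-equivariant way; all the relevant pieces are Koszul-type signs of the form $(-1)^{n_1(n_1-1)/2}$ and $(-1)^{n_2 n_3 n_1(n_1-1)/2}$ built into the definition of $\motimes$, and these are preserved by negation of individual arguments.

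For part~(2), the stated pairing $\sum_\sigma \sgn(\sigma) \prod_i \phi_i(v_{\sigma(i)})$ is just $\det$ of the matrix $(\phi_i(v_j))$, which is the canonical nondegenerate identification $\det(V^*) \cong \det(V)^*$; equivariance of $\detgr$ for the pivotal structure follows since $\det$ transports the double-dual isomorphism of $V$ to the double-dual isomorphism of $\det V$, so the $\epsilon^{\rk V}$-scalings match tautologically. Part~(3) combines parts~(1) and~(2): the underlying equality of line bundles in the square is the content of \cref{prop:determinantclassical}, and $C_2$-equivariance is checked by evaluating on a product basis of $V \otimes W$, where the Koszul signs from the $\motimes$ braiding (the content of part~(1)) match the permutation signs arising from the pairing formula (the content of part~(2)). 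The main obstacle throughout is purely the bookkeeping of braiding and Koszul signs; once one works systematically in a basis, every assertion reduces to an identity of signed products of $\epsilon$'s, streamlined by the fact that all the relevant exponents live in $\Z/2$ and that the dualization functor acts trivially on the $\Z$-component of a graded line bundle.
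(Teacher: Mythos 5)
Your proof takes essentially the same approach as the paper: reduce to discrete (smooth) $R$ via the left Kan extension argument from the proof of Theorem 2.14, then verify the compatibilities directly on vector bundles by an explicit sign check. The paper simply says "it is a straightforward check" where you spell out what the check involves, and your unpacking (the $(-1)^{n_1 n_2}$ braiding being symmetric in both variables, the scalar $\epsilon$ on $V$ inducing $\epsilon^{\rk V}$ on $\det V$) is accurate.
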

\begin{proof}
As in the proof of \cref{thm:determinantmap}, it is enough to construct a $C_2$-action on $\Pic^\Z(R)$ for $R$ discrete as well as show that the determinant functor
\[\detgr\colon \Vect^\epsilon(R)^\sim\longrightarrow \Pic^{\Z, \epsilon}(R)\]
is $C_2$-equivariant, again for $R$ discrete, which is a straightforward check.
\end{proof}

We can describe the invariant categories as follows:
\begin{itemize}
    \item $\Perf^+(R)^{C_2}$ is the $\infty$-category of perfect complexes equipped with a nondegenerate symmetric bilinear pairing.
    \item $\Perf^-(R)^{C_2}$ is the $\infty$-category of perfect complexes equipped with a symplectic pairing.
    \item $\Pic^{\Z, +}(R)^{C_2}$ is the $\infty$-category of graded line bundles $(\cL, n)$ equipped with an isomorphism $\cL\cong \cL^*$; equivalently, a trivialization of $\cL^{\otimes 2}$.
    \item $\Pic^{\Z, -}(R)^{C_2}$ is the $\infty$-category of graded line bundles $(\cL, 2n)$ equipped with a trivialization of $\cL^{\otimes 2}$.
\end{itemize}

By the above statement the determinant functor $\detgr\colon \uPerf\rightarrow \uPic^\Z$ descends to a morphism of spectra
\[\detgr\colon \uK^{\pm, C_2}\longrightarrow \uPic^{\Z, \pm, C_2}\]
and hence, by precomposition with the forgetful map $\uGW^\pm\rightarrow \uK^{\pm, C_2}$, to
\[\detgr\colon \uGW^\pm\longrightarrow \uPic^{\Z, \pm, C_2}.\]

Let $\mu_2$ be the algebraic group (defined over $k$) of second roots of unity. An element of $\mu_2(R)$ is then the same as an element $g\in\Gm(R)$ such that $g^2 = 1$.

\begin{prop}
Consider the determinant map of prestacks
\[\detgr\colon \tau_{[0, 1]}\uGW^+\longrightarrow \uPic^{\Z, +, C_2},\]
where $\tau_{[0, 1]}(-)$ denotes the 1-truncation of the connective cover. This map induces an isomorphism after \'etale sheafification.
\label{prop:GW1type}
\end{prop}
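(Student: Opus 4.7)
The plan is to check the claim stalkwise in the \'etale topology. Both $\tau_{[0,1]}\uGW^+$ and $\uPic^{\Z,+,C_2}$ are $1$-truncated \'etale stacks of connective $\bE_\infty$-spaces (hence of Picard $\infty$-groupoids), so after \'etale sheafification the map is an equivalence iff it induces isomorphisms on the sheaves $\pi_0^{\text{\'et}}$ and $\pi_1^{\text{\'et}}$. Since stalks of \'etale sheaves on the site of derived affine $k$-schemes are computed on strictly henselian local $k$-algebras $R$, and $2\in R^\times$ by the running assumption, the problem reduces to showing that for every such $R$ the map $\tau_{[0,1]}\GW^+(R)\to \Pic^{\Z,+,C_2}(R)$ is an equivalence of Picard $\infty$-groupoids.

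First I would unpack the target. For strictly henselian local $R$ with $2\in R^\times$, Hensel's lemma applied to $x^2-u$ shows that $R^\times=(R^\times)^2$, so in particular $\Pic(R)=0$ and $H^1_{\text{\'et}}(R,\mu_2)=0$. Consequently $\pi_0\Pic^{\Z,+,C_2}(R)=\Z$ detected by the grading. Unwinding the pivotal convention of \cref{prop:dualitydeterminant}, the $C_2$-action on $\Aut(\cO)=R^\times$ is inversion, so $\pi_1\Pic^{\Z,+,C_2}(R)=\mu_2(R)$. Next I would compute the source. By diagonalisability of symmetric bilinear forms over local rings with $2$ invertible, together with $R^\times=(R^\times)^2$, every non-degenerate symmetric form on a finitely generated projective $R$-module is isometric to $\langle 1\rangle^{\oplus n}$, so $GW_0(R)=\Z$ via the rank, which the determinant map identifies with the grading.

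For $\pi_1$ one needs the identification $GW_1(R)=\mu_2(R)$. I would derive this from the fundamental fibre sequence $\K(R)_{hC_2}\to\GW^+(R)\to L^+(R)$ of \cite{CDHHLMNNS1}: the associated long exact sequence reads in relevant degrees
\[L_2^+(R)\to K_1(R)_{C_2}\to GW_1(R)\to L_1^+(R)\to K_0(R)_{C_2}\to GW_0(R)\to L_0^+(R)\to 0,\]
where the $C_2$-action on $K_1(R)=R^\times$ is inversion. Using $R^\times=(R^\times)^2$ one computes $K_1(R)_{C_2}=\mu_2(R)$ and $K_0(R)_{C_2}=\Z$, and one checks that $L^+$ of a strictly henselian local ring with $2$ invertible is sufficiently rigid (essentially because the Witt group degenerates) to make the connecting map from $L^+_1(R)$ to $K_0(R)_{C_2}$ vanish and the arrow $K_1(R)_{C_2}\to GW_1(R)$ an isomorphism. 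Finally, one verifies that under the determinant map the class of a diagonal automorphism $u$ of $\langle 1\rangle$ maps to $u\in\mu_2(R)=\pi_1\Pic^{\Z,+,C_2}(R)$ via the construction of \cref{prop:dualitydeterminant}, giving the identity on $\pi_1$.

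The main obstacle is the $GW_1$ computation: the rest is essentially bookkeeping with pivotal signs and classical facts about henselian local rings. The heart of the argument is that over a strictly henselian local ring with $2$ invertible, both $R^\times/(R^\times)^2$ and $\Pic(R)$ vanish, which collapses the long exact sequence of the fundamental fibration and isolates $\mu_2(R)$ as the only nontrivial contribution in degree $1$.
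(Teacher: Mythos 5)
Your reduction to étale stalks and the computation of the target (namely $\pi_0\Pic^{\Z,+,C_2}(R)=\Z$ and $\pi_1=\mu_2(R)$ for strictly henselian local $R$) is sound, and your $\pi_0$ computation of the source via diagonalizability is right. The paper takes a slightly different route: rather than passing to strictly henselian stalks it works étale-locally and cites Bass's classical theorem (\cite[Corollary 4.7.7]{Bass}) that the spinor norm and determinant together give a Zariski-local isomorphism $\GW^+_1(R)\cong \Gm(R)/2\oplus\mu_2(R)$, and then observes that squaring $\Gm\to\Gm$ is étale-locally surjective so the first summand sheafifies to zero.

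There is a genuine gap in your $\pi_1$ computation. You assert $K_1(R)_{C_2}=\mu_2(R)$, but for the inversion action on $K_1(R)=R^\times$ the coinvariants are $R^\times/(R^\times)^2$, which for strictly henselian $R$ with $2$ invertible is \emph{trivial} — not $\mu_2(R)$. (You have confused coinvariants with invariants: it is $K_1(R)^{C_2}=\{u:u^2=1\}$ that equals $\mu_2(R)$.) Moreover, $\pi_1\bigl(\K(R)_{hC_2}\bigr)$ is not simply $K_1(R)_{C_2}$: the homotopy-orbit spectral sequence has a second composition factor coming from $H_1(C_2;K_0(R))=\Z/2$, and it is actually this group, not the $K_1$ coinvariants, that can contribute the residual $\mu_2$. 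Tracing the spectral sequence and the connecting maps from $L^+_*(R)$ honestly is possible, but it requires knowing $L^+_1(R)$, $L^+_2(R)$ and the relevant differentials, none of which you pin down — the clause ``one checks that $L^+$\dots is sufficiently rigid'' is doing a lot of unverified work. The paper sidesteps all of this with Bass's explicit result, where the $\mu_2$ visibly comes from the determinant of the orthogonal group rather than from any piece of $K$-theory homotopy orbits; I would recommend either filling in the spectral-sequence argument in full, or replacing this step with the classical reference as the paper does.
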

\begin{proof}
We have
\[\pi_0(\uPic^{\Z, +, C_2}) \cong \Z,\qquad \pi_1(\uPic^{\Z, +, C_2}) \cong \mu_2\]
as \'etale sheaves.

\'Etale locally every quadratic form admits an orthonormal basis which shows that $\rk\colon \GW^+_0(R)\rightarrow \Z(R)$ becomes an isomorphism after \'etale sheafification.

There is a homomorphism
\[\GW^+_1(R)\longrightarrow \Gm(R)/2\oplus \mu_2(R)\]
given by the spinor norm and the determinant which is shown to be an isomorphism Zariski locally in \cite[Corollary 4.7.7]{Bass}. But the multiplication by 2 map $\Gm\rightarrow \Gm$ is \'etale locally surjective.
\end{proof}

Recall the notion of a shifted symplectic structure on derived Artin stacks from \cite{PTVV}. Given an $n$-shifted symplectic structure $\omega_Y$ on a derived Artin stack $Y$ the perfect complex $\bT_Y[1]\cong \bL_Y[n+1]$ equipped with the pairing coming from the symplectic structure defines a map $Y\rightarrow \Omega^\infty\uGW^{[n+2]}$. Assume that $n=2k$ is even. Then $\bT_Y[-k]$ defines a map $Y\rightarrow \Omega^\infty\uGW^\epsilon$, where the sign is $\epsilon = (-1)^{k+1}$. In particular, by the above we obtain a squared volume form on $Y$ and we denote the corresponding pairing $\det(\bL_Y)\otimes \det(\bL_Y)\rightarrow \cO_Y$ by $(-, -)_{\omega_Y}$.

Let $X$ be a derived prestack with a fundamental class $[X]$ of degree $d$, where $d$ is even. The pushforward along $p\colon X\rightarrow \pt$ gives a map
\[p_\sharp\colon \GW^{[0]}(X)\longrightarrow \GW^{[d]}(k).\]
The object $\cO_X\in\Perf(X)$ has an obvious nondegenerate symmetric bilinear pairing. So, it defines a point $[\cO_X]\in\Omega^\infty\GW^{[0]}(X)$ and, hence, $[p_\sharp \cO_X]\in\Omega^\infty\GW^{[d]}(k)$. Since $d$ is even, we obtain a canonical pairing $\det(p_\sharp \cO_X)\otimes \det(p_\sharp \cO_X)\rightarrow k$ which we denote by $(-, -)_{[X]}$.

By \cref{prop:fundamentalclassorientation} the fundamental class $[X]$ on $X$ gives rise to an $\cO$-orientation of $X$ of degree $d$. Thus, by the AKSZ construction \cite[Theorem 2.5]{PTVV} we obtain an $(n-d)$-shifted symplectic structure $\omega_{\Map}$ on $\Map(X, Y)$. As both $n$ and $d$ are assumed to be even, $(n-d)$ is even as well, so there is a natural squared volume form on $\Map(X, Y)$ and we denote the corresponding pairing on $\det(\bL_{\Map(X, Y)})$ by $(-, -)_{\omega_{\Map(X, Y)}}$.

\begin{thm}
Let $X$ be a derived prestack with a fundamental class $[X]$ of even degree $d$ and a simple structure compatible with Poincar\'e duality. Let $Y$ be an $n$-shifted symplectic stack, where $n$ is even. Let $o$ be an isomorphism $\det(p_\sharp \cO_X)\cong k$ and $\vol_Y$ a volume form on $Y$. Let $\vol_{\Map(X, Y)}$ be the torsion volume form constructed in \cref{thm:volumelift} from this data. Then
\[(\vol_{\Map(X, Y)}, \vol_{\Map(X, Y)})_{\omega_{\Map(X, Y)}} = \langle \epsilon(\ev^* (\vol_Y, \vol_Y)_{\omega_Y}), e(X)\rangle ((o, o)_{[X]})^{\dim Y}.\]
\label{thm:volumeliftduality}
\end{thm}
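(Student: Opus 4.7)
My plan is to upgrade the $K$-theoretic argument of \cref{thm:volumelift} to the level of Grothendieck--Witt theory, using \cref{thm:indextheoremwithPoincare} in place of \cref{thm:indextheorem} and the $C_2$-equivariant determinant map of \cref{prop:dualitydeterminant} in place of the ordinary determinant map of \cref{thm:determinantmap}. The proof of \cref{thm:volumelift} amounts to chasing $[\cO_X]$ through a diagram factoring $\detgr(\pi_\sharp \ev^*\bL_Y)$ via the simple structure; here one performs the same chase but with $\cO_X$ viewed as a Poincar\'e object.

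First I would package the input data as Poincar\'e data. The $n$-shifted symplectic structure on $Y$ refines $\ev^*\bL_Y$ (after an appropriate even shift) to a class in $\uGW^{\epsilon}$ whose image under the $C_2$-equivariant determinant recovers the pairing $(-,-)_{\omega_Y}$. By \cref{prop:fundamentalclassorientation} the fundamental class $[X]$ makes $\cO_X$ a Poincar\'e object of $(\QCoh(X)^\omega, \Qoppa^{[d]})$, and the compatibility of the simple structure with Poincar\'e duality furnishes a class $e_{\GW}(X)\in \Omega^\infty \C_\bullet(X(k);\GW^{[d]}(k))$ whose pushforward along $X(k)\to\pt$ represents $(p_\sharp\cO_X, (-,-)_{[X]})$ under $\detgr$. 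Next, applying \cref{thm:indextheoremwithPoincare} to the $\GW$-class of $\ev^*\bL_Y$ and postcomposing with $\detgr$ yields a $C_2$-equivariant refinement, in $\Pic^{\Z,\pm,C_2}(\Map(X,Y))$, of the isomorphism
\[\detgr(\bL_{\Map(X,Y)}) \cong \langle \epsilon(\ev^*\detgr(\bL_Y)), \detgr(e_\K(X))\rangle\]
already used in \cref{thm:volumelift}.

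The pairing on the left hand side must then be identified with $(-,-)_{\omega_{\Map(X,Y)}}$, using that the AKSZ symplectic form is obtained from the symplectic pairing on $Y$ by pulling back along $\ev$ and pushing forward along $\pi_\sharp$, both Poincar\'e functors by \cref{prop:Poincarefunctors}. On the right hand side, the splitting $\detgr=(\det,\chi)$ together with the multiplicativity of $\detgr$ with respect to $\motimes$ (\cref{prop:dualitydeterminant}(3)) decomposes the pairing into $\langle \epsilon(\ev^*(-,-)_{\omega_Y}), e(X)\rangle$ on the first tensor factor and $((-,-)_{[X]})^{\otimes \dim Y}$ on the second. Evaluating both sides on the trivializations $\ev^*\vol_Y$ and $o$ used in \cref{thm:volumelift} to define $\vol_{\Map(X,Y)}$ then produces the claimed scalar identity.

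The main obstacle will be the identification, in Step above, of the pairing on $\det(\bL_{\Map(X,Y)})$ coming from the $\GW$-chase with the pairing $(-,-)_{\omega_{\Map(X,Y)}}$ coming from the AKSZ construction: one must show that the PTVV transgression of the symplectic form is precisely the composite $\pi_\sharp\circ\ev^*$ at the level of Grothendieck--Witt classes, which should follow from the description of the AKSZ symplectic form as a pushforward in the formalism of Poincar\'e $\infty$-categories. A secondary bookkeeping issue is the consistent use of the $4$-periodicity $\Qoppa^{[n]}\cong \Qoppa^{[n-4]}$ and of the identification $\Qoppa^-\cong \Qoppa^{[2]}$, which must be applied carefully to avoid spurious signs in the final comparison.
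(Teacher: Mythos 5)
Your proposal matches the paper's proof in all essentials: you correctly replace \cref{thm:indextheorem} by \cref{thm:indextheoremwithPoincare}, the determinant map of \cref{thm:determinantmap} by the $C_2$-equivariant one from \cref{prop:dualitydeterminant}, view $\cO_X$ as a Poincar\'e object via \cref{prop:fundamentalclassorientation}, and split the resulting pairing through $\detgr=(\det,\chi)$ exactly as the paper does. The ``main obstacle'' you identify --- that the AKSZ symplectic pairing on $\bL_{\Map(X,Y)}$ agrees with $\pi_\sharp$ applied to the $\GW$-class of $\ev^*\bL_Y$ --- is precisely the step the paper addresses, and at roughly the same level of detail (``examining the AKSZ construction, the isomorphism \dots is compatible with pairings''), so your concern is well-placed but not a gap relative to the paper's own argument.
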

\begin{proof}
Let $S$ be a derived affine scheme and consider a morphism $f\colon S\rightarrow \Map(X, Y)$ corresponding to $\tilde{f}\colon S\times X\rightarrow Y$. Using the $n$-shifted symplectic structure $\omega_Y$ on $Y$ we have that $[\bL_Y[n+1]]\in \Omega^\infty\GW^{[n+2]}(Y)$. Similarly, using the $(n-d)$-shifted symplectic structure $\omega_{\Map(X, Y)}$ obtained using the AKSZ construction we have that $[\bL_{\Map(X, Y)}[n-d+1]]\in\Omega^\infty\GW^{[n-d+2]}(\Map(X, Y))$. Therefore, \[[\bL_{\Map(X, Y)}[n+1]]\in\Omega^\infty\GW^{[n+d+2]}(\Map(X, Y)).\]
Examining the AKSZ construction, the isomorphism
\[\bL_{\Map(X, Y)}[n+1]\cong \pi_\sharp \ev^* \bL_Y[n+1]\]
from \cref{prop:mappingcotangent} is compatible with pairings, so
\[[\bL_{\Map(X, Y)}[n+1]] = \pi_\sharp [\ev^* \bL_Y[n+1]]\in\Omega^\infty\GW^{[n+d+1]}.\]
By \cref{thm:indextheoremwithPoincare} we have
\[[f^* \bL_{\Map(X, Y)}[n+1]]\sim \langle \epsilon(\tilde{f}^*\bL_Y[n+1]), e_\GW(X)\rangle\in\Omega^\infty\GW(S).\]

Using \cref{prop:dualitydeterminant} we get that an isomorphism
\[\det(f^*\bL_{\Map(X, Y)}[n+1])\cong \langle \det(\tilde{f}^*\bL_Y[n+1]), e(X)\rangle\otimes \det(p_\sharp\cO_X)^{\otimes (-\dim(Y))}\]
of lines equipped with nondegenerate pairings. In \cref{thm:volumelift} the volume form $\vol_{\Map(X, Y)}$ is constructed using the trivialization $\vol_Y$ of $\det(\tilde{f}^*\bL_Y[n+1])$ and $o$ of $\det(p_\sharp \cO_X)$. Compatibility with the nondegenerate pairings implies the result.
\end{proof}

We will also need a slight variant of the above statement. Another setting where \cref{thm:volumelift} can be applied is if $Y$ merely has a squared volume form $\vol^2_Y\in\det(\bL_Y)^{\otimes 2}$ and $X$ has a trivialization of the mod 2 Euler class $e(X)\in\C_\bullet(X; \Z/2)$. In fact, we can take as $\vol^2_Y$ the canonical squared volume form on $Y$ induced by the even shifted symplectic structure $\omega_Y$.

\begin{thm}
Let $X$ be a derived prestack with a fundamental class $[X]$ of even degree $d$ and a simple structure compatible with Poincar\'e duality. Let $Y$ be an $n$-shifted symplectic stack, where $n$ is even. Let $o$ be an isomorphism $\det(p_\sharp \cO_X)\cong k$ and choose a trivialization of the mod 2 Euler class $e(X)\in\C_\bullet(X; \Z/2)$. Let $\vol_{\Map(X, Y)}$ be the torsion volume form constructed in \cref{thm:volumelift} from this data. Then
\[(\vol_{\Map(X, Y)}, \vol_{\Map(X, Y)})_{\omega_{\Map(X, Y)}} = ((o, o)_{[X]})^{\dim Y}.\]
\label{thm:volumeliftdualitysquared}
\end{thm}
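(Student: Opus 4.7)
The plan is to adapt the proof of \cref{thm:volumeliftduality} with essentially no change, using the observation that the identity it establishes is intrinsically an isomorphism in $\Pic^{\Z,+,C_2}$---i.e., of lines equipped with nondegenerate symmetric pairings---so that only squared data on $Y$ are required.

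Explicitly, applying \cref{thm:indextheoremwithPoincare} to $\cO_X$ with its canonical Poincar\'e structure induced by the fundamental class, and then applying $\detgr$ via \cref{prop:dualitydeterminant}, produces an isomorphism in $\Pic^{\Z,+,C_2}$
\[
\det(\bL_{\Map(X, Y)})\cong \langle \epsilon(\ev^*\det(\bL_Y)), e(X)\rangle\otimes \det(p_\sharp\cO_X)^{\otimes\dim Y},
\]
where the left-hand $\mu_2$-structure is the one induced by $\omega_{\Map(X, Y)}$, and the right-hand $\mu_2$-structure factors through the canonical $\mu_2$-structure on $\det(\bL_Y)$ determined by $\vol_Y^2$ (equivalently, by $\omega_Y$) on the first factor, and through $(-, -)_{[X]}$ on the second. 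This is precisely the isomorphism from the proof of \cref{thm:volumeliftduality}, interpreted at the level of $\mu_2$-lines so that no honest trivialization of $\det(\bL_Y)$ is required.

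Next, I recall the construction of $\vol_{\Map(X, Y)}$ from \cref{thm:volumelift}: it trivializes the line $\langle \epsilon(\ev^*\det(\bL_Y)), e(X)\rangle$ using the mod $2$ Euler class trivialization together with $\vol_Y^2$ (which makes $\det(\ev^*\bL_Y)$ a $\mu_2$-line, so the pairing factors through $e(X)\bmod 2$), and trivializes $\det(p_\sharp\cO_X)^{\otimes\dim Y}$ by $o^{\otimes\dim Y}$. Squaring, the first trivialization reduces to the canonical $\mu_2$-structure coming from $\vol_Y^2$---the integer lift of $e(X)\bmod 2$ cancels---while the second squares to $o^{\otimes 2\dim Y}$. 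Comparing $\vol_{\Map(X, Y)}^2$ with the canonical $\mu_2$-structure from $\omega_{\Map(X, Y)}$ via the isomorphism above, the ratio on the $\langle -, -\rangle$-factor equals $1$ (the two $\mu_2$-structures coincide by the choice of $\vol_Y^2$), and the ratio on the $\det(p_\sharp\cO_X)^{\otimes 2\dim Y}$-factor equals $((o, o)_{[X]})^{\dim Y}$ via the identity $o^{\otimes 2} = (o, o)_{[X]}\cdot (-, -)_{[X]}^{-1}$. By definition, this scalar is $(\vol_{\Map(X, Y)},\vol_{\Map(X, Y)})_{\omega_{\Map(X, Y)}}$, giving the claim.

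The main point requiring care is the formal $B\mu_2$-statement underlying the squaring step: the pairing of a $\mu_2$-line cochain with an integer chain depends only on the integer mod $2$, and any trivialization of the resulting line obtained from a mod $2$ trivialization squares to the canonical $\mu_2$-structure on the target, independent of the chosen integer lift. This is the one ingredient not already present in the proof of \cref{thm:volumeliftduality}; once it is verified, the remainder of the argument is a direct transcription.
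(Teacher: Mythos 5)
Your proof is correct and takes essentially the same route the paper implies (the paper states \cref{thm:volumeliftdualitysquared} without proof as a "slight variant" of \cref{thm:volumeliftduality}): you observe that the isomorphism from the proof of \cref{thm:volumeliftduality} is an isomorphism in $\uPic^{\Z,+,C_2}$, so that trivializing the $\langle-,-\rangle$-factor only requires the squared volume form $\vol_Y^2$ together with a mod~$2$ trivialization of $e(X)$. The one additional ingredient you flag --- that a trivialization of $\langle\ev^*\det(\bL_Y),e(X)\rangle$ built from a chain $e_{1/2}(X)$ and a homotopy $\zeta$ with $\partial\zeta = e(X)-2e_{1/2}(X)$ squares to $\langle\vol_Y^2,e(X)\rangle$ independently of the lift --- is indeed the key point, and it follows from naturality of $\vol_Y^2$ with respect to $\zeta$ together with the fact that $\langle\cO_Y,\zeta\rangle$ is the identity on the unit $\cO_{\Map(X,Y)}$.
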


\subsection{Symplectic volume forms}
\label{sect:symplecticvolume}

In this section we continue assuming that $2$ is invertible in $k$. We have previously shown that if a derived stack $Y$ has an $n$-shifted symplectic structure for $n$ even, there is a natural squared volume form on $Y$. We will now refine the result when $n$ is divisible by $4$ by constructing an actual volume form. In this case $\bT_Y[-n/2]\cong \bL_Y[n/2]$ defines a class in $\Omega^\infty\GW^-(Y)$.

Besides the stack $\uPic^\Z$ of $\Z$-graded lines we may also consider the stack $\uPic^{\Z/2}$ of $\Z/2$-graded lines. There is a natural forgetful map $\uPic^\Z\rightarrow \uPic^{\Z/2}$.

\begin{thm}
There is a commutative diagram
\[
\xymatrix{
\tau_{\geq 0} \uGW^- \ar^-{0}[r] \ar[d] & \uPic^{\Z/2, -, C_2} \\
\uGW^- \ar^-{\detgr}[r] & \uPic^{\Z, -, C_2} \ar[u]
}
\]
\label{thm:symplecticvolumeform}
\end{thm}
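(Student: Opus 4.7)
The plan is to reduce to a pointwise statement about classical symplectic vector bundles on discrete commutative $k$-algebras, where the required nullhomotopy is given by the Pfaffian of the symplectic form.

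First I would carry out the reduction exactly as in the proof of \cref{thm:determinantmap}: both $\tau_{\geq 0}\uGW^-$ and $\uPic^{\Z/2,-,C_2}$ are stacks of $\bE_\infty$-ring spaces left Kan extended from smooth (in particular discrete) commutative $k$-algebras, and the forgetful functor $\Ring_{\bE_\infty}(\cS)\to\cS$ preserves sifted colimits. It therefore suffices to construct, naturally in a discrete $R$, a nullhomotopy of the composite $\Omega^\infty\tau_{\geq 0}\uGW^-(R)\to\Omega^\infty\uPic^{\Z,-,C_2}(R)\to\Omega^\infty\uPic^{\Z/2,-,C_2}(R)$ as $\bE_\infty$-ring maps. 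Using the comparison between the Poincaré $\infty$-categorical $\GW$ and Schlichting's definition for dg categories with duality (see the remark preceding \cref{prop:dualitydeterminant}), the 1-type of $\Omega^\infty\tau_{\geq 0}\uGW^-(R)$ is modeled by the group completion of the symmetric monoidal groupoid of symplectic vector bundles on $R$ under orthogonal sum.

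Second I would construct the Pfaffian trivialization. Given a symplectic vector bundle $(V,\omega)$ of rank $2n$ on $R$, the Pfaffian $\mathrm{Pf}(\omega)\in\det(V^*)\cong\det(V)^{-1}$ is a canonical nowhere-vanishing section, defined over any commutative ring as the unique polynomial satisfying $\mathrm{Pf}(\omega)^{\otimes 2}=\det(\omega)$. The identity $\mathrm{Pf}(\omega)^{\otimes 2}=\det(\omega)$ says precisely that the induced trivialization of $\det(V)$ squares to the duality isomorphism $\det(V)\xrightarrow{\sim}\det(V^*)=\det(V)^{-1}$ coming from $\omega\colon V\xrightarrow{\sim} V^*$. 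Thus the Pfaffian provides a canonical lift of $\detgr(V,\omega)\in\Pic^{\Z,-,C_2}(R)$ to a trivialization of $(\det(V),2n)$ in $\Pic^{\Z/2,-,C_2}(R)$, where the degree $2n$ now maps to $0$ modulo $2$.

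Third I would verify the compatibility with the $\bE_\infty$-structure. Multiplicativity $\mathrm{Pf}(\omega_1\oplus\omega_2)=\mathrm{Pf}(\omega_1)\otimes\mathrm{Pf}(\omega_2)$ is classical; the Koszul signs in $\uPic^{\Z/2,-,C_2}$ attached to swapping factors of ranks $2n_1,2n_2$ all trivialize since the ranks are even. Naturality in $R$ and compatibility with the two tensor structures $\otimes$ and $\motimes$ then make the Pfaffian into a natural morphism of $\bE_\infty$-semiring-valued prestacks, and left Kan extension promotes it to the required $\bE_\infty$-nullhomotopy on all connective dg $k$-algebras.

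The main obstacle will be upgrading the construction from the groupoid of symplectic vector bundles to the full Grothendieck--Witt spectrum, i.e. checking the compatibility with the metabolic/additivity relations that define $\GW$. Concretely one needs that for a hyperbolic object $V=L\oplus L^*$ with the standard symplectic form, the Pfaffian recovers the tautological trivialization $\det(L)\otimes\det(L^*)\cong\cO_R$ coming from the Lagrangian $L$. This is an elementary calculation, and together with the symmetric monoidal structure it produces the needed null data on the full group completion. Once this point is settled, the rest of the argument is formal.
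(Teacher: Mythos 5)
Your construction is essentially the paper's construction: the Pfaffian $\mathrm{Pf}(\omega)$, as a section of $\det(V^*)$, is exactly the $n$-th divided power $\gamma_n(\omega)=\omega^n/n!$ that the paper uses for $\vol_M$, and the multiplicativity and squaring checks you sketch are precisely the two bullet-point verifications in the paper's proof (both carried out Zariski-locally with a symplectic basis).

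Two remarks on the surrounding reductions. First, your reduction to discrete $R$ via left Kan extension from smooth $k$-algebras requires knowing that $\tau_{\geq 0}\uGW^-$ (or the prestack of symplectic modules) is left Kan extended from smooth algebras. The paper only establishes this for $\uVect$ and $\uPic^\Z$, and the proof of \cref{thm:symplecticvolumeform} instead identifies the target with $\B\mu_2$ and invokes topological invariance of the \'etale site: since $\B\mu_2$ is nil-invariant, the map $\tau_{\geq 0}\uGW^-(R)\to\B\mu_2(R)$ factors through $\B\mu_2(\pi_0(R)^{\red})$ and a natural nullhomotopy over discrete rings pulls back. That reduction is cleaner and avoids the Kan extension question for $\uGW$. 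Second, your anticipated \emph{main obstacle} about metabolic/additivity relations is handled entirely by \cite[Theorem A]{HebestreitSteimle}, which identifies $\tau_{\geq 0}\GW^-(R)$ with the group completion of the $\bE_\infty$-monoid of symplectic modules under orthogonal sum (and which is the correct reference here, rather than the remark about Schlichting's formulation). Once that is cited and one observes that $\B\mu_2$ is group-like, the nullhomotopy needs only to be specified on the uncompleted monoid of Poincar\'e objects and no separate computation for hyperbolic objects is required; the paper does not perform one.
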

\begin{proof}
The map $(\cL, n)\mapsto (\cL, n/2)$ defines an isomorphism $\uPic^{\Z, -, C_2}\cong \uPic^{C_2}\times \Z$ of $E_\infty$ stacks, where $\uPic^{C_2}$ is the stack of lines equipped with a nondegenerate pairing, which can be identified with $\B\mu_2$. So, we have to construct a nullhomotopy of the composite
\[\tau_{\geq 0} \uGW^-\longrightarrow \uGW^-\longrightarrow \B\mu_2.\]

By the topological invariance of the \'etale site (see \cite[Corollary 2.2.2.9]{HAGII}) it is enough to construct this nullhomotopy for discrete commutative $k$-algebras $R$. By \cite[Theorem A]{HebestreitSteimle} the connective spectrum $\tau_{\geq 0} \GW^-(R)$ is the group completion of the monoid $\Pn(R, -)$ of finitely generated projective $R$-modules $M$ equipped with a symplectic structure $\omega\in\wedge^2 M^*$. As $\B\mu_2$ is a group, this implies that we have to construct a nullhomotopy of the functor of symmetric monoidal groupoids
\[\det\colon \Pn(R, -)\longrightarrow (\B\mu_2)(R)\]
obtained by sending $M$ to the line bundle $\det(M)$ equipped with an isomorphism $\det(M)^{\otimes 2}\cong \cO$ using the symplectic structure.

We send $(M, \omega)$ to the section $\vol_M = \gamma_n(\omega)$ of $\det(M^*)$, where $n=\rk(M)$ and $\gamma_n$ is the $n$-th divided power. This construction is clearly functorial in $R$, so to check that it defines a nullhomotopy of $\Pn(R, -)\rightarrow (\B\mu_2)(R)$ we have to check the following:
\begin{itemize}
    \item The nullhomotopy has to be compatible with the symmetric monoidal structure. This boils down to the fact that under the isomorphism $\det(M_1^*)\otimes \det(M_2^*)\rightarrow \det(M_1^*\oplus M_2^*)$ the section $\vol_{M_1}\otimes \vol_{M_2}$ goes to $\vol_{M_1\oplus M_2}$. Indeed, if the ranks of $M_1$ and $M_2$ are $n_1$ and $n_2$, the symplectic structure on $M = M_1\oplus M_2$ is
    \[\omega_M = \omega_{M_1} + \omega_{M_2}.\]
    So,
    \[\gamma_{n_1+n_2}(\omega_M) = \gamma_{n_1}(\omega_{M_1})\wedge \gamma_{n_2}(\omega_{M_2})\]
    which implies the result.
    \item The volume form $\vol_M$ has to square to the canonical trivialization of $\det(M^*)^{\otimes 2}$. This can be checked Zariski locally on $R$, so that we may assume that $M$ admits a symplectic basis $\{e_1,f_1,e_2,f_2,\dots\}$. Let $\{e^1,f^1,\dots, e^n, f^n\}$ be the dual basis of $M^*$, so that the volume form is
    \[\vol_M = e^1\wedge f^1\wedge \dots \wedge e^n\wedge f^n\in\det(M^*).\]
    Under the isomorphism $\det(M^*)\cong \det(M)^*$ given by \cref{prop:dualitydeterminant} it corresponds to
    \[(e_1\wedge f_1\wedge \dots\wedge e_n\wedge f_n)^{-1} \in\det(M)^*.\]
    The isomorphism $\omega^\sharp\colon M\rightarrow M^*$ sends $e_i\mapsto f^i$ and $f_i\mapsto -e^i$. So, it sends
    \[e_1\wedge f_1\wedge \dots\wedge e_n\wedge f_n\mapsto e^1\wedge f^1\wedge \dots\wedge e^n\wedge f^n\]
    which proves the claim.
\end{itemize}
\end{proof}

Concretely, the above statement shows that there is a \defterm{symplectic volume form} on any $n$-shifted symplectic stack $Y$ with $n$ divisible by $4$; moreover, this symplectic volume form squares to the trivialization of $\det(\bL_Y)^{\otimes 2}$ constructed by taking the determinant of the isomorphism $\omega^\sharp\colon \bT_Y\rightarrow \bL_Y[n]$.

Let us now describe a compatibility of the symplectic volume form with respect to the tensor product. The tensor product gives a functor of Poincar\'e $\infty$-categories
\[(\Perf(R), \Qoppa^+)\otimes (\Perf(R), \Qoppa^-)\longrightarrow (\Perf(R), \Qoppa^-).\]
It gives a tensor product map on the Grothendieck--Witt spectra
\[\uGW^+\otimes \uGW^-\longrightarrow \uGW^-\]
and their connective covers
\[\tau_{\geq 0} \uGW^+\otimes \tau_{\geq 0} \uGW^-\longrightarrow \tau_{\geq 0} \uGW^-.\]

The multiplication map
\[\motimes\colon \uPic^{\Z, \epsilon_1, C_2}\otimes \uPic^{\Z, \epsilon_2, C_2}\longrightarrow \uPic^{\Z, \epsilon_1\epsilon_2, C_2}\]
descends to a multiplication map
\[\motimes\colon \uPic^{\Z, \epsilon_1, C_2}\otimes \uPic^{\Z/2, \epsilon_2, C_2}\longrightarrow \uPic^{\Z/2, \epsilon_1\epsilon_2, C_2},\]
where we use that $(\cL_1, n_1)\motimes (\cL_2, n_2) = (\cL_1^{n_2}\otimes \cL_2^{n_1}, n_1n_2)$ depends on $n_2$ only modulo 2 as $\cL_1^{\otimes 2}$ is canonically trivial. Therefore, from \cref{prop:dualitydeterminant} we obtain a commutative diagram
\[
\xymatrix{
\tau_{\geq 0} \uGW^+\otimes \tau_{\geq 0} \uGW^- \ar^{\detgr\otimes\detgr}[d] \ar[r] & \tau_{\geq 0} \uGW^- \ar^{\detgr}[d] \\
\uPic^{\Z, +, C_2}\otimes \uPic^{\Z/2, -, C_2} \ar[r] & \uPic^{\Z/2, -, C_2}
}
\]

\begin{prop}
The diagram
\[
\xymatrix{
\tau_{\geq 0} \uGW^+\otimes \tau_{\geq 0} \uGW^- \ar^{\detgr\otimes\detgr}[d] \ar[r] & \tau_{\geq 0} \uGW^- \ar^{\detgr}[d] \\
\uPic^{\Z, +, C_2}\otimes \uPic^{\Z/2, -, C_2} \ar[r] & \uPic^{\Z/2, -, C_2}
}
\]
is compatible with the nullhomotopies of $\detgr\colon \tau_{\geq 0}\uGW^-\rightarrow \uPic^{\Z/2, -, C_2}$ given by the symplectic volume form.
\label{prop:symplectictimesorthogonal}
\end{prop}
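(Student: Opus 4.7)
My plan is to follow the same strategy used in the proof of \cref{thm:symplecticvolumeform}. By topological invariance of the \'etale site \cite[Corollary 2.2.2.9]{HAGII}, the compatibility of nullhomotopies can be verified after restricting to discrete commutative $k$-algebras $R$. By \cite[Theorem A]{HebestreitSteimle}, the connective covers $\tau_{\geq 0}\GW^\pm(R)$ are the group completions of the symmetric monoidal groupoids $\Pn(R, \pm)$ of finitely generated projective $R$-modules with a nondegenerate symmetric (resp.\ symplectic) pairing. Since the target $\uPic^{\Z/2, -, C_2}$ is grouplike, it then suffices to verify the compatibility on generators, i.e.\ on pairs of Poincar\'e objects.

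Concretely, for $(V, q) \in \Pn(R, +)$ of rank $r$ and $(M, \omega) \in \Pn(R, -)$ of rank $2n$, the tensor product $(V \otimes M, q \otimes \omega)$ is symplectic of rank $2rn$, and commutativity of the outer diagram of nullhomotopies reduces to the identity that, under the canonical isomorphism
\[\det((V \otimes M)^*) \cong \det(V^*)^{\otimes 2n} \otimes \det(M^*)^{\otimes r}\]
coming from \cref{prop:determinantclassical} together with the duality isomorphisms from \cref{prop:dualitydeterminant}, the symplectic volume form $\vol_{V \otimes M} = \gamma_{rn}(q \otimes \omega)$ corresponds to $q_V^{\otimes n} \otimes \vol_M^{\otimes r}$. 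Here $q_V \in \det(V^*)^{\otimes 2}$ is the canonical trivialization induced by the orthogonal structure on $V$, and $\vol_M = \gamma_n(\omega)$ is the symplectic volume form on $M$ as constructed in \cref{thm:symplecticvolumeform}.

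This identity can be verified \'etale-locally on $R$ by choosing an orthonormal basis $\{e_1, \ldots, e_r\}$ of $V$ and a symplectic basis $\{x_1, y_1, \ldots, x_n, y_n\}$ of $M$. A direct check shows that $\{e_i \otimes x_j,\, e_i \otimes y_j\}_{i, j}$ forms a symplectic basis of $V \otimes M$, since $(q \otimes \omega)(e_i \otimes x_j, e_k \otimes y_l) = \delta_{ik}\delta_{jl}$, so both sides of the desired identity can be expanded as exterior products in explicit dual bases. The main obstacle I expect is sign bookkeeping: the $\motimes$-multiplication of \cref{prop:determinantclassical} and the pivotal structures of \cref{prop:dualitydeterminant} contribute nontrivial signs depending on the ranks, and these need to be matched against the signs arising from reordering the basis $\{e_i \otimes x_j, e_i \otimes y_j\}$ to compute $\gamma_{rn}(q \otimes \omega)$. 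The key simplification is that the statement lives in $\uPic^{\Z/2, -, C_2}$ rather than in $\uPic^{\Z, -, C_2}$: grading-dependent signs only need to be tracked modulo $2$, the square ambiguity of a hypothetical "orthogonal volume form" for $V$ (which does not exist globally) gets squared out, and the combinatorial identity collapses to a straightforward permutation check on basis indices.
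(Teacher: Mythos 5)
Your proposal is correct and takes essentially the same route as the paper: reduction to discrete $R$ via topological invariance of the étale site, appeal to Hebestreit--Steimle's Theorem~A to pass to the monoids of Poincaré objects, and an explicit computation with an orthonormal basis of $V$ and a symplectic basis of $M$ that shows $\vol_{V\otimes M}$ is the image of $(\vol_V^2)^{\otimes \rk M/2}\otimes \vol_M^{\otimes \rk V}$ under the $\motimes$-monoidal isomorphism of determinant lines. The only gap is that you stop at "this collapses to a straightforward permutation check" rather than doing the check, but the paper's proof does exactly that check via the explicit formula in \cref{prop:determinantclassical}, so your plan carries through.
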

\begin{proof}
The difference of the two nullhomotopies defines a map
\[\GW^+_0(R)\otimes_\Z \GW^-_0(R)\longrightarrow \mu_2(R) = \pi_1(\Pic^{\Z/2, -, C_2}(R)).\]

We can check that this map is trivial as in \cref{thm:symplecticvolumeform}. By the homotopy invariance of the \'etale site it is enough to prove the claim for $R$ discrete. Then by \cite[Theorem A]{HebestreitSteimle} $\GW^+_0(R)$ is the group completion of the monoid $\Pn_0(R, +)$ of projective finitely generated $R$-modules equipped with a nondegenerate symmetric bilinear pairing and $\GW^-_0(R)$ is the group completion of the monoid $\Pn_0(R, -)$ of projective finitely generated $R$-modules equipped with a symplectic pairing. As $\mu_2(R)$ is a group, we need to check that the map
\[\Pn_0(R, +)\otimes_\bN \Pn_0(R, -)\longrightarrow \mu_2(R)\]
is trivial, where $\otimes_\bN$ denotes the tensor product of commutative monoids. Concretely, suppose $V$ is a projective finitely generated $R$-module equipped with a nondegenerate symmetric bilinear pairing and $W$ is a projective finitely generated $R$-module equipped with a symplectic pairing. Then $V\otimes W$ carries a natural symplectic pairing and we have to show that under the isomorphism
\[(\det(V)^{\otimes 2})^{\dim W/2}\otimes \det(W)^{\dim V}=\det(V)^{\dim W}\otimes \det(W)^{\dim V}\longrightarrow \det(V\otimes W)\]
the element $(\vol^2_V)^{\dim W/2}\otimes \vol_W^{\dim V}$ is sent to $\vol_{V\otimes W}$, where $\vol^2_V$ is the natural trivialization of $\det(V)^{\otimes 2}$ obtained from the nondegenerate pairing on $V$ and $\vol_W$ is the natural trivialization of $\det(W)$ given by the symplectic volume form.

To check this we may work \'etale locally on $R$, so we may assume that $V$ has an orthonormal basis $\{v_1, \dots, v_n\}$ and $W$ has a symplectic basis $\{e_1, f_1, \dots, e_m, f_m\}$. Then $V\otimes W$ has a symplectic basis $\{v_i\otimes e_j, v_i\otimes f_j\}_{i=1\dots n, j=1\dots m}$. We denote the dual bases by the same letters with upper indices. The isomorphism $V\rightarrow V^*$ given by the nondegenerate pairing on $V$ sends $v_i\mapsto v^i$. Therefore, $\det(V)\rightarrow \det(V^*)$ is given by $v_1\wedge \dots v_n\mapsto v^1\wedge \dots\wedge v^n$ and hence
\[\vol^2_V = (v_1\wedge \dots \wedge v_n)^2.\]
By definition the symplectic volume form on $W$ is
\[\vol_W = e_1\wedge f_1\wedge \dots\wedge e_n\wedge f_n.\]
Thus, the element of $\det(V)^{\dim W}\otimes \det(W)^{\dim V}$ is
\[(v_1\wedge \dots \wedge v_n)^{\dim W}\otimes (e_1\wedge f_1\wedge \dots\wedge e_n\wedge f_n)^{\dim V}.\]
By \cref{prop:determinantclassical} it is sent to
\[\wedge_{i=1}^n\wedge_{j=1}^m (v_i\otimes e_j)\wedge (v_i\otimes f_j)\in \det(V\otimes W)\]
which is exactly the symplectic volume form $\vol_{V\otimes W}$.
\end{proof}

\section{Betti setting}

In this section we describe the results of \cref{sect:determinant} for constant stacks.

\subsection{Finiteness}

Let $M\in\cS$ be a space and consider the constant derived stack $X=M_\B$ with value $M$. Then
\[\QCoh(M_\B)\cong\Fun(M, \Mod_k)=:\LocSys(M)\]
is the $\infty$-category of (infinite rank) local systems on $M$. The subcategory $\Perf(M_\B)\subset \QCoh(M_\B)$ is the full subcategory of local systems whose fibers are perfect complexes. If $M$ is connected with a chosen basepoint, we may further identify
\[\LocSys(M)\cong \Mod_{\C_\bullet(\Omega M; k)},\]
the $\infty$-category of modules over chains on the based loop space. In particular,
\[\K^\omega(M_\B)\cong \K(\C_\bullet(\Omega M; k)).\]
We refer to \cite{HaugsengBG} for details on the functoriality of the $\infty$-category of local systems. For any map $f\colon M_1\rightarrow M_2$ there is a pullback $f^*\colon \LocSys(M_2)\rightarrow \LocSys(M_1)$ given by restriction which admits a left adjoint $f_\sharp\colon \LocSys(M_1)\rightarrow \LocSys(M_2)$ given by the left Kan extension which satisfies the projection formula.

\begin{remark}
The assembly and coassembly maps in this context coincide with those defined in \cite{WilliamsRR}.
\end{remark}

Recall that $M$ is \defterm{finitely dominated} if it is a retract of a finite CW complex in the homotopy category of spaces. Equivalently, it is a compact object of $\cS$ (see \cite[Remark 5.4.1.6]{LurieHTT}).

\begin{prop}
Suppose $M$ is a finitely dominated space. Then $M_\B$ satisfies \cref{mainassumptionduality}.
\label{prop:Bettiassumptions}
\end{prop}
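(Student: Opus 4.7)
The strategy is to use the identification $\QCoh(M_\B) \simeq \LocSys(M) = \Fun(M, \Mod_k)$ and check each clause of \cref{mainassumptionduality} using formal properties of this functor $\infty$-category. I begin with \cref{mainassumption}: the left adjoint $p_\sharp$ always exists as left Kan extension along $p\colon M\to \pt$ (computing $\C_\bullet(M; -)$), and compactness of the constant local system $\cO_{M_\B}$ amounts to $p_* = \lim_M$ preserving filtered colimits. Since filtered colimits in $\Fun(M, \Mod_k)$ are computed pointwise, this reduces to $\underline{k}$ being a compact object of $\Fun(M, \Mod_k)$, which in turn is equivalent to $M$ being a compact object of $\cS$, i.e.\ finitely dominated.

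Next I verify the point-adjoint clause of \cref{mainassumptionassembly} together with compact generation. For any point $i\colon \pt \to M$ picking out $x_0 \in M$, left Kan extension produces $i_\sharp\colon \Mod_k \to \LocSys(M)$, and the projection formula is a pointwise check:
\[i_\sharp(i^*\cF \otimes V)(y) \simeq \colim_{\gamma \in \Map_M(x_0, y)} \cF(x_0) \otimes V \simeq \cF(y) \otimes \colim_\gamma V \simeq (\cF \otimes i_\sharp V)(y),\]
where we use the canonical equivalence $\cF(x_0) \simeq \cF(y)$ supplied by $\gamma$ and the fact that the first tensor factor is constant over the indexing space. The same analysis shows that each corepresentable $i_{x\sharp} k$ is a compact object of $\LocSys(M)$ (it corepresents evaluation at $x$), and such objects jointly generate, so $\LocSys(M)$ is compactly generated.

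The remaining diagonal clause is the substantive step. The key input is the tensor product identification
\[\LocSys(M) \otimes_{\Mod_k} \LocSys(N) \simeq \LocSys(M \times N)\]
in $\PrSt_k$, which follows from the presentation $\LocSys(M) \simeq M \otimes \Mod_k$ as the copower (i.e.\ free $\Mod_k$-linear cocompletion) of $M$, together with compatibility of this copower with $\otimes_{\Mod_k}$. Under this identification $\Delta^*$ becomes restriction of local systems along the diagonal $M \to M \times M$, so its left adjoint $\Delta_\sharp$ is again left Kan extension, and the projection formula follows by the same pointwise argument, now with indexing colimits taken over spaces of pairs of paths in $M$. The main technical obstacle is the tensor product formula itself; while standard, it genuinely relies on the compact generation verified in the previous step together with Lurie's framework for presentable stable $\infty$-categories.
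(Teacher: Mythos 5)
Your proof follows essentially the same route as the paper's: identify $\QCoh(M_\B)$ with $\LocSys(M)=\Fun(M,\Mod_k)$, observe that pullbacks along maps of spaces admit left adjoints (left Kan extension) satisfying the projection formula, and invoke compactness of the constant local system and compact generation; the paper simply delegates the latter two to \cite{HaugsengBG} and \cite{HopkinsLurie}, while you sketch the reasons. One inaccuracy worth flagging: you assert that compactness of $\underline{k}$ in $\Fun(M,\Mod_k)$ is \emph{equivalent} to $M$ being compact in $\cS$. Only the implication ``$M$ finitely dominated $\Rightarrow \underline{k}$ compact'' holds in general (this is the content of \cite[Lemma 4.8]{HaugsengBG}); the converse fails, e.g.\ take $k=\Q$ and $M=\B G$ for $G$ a nontrivial finite group, where $\underline{\Q}$ is a summand of $\Q[G]$, hence compact in $\Mod_{\Q[G]}\simeq\LocSys(\B G)$, yet $\B G$ is not finitely dominated. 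You only use the true direction, so the argument stands, but the biconditional should be dropped. The pointwise projection-formula check is also slightly glib (the identification $\cF(x_0)\simeq\cF(y)$ varies with $\gamma$, so one really needs to check that the canonical base-change map, not an ad hoc one, is an equivalence), though this is the same level of rigour as the paper, which asserts the projection formula without argument.
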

\begin{proof}
The functors $p^*, i^*, \Delta^*$ admit left adjoints satisfying the projection formula. By \cite[Lemma 4.8]{HaugsengBG} the constant local system $k_M\in\LocSys(M)$ is compact, i.e. $p_*$ is colimit-preserving. Finally, by \cite[Lemma 4.3.8]{HopkinsLurie} $\LocSys(M)$ is compactly generated.
\end{proof}

For a finitely dominated space $M$ the Euler characteristic $\chi(M)$ is well-defined and it coincides with the Euler characteristic of $M_\B$.

\subsection{Lifts along the assembly map}
\label{sect:Bettilifts}

Let $M$ be a finitely dominated space. Then the structure sheaf $\cO_{M_\B}\in\QCoh(M_\B)$ is compact and hence defines a class $[\cO_{M_\B}]\in\Omega^\infty\K^\omega(M_\B)$. We will be interested in lifts of $[\cO_{M_\B}]$ along the assembly map
\[\C_\bullet(M; \K(k))\longrightarrow \K^\omega(M_\B).\]

To describe the known results, let us temporarily switch from working over the commutative ring $k$ to working over the sphere spectrum. Let
\[\Sp^M = \Fun(M, \Sp)\]
be the $\infty$-category of parametrized spectra, so that
\[\LocSys(M) = \Sp^M\otimes_{\Sp} \Mod_k.\]
Consider the $A$-theory
\[\A(M) = \K(\Sp^{M, \omega})\]
defined to be the $K$-theory of the stable $\infty$-category of compact parametrized spectra. In this case the assembly map becomes
\[\C_\bullet(M; \A(\pt))\longrightarrow \A(M)\]
and we want to lift the class $[\bS_M]\in \Omega^\infty\A(M)$ of the constant parametrized spectrum $\bS_M\in\Sp^M$ with value the sphere spectrum. Let us recall some known results:
\begin{itemize}
\item Such a lift exists if, and only if, $M$ is homotopy equivalent to a finite CW complex \cite{WallFiniteness}.
\item If $M$ is a compact ENR, a canonical such lift was constructed in \cite[Section 8]{DwyerWeissWilliams} using controlled algebra.
\end{itemize}

The base change to $k$ defines a commutative diagram of assembly maps
\[
\xymatrix{
\C_\bullet(M; \A(\pt))\ar[r] \ar[d] & \A(M) \ar[d] \\
\C_\bullet(M; \K(k))\ar[r] & \K^\omega(M_\B)
}
\]
so any lift of $[\bS_M]$ over the sphere spectrum gives rise to a simple structure on $M_\B$. Let us now describe an explicit model of such a simple structure for $k=\Z$ if $M$ is a finite CW complex following \cite{TuraevEuler}. For simplicity we assume that $M$ is connected with a basepoint $x_0\in M$.

\begin{prop}
The 1-truncation of the assembly map
\[\tau_{\leq 1} \C_\bullet(M; \K(\Z))\xrightarrow{\alpha} \tau_{\leq 1} \K(\C_\bullet(\Omega M;\Z))\]
is equivalent to the map
\[\B \rH_1(M; \Z)\times \tau_{\leq 1} \K(\Z)\longrightarrow \tau_{\leq 1} \K(\Z[\pi_1(M)]),\]
where $\B \rH_1(M; \Z)$ is the one-object groupoid with automorphisms given by $\rH_1(M; \Z)$, which sends the class $[V]\in\Omega^\infty\K(\Z)$ of the abelian group $V$ to the class $[\Z[\pi_1(M)]\otimes V]\in\K(\Z[\pi_1(M)])$ of the free $\Z[\pi_1(M)]$-module generated by $V$. Under these identifications the class $[\Z]\in \Omega^\infty\K(\C_\bullet(\Omega M;\Z))$ of the trivial $\C_\bullet(\Omega M;\Z)$-module goes to the class $[\C_\bullet(\tilde{M};\Z)]\in \Omega^\infty\K(\Z[\pi_1(M)])$ of the $\Z[\pi_1(M)]$-module given by chains on the universal cover $\tilde{M}\rightarrow M$.
\label{prop:assemblytruncation}
\end{prop}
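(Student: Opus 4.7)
The plan is to compute both sides on the level of homotopy groups in degrees $0$ and $1$, and then identify the assembly map. For the source, choose a basepoint $x_0\in M$ to split $\Sigma^\infty_+ M \simeq \bS \oplus \Sigma^\infty M$, so that $\C_\bullet(M;\K(\Z))\simeq \K(\Z)\oplus (\Sigma^\infty M \otimes \K(\Z))$. The reduced suspension spectrum of the connected pointed space $M$ is $1$-connective with $\pi_1=\rH_1(M;\Z)$ by the Hurewicz theorem, and the Atiyah--Hirzebruch spectral sequence then gives $\pi_1(\Sigma^\infty M\otimes \K(\Z))\cong\rH_1(M;\Z)\otimes K_0(\Z)=\rH_1(M;\Z)$, so $\tau_{\leq 1}(\Sigma^\infty M \otimes \K(\Z)) \simeq \Sigma H\rH_1(M;\Z)$, which at the level of Picard groupoids is $\B\rH_1(M;\Z)$. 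This identifies the source with $\tau_{\leq 1}\K(\Z)\times \B\rH_1(M;\Z)$.

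For the target, base change along the Postnikov truncation $\C_\bullet(\Omega M;\Z)\to \Z[\pi_1(M)]$ induces $\K(\C_\bullet(\Omega M;\Z))\to \K(\Z[\pi_1(M)])$, and I claim this is an equivalence on $\pi_0$ and $\pi_1$. Indeed, $K_0$ of a connective $E_1$-ring agrees with that of its $\pi_0$ (every perfect connective module has $\pi_0$ a finitely generated projective $\pi_0 R$-module and is $K_0$-detected by that class), and similarly $K_1 = \pi_0(\GL)^{\mathrm{ab}}$ depends only on $\pi_0$ since $\pi_0 \GL_n(R) = \GL_n(\pi_0 R)$. Now the assembly map is defined by evaluation of the induction $i_{x\sharp}\colon V\mapsto \C_\bullet(\Omega M;\Z)\otimes V$ naturally in $x\in M$. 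On $\pi_0$ this becomes the asserted formula $V\mapsto \Z[\pi_1(M)]\otimes V$. On the $\rH_1(M;\Z)$-factor of $\pi_1$ of the source, a loop $\gamma\in\pi_1(M)$ corresponds to the monodromy automorphism of $i_{x_0\sharp}\Z \simeq \C_\bullet(\Omega M;\Z)$, namely right multiplication by $\gamma$, which represents the class $[\gamma]\in K_1(\Z[\pi_1(M)])$; the factorization through $\pi_1(M)^{\mathrm{ab}}=\rH_1(M;\Z)$ is automatic since $K_1$ is abelian.

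Finally, the class of the trivial module: the unit $\cO_{M_\B}\in \QCoh(M_\B)$ corresponds to $\Z\in \Mod_{\C_\bullet(\Omega M;\Z)}$ with trivial augmentation action and is perfect as the monoidal unit. The base change functor $\Mod_{\C_\bullet(\Omega M;\Z)}\to \Mod_{\Z[\pi_1(M)]}$ corresponds under the equivalence $\LocSys(-)\simeq \Mod_{\C_\bullet(\Omega-;\Z)}$ to pushforward of local systems along the canonical map $f\colon M\to \B\pi_1(M)$, so it sends the constant local system $\Z$ on $M$ to $f_\sharp \Z = \C_\bullet(\tilde M;\Z)$ by the standard identification of covering-space homology with $\pi_1$-equivariant homology. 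For $M$ a finite CW complex this is manifestly represented by a bounded complex of finite free $\Z[\pi_1(M)]$-modules via the cellular chain complex of the $\pi_1(M)$-equivariant CW structure on $\tilde M$. The most delicate step in the argument is the target comparison: justifying the equivalence $\tau_{\leq 1}\K(\C_\bullet(\Omega M;\Z)) \simeq \tau_{\leq 1}\K(\Z[\pi_1(M)])$ requires invoking the fact that these low-dimensional $K$-groups of a connective $E_1$-ring depend only on its underlying classical ring $\pi_0$.
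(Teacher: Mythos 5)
Your argument is correct and follows essentially the same strategy as the paper: split off the basepoint to identify $\tau_{\leq 1}\C_\bullet(M;\K(\Z))\cong \B\rH_1(M;\Z)\times\tau_{\leq 1}\K(\Z)$, reduce the target to $\K(\Z[\pi_1 M])$ via the fact that low-degree $K$-groups of a connective $E_1$-ring are insensitive to higher Postnikov data, and identify $[\Z]$ with $[\C_\bullet(\tilde M;\Z)]$ via the Cartesian square for $M\to\B\pi_1(M)$. The paper cites Waldhausen's Proposition~1.1 (a $1$-connected ring map induces a $2$-connected map on $K$-theory) for the middle step, which is the cleanest justification; your direct descriptions of $K_0$ and $K_1$ are a reasonable substitute, but the parenthetical for $K_0$ is not quite right as stated: a perfect connective module over a connective $E_1$-ring need not have projective $\pi_0$ (e.g.\ $\Z/2$ over $\Z$). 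What is true is that $K_0$ is generated by finite \emph{free} modules via cofiber sequences, or equivalently one invokes the equivalence $\K(\Vect(R))\simeq \K(\Perf(R))$ for connective $R$; Waldhausen's connectivity theorem packages exactly this together with the $K_1$ statement. Everything else, including the monodromy identification on the $\rH_1(M;\Z)$ factor and the base-change computation of $\Z\otimes_{\C_\bullet(\Omega M;\Z)}\Z[\pi_1 M]$, matches the paper's proof.
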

\begin{proof}
The natural map $\C_\bullet(\Omega M; \Z)\rightarrow \Z[\pi_1(M)]$ is 1-connected, so by \cite[Proposition 1.1]{Waldhausen} the induced map
\[\K(\C_\bullet(\Omega M; \Z))\longrightarrow \K(\Z[\pi_1(M)])\]
is 2-connected, i.e. it induces an equivalence on $\tau_{\leq 1}$. 
The Cartesian diagram
\[
\xymatrix{
\tilde{M} \ar[r] \ar[d] & \pt \ar[d] \\
M \ar[r] & \B \pi_1(M)
}
\]
shows that base changing along $\C_\bullet(\Omega M; \Z)\rightarrow \Z[\pi_1(M)]$ identifies
\[
\Z\otimes_{\C_\bullet(\Omega M; \Z)}\Z[\pi_1(M)]\cong \C_\bullet(\tilde{M};\Z).
\]
Finally we identify the $1$-truncation of $\C_\bullet(M; \K(\Z))$ as
\[\tau_{\leq 1} \C_\bullet(M; \K(\Z))\cong \B \rH_1(M; \Z)\times \tau_{\leq 1} \K(\Z).\]
\end{proof}

Let $A$ be the set of cells of $M$ and $A_n\subset A$ the set of $n$-dimensional cells. Let $\tilde{M}\rightarrow M$ be the universal cover. We can canonically lift the cell structure $A$ of $M$ to a $\pi_1(M)$-equivariant cell structure $\tilde{A}$ on $\tilde{M}$.

\begin{defn}
A \defterm{fundamental family of cells} $e$ in $\tilde{M}$ is the choice of a lift of a cell $a\in A$ on $M$ to a cell $\tilde{a}\in\tilde{A}$ on $\tilde{M}$.
\end{defn}

Given two fundamental families of cells $e, e'$ in $\tilde{M}$, for every $a\in A$ there is a unique $h_a\in\pi_1(M)$ such that $\tilde{a}' = h_a \tilde{a}$. Let $q\colon \pi_1(M)\rightarrow \rH_1(M;\Z)$ be the abelianization map and
\[e' / e = \sum_{a\in A} (-1)^{\dim(a)} q(h_a).\]

\begin{defn}
Two fundamental family of cells $e, e'$ in $\tilde{M}$ are \defterm{equivalent} if $e'/e = 1\in\rH_1(M;\Z)$. Denote by $E(M)$ the set of equivalence classes of fundamental families of cells.
\end{defn}

By definition $E(M)$ is a nonempty $\rH_1(M;\Z)$-torsor.

\begin{prop}
Suppose $M$ is a connected finite CW complex. Then there is a canonical lift of \[[\Z]\in\Omega^\infty\K(\C_\bullet(\Omega M;\Z)),\]
the class of the trivial $\C_\bullet(\Omega M; \Z)$-module, along the assembly map $\alpha\colon \C_\bullet(M; \K(\Z))\rightarrow \K(\C_\bullet(\Omega M; \Z))$ to an element \[e_\K(M)\in\Omega^\infty\C_\bullet(M; \K(\Z)).\]
In other words, $M_\B$ has a canonical simple structure.
\label{prop:CWlift}
\end{prop}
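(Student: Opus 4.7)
The plan is to produce the lift directly from the cellular chain complex of the universal cover $\tilde{M}$. Fix a lift of the CW-structure on $M$ to a $\pi_1(M)$-equivariant CW-structure on $\tilde{M}$, and let $C_\bullet^{\mathrm{cell}}(\tilde{M};\Z)$ denote the associated cellular chain complex. After choosing a fundamental family of cells $e$ (i.e.\ a $\pi_1(M)$-orbit representative $\tilde{a}\in\tilde{A}$ for every cell $a\in A$), the module $C_n^{\mathrm{cell}}(\tilde{M};\Z)$ becomes a free $\Z[\pi_1(M)]$-module with explicit basis $\{\tilde{a}\}_{a\in A_n}$. Under the identification of \cref{prop:assemblytruncation}, the class $[\Z]\in\Omega^\infty\K(\C_\bullet(\Omega M;\Z))$ corresponds to $[\C_\bullet(\tilde{M};\Z)]\in\Omega^\infty\K(\Z[\pi_1(M)])$, so it suffices to factor this class through the assembly map.

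For this, the key input is \cref{prop:chaincomplexfiltration}: the brutal truncation filtration on $C_\bullet^{\mathrm{cell}}(\tilde{M};\Z)$ gives a canonical additivity homotopy
\[[\C_\bullet(\tilde{M};\Z)]\sim \sum_n (-1)^n [C_n^{\mathrm{cell}}(\tilde{M};\Z)] = \sum_{a\in A} (-1)^{\dim(a)} [\Z[\pi_1(M)]]\]
in $\Omega^\infty\K(\Z[\pi_1(M)])$. By the description of the assembly map in \cref{prop:assemblytruncation}, each summand $[\Z[\pi_1(M)]]$ is the image of the class $[\Z]\in\K(\Z)$ placed at any point $x_a\in M$ in the interior of $a$. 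Therefore, setting
\[e_\K(M) := \sum_{a\in A} (-1)^{\dim(a)} x_a\otimes [\Z]\in \Omega^\infty \C_\bullet(M;\K(\Z))\]
produces an element together with an explicit homotopy $\alpha(e_\K(M))\sim [\Z]$, yielding a simple structure on $M_\B$.

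The main obstacle is to upgrade this calculation from the $1$-truncation afforded by \cref{prop:assemblytruncation} to a genuine lift in $\Omega^\infty\C_\bullet(M;\K(\Z))$. I would handle this by carrying out the filtration argument at the level of spectra: the cellular skeletal filtration of $\tilde{M}$ exhibits the $\C_\bullet(\Omega M;\Z)$-module $\Z$ as a finite iterated extension whose graded pieces are shifts of local systems induced from points $x_a\in M$, and $K$-theoretic additivity (together with the naturality of the assembly map in each step) produces the required homotopy in $\Omega^\infty\K^\omega(M_\B)$ that factors through $\C_\bullet(M;\K(\Z))$. The resulting element $e_\K(M)$ depends on the choice of fundamental family of cells, but two choices $e,e'$ produce lifts differing by the class $e'/e\in\rH_1(M;\Z)=\pi_1(\C_\bullet(M;\K(\Z)))$ at the basepoint component, which explains and is compatible with the torsor structure on $E(M)$.
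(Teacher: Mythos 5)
Your proposal is essentially the same as the paper's proof: both produce $e_\K(M)$ from the cellular chain complex of the universal cover after choosing a fundamental family of cells, and both produce the required homotopy from the additivity homotopy attached to the brutal truncation (skeletal) filtration via \cref{prop:chaincomplexfiltration}, with the dependence on the fundamental family absorbed into $\rH_1(M;\Z)$. Your final paragraph about the torsor structure on the set of fundamental families matches the paper's closing remark.

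The one place you diverge is the ``main obstacle'' you flag: the worry that the additivity calculation lives in $\K(\Z[\pi_1(M)])$ (via \cref{prop:assemblytruncation}) rather than in $\K(\C_\bullet(\Omega M;\Z))$ where a genuine lift is needed. This is a real point, but it is resolved more cheaply than you suggest: by \cref{prop:assemblytruncation} the comparison map $\beta\colon \K(\C_\bullet(\Omega M;\Z))\to\K(\Z[\pi_1(M)])$ is $2$-connected, so its fiber is $1$-connected; hence for fixed endpoints the induced map on path components of path spaces is a bijection, and the homotopy $\alpha(e_\K(M))\sim[\C_\bullet(\tilde M;\Z)]$ produced in $\Omega^\infty\K(\Z[\pi_1(M)])$ lifts, uniquely up to homotopy, to the required homotopy $\alpha(e_\K(M))\sim[\Z]$ in $\Omega^\infty\K(\C_\bullet(\Omega M;\Z))$. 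This is the paper's (implicit) justification for ``it is enough to lift along the $1$-truncation.'' Your proposed alternative --- carrying out the skeletal filtration directly on $\Z$ as a $\C_\bullet(\Omega M;\Z)$-module, i.e. on the constant local system $\Z_M\in\LocSys(M)^\omega$, so that additivity and the naturality of assembly apply without ever truncating --- is also correct and in some ways cleaner, but it relies on the assertion that the associated graded of that filtration is $\bigoplus_{a\in A_k}(i_{\alpha_a})_\sharp\Z[k]$, which is a (small) lemma you state without argument; that identification is also precisely where the choice of fundamental family and cell orientations re-enters. Either route works, and the content is the same.
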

\begin{proof}
To describe a canonical lift of $[\Z]\in\Omega^\infty\K(\C_\bullet(\Omega M; \Z))$ it is enough to lift the class of chains on the universal cover $[\C_\bullet(\tilde{M};\Z)]\in\Omega^\infty\K(\Z[\pi_1(M)])$ along the 1-truncation $\tau_{\leq 1}$ of the assembly map described in \cref{prop:assemblytruncation}. Using cellular chains of $\tilde{M}$ we obtain that $\C_k(\tilde{M}; \Z)$ is a finitely generated free $\pi_1$-module. The choice of a fundamental family of cells $e$ defines a map of abelian groups $\C_k(M;\Z) \to \C_k(\tilde{M}; \Z)$ inducing an isomorphism of $\pi_1$-modules
\[
\Z[\pi_1(M)] \otimes_\Z \C_k(M;\Z) \cong \C_k(\tilde{M};\Z).
\]
Using \cref{prop:chaincomplexfiltration} we now obtain the homotopy
\[
[\C_\bullet(\tilde{M};\Z)]\sim \sum_{k=0} (-1)^k [\C_k(\tilde{M};\Z)]\sim \sum_{k=0} (-1)^k \left[\Z[\pi_1(M)]\otimes_\Z \C_k(M;\Z)\right],
\]
where the last term is in the image of the assembly map.

% We may identify
% \[\C_k(\tilde{M}; \Z) = \bigoplus_{\tilde{a}\in\tilde{A}_k} \Z.\]
% \pavel{Need to choose orientations of cells}

% Given a fundamental family of cells $e$ in $\tilde{M}$ we may further identify
% \[\C_k(\tilde{M}; \Z)\cong \bigoplus_{a\in A_k} \Z[\pi_1(M)]\cong \Z[\pi_1(M)]\otimes_\Z \Z[A_k].\]
% Using \cref{prop:chaincomplexfiltration} we obtain a homotopy
% \[[\C_\bullet(\tilde{M};\Z)]\sim \sum_{k=0} (-1)^k [\C_k(\tilde{M};\Z)]\sim \sum_{k=0} (-1)^k \left[\Z[\pi_1(M)]\otimes_\Z\Z[A_k]\right].\]
% In this way we construct a lift along the assembly map.

Given two fundamental family of cells $e, e'$, the corresponding lifts differ by $e'/e\in\rH_1(M;\Z)$ and hence are homotopic.
\end{proof}

\begin{remark}
More explicitly, we have constructed a lift of $\Z$ along the assembly map to
\[e_\K(M) = \sum_k (-1)^k [\Z[A_k]] x_0\]
depending on a fundamental family of cells in $\tilde{M}$. Changing the fundamental family of cells by a class $\gamma$ in $\rH_1(M;\Z)$ changes $e_\K(M)$ by an automorphism given by $\gamma$.
\end{remark}

\begin{remark}
Instead of choosing a single basepoint $x_0\in M$ it is often useful to choose a point $\alpha_a\in a$ for every cell $a\in A$. Then one can analogously identify
\[e_\K(M) = \sum_{a\in A} (-1)^{\dim(a)} [\Z]\alpha_a.\]
\end{remark}

\begin{remark}
It is shown in \cite{TuraevEuler} that the lift constructed in \cref{prop:CWlift} is invariant under cell subdivisions of $M$.
\end{remark}

Let us now show that the simple structure constructed in \cref{prop:CWlift} is compatible with gluing.

\begin{prop}
Suppose $A,B,C$ are finite CW complexes with $A\subset B$ and $A\subset C$ a subcomplex. Consider the pushout
\[\xymatrix{A \ar[r]^{f} \ar[d]_{g} & B \ar[d]^{g'} \\
C \ar[r]_{f'} & M
}\]
which endows $M$ with the structure of a finite CW complex. Then the simple structure on $M_\B$ constructed in \cref{prop:CWlift} is obtained by gluing the simple structures on $B_\B$ and $C_\B$ along $A_\B$ in the sense of \cref{prop:gluedsimple}.
\label{prop:gluedCWsimplestructures}
\end{prop}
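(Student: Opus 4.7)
The plan is to pick compatible cellular data on all four spaces and compare the two lifts at the $1$-truncation of the assembly map, where by \cref{prop:assemblytruncation} everything reduces to manipulations of $\Z[\pi_1(M)]$-modules. Equip $M$ with the CW structure inherited from the pushout, so that each cell of $M$ lies in exactly one of three disjoint classes: cells of $A$, cells of $B\setminus A$, or cells of $C\setminus A$. Fix a fundamental family of cells for $A$ and extend it via the subcomplex inclusions to fundamental families for $B$, $C$, and $M$.

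With these compatible choices, the explicit formula $e_\K(?) = \sum_\sigma (-1)^{\dim \sigma}[\Z]\alpha_\sigma$ from the proof of \cref{prop:CWlift} yields, in $\Omega^\infty\C_\bullet(M; \K(\Z))$, the identity
\[
g'_* e_\K(B) + f'_* e_\K(C) - (g'\circ f)_* e_\K(A) = e_\K(M)
\]
by cancellation of the contributions indexed by cells of $A$. What remains is to compare the two trivializations $\alpha(\cdot) \sim [\cO_{M_\B}]$ in $\Omega^\infty\K^\omega(M_\B)$: the direct one from \cref{prop:CWlift} applied to $M$ and the composite one produced by \cref{prop:gluedsimple}.

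At the $1$-truncation, $[\cO_{M_\B}]$ is represented by the cellular chain complex $\C_\bullet(\tilde M; \Z)$ viewed as a $\Z[\pi_1(M)]$-module. Writing $\tilde A, \tilde B, \tilde C$ for the preimages of $A, B, C$ in $\tilde M$, the pushout square \eqref{diag:pushoutofOX} specializes at the $1$-truncation to the Mayer--Vietoris cofiber sequence
\[
\C_\bullet(\tilde A; \Z) \longrightarrow \C_\bullet(\tilde B; \Z)\oplus \C_\bullet(\tilde C; \Z) \longrightarrow \C_\bullet(\tilde M; \Z).
\]
The glued trivialization is then the composition of the additivity homotopy for this cofiber sequence with the cellular additivity homotopies on $\tilde A, \tilde B, \tilde C$ coming from \cref{prop:CWlift}. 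Since the cellular filtrations on $\tilde A, \tilde B, \tilde C$ lift to cellular subcomplexes of $\tilde M$, combining them with the Mayer--Vietoris filtration produces a refinement of the cellular filtration of $\tilde M$ whose associated graded agrees, cell-by-cell, with that of the direct cellular filtration. Additivity in $\K$-theory then delivers the desired homotopy between the glued and direct trivializations.

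The main obstacle is organizing this last coherence properly. The additivity theorem in $\K$-theory ensures that any two bounded filtrations with isomorphic associated gradeds induce canonically homotopic trivializations of the total class, but one must check that the two filtrations of $\C_\bullet(\tilde M;\Z)$ really do refine one another rather than being merely shuffles of cell orderings. This reduces to the compatibility of the CW structures under the inclusions $A \hookrightarrow B, C \hookrightarrow M$, which is guaranteed by Step~1; the remaining verifications are then a standard application of additivity.
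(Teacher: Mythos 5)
Your proposal takes a different route from the paper, and it leaves a genuine gap at the step you yourself flag as "the main obstacle." The paper does not directly compare the two additivity homotopies attached to $\C_\bullet(\tilde M;\Z)$. Instead it observes that the glued homotopy is, by inspection, identical to the cellular homotopy of \cref{prop:CWlift} applied to the double mapping cylinder $B\cup_A(A\times I)\cup_A C$ with its natural CW structure (whose cellular chain complex is on the nose the mapping cone of $\C_\bullet(\tilde A;\Z)\to\C_\bullet(\tilde B;\Z)\oplus\C_\bullet(\tilde C;\Z)$), and then uses that the collapse $B\cup_A(A\times I)\cup_A C\to M$ is a simple homotopy equivalence, so the two simple structures on $M_\B$ agree by Turaev's invariance of the construction (noted after \cref{prop:CWlift}). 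Your argument never introduces the double mapping cylinder and never appeals to simple homotopy invariance, which is the step that actually closes the comparison.

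Concretely, the gap in your direct approach is the coherence of the homotopies. The Mayer--Vietoris cofiber sequence $\C_\bullet(\tilde A;\Z)\to\C_\bullet(\tilde B;\Z)\oplus\C_\bullet(\tilde C;\Z)\to\C_\bullet(\tilde M;\Z)$ is not a filtration of $\C_\bullet(\tilde M;\Z)$ — it filters the middle term — so "combining it" with the brutal-truncation filtrations of $\tilde A,\tilde B,\tilde C$ to obtain "a refinement of the cellular filtration of $\tilde M$" is not well formed as stated. Even after reformulating via a commuting bifiltration on $\C_\bullet(\tilde M;\Z)$ (by degree and by containment in $\tilde A$), one still needs a coherence lemma asserting that the additivity homotopy for the total filtration is canonically homotopic to the composite of the one-variable additivity homotopies, regardless of the order; this is not a "standard application of additivity" but exactly the nontrivial point. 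One must also track the $-[\C_\bullet(\tilde A;\Z)]$ contribution, which makes the total associated gradeds agree only after a cancellation that has to be implemented coherently in $\K$-theory. The paper's detour through the double mapping cylinder is precisely what makes the filtration comparison tautological and pushes the residual content into the (separately established) simple homotopy invariance; without it, your argument as written is incomplete.
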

\begin{proof}
As in the proof of \cref{prop:CWlift} it suffices to work in $\K(\Z[\pi_1(M)])$. Let $\tilde{M}\rightarrow M$ be the universal cover and set $\tilde{A} = A \times_M \tilde{M}$, $\tilde{B} = B \times_M \tilde{M}$ and $\tilde{C} = C \times_M \tilde{M}$. The glued lift of $[\C_\bullet(\tilde{M};\Z)]$ is obtained as follows.
\begin{align*}
[\C_\bullet(\tilde{M};\Z)] &\sim [\C_\bullet(\tilde{B};\Z)] + [\C_\bullet(\tilde{C};\Z)] - [\C_\bullet(\tilde{A};\Z)] \\
&\sim [\bigoplus_k \C_k(\tilde{B};\Z)] + [\bigoplus_k \C_k(\tilde{C};\Z)] - [\bigoplus_k \C_k(\tilde{A};\Z)] \\
&\sim \alpha(e_\K(B) + e_\K(C) - e_\K(A)),
\end{align*}
where the first two homotopies are given by additivity of $K$-theory (on the cellular chain complex). More precisely, the first homotopy is induced by the fiber sequence
\[
\C_\bullet(\tilde{A};\Z) \longrightarrow \C_\bullet(\tilde{B};\Z) \oplus \C_\bullet(\tilde{C};\Z) \longrightarrow  \C_\bullet(\tilde{M};\Z).
\]
The second homotopy is induced by \cref{prop:chaincomplexfiltration}, that is by filtering the three chain complexes $\C_\bullet(\tilde{A};\Z)$, $\C_\bullet(\tilde{B};\Z)$ and $\C_\bullet(\tilde{C};\Z)$ by the brutal truncation and finally by the choice of fundamental families of cells.

We conclude that the above homotopy is obtained by the construction in \cref{prop:CWlift} for the natural cell structure on $B \cup_A (A \times I) \cup_A C$ induced from the cell structures on $A$, $B$ and $C$, and the induced fundamental family of cells. Finally, note that the natural map $B \cup_A (A \times I) \cup_A C \to M$ is a simple homotopy equivalence.
\end{proof}

Next, we will show that one can endow $M$ with further extra structure to trivialize $[\Z]\in\Omega^\infty\K(\C_\bullet(\Omega M;\Z))$. By \cref{prop:turaevtrivialization} we need to trivialize the lift of $[\Z]$ to $e_\K(M)\in\Omega^\infty\C_\bullet(M;\K(\Z))$. Consider the map
\[\C_\bullet(M;\K(\Z))\longrightarrow \C_\bullet(M;\Z)\times \K(\Z),\]
where the first map is induced by the degree map $\K(\Z)\rightarrow \Z$ and the second map is induced by the pushforward along $M\rightarrow \pt$. As $M$ is connected and $\K(\Z)\rightarrow \Z$ is an isomorphism on $\pi_0$, this map is an isomorphism on 1-truncations. Therefore, to trivialize $e_\K(M)\in\Omega^\infty\C_\bullet(M; \K(\Z))$, we need to provide the following information:
\begin{itemize}
\item Trivialization of the image of $e_\K(M)$ under the degree map $\K(\Z)\rightarrow \Z$, i.e. the homological Euler class $e(M)\in\C_\bullet(M;\Z)$.
\item Trivialization of the image of $e_\K(M)$ under the pushforward $\C_\bullet(M;\K(\Z))\rightarrow \K(\Z)$, i.e. the class $[\C_\bullet(M;\Z)]\in\Omega^\infty\K(\Z)$.
\end{itemize}

Recall the following notion from \cite{TuraevEuler}.

\begin{defn}
An \defterm{Euler structure on $M$} is a singular 1-chain $\xi$ with integer coefficients with
\[\partial \xi = \sum_{a\in A} (-1)^{\dim(a)} \alpha_a,\]
where $\alpha_a\in a$. Two Euler structures $\xi,\eta$ with $\partial \xi = \sum_{a\in A} (-1)^{\dim(a)} \alpha_a$ and $\partial \eta = \sum_{a\in A} (-1)^{\dim(a)} \beta_a$ are \defterm{equivalent} if for some paths $x_a\colon [0, 1]\rightarrow a$ from $\alpha_a$ to $\beta_a$ the 1-cycle
\[\xi - \eta + \sum_{a\in A} (-1)^{\dim(a)} x_a\]
is a boundary. Let $\Eul(M)$ be the set of Euler structures on $M$.
\end{defn}

\begin{remark}
One should not confuse the notion of an Euler structure on the finite CW complex $M$ and an Euler structure on the derived prestack $M_\B$. We will show in \cref{prop:turaevtrivialization}, however, that the former induces the latter.
\end{remark}

The set of Euler structures on $M$ is nonempty if, and only if, $\chi(M) = 0$. In this case $\Eul(M)$ is a nonempty $\rH_1(M;\Z)$-torsor. Moreover, again under the assumption $\chi(M)=0$, there is a canonical isomorphism $E(M)\rightarrow \Eul(M)$ of $\rH_1(M; \Z)$-torsors. Clearly, an Euler structure is exactly a trivialization of the homological Euler class $e(M)\in\C_\bullet(M;\Z)$.

In the case of 3-manifolds the set of Euler structures has the following geometric description \cite{TuraevTorsion}.

\begin{prop}
Let $M$ be a closed oriented 3-manifold. There is a canonical bijection between the set of Euler structures on $M$ and the set of $\Spinc$-structures $\sigma$. Under this correspondence the characteristic class $c(\xi)\in\rH_1(M;\Z)$ of the Euler structure (see \cite[Section 5.2]{FarberTuraevAbsolute}) corresponds to the first Chern class $c_1(\sigma)\in\rH^2(M;\Z)\cong \rH_1(M;\Z)$ of the $\Spinc$-structure.
\label{prop:SpincEuler}
\end{prop}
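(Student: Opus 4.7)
The plan is to exhibit both sides as $\rH_1(M;\Z)$-torsors and construct a canonical equivariant map between them, then verify compatibility of characteristic classes. Since $M$ is a closed oriented $3$-manifold we have $\chi(M)=0$, so $\Eul(M)$ is nonempty and is an $\rH_1(M;\Z)$-torsor by construction. The set $\Spinc(M)$ of $\Spinc$-structures is also a (nonempty) $\rH_1(M;\Z)$-torsor: every closed oriented $3$-manifold is parallelizable so admits spin structures and hence $\Spinc$-structures, and $\Spinc(M)$ is a torsor over $\rH^2(M;\Z)\cong \rH_1(M;\Z)$ by Poincar\'e duality.

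The bridge between the two is the geometric realization of Euler structures by nonvanishing vector fields, following Turaev. First I would fix a smooth structure on $M$ compatible with the CW structure and triangulate; then for any choice of CW Euler structure $\xi$ I would construct a nowhere-zero vector field $v_\xi$ on $M$ whose relative homology class (considered as an equivalence class of sections of the unit sphere bundle of $TM$ up to vector fields with cancelling singularities) recovers $\xi$. Standard obstruction theory on a $3$-manifold with $\chi(M)=0$ shows that the set of homotopy classes of nonvanishing vector fields, relative to the equivalence that identifies two vector fields if their difference bounds a $2$-chain, is canonically an $\rH_1(M;\Z)$-torsor, and the translation action matches the $\rH_1(M;\Z)$-action on $\Eul(M)$ by construction.

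Next, a nowhere-zero vector field $v$ on the oriented $3$-manifold $M$ gives a splitting $\T M\cong \underline{\R}\oplus \xi_v$ where $\xi_v$ is an oriented rank-$2$ real bundle, hence a complex line bundle. The resulting reduction of structure group from $\SO(3)$ to $\U(1)\subset \U(2)\cong\Spinc(3)$ produces a canonical $\Spinc$-structure $\sigma(v)$ on $M$. I would verify that homotopic vector fields yield the same $\Spinc$-structure and that the map $v\mapsto \sigma(v)$ is $\rH_1(M;\Z)$-equivariant: changing $v$ by a class $\alpha\in\rH_1(M;\Z)$ corresponds, under Poincar\'e duality, to changing $\sigma(v)$ by the class $\PD(\alpha)\in\rH^2(M;\Z)$. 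An equivariant map of torsors is automatically a bijection, giving the desired identification $\Eul(M)\cong \Spinc(M)$.

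Finally, for the characteristic class compatibility, recall that $c(\xi)\in\rH_1(M;\Z)$ measures the obstruction to $\xi$ being symmetric under orientation reversal of the vector field (equivalently, $c(\xi)=\xi-(-\xi)$ in torsor notation), while $c_1(\sigma)=\sigma-\bar\sigma$ where $\bar\sigma$ is the conjugate $\Spinc$-structure. Reversing the vector field $v\mapsto -v$ swaps the splitting, replacing the complex line bundle $\xi_v$ by its conjugate $\overline{\xi_v}$, which corresponds exactly to conjugating the $\Spinc$-structure. Hence the construction intertwines $\xi\mapsto -\xi$ on $\Eul(M)$ with $\sigma\mapsto\bar\sigma$ on $\Spinc(M)$, which under Poincar\'e duality gives $c(\xi)=c_1(\sigma)$. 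The main obstacle I expect is the careful bookkeeping of the $\rH_1$-actions: matching the combinatorial shift of a CW Euler structure by $\alpha\in\rH_1(M;\Z)$ with the modification of a vector field by introducing a cancelling pair of zeros connected by a loop representing $\alpha$, and checking that on the $\Spinc$ side this corresponds precisely to tensoring by the line bundle $\PD(\alpha)$. This is essentially the content of Turaev's comparison theorem in \cite{TuraevTorsion}.
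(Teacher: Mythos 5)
The paper gives no proof of this proposition at all: it is stated as a known result and attributed directly to Turaev (the reference \cite{TuraevTorsion} cited just above the statement). Your sketch reproduces the standard Turaev argument — realizing combinatorial Euler structures as homotopy classes of nonvanishing vector fields modulo punctured homotopy, splitting off a trivial line to get a $\Spinc$-reduction via $U(1)\subset U(2)\cong\Spinc(3)$, checking $\rH_1(M;\Z)$-equivariance, and matching $c(\xi)=\xi-\xi^-$ with $c_1(\sigma)=\sigma-\bar\sigma$ via vector-field reversal — so it is the same approach the paper implicitly relies on. The one place you should be prepared to do real bookkeeping is the choice of lift $U(1)\hookrightarrow\Spinc(3)$ along $\Spinc(3)\to SO(3)$: there are two natural such lifts differing by a determinant twist, and the sign in the final identity $c(\xi)=c_1(\sigma)$ (rather than $-c_1(\sigma)$) depends on pinning this down consistently with the orientation conventions used to define the $\rH_1(M;\Z)$-action on $\Eul(M)$.
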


Next, let us describe a trivialization of $[\C_\bullet(M; \Z)]\in\Omega^\infty\K(\Z)$.

\begin{defn}
A \defterm{homology orientation} of $M$ is an orientation of the $\R$-line $\det \rH_\bullet(M; \R)$.
\end{defn}

Using the canonical isomorphism
\[\phi\colon \det \C_\bullet(M; \R)\cong \det \rH_\bullet(M; \R)\]
as well as the base change isomorphism
\[(\det \C_\bullet(M;\Z))\otimes_\Z\R\cong \det \C_\bullet(M; \R)\]
we see that a homology orientation is the same as the choice of an isomorphism $\det\C_\bullet(M; \Z)\cong\Z$ as abelian groups. Thus, a homology orientation defines a trivialization of $[\C_\bullet(M; \Z)]\in \Omega^\infty\K(\Z)$.

\begin{remark}
Suppose $M$ is a closed oriented topological manifold of dimension $d$. Depending on $d$, there might be a canonical homology orientation:
\begin{itemize}
\item Suppose $d$ is odd. Choose an arbitrary orientation of $\det \rH_m(M; \R)$ for $m=0\dots (d-1)/2$. Poincare duality gives an isomorphism $\det\rH_m(M; \R)\cong (\det\rH_{d-m}(M; \R))^{-1}$ which, therefore, induces an orientation of $\det \rH_m(M; \R)$ for $m=(d+1)/2\dots d$. As the homology groups are paired, the corresponding homology orientation is independent of the original choices.

\item Suppose $d\equiv 2\pmod{4}$. Then $\rH_\bullet(M; \R)[-d/2]$ carries a symplectic structure. So, the symplectic volume form from \cref{sect:symplecticvolume} provides a canonical homology orientation.

\item If $d$ is divisible by $4$, there is no canonical homology orientation, i.e. in general $\det(\rH_\bullet(M; \R))$ defines a nontrivial character of the oriented mapping class group of $M$. As in the case $d\equiv 2\pmod{4}$, using Poincare duality a homology orientation is the same as an orientation of $\det \rH_{d/2}(M; \R)$. But as remarked in \cite{TuraevEuler}, the complex conjugation on $M=\mathbf{CP}^2$ preserves the orientation of $M$, but reverses an orientation of $\det \rH_2(M;\R)$.
\end{itemize}
\label{rmk:canonicalhomologyorientation}
\end{remark}

\begin{prop}
Suppose $M$ is a connected finite CW complex with $\chi(M) = 0$. Choose an Euler structure and a homology orientation on $M$. Then there is a canonical homotopy $[\cO_{M_\B}]\sim 0$ in $\Omega^\infty\K^\omega(M_\B)$. In other words, in this case there is a canonical Euler structure on the stack $M_\B$.
\label{prop:turaevtrivialization}
\end{prop}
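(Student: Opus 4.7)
The plan is to bootstrap from the canonical simple structure on $M_\B$ produced in \cref{prop:CWlift}. That proposition supplies an element $e_\K(M)\in\Omega^\infty\C_\bullet(M;\K(\Z))$ together with a canonical homotopy $\alpha(e_\K(M))\sim[\cO_{M_\B}]$ in $\Omega^\infty\K^\omega(M_\B)$. The goal is therefore to produce a canonical nullhomotopy of $e_\K(M)$ itself in $\Omega^\infty\C_\bullet(M;\K(\Z))$; applying $\alpha$ and concatenating with the homotopy above then yields the desired Euler structure on $M_\B$.

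To trivialize $e_\K(M)$, I invoke the equivalence on $\tau_{\leq 1}$ of
\[
\C_\bullet(M;\K(\Z))\longrightarrow \C_\bullet(M;\Z)\times \K(\Z),
\]
recorded in the paragraph preceding the statement, whose components are the rank map $\K(\Z)\to\Z$ and the pushforward along $M\to\pt$. Since a nullhomotopy is determined up to higher homotopy by its image in the $1$-truncation, it suffices to provide compatible nullhomotopies of the two projections of $e_\K(M)$.

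The first projection is the homological Euler class $e(M)=\sum_{a\in A}(-1)^{\dim a}\alpha_a\in\C_\bullet(M;\Z)$, and a Turaev Euler structure on $M$ is tautologically a $1$-chain $\xi$ with $\partial\xi=e(M)$, i.e.\ a nullhomotopy of $e(M)$. The second projection is $\sum_k(-1)^k[\Z[A_k]]\in\Omega^\infty\K(\Z)$, which by \cref{prop:chaincomplexfiltration} applied to the cellular chain complex equals $[\C_\bullet(M;\Z)]$. Because $\chi(M)=0$, this class vanishes in $\pi_0\K(\Z)=\Z$, so the space of its nullhomotopies is a torsor over $\pi_1\K(\Z)=\{\pm1\}$. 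Under $\detgr\colon\K(\Z)\to\Pic^\Z(\Z)$, which is an equivalence on the relevant $1$-type, this torsor is identified with the set of trivializations of $\det\C_\bullet(M;\Z)\in\Pic(\Z)$. Via the isomorphism $\phi$ of \eqref{eq:Eulermap} applied over $\R$ together with the base change $\det\C_\bullet(M;\Z)\otimes_\Z\R\cong\det\C_\bullet(M;\R)$, a homology orientation is precisely such a trivialization.

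Combining the two nullhomotopies along the $\tau_{\leq 1}$-equivalence produces a canonical nullhomotopy of $e_\K(M)$ and the result follows. The main subtlety is canonicity of the whole construction: it must depend only on the equivalence class of the Turaev Euler structure (an $\rH_1(M;\Z)$-torsor) and of the homology orientation (a $\{\pm1\}$-torsor), and not on the fundamental family of cells or basepoint entering $e_\K(M)$. Independence of the cellular choices is already part of \cref{prop:CWlift}, while compatibility with the two equivalence relations is a direct check using the explicit torsor descriptions — the action of $\rH_1(M;\Z)$ on both $e_\K(M)$ and on $\Eul(M)$ cancels, and the action of $\{\pm 1\}$ on the homology orientation cancels against the corresponding action on the chosen $\pi_1\K(\Z)$-nullhomotopy.
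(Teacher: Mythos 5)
Your argument is correct and is essentially the paper's: reduce the trivialization of $e_\K(M)$ to the two projections of $\tau_{\leq 1}\C_\bullet(M;\K(\Z))$, trivializing the $\C_\bullet(M;\Z)$-component by the Turaev Euler structure and the $\K(\Z)$-component $[\C_\bullet(M;\Z)]$ by the homology orientation (via $\detgr$), then feed the result through the assembly map and the homotopy from \cref{prop:CWlift}. The paper packages the Euler-structure input slightly differently, using the identification $E(M)\cong\Eul(M)$ to select a canonical fundamental family of cells rather than trivializing the $\C_\bullet(M;\Z)$-projection directly, but this is the same content; your account of the $\detgr$ identification and of canonicity is simply a bit more explicit.
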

\begin{proof}
We continue the proof of \cref{prop:CWlift} with the additional structure given in the present statement. The choice of the Euler structure allows us to make a canonical choice of the fundamental family of cells in $\tilde{M}$, so that the homotopy
\[
[\C_k(\tilde{M};\Z)] \sim \sum_{k=0} (-1)^k \left[\Z[\pi_1(M)]\otimes_\Z \C_k(M;\Z)\right]
\]
gives a well-defined lift along $\tau_{\leq 1}\K(\Z) \to \tau_{\leq 1} \C_\bullet(M; \K(\Z)) \to \K(M)$. It remains to trivialize
\[
\sum_{k=0} (-1)^k \left[\C_k(M;\Z)\right] \in\Omega^\infty\K(\Z),
\]
but this is exactly the datum of a homology orientation.
\end{proof}

\subsection{Poincare duality}

In this section we assume $2$ is invertible in $k$. Let $M\in\cS$ be a space and $\xi\in\Sp^M$ an invertible parametrized spectrum. Consider the visible Poincar\'e structure $\Qoppa^v_\xi\colon \Sp^{M, \omega, \op}\rightarrow \Sp$ from \cite[Definition 4.4.4]{CDHHLMNNS1}. By \cite[Corollary 4.6.1]{CDHHLMNNS2} there is a natural equivalence
\[\GW(\Sp^{M, \omega}, \Qoppa^v_\xi)\cong \LA^v(M, \xi),\]
where $\LA^v(M, \xi)$ are the visible $LA$-spectra from \cite{WeissWilliamsIII}.

Suppose $M\in\cS$ is finitely dominated. Then one can define the \defterm{Spivak normal fibration} $\zeta_M$ which satisfies the universal property
\[p_\sharp((-)\otimes \zeta_M)\cong p_*(-),\]
where $p\colon M\rightarrow \pt$ and $p_\sharp$ ($p_*$) is the left (right) adjoint to $p^*\colon \Sp\rightarrow \Sp^M$. Assume $\zeta_M$ is invertible (i.e. $M$ is a Poincar\'e duality space) and let $\xi=\zeta_M^{-1}$. Then $\bS_M\in\Sp^M$ has a canonical structure of a Poincar\'e object in $\Sp^{M, \omega}, \Qoppa^v_\xi)$ (see e.g. \cite[Corollary 4.4.20]{CDHHLMNNS1}). In particular, it defines a class
\[[\bS_M]\in \LA^v(M, \xi)\]
called the \defterm{visible signature} of $M$. There is a parametrized spectrum $\LA^v(\pt, \xi)$ over $M$ whose fiber at $x\in M$ is $\LA^v(\pt, \xi|_x)$. In this setting we still have the assembly map
\[\C_\bullet(M; \LA^v(\pt, \xi))\longrightarrow \LA^v(M, \xi),\]
where $\C_\bullet(M; \LA^v(\pt, \xi))=p_\sharp \LA^v(\pt, \xi)$. The following is shown in \cite[Section 10]{WeissWilliamsIII}.

\begin{thm}
Suppose $M$ is a closed topological manifold. Then there is a canonical lift of the visible signature $[\bS_M]\in\Omega^\infty\LA^v(M, \xi)$ along the assembly map
\[\C_\bullet(M; \LA^v(\pt, \xi))\longrightarrow \LA^v(M, \xi).\]
\label{thm:WWlift}
\end{thm}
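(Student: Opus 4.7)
The plan is to follow the strategy of \cite[Section 10]{WeissWilliamsIII}, constructing the lift by an inductive gluing procedure parallel to \cref{prop:CWlift} but using visible Poincar\'e structures in place of bare $K$-theoretic data. By Kirby--Siebenmann, $M$ admits a topological handle decomposition $M = \bigcup_i H_i$. The first step is to establish a visible $L$-theoretic analogue of \cref{prop:gluedCWsimplestructures}: given a pushout $M = M_1 \cup_A M_2$ of compact ENRs equipped with compatible visible signatures, the additivity theorem for Grothendieck--Witt spectra of Poincar\'e $\infty$-categories \cite{CDHHLMNNS2} supplies a canonical homotopy
\[
[\bS_M] \sim [\bS_{M_1}] + [\bS_{M_2}] - [\bS_A]
\]
in $\Omega^\infty \LA^v(M, \xi)$, together with its lift along the assembly map.

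Iterating this gluing over the handle decomposition reduces the construction to the local contribution of a single handle $H \cong D^k \times D^{n-k}$. Since $H$ is contractible, the assembly map $\LA^v(\pt, \xi|_x) \to \LA^v(H, \xi)$ at an interior point $x$ of its core is an equivalence, and the single-handle lift is the canonical class of the disk with its obvious symmetric Poincar\'e structure relative to its boundary. One then verifies that the resulting element $e^{\LA}(M) \in \Omega^\infty \C_\bullet(M; \LA^v(\pt, \xi))$ is independent of the chosen handle decomposition by checking invariance under handle slides, cancellations, and creations; each such move alters the lift by a canonical nullhomotopic correction, in the spirit of the cell-subdivision invariance noted after \cref{prop:CWlift}.

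The main obstacle lies in the gluing step: unlike the $K$-theoretic additivity, which is formal from a fiber sequence, the Poincar\'e additivity requires producing an actual Poincar\'e cobordism realizing the difference $[\bS_M] - [\bS_{M_1}] - [\bS_{M_2}] + [\bS_A]$ compatibly with the lifts. This cobordism is supplied by the collar neighborhood theorem, and it is exactly here that one uses $M$ being a genuine topological manifold rather than a finitely dominated Poincar\'e complex. Indeed, for a general Poincar\'e complex the obstruction to such a lift is Ranicki's total surgery obstruction, which vanishes precisely when the complex is homotopy equivalent to a closed topological manifold.
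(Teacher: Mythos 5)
The paper does not actually prove \cref{thm:WWlift}: it is explicitly cited as a result of \cite[Section 10]{WeissWilliamsIII}, so there is no in-text proof to compare against. Your proposal is instead an attempted reconstruction, and it has a genuine gap.

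The first sentence, \emph{``By Kirby--Siebenmann, $M$ admits a topological handle decomposition,''} is false in general. Kirby--Siebenmann give handle decompositions for closed topological manifolds of dimension $\geq 6$ (and, by work of Quinn, for dimension $5$), and in dimension $\leq 3$ every topological manifold is smoothable, but closed topological $4$-manifolds need not admit handle decompositions at all (Freedman's $E_8$-manifold being the standard example; a closed topological $4$-manifold with a handle decomposition is smoothable). So the inductive scheme you set up does not even get off the ground for an entire dimension in which the statement is asserted. A closely related issue infects the well-definedness step: there is no general topological Cerf theory in low dimensions letting you connect arbitrary handle decompositions by slides, cancellations, and creations, so the claimed ``invariance under handle moves'' does not substitute for a dimension-independent construction.

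Second, you accurately identify the main content of the proof --- producing an algebraic Poincar\'e cobordism realizing the gluing relation compatibly with the local lifts --- but then you discharge it by invoking the collar neighbourhood theorem, which only gives a geometric collar. Turning that collar into a cobordism in the sense of the Grothendieck--Witt/Poincar\'e $\infty$-categorical formalism, and doing so in a way that refines the $K$-theoretic homotopy of \cref{prop:gluedCWsimplestructures}, is precisely the step that carries all the weight; asserting that the collar ``supplies'' it leaves the gap you just named open. Weiss--Williams avoid both of these pitfalls: their construction of the lift in \cite[Section 10]{WeissWilliamsIII} proceeds via controlled/bounded algebra over $M$ (their $\mathrm{LA}^{\%}$- and fibrewise framework) rather than via handle decompositions, which is exactly why the statement holds for arbitrary closed topological manifolds with no dimension restriction. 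Your closing remark that the total surgery obstruction governs the existence of a lift for a general Poincar\'e complex is a correct and useful observation, but it underscores, rather than repairs, the need for a construction that does not rely on handles.
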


Now suppose $M$ is a closed oriented topological $d$-manifold. Under the equivalence
\[\Sp^M\otimes_{\Sp} \Mod_k\cong \LocSys(M)\]
we have $\zeta_M\boxtimes k\mapsto k_M[-d]$, where $k_M\in\LocSys(M)$ is the constant local system over $M$ with fiber $k$. Thus, the base change from the sphere spectrum to $k$ defines a Poincar\'e functor
\[(\Sp^{M, \omega}, \Qoppa^v_\xi)\longrightarrow (\LocSys(M)^\omega, \Qoppa^{[d]}).\]
Using the equivalence provided by \cref{prop:fundamentalclassorientation} between fundamental classes and Poincar\'e structures, the Poincar\'e structure on $k_M\in\LocSys(M)^\omega$ corresponds to the usual fundamental class $[M]\in\rH_d(M; k)$.

\begin{remark}
As we are working over a ring $k$ where $2$ is invertible, there is no difference between symmetric, visible and quadratic Poincar\'e structures on $\LocSys(M)$.
\end{remark}

\begin{prop}
Let $M$ be a closed oriented topological $d$-manifold. Then the fundamental class of $M$ provides a fundamental class of $M_\B$ of degree $d$. Moreover, there is a simple structure with the Euler class $e_{\GW}(M_\B)\in\Omega^\infty\C_\bullet(M; \GW^{[d]}(k))$ on $M_\B$ compatible with Poincar\'e duality.
\end{prop}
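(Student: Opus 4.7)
The plan is to decompose the statement into two parts and handle them in sequence. For the first assertion, the input is that $M$ is a closed oriented topological $d$-manifold, so the Spivak normal fibration $\zeta_M$ is invertible and the sphere spectrum $\bS_M \in \Sp^M$ is a Poincar\'e object of $(\Sp^{M,\omega}, \Qoppa^v_\xi)$ with $\xi = \zeta_M^{-1}$. Base changing along the Poincar\'e functor $(\Sp^{M,\omega}, \Qoppa^v_\xi) \to (\LocSys(M)^\omega, \Qoppa^{[d]})$ noted in the excerpt sends $\bS_M \mapsto k_M = \cO_{M_\B}$, endowing it with the structure of a Poincar\'e object in $(\LocSys(M)^\omega, \Qoppa^{[d]})$. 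By \cref{prop:fundamentalclassorientation} this is equivalent to a fundamental class of $M_\B$ of degree $d$, and one checks (as indicated in the excerpt) that the resulting morphism $k \to p_\sharp \cO_{M_\B}[-d]$ agrees with the one induced by the classical fundamental class $[M] \in \rH_d(M; k)$.

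For the second assertion, the key input is \cref{thm:WWlift}, which supplies a canonical lift
\[e_\LA(M) \in \Omega^\infty \C_\bullet(M; \LA^v(\pt, \xi))\]
of the visible signature $[\bS_M] \in \Omega^\infty \LA^v(M, \xi)$ along the assembly map. The Poincar\'e functor used above is natural in $M$ and compatible with pullback to points, so it induces a commutative diagram of assembly maps
\[
\xymatrix{
\C_\bullet(M; \LA^v(\pt, \xi)) \ar[r]^-{\alpha} \ar[d] & \LA^v(M, \xi) \ar[d] \\
\C_\bullet(M; \GW^{[d]}(k)) \ar[r]^-{\alpha} & \GW^{\omega, [d]}(M_\B)
}
\]
where the right vertical arrow sends $[\bS_M] \mapsto [\cO_{M_\B}]$. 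Defining $e_\GW(M_\B)$ as the image of $e_\LA(M)$ under the left vertical arrow and transporting the Weiss--Williams homotopy through the diagram produces the required homotopy $\alpha(e_\GW(M_\B)) \sim [\cO_{M_\B}]$ in $\Omega^\infty \GW^{\omega, [d]}(M_\B)$.

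It remains to observe that postcomposing with the forgetful map $\GW^{[d]}(k) \to \K(k)$ (and similarly $\GW^{\omega, [d]}(M_\B) \to \K^\omega(M_\B)$) converts this data into a simple structure on $M_\B$ in the sense of \cref{def:simplestructure}. By construction this simple structure is compatible with Poincar\'e duality. The only point requiring attention is naturality: one must confirm that the base-change functor $\Sp^{M,\omega} \to \LocSys(M)^\omega$ is compatible with the pushforwards $i_\sharp$ along points $i\colon \pt \to M$ (which is automatic since left adjoints commute with the tensor product $(-) \otimes_\Sp \Mod_k$), so that the assembly map really is natural in the coefficient theory; this is the main, albeit routine, point to verify, and once granted the argument proceeds as sketched above.
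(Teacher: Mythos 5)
Your proof is correct and follows essentially the same route as the paper: both obtain the fundamental class of $M_\B$ from the classical fundamental class of $M$ (you phrase it via Poincar\'e objects and \cref{prop:fundamentalclassorientation}, the paper states it directly, but the content is the same), and both deduce the simple structure compatible with Poincar\'e duality by base changing the Weiss--Williams lift from \cref{thm:WWlift} through the commutative square of assembly maps induced by $(\Sp^{M,\omega}, \Qoppa^v_\xi) \to (\LocSys(M)^\omega, \Qoppa^{[d]})$.
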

\begin{proof}
The fundamental class
\[[M]\colon k\longrightarrow \C_\bullet(M; k)[-d]\]
of $M$ defines a fundamental class
\[[M_\B]\colon k\longrightarrow p_\sharp \cO_{M_\B}[-d].\]

The base change from the sphere spectrum to $k$ provides a commutative diagram of assembly maps
\[
\xymatrix{
\C_\bullet(M; \LA^v(\pt, \xi)) \ar[r] \ar[d] & \LA^v(M, \xi) \ar[d] \\
\C_\bullet(M; \GW^{[d]}(k)) \ar[r] & \GW^{\omega, [d]}(M_\B)
}
\]
By what we have explained above, the visible signature $[\bS_M]\in\LA^v(M, \xi)$ under the right vertical map goes to the class of $[\cO_{M_\B}]\in\Omega^\infty\GW^{\omega, [d]}(M_\B)$. Thus, the lift of the visible signature along the top assembly map provided by \cref{thm:WWlift} provides a lift of $[\cO_{M_\B}]\in\Omega^\infty\GW^{\omega, [d]}(M_\B)$ along the bottom assembly map, i.e. a simple structure on $M_\B$ compatible with Poincar\'e duality.
\end{proof}

\begin{remark}
There is a natural map $\GW^{[d]}(k)\rightarrow \rL(k)[-d]$, where $\rL(k)$ is the $L$-theory spectrum of symmetric bilinear forms over $k$. Under this map the class $e_{GW}(M_\B)\in\rH_0(M; \GW^{[d]}(k))$ goes to the fundamental $L$-homology class
\[[M]_{\rL}\in\rH_d(M; \rL(k))\]
from \cite[Proposition 16.16]{Ranicki}. There is a homomorphism $W(k)=\rL_0(k)\rightarrow \Z/2$ from the Witt group to $\Z/2$ given by rank modulo 2. Under this homomorphism the fundamental $L$-homology class goes to the usual $\Z/2$ homology class $[M]\in\rH_d(M; \Z/2)$.
\end{remark}

\subsection{Reidemeister torsion}
\label{sect:Reidemeister}

In this section we explain the relationship between our construction and Reidemeister torsion.

\begin{defn}
Let $G$ be an algebraic group and $M\in\cS$ a space. Let $\B G=[\pt/G]$ be the classifying stack. The \defterm{character stack} is the derived stack
\[\Loc_G(M) = \Map(M_\B, \B G)\]
parametrizing $G$-local systems on $M$.
\end{defn}

Now suppose $M$ is a connected finite CW complex equipped with a homology orientation. Consider the derived stack $\Loc_{\GL_n}(M)$ of rank $n$ local systems on $M$. It has a natural map
\[\Loc_{\GL_n}(M)\longrightarrow \uPerf(M_\B)\]
and we may pullback the determinant line to $\Loc_{\GL_n}(M)$. Its fiber at a $k$-point $F\in\Loc_{\GL_n}(M)$ is
\[\cD_F\cong \detgr(\C_\bullet(M; F)).\]

Recall the natural isomorphism
\[\phi\colon \det(\C_\bullet(M; F)) \cong \det(\rH_\bullet(M; F))\]
from \eqref{eq:Eulermap}. Given a local system $F$ of vector spaces over $M$ together with an Euler structure on $M$ one can define the Reidemeister--Turaev (refined) torsion (see \cite{TuraevReidemeister,FarberTuraevAbsolute}) which is a a nonzero element
\[\tau(M; F)\in \det(\rH_\bullet(M; F)).\]

\begin{prop}
Choose a homology orientation and an Euler structure on $M$ inducing an Euler structure on $M_\B$ as in \cref{prop:turaevtrivialization}. Let $\Delta$ be the determinant section of the determinant line bundle $\cD$ on $\Loc_{\GL_n}(M)$. Then its fiber at $F$, under the isomorphism $\phi$, coincides with the Reidemeister--Turaev torsion $\tau(F)$.
\label{prop:determinantReidemeister}
\end{prop}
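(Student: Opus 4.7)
The plan is to show that the abstract construction of the determinant section $\Delta$, when specialized to the stack $M_\B$ equipped with the Euler structure built in \cref{prop:turaevtrivialization} and then evaluated at a single local system $F$, unwinds into exactly the chain-level formula that defines the Reidemeister--Turaev torsion. Both constructions are manifestations of the same piece of linear algebra on the cellular chain complex of the universal cover; the work is to match them up explicitly.

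First I would recall, following the proof of \cref{thm:volumeEuler}, that the determinant section $\Delta$ at a point $F\in\Loc_{\GL_n}(M)(k)$ is obtained by applying the integral transform $\tens_{M_\B}$ to the nullhomotopy of $[\cO_{M_\B}]\in\Omega^\infty\K^\omega(M_\B)$ and then taking $\detgr$. Concretely, pairing the nullhomotopy with $F^\vee$ and pushing forward along $\pi$ yields a preferred trivialization of $\det(\pi_\sharp F)=\det \C_\bullet(M;F)$; under $\phi$ this becomes the sought-after element of $\det \rH_\bullet(M;F)$.

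Next I would unpack the three ingredients in the Euler structure on $M_\B$ constructed in \cref{prop:turaevtrivialization} and track their images under this integral transform. The fundamental family of cells $e$ in $\tilde M$ gives, via \cref{prop:chaincomplexfiltration} applied to the cellular chain complex of $\tilde M$ as a complex of free $\Z[\pi_1(M)]$-modules, a preferred identification
\[
\det(\C_\bullet(\tilde M;\Z))\;\cong\;\bigotimes_k \det\bigl(\Z[\pi_1(M)]\otimes_\Z \C_k(M;\Z)\bigr)^{(-1)^k}.
\]
Base-changing along the action of $\pi_1(M)$ on $F$ turns this into a canonical isomorphism
\[
\det(\C_\bullet(M;F))\;\cong\;\bigotimes_k \det(\C_k(M;F))^{(-1)^k},
\]
in which each factor $\det(\C_k(M;F))$ has the natural basis indexed by the cells. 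The Euler structure on $M$ pins down, through the bijection $\Eul(M)\cong E(M)$, the equivalence class of the fundamental family and hence this trivialization unambiguously; the homology orientation, via base change from $\Z$ to $k$ and the isomorphism $\phi$, converts the resulting element of $\det \C_\bullet(M;F)$ into an element of $\det \rH_\bullet(M;F)$. Combining these three steps gives an explicit formula for $\Delta_F$ expressed in terms of the chosen cells, the Euler structure and the homology orientation.

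Finally I would compare this formula with the definition of $\tau(M;F)$ recalled in \cite[Section 2.2]{FarberTuraevPR} (or \cite{TuraevEuler,FarberTuraevAbsolute}): there the torsion is defined by exactly the same alternating product of determinants over cells, with the same normalization by the Euler structure and homology orientation. The main obstacle is the verification of signs: the symmetric monoidal structure on $\uPic^\Z$ from \cref{prop:determinantclassical} introduces Koszul signs of the form $(-1)^{n_1 n_2}$ and the brutal truncation in \cref{prop:chaincomplexfiltration} produces alternating signs $(-1)^k$, and these must be shown to match the sign conventions in the Turaev definition. This is a direct but careful bookkeeping exercise on the filtered cellular complex, after which the equality $\phi(\Delta_F)=\tau(M;F)$ follows.
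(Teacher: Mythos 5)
Your proposal is correct and follows essentially the same route as the paper: both arguments unwind the trivialization of $\det(\C_\bullet(M;F))$ coming from \cref{thm:pushforwardEulertrivial} by base-changing the free based $k[\pi_1(M)]$-resolution $\C_\bullet(\tilde M;k)$ along $F$, identify the resulting chain complex termwise with $F_0^{\oplus \#A_d}$, and match the induced trivialization with the cell-by-cell formula for the Reidemeister--Turaev torsion. One small slip: the determinant section is produced by \cref{thm:pushforwardEulertrivial} (via $\tens_{M_\B}$), not by \cref{thm:volumeEuler}, which concerns mapping stacks; and the integral transform pairs the nullhomotopy with $F$ rather than $F^\vee$ under the paper's conventions. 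Neither affects the substance.
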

\begin{proof}
Let us unpack the trivialization of $\det(\C_\bullet(M; F))$ from \cref{thm:pushforwardEulertrivial}. The local system $F$ corresponds to a $k[\pi_1(M)]$-module $F_0$, the fiber of $M$ at the basepoint of $M$. Then
\[\C_\bullet(M; F)\cong F_0\otimes_{k[\pi_1(M)]} \C_\bullet(\tilde{M}; k),\]
where $\tilde{M}\rightarrow M$ is the universal cover. The proof of \cref{prop:turaevtrivialization} gives a model of $\C_\bullet(\tilde{M}; k)$ as a chain complex of free based finite rank $k[\pi_1(M)]$-modules. Therefore, this gives a model of $\C_\bullet(M; F)$ as a chain complex whose $d$-th term is $F_0^{\oplus \# A_d}$, where $A_d$ is the set of $d$-cells on $M$. This identifies
\[\det(\C_\bullet(M; F))\cong \det(F_0)^{\chi(M)} = k\]
which gives the trivialization defined in \cref{thm:pushforwardEulertrivial}. But this is precisely the description of the refined torsion from \cite[Section 1.5]{FarberTuraevAbsolute}.
\end{proof}

We may also use the section $\Delta$ to define a volume form on the character stack $\Loc_G(M)$. Before we introduce it, let us consider the following construction. Suppose $\lambda\colon G\rightarrow \GL_1$ is a character of $G$ and $h\in \rH_1(M;\Z)$. Then there is a natural function $\langle h, \lambda\rangle$ on $\Loc_G(M)$ obtained by taking the holonomy of the rank 1 local system determined by $\lambda$ along $h$. For instance, we may apply this construction to the modular character $\Delta_G$ of $G$, i.e. the character of the $G$-representation $\det(\g)$.

\begin{prop}
Suppose $M$ is a finite CW complex equipped with a homology orientation and $G$ an algebraic group. Choose either of the following pieces of data:
\begin{itemize}
\item A $G$-invariant volume form on the Lie algebra $\g$ of $G$.
\item An Euler structure on $M$.
\end{itemize}
Then $\Loc_G(M)$ carries a canonical torsion volume form $\vol_{\Loc_G}$ and
\[\dim\Loc_G(M) = -\chi(M) \dim(G).\]
Changing the Euler structure by $h\in\rH_1(M;\Z)$ changes the volume form by
\[\vol_{\Loc_G}\mapsto \langle h, \Delta_G\rangle \vol_{\Loc_G}\]
and multiplying the volume form on $\g$ by a scalar $A\in k^*$ changes the volume form by
\[\vol_{\Loc_G}\mapsto A^{\chi(M)} \vol_{\Loc_G}.\]
\label{prop:LocGvolumeform}
\end{prop}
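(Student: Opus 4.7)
The plan is to specialize \cref{thm:volumelift} to $X = M_\B$ and $Y = \B G$, since $\Loc_G(M) = \Map(M_\B, \B G)$ by definition. By \cref{prop:Bettiassumptions} the stack $M_\B$ satisfies \cref{mainassumptionduality} (in particular \cref{mainassumptionassembly}), and \cref{prop:CWlift} equips it with a canonical simple structure arising from the CW decomposition of $M$. On the target side, the cotangent complex of $\B G$ is $\bL_{\B G} = \g^*[-1]$, so $\B G$ has pure virtual dimension $-\dim(G)$ and $\det(\bL_{\B G})$ is identified with the $G$-equivariant line $\det(\g)$.

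Next I supply the auxiliary inputs of \cref{thm:volumelift}. The homology orientation on $M$ gives an isomorphism $\det(\C_\bullet(M;k)) \cong k$ and hence, via the canonical comparison $\phi$ of \eqref{eq:Eulermap}, an isomorphism $\det(p_\sharp \cO_{M_\B}) \cong k$, realizing option~(2) of the first choice. For the second choice, a $G$-invariant volume form on $\g$ yields a volume form on $\B G$ via \cref{ex:quotientvolumeform} (applied to $\pt$), whereas an Euler structure on $M$ is tautologically a trivialization of $e(M) \in \C_\bullet(M;\Z)$. With either option in hand \cref{thm:volumelift} outputs a torsion volume form on $\Loc_G(M)$ and the dimension formula
\[
\dim \Loc_G(M) = \dim(\B G)\cdot \chi(M) = -\dim(G)\, \chi(M).
\]

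To obtain the two rescaling identities I use the explicit factorization
\[
\det(\bL_{\Loc_G(M)}) \cong \langle \epsilon(\ev^* \det \bL_{\B G}),\, e(M)\rangle \otimes \det(p_\sharp \cO_{M_\B})^{\otimes \dim(\B G)}
\]
produced inside the proof of \cref{thm:volumelift}. Scaling $\vol_\g$ by $A \in k^*$ scales the induced volume form on $\B G$ by a fixed power of $A$, and the remark immediately following \cref{thm:volumelift} then delivers the overall factor $A^{\chi(M)}$. A change of Euler structure by $h \in \rH_1(M;\Z)$ leaves the second tensor factor untouched and multiplies the trivialization of the first factor by the monodromy of $\ev^*\det(\bL_{\B G})$ along the loop $h$; identifying this line bundle on $\B G$ with $\det(\g)$ and noting that $G$ acts on $\det(\g)$ through the modular character $\Delta_G$, the monodromy is precisely $\langle h, \Delta_G\rangle$.

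The main technical subtlety will be to pin down sign and duality conventions, in particular verifying that the $[-1]$ shift identifies $\det(\bL_{\B G})$ with $\det(\g)$ (carrying $\Delta_G$) rather than with $\det(\g^*)$ (which would flip to $\Delta_G^{-1}$), and that rescaling $\vol_\g$ by $A$ rescales the volume form on $\B G$ by exactly $A$ and not $A^{-1}$. This calibration is what fixes the exponents to $+\chi(M)$ and the holonomy factor to $\langle h, \Delta_G\rangle$ as stated.
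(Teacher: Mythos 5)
Your proposal is correct and takes essentially the paper's route: specialize \cref{thm:volumelift} to $X = M_\B$, $Y = \B G$, with the simple structure from \cref{prop:CWlift} and the homology orientation supplying $\det(p_\sharp\cO_{M_\B})\cong k$, and the rescaling identities falling out of the factorization in \cref{thm:volumelift}'s proof exactly as you describe (the paper packages the Euler--structure case through \cref{prop:turaevtrivialization} and leaves the rescalings implicit, but the content is the same). The sign calibration you flag at the end resolves directly from the definitions: $\bL_{\B G}$ is $\g^*$ concentrated in cohomological degree $1$, so $\det\bL_{\B G}\cong\det(\g)$, and a $G$-invariant volume form on $\g$ (an invariant element of $\det(\g^*)$) is literally the same datum as a trivialization of $\det(\g)$, so rescaling $\vol_\g$ by $A$ rescales $\vol_{\B G}$ by $A$ and hence $\vol_{\Loc_G}$ by $A^{\chi(M)}$, while the holonomy of $\ev^*\det\bL_{\B G}$ along $h$ is the holonomy of the adjoint local system composed with $\det$, namely $\langle h,\Delta_G\rangle$.
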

\begin{proof}
The classifying stack $\B G$ has pure dimension $\dim(\B G) = -\dim(G)$. A $G$-invariant volume form on $\g$ is the same as a volume form on the stack $\B G$. An Euler structure together with a homology orientation on $M$ gives rise to an Euler structure on $M_\B$ by \cref{prop:turaevtrivialization}. So, the result follows from \cref{thm:volumelift}.
\end{proof}

\begin{remark}
Note that there is a $G$-invariant volume form on $\g$ if, and only if, $G$ is unimodular, i.e. $\Delta_G = 1$. An Euler structure on $M$ exists if, and only if, $\chi(M) = 0$.
\end{remark}

Let us now describe some examples of computation of the volume form $\vol_{\Loc_G}$ on $\Loc_G(M)$. Suppose $G$ carries a $G$-invariant volume form on the Lie algebra $\g$ of $G$. Then there is a volume form $\vol_G$ on $G$ which is uniquely determined by the property that it is bi-invariant and which coincides with the chosen volume form on $\g=\T_e G$ at the unit. It induces a volume form on $G^n$ and a quotient volume form $\vol_{[G^n/G]}$ on $[G^n/G]$ by \cref{ex:quotientvolumeform}.

\begin{example}
Consider a wedge of $n$ circles
\[V_n = (S^1)^{\vee n}.\]
It has a standard CW structure with one $0$-cell $p$ and $n$ $1$-cells. Choose paths from each $1$-cell to the $0$-cell given anticlockwise with respect to the standard orientation of $S^1$ (see \cref{fig:circle}). This gives a model of the cellular chain complex $\C_\bullet(V_n; \cL)$ of a local system $\cL$ as
\[\cL_p^{\oplus n}\longrightarrow \cL_p,\]
where the differential is given by the sum of monodromies. A choice of ordering of the circles induces a homology orientation on $V_n$. We have
\[\Loc_G(V_n) \cong [G^n / G],\]
where $G$ acts on $G^n$ by a simultaneous conjugation. We claim that the torsion volume form $\vol_{\Loc_G}$ is given by the quotient volume form on $[G^n/G]$.

Indeed, let $f\colon G^n\rightarrow [G^n/G]$ be the projection. The torsion volume form on $\Loc_G(V^n)$ has the following description. First, we may identify
\[f^*\bL_{\Loc_G(V_n)}\cong \left((\cO_{G^n}\otimes \g^*)^{\oplus n}\rightarrow \cO_{G^n}\otimes \g^*\right),\]
where we use the left-invariant trivialization of $\bL_G$ (see \cref{sect:circle} for more details). The torsion volume form $\vol_{\Loc_G}$ is obtained by trivializing the determinant of $\bL_{\Loc_G(V_n)}$ using the trivialization of the determinant of $\g^*$ given by the chosen volume $G$-invariant volume form on $\g$ which is precisely the description of the quotient volume form $\vol_{[G^n/G]}$ on $[G^n/G]$.
\label{ex:wedgeofcircles}
\end{example}

\begin{example}
Consider a closed oriented surface $\Sigma$ of genus $g$. Consider a decomposition $\Sigma = \Sigma^\circ\cup_{S^1} D$ obtained by removing a disk from $\Sigma$. Then $\Sigma^\circ$ is homotopy equivalent to $V_{2g}$. This equivalence is compatible with simple structures as the Whitehead group of the free group $\pi_1(V_{2g})$ is zero. Moreover, by \cref{prop:gluedCWsimplestructures} the simple structure on $\Sigma_\B$ (coming from its structure as a finite CW complex) is glued from the simple structures on $\Sigma^\circ_\B$ and $D_\B$ along $S^1_\B$. By \cref{prop:gluedvolonmappingstacks} we obtain that the torsion volume form on $\Loc_G(\Sigma)$ is glued from the torsion volume forms on $\Loc_G(\Sigma^\circ)$ and $\Loc_G(D)$. Namely, we have
\[\Loc_G(\Sigma) \cong [G^{2g}/G]\times_{[G/G]} \B G\]
and we have shown that the volume from on $\Loc_G(\Sigma)$ is glued from $\vol_{[G^{2g}/G]}$, $\vol_{\B G}$ and $\vol_{[G/G]}$.
\label{ex:surfacevol}
\end{example}

\begin{example}
Consider a closed oriented 3-manifold $M$ together with a Heegaard splitting $M=N_1\cup_\Sigma N_2$, where $N_1$ and $N_2$ and handlebodies and where $\Sigma$ has genus $g$. Then $N_i$ are homotopy equivalent to $V_g$ and so by \cref{prop:gluedvolonmappingstacks} the torsion volume form on
\[\Loc_G(M)\cong [G^g/G]\times_{\Loc_G(\Sigma)} [G^g/G]\]
is glued from $\vol_{[G^g/G]}$ and $\vol_{\Loc_G(\Sigma)}$ (which was described in \cref{ex:surfacevol}).
\end{example}

\subsection{Symplectic volume forms on mapping stacks of surfaces}

Assume $2$ is invertible in $k$ throughout this section. Let $\Sigma$ be a closed oriented surface; the fundamental class $[\Sigma]\in\rH_2(\Sigma; \Z)$ endows $\Sigma_\B$ with an $\cO$-orientation of degree $2$ by \cref{prop:fundamentalclassorientation}. In this section we consider the following two closely related settings:
\begin{itemize}
    \item $(Y, \omega_Y)$ an $n$-shifted symplectic stack for $n\equiv 2\pmod 4$. By the AKSZ construction from \cite[Theorem 2.5]{PTVV} there is a natural $(n-2)$-shifted symplectic structure on $\Map(\Sigma_\B, Y)$. As $(n-2)$ is divisible by $4$ by assumption, we obtain the symplectic volume form $\vol_{\Map}$ on $\Map(\Sigma_\B, Y)$ as explained in \cref{sect:symplecticvolume}.
    \item $R$ is a (discrete) commutative $k$-algebra with $S=\Spec R$. $V\in\Perf^+(S\times \Sigma_\B)^{C_2}$ is a local system of perfect complexes of $R$-modules over $\Sigma$ equipped with a nondegenerate symmetric bilinear pairing. By \cref{sect:symplecticvolume} we obtain a symplectic volume form, which is an invertible element $\vol_{p_\sharp V}\in\det(p_\sharp V)$. We will say $V$ is \defterm{unimodular} if it comes with a trivialization of $\det(V)$ squaring to the canonical one provided by the nondegenerate pairing on $\det(V)$.
\end{itemize}

The two settings are connected by considering a morphism $f\colon S\rightarrow \Map(\Sigma_\B, Y)$ classifying a map $\tilde{f}\colon S\times \Sigma_\B\rightarrow Y$ and setting $V = \tilde{f}^* \bT_Y[-n/2]$ with the symmetric bilinear pairing induced by the symplectic structure $\omega_Y$.

Next, let us construct a torsion volume form in the two settings. We consider the following data that goes into its construction using \cref{thm:volumelift}:
\begin{itemize}
    \item The intersection pairing gives a symplectic structure on $\rH_\bullet(\Sigma;\Z)[-1]$ and hence the symplectic volume form gives a homology orientation $o$.
    \item The mod 2 Euler class $e(\Sigma)\in\C_\bullet(\Sigma; \Z/2)$ is the second Stiefel--Whitney class $w_2(\Sigma)$. As any oriented surface $\Sigma$ admits a spin structure, $w_2(X) = 0\in\rH_2(\Sigma; \Z/2)$. A trivialization of $e(X)$ on the chain level is the same as a spin structure $s$ on $\Sigma$.
    \item Using the $n$-shifted symplectic structure on $Y$ (where we recall that $n$ is assumed to be even) we get an isomorphism $\bT_Y\rightarrow \bL_Y[n]$ whose determinant defines a squared volume form on $Y$, i.e. a trivialization of $\det(\bL_Y)^{\otimes 2}$, as described in \cref{sect:Poincarevolumeform}.
\end{itemize}

By \cref{thm:volumelift} we obtain a torsion volume form $\tau_s(Y)$ on $\Map(\Sigma_\B, Y)$, where we emphasize the dependence on the spin structure $s$. Similarly, we obtain an invertible element $\tau_S(V)\in\det(p_\sharp V)$.

\begin{example}
Kasteleyn orientations give a convenient method to describe spin structures on a surface combinatorially as explained in \cite{CimasoniReshetikhin} (we refer to that paper for details on dimer configurations and Kasteleyn orientations). Consider a finite CW structure on $\Sigma$ and let the graph $\Gamma\subset \Sigma$ be the corresponding 1-skeleton. Then the Euler class is
\[e(\Sigma) = \sum_{v\in V} x_v - \sum_{e\in E} x_e + \sum_{f\in F} x_f\in\C_0(\Sigma; \Z),\]
where $V,E,F$ are the sets of $0$-, $1$- and $2$-cells and $x_{\dots}$ are some points in the interiors of the corresponding cells (as the cells are contractible, the precise location is irrelevant).

Choose a dimer configuration $D$ on $\Gamma$, i.e. a collection of edges in $\Gamma$ such that each vertex of $\Gamma$ is adjacent to exactly one edge in $D$. (There are combinatorial obstructions to the existence of dimer configurations; for instance, the number of $0$-cells has to be even.) Next, choose a Kasteleyn orientation $K$ on $\Gamma$. We are now going to define a class $e_{1/2}(\Sigma)\in\C_0(\Sigma; \Z)$ such that $2e_{1/2}(\Sigma) - e(\Sigma)\in\C_0(\Sigma; \Z)$ is the boundary of a 1-chain. Split the set of vertices $V=V^+\coprod V^-$ into even ones and odd ones as follows: given an edge $e\in D$ which flows towards a vertex $v\in V$ (with respect to the orientation $K$) we say $v$ is even; otherwise, $v$ is odd. Each edge $e$ borders two faces $f_1,f_2$ which are distinguished using the Kasteleyn orientation: $\epsilon^K_{f_1}(e) = -\epsilon^K_{f_2}(e)$. Let $n_f$ be the number of edges $e\in\partial f$ such that $\epsilon^K_f(e) = -1$ (this number is odd since the orientation is Kasteleyn). Let
\[e_{1/2}(X) = \sum_{v\in V^+} x_v +  \sum_{f\in F}\frac{1-n_f}{2} x_f.\]
The 1-chain whose boundary is $2e_{1/2}(X) - e(X)$ is given by the sum of oriented edges in $e\in D$ (which flows odd vertices to even vertices) and the paths from $x_e$ for each $e\in E$ into the face $f\in F$ that it borders with $\epsilon_f^K(e) = -1$. This combinatorial description of a spin structure gives a canonical element in $\det\C_\bullet(\Sigma; V)$ for any orthogonal local system $V$ over $\Sigma$ and hence it allows one to describe the torsion volume form.
\end{example}

\ 

Consider the ratio
\[\sigma_s(Y) = \frac{\vol_{\Map}}{\tau_s(Y)},\]
which is an invertible function on $\Map(\Sigma_B, Y)$. Pulling back this function to a derived  affine scheme $S$ along $f\colon S\rightarrow \Map(\Sigma_\B, Y)$, corresponding to $\tilde{f}\colon S\times \Sigma_\B\rightarrow Y$, we obtain
\[\sigma_s(V) = \frac{\vol_{p_\sharp V}}{\tau_s(V)},\]
where $v=\tilde{f}^* \bT_Y[-n/2]$. The goal of this section is to describe this ratio.

\begin{remark}
Note that we make a simplifying assumption that $R$ is discrete. So, we will describe the restriction of the function $\sigma_s(Y)$ to the underlying classical stack $t_0(\Map(\Sigma_\B, Y))$.
\end{remark}

We begin by establishing elementary properties of the function $\sigma_s(V)$. The determinant line of $W$ carries a nondegenerate pairing and hence it defines an element
\[\det(V)\in\rH^1(\Sigma; \mu_2(R)).\]
Consider the natural pairing
\[\langle -, -\rangle\colon \rH^1(\Sigma; \mu_2(R))\otimes_\Z \rH_1(\Sigma; \Z/2)\longrightarrow \mu_2(R)\otimes_\Z \Z/2\rightarrow \mu_2(R),\]
where the first map is the pairing between cohomology and homology and the second map is given by $\sigma\in\mu_2(R), n\in\Z/2\mapsto \sigma^n\in\mu_2(R)$.

\begin{prop}
Consider a local system $V\in\Perf^+(S\times \Sigma_\B)^{C_2}$ as above.
\begin{enumerate}
    \item $\sigma_s(V)\in\mu_2(R)$.
    \item Given two spin structures $s_1, s_2$ on $\Sigma$ whose difference is an element $h\in\rH_1(\Sigma; \Z/2)$ we have
    \[\sigma_{s_2}(V) = \langle \det(V), h\rangle \sigma_{s_1}(V).\]
    \item For a pair of local systems $V_1, V_2$ as above
    \[\sigma_s(V_1\oplus V_2) = \sigma_s(V_1)\sigma_s(V_2).\]
    \item Consider the constant local system $\underline{R}^{\oplus n}$ of rank $n$ with the symmetric bilinear pairing $(e_i, e_j) = \delta_{ij}$. Then
    \[\sigma_s(V\oplus \underline{R}^{\oplus n}) = \sigma_s(V).\]
\end{enumerate}
\label{prop:symplectictorsionsign}
\end{prop}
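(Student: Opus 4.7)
The plan is to handle each of the four parts in turn, leveraging the squaring formula of \cref{thm:volumeliftdualitysquared}, naturality of the construction of \cref{thm:volumelift} in the input data, and multiplicativity of the symplectic volume form established in \cref{prop:symplectictimesorthogonal}.

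For (1), I would apply \cref{thm:volumeliftdualitysquared} with $X=\Sigma_\B$ (of fundamental class degree $d=2$) and $Y$ the $n$-shifted symplectic target whose pullback gives rise to $V$. The theorem computes $(\tau_s(V),\tau_s(V))_{\omega}=((o,o)_{[X]})^{\dim V}$. The key observation is that the homology orientation $o$ was chosen to be the symplectic volume form on $\rH_\bullet(\Sigma;k)[-1]$ coming from Poincar\'e duality, so by the very definition of the symplectic volume form from \cref{thm:symplecticvolumeform} one has $(o,o)_{[X]}=1$. Combined with the fact that $\vol_{p_\sharp V}$ also squares to the canonical pairing induced by the symplectic structure on $p_\sharp V$, the ratio $\sigma_s(V)^2=1$, establishing (1).

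For (2), the plan is to trace through the construction of $\tau_s(V)$ in the proof of \cref{thm:volumelift}: among all the ingredients (the homology orientation $o$, the squared volume form on $V$, and the trivialization of $e(\Sigma)\in \C_\bullet(\Sigma;\Z/2)$), only the last depends on the spin structure. The only nontrivial step will be identifying the piece of the construction that couples the $e(\Sigma)$-trivialization to $V$: it is precisely the pairing $\langle\epsilon(\ev^*\det(\bL_Y)),e(X)\rangle$ in $\mu_2$, and modifying $s$ by $h\in \rH_1(\Sigma;\Z/2)$ translates the trivialization along $h$, producing the holonomy $\langle\det(V),h\rangle$. Since $\vol_{p_\sharp V}$ is spin-independent, this is also how $\sigma_s$ changes. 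For (3), both volume forms are multiplicative under orthogonal direct sum: for the torsion volume form this follows because every step in \cref{thm:volumelift} (the determinant map $\detgr$, the assembly $\alpha$, and the pairings $\langle-,-\rangle$) is a map of $\bE_\infty$ spaces, hence additive in the $K$-theoretic input $V$; for the symplectic volume form it is \cref{prop:symplectictimesorthogonal} applied to the trivial orthogonal line paired against $V_1,V_2$, or alternatively the $\bE_\infty$-structure of \cref{thm:symplecticvolumeform} directly.

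For (4), by (3) it suffices to prove $\sigma_s(\underline{R})=1$, and then to raise to the power $n$. The trivial rank one orthogonal local system is $\underline{R}=p^*R$, so $p_\sharp V\cong \C_\bullet(\Sigma;k)\otimes R$ carries the symplectic pairing pulled back from $\rH_\bullet(\Sigma;k)[-1]\otimes R$, whose symplectic volume form is exactly $o\otimes 1_R$ by construction. On the other hand, since $\det(V)=\underline{R}$ is canonically trivial as an orthogonal line, the pairing $\langle\det(V),-\rangle$ appearing in the construction of $\tau_s$ is trivial, and the output of \cref{thm:volumelift} reduces to the trivialization of $\det(p_\sharp \cO_X)$ supplied by $o$. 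Hence $\tau_s(\underline{R})=o\otimes 1_R=\vol_{p_\sharp V}$ and $\sigma_s(\underline{R})=1$. The main obstacle throughout will be bookkeeping in part (2), where one must track exactly how the trivialization of the mod-$2$ Euler class interacts with the $\mu_2$-local system $\det(V)$ through the pairing $\langle-,-\rangle$ in \cref{thm:volumelift}; all other parts are formal consequences of the multiplicative structure of the construction.
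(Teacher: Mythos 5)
Your proof follows the paper's argument essentially step for step: (1) is the same application of \cref{thm:volumeliftdualitysquared} together with $(o,o)_{[\Sigma]}=1$ and the defining property of $\vol_{p_\sharp V}$; (2) identifies the right mechanism (only the spin-dependent trivialization of $e(\Sigma)$ couples to $\det(V)$), though the paper makes this concrete by encoding the spin structure as a pair $(e^s_{1/2}(\Sigma),h_s)$ with $2e^s_{1/2}(\Sigma)-e(\Sigma)=\partial h_s$ and reading off the factor $\langle\det(V),h_{s_1}-h_{s_2}\rangle$; (3) and (4) are as in the paper. One small correction in (3): \cref{prop:symplectictimesorthogonal} is a statement about tensor products of an orthogonal and a symplectic object and does not itself give direct-sum multiplicativity of the symplectic volume form, so your primary citation there misfires; the alternative you offer — the $\bE_\infty$-compatibility built into \cref{thm:symplecticvolumeform} — is exactly what the paper uses and is what you should invoke. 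In (4) you reduce to rank one and then extend by (3), while the paper handles $\underline{R}^{\oplus n}$ in one shot via \cref{prop:symplectictimesorthogonal}; the two are equivalent, and either way the nontrivial input is that the symplectic volume form of $M\otimes\C_\bullet(\Sigma;k)$ factors as expected.
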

\begin{proof}
To show that $\sigma_s(V)^2 = 1$, we have to show that the squared volume forms $(\vol_{p_\sharp V})^2$ and $\tau_s(V)^2$ coincide. For this we have to prove that
\[(\tau_s(V), \tau_s(V)) = 1,\]
where $(-, -)\colon \det(p_\sharp V)\otimes \det(p_\sharp V)\rightarrow R$ is the nondegenerate pairing induced on $\det(p_\sharp V)$ from the symplectic structure on $p_\sharp V$. The left-hand side was computed in \cref{thm:volumeliftdualitysquared} to be
\[\langle o, o\rangle_{[\Sigma]}^{\chi(V)}.\]
But since the homology orientation $o$ was chosen to be given by the symplectic volume form, $\langle o, o\rangle_{[\Sigma]} = 1$. This proves the first claim.

To show that $\sigma_{s_2}(V) = \langle \det(V), h\rangle \sigma_{s_1}(V)$ we have to show that
\[\tau_{s_2}(V) = \langle \det(V), h\rangle \tau_{s_1}(V).\]
A trivialization of the mod 2 Euler class (i.e. a spin structure $s$) is the same as a $0$-chain $e^s_{1/2}(\Sigma)\in\C_0(\Sigma; \Z)$ and a $1$-chain $h_s\in\C_1(\Sigma; \Z)$ which satisfy $2e^s_{1/2}(\Sigma) - e(\Sigma) = \partial h_s$. In this case $h = h_{s_1}-h_{s_2}\in\rH_1(\Sigma; \Z/2)$. The volume form $\tau_s(V)\in\det(p_\sharp V)$ is constructed using the isomorphism
\[\det(p_\sharp V)\cong \langle \det(V), e(X)\rangle \otimes \det(p_\sharp \cO_X)^{\otimes \chi(V)}\cong \langle \det(V)^{\otimes 2}, e^s_{1/2}(X)\rangle \otimes \det(p_\sharp \cO_X)^{\otimes \chi(V)},\]
where the first isomorphism is the canonical isomorphism depending on the simple structure constructed using \cref{thm:indextheorem} and the second isomorphism is constructed from the 1-chain $h_s$. Thus, the two volume forms constructed using the two spin structures $s_1, s_2$ differ by a factor of $\langle \det(V), h_{s_1}-h_{s_2}\rangle$. This proves the second claim.

By \cref{thm:symplecticvolumeform} we have $\vol_{p_\sharp V_1}\otimes \vol_{p_\sharp V_2}\mapsto \vol_{p_\sharp(V_1\oplus V_2)}$ under the natural isomorphism
\[\det(p_\sharp V_1)\otimes \det(p_\sharp V_2)\cong \det(p_\sharp(V_1\oplus V_2)).\] Thus, we have to show that under the same isomorphism $\tau_s(V_1)\otimes \tau_s(V_2)\mapsto \tau_s(V_1\oplus V_2)$. I.e. the construction of the torsion volume form $\tau_s(-)$ is additive in $V$. This follows by analyzing each step of the construction:
\begin{itemize}
    \item By \cref{thm:indextheorem} we have a homotopy $[\pi_\sharp W]\sim \langle \epsilon([V]), e_\K(X)\rangle$ which is additive in $V$ as all maps in the claim are maps of spectra.
    \item By \cref{thm:determinantmap} the determinant is an $\bE_\infty$ ring map $\detgr\colon \uK\rightarrow \uPic^\Z$ which shows that the isomorphism $\det(p_\sharp V)\cong \langle V, e(X)\rangle \otimes \det(p_\sharp \cO_X)^{\chi(V)}$ is additive in $V$.
\end{itemize}
This proves the third claim.

To simplify the notation, denote by $M=R^{\oplus n}$ the free $R$-module with the symmetric bilinear pairing $(e_i, e_j) = \delta_{ij}$ and let $\underline{M}$ be the corresponding constant local system over $\Sigma$. By the third claim to show the fourth claim it is enough to prove that $\sigma_s(\underline{M}) = 1$. We have $p_\sharp(\underline{M})\cong M\otimes_k\C_\bullet(\Sigma; k)$, where the symplectic structure is the product of the symplectic structure on $\C_\bullet(\Sigma; k)$ and the symmetric pairing on $M$. By \cref{prop:symplectictimesorthogonal} under the natural isomorphism
\[\det(M)^{\chi(\Sigma)}\otimes_k \det(\C_\bullet(\Sigma; k))^n\longrightarrow \det(M\otimes_k \det(\C_\bullet(\Sigma; k)))\cong \det(p_\sharp \underline{M})\]
we have
\[(e_1\wedge \dots\wedge e_n)^{\chi(\Sigma)}\otimes o^n\mapsto \vol_{p_\sharp\underline{M}}.\]
But the torsion volume form $\tau_s(\underline{M})$, by definition, is the image of the same element under this isomorphism.
\end{proof}

By the previous claim, if $V$ is unimodular, then $\sigma_s(V)$ is independent of the spin structure; in this case we denote it by $\sigma(V)$.

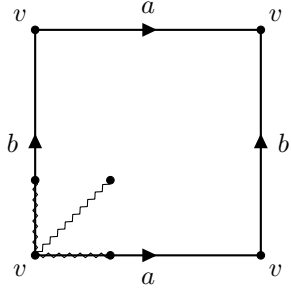
\begin{figure}[ht]
\begin{tikzpicture}
\draw[fill] (0, 0) circle (0.05);
\draw (-0.2, -0.2) node {$v$};
\draw[fill] (3, 0) circle (0.05);
\draw (3.2, -0.2) node {$v$};
\draw[fill] (3, 3) circle (0.05);
\draw (3.2, 3.2) node {$v$};
\draw[fill] (0, 3) circle (0.05);
\draw (-0.2, 3.2) node {$v$};
\draw[thick] (0, 0) -- (3, 0) node[currarrow, pos=0.5]{};
\draw (1.5, -0.3) node {$a$};
\draw[thick] (3, 0) -- (3, 3) node[currarrow, pos=0.5,sloped]{};
\draw (3.3, 1.5) node {$b$};
\draw[thick] (3, 3) -- (0, 3) node[currarrow, pos=0.5]{};
\draw (1.5, 3.3) node {$a$};
\draw[thick] (0, 0) -- (0, 3) node[currarrow, pos=0.5,sloped]{};
\draw (-0.3, 1.5) node {$b$};
\draw [line join=round, decorate, decoration={zigzag, segment length=4, amplitude=.9}] (0,0) -- (1, 1);
\draw [line join=round, decorate, decoration={zigzag, segment length=4, amplitude=.9}] (0,0) -- (0, 1);
\draw [line join=round, decorate, decoration={zigzag, segment length=4, amplitude=.9}] (0,0) -- (1, 0);
\draw[fill] (1, 1) circle (0.05);
\draw[fill] (1, 0) circle (0.05);
\draw[fill] (0, 1) circle (0.05);
\end{tikzpicture}
\caption{Torus with a chosen Euler structure.}
\label{fig:torus}
\end{figure}

\begin{example}
Consider the 2-torus $\Sigma=T^2$ with the CW structure with a unique $0$-cell $v$, two $1$-cells $a,b$ and a unique $2$-cell $f$. We orient them in the standard way as shown in the \cref{fig:torus}. With this orientation $f\in\rH_2(\Sigma; \Z)$ represents the fundamental class and the intersection pairing is $a\cdot b = 1$. Thus, the symplectic volume form for $\C_\bullet(\Sigma; k)$ provides a homology orientation which is
\[v\otimes (a\wedge b)^{-1}\otimes f\in\det\rH_\bullet(\Sigma; \Z).\]
The Euler structure (shown in the picture by squiggly lines) allows us to identify the chain complex $\C_\bullet(\tilde{M}; \Z)$ of chains on the universal cover, as an $R=k[\pi_1(T^2)]=k[x^{\pm 1}, y^{\pm 1}]$-module, with the free graded $R$-module on generators $v,a,b,f$ and with the differential
\begin{align*}
\partial f &= (1-y)a + (x-1)b \\
\partial a &= (x-1) v \\
\partial b &= (y-1) v
\end{align*}
The chosen Euler structure induces a spin structure on $T^2$ that we denote by $s$. A rank 1 local system equipped with a nondegenerate symmetric bilinear pairing is equivalently a $\mu_2$-local system $\cL=\cL_{\epsilon_x\epsilon_y}$ over $T^2$ which is specified by a pair of signs $\epsilon_x,\epsilon_y\in\mu_2$ given by the monodromies around the $a$ and $b$ cycle. By construction
\[\sigma_s(\cL_{++}) = 1.\]
Let us now show that
\[\sigma_s(\cL_{+-}) = \sigma_s(\cL_{-+}) = \sigma_s(\cL_{--}) = -1.\]
All these local systems are acyclic. In this case it is convenient to use the isomorphism \eqref{eq:Eulermap} to identify
\[\phi\colon \det\C_\bullet(\Sigma; \cL)\longrightarrow k.\]
Under this isomorphism $\vol_{p_\sharp \cL}$ goes to $1$ (the symplectic volume form of the zero vector space). Therefore,
\[\sigma_s(\cL) = \phi(\tau_s(\cL)).\]
Assume for simplicity that $\epsilon_x\neq 1$ (i.e. we consider the case $\cL_{-+}$ or $\cL_{--}$). We will use the formulas and the notation from \cite[Section 2.2]{FarberTuraevPR} to compute $\phi(\tau_s(\cL))$. Choose $b_1 = \{a\}$ and $b_2=\{f\}$. We have
\[[\partial(b_1) / v] = x-1,\qquad [\partial(b_2),b_1 / a,b] = \det\begin{pmatrix} 1-y & x-1 \\ 1 & 0\end{pmatrix} = 1-x,\qquad [b_2 / f] = 1.\]
Therefore,
\[\phi(\tau_s(\cL)) = -1.\]
\label{ex:toruscomputation}
\end{example}

\begin{remark}
For the surface $\Sigma$ the sign $\sigma_s(V)$ is diffeomorphism-invariant, i.e. for any diffeomorphism $f\colon \Sigma\rightarrow \Sigma$ we have
\[\sigma_{f^* s}(f^* V) = \sigma_s(V).\]
By \cref{prop:symplectictorsionsign} we get
\[\sigma_s(f^* W) = \langle \det(V), f^* s - s\rangle \sigma_s(V).\]
There is a unique (odd) diffeomorphism-invariant spin structure on $T^2$ and this equality as well as the computation of the sign $\sigma_s(\cL_{\epsilon_x\epsilon_y})$ performed in \cref{ex:toruscomputation} shows that the chosen spin structure $s$ is diffeomorphism-invariant.
\label{rmk:torusspinstructure}
\end{remark}

As explained in \cite{Bass} (where $\GW^+_n(R)$ is denoted by $KO_n(R)$) there is a canonical homomorphism
\[w_2\colon \GW^+_2(R)\longrightarrow \mu_2(R)\]
which is constructed by stabilizing the spin extension of the special orthogonal group. Equivalently we may think about it as a morphism of group prestacks
\[w_2\colon \tau_{\geq 2}\uGW^+\longrightarrow \B^2\mu_2.\]
We can apply this construction to define the second Stiefel--Whitney class in the two settings we consider:
\begin{itemize}
    \item Let $(Y, \omega_Y)$ be an $n$-shifted symplectic stack with $n\equiv 2\pmod 4$. Assume $\dim(Y) = 0$ and suppose $Y$ is equipped with a volume form $\vol_Y$ such that $(\vol_Y, \vol_Y)_{\omega_Y} = 1$. The shifted tangent complex defines a morphism $[\bT_Y[-n/2]]\colon Y\rightarrow \tau_{\geq 0}\uGW^+$. Let $\sh(-)$ be the \'etale sheafification of a prestack. Then the tangent complex defines a morphism
    \[[\bT_Y[-n/2]]\colon Y\longrightarrow \sh(\tau_{\geq 0} \uGW^+).\]
    By \cref{prop:GW1type} the morphism of derived stacks
    \[\detgr\colon \sh(\tau_{\geq 0}\uGW^+)\longrightarrow \uPic^{\Z, +, C_2}\]
    is an equivalence on 1-truncations, so using the volume form we lift the tangent complex to a morphism
    \[[\bT_Y[-n/2]]\colon Y\longrightarrow \sh(\tau_{\geq 2} \uGW^+).\]
    We define
    \[w_2(Y) = w_2(\bT_Y[-n/2])\colon Y\longrightarrow \B^2\mu_2.\]
    \item Let $V\in\Perf^+(S\times \Sigma_\B)^{C_2}$ be a unimodular local system with $\chi(V)=0$, where $S=\Spec R$. We have $\epsilon([V])\in\Omega^\infty\C^\bullet(\Sigma; \tau_{\geq 0}\GW^+(R))$. Again working \'etale locally on $R$ and using that $R\mapsto \rH^2(\Sigma; \mu_2(R))$ is an \'etale sheaf, we get a class
    \[w_2(V) = w_2(\epsilon([V]))\in\rH^2(\Sigma; \mu_2(R)).\]
\end{itemize}

\begin{remark}
More generally, if $Y$ is of pure dimension $\dim(Y)$ or $V$ has a constant rank $\chi(V)$, we define the second Stiefel--Whitney class by $w_2(V) = w_2(V \oplus \cO^{-\chi(V)})$, where $\cO^{-\chi(V)}$ is the (virtual) trivial local system of rank $-\chi(V)$.
\end{remark}

\begin{example}
Let $G$ be a connected algebraic group over a field $k$ whose Lie algebra $\g$ is equipped with a nondegenerate $G$-invariant symmetric bilinear pairing. Then the classifying stack $Y = \B G$ has a $2$-shifted symplectic structure. The adjoint representation defines a homomorphism $\rho\colon G\rightarrow \SO(\g)$. The pullback of the spin extension of $\SO(\g)$ to $G$ defines a homomorphism $\pi_1(G)\rightarrow \mu_2$ and, correspondingly, a morphism $w_2\colon \B G\rightarrow \B^2\mu_2$. For instance, for $G=\PGL_2$ the homomorphism $\pi_1(\PGL_2)\rightarrow \mu_2$ is nontrivial.
\end{example}

We are ready to state the main result of this section comparing the symplectic volume form and the torsion volume form.

\begin{thm}
Let $\Sigma$ be a closed oriented surface.
\begin{enumerate}
    \item Let $Y$ be an $n$-shifted symplectic stack for $n\equiv 2\pmod 4$, $\vol_{\Map}$ the symplectic volume form on $\Map(\Sigma_\B, Y)$ and $\tau(Y)$ the torsion volume form. Choose a volume form $\vol_Y$ on $Y$ such that $(\vol_Y, \vol_Y)_{\omega_Y} = 1$ and assume that $Y$ is of pure dimension $\dim(Y)$. Then, after restriction to the classical truncation $t_0(\Map(\Sigma_\B, Y))$, we have
    \[\vol_{\Map} = \left(\int_\Sigma \ev^* w_2(Y)\right) \tau(Y).\]
    \item Let $V\in \Perf^+(S\times \Sigma_\B)^{C_2}$ be a unimodular local system, where $S=\Spec R$, and assume that $\chi(V)$ is constant. Then
    \[\vol_{p_\sharp V} = \left(\int_\Sigma w_2(V)\right)\tau(V).\]
\end{enumerate}
\label{thm:symplectictorsionvolumeform}
\end{thm}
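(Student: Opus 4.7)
The plan is first to reduce (1) to (2). For a derived affine scheme $S$ and a point $f\colon S\to\Map(\Sigma_\B,Y)$ classifying $\tilde f\colon S\times\Sigma_\B\to Y$, the perfect complex $V:=\tilde f^*\bT_Y[-n/2]$ with the symmetric bilinear pairing induced by $\omega_Y$ is unimodular thanks to the hypothesis $(\vol_Y,\vol_Y)_{\omega_Y}=1$. By \cref{prop:mappingcotangent} and the AKSZ construction, $f^*\bL_{\Map(\Sigma_\B,Y)}$ is canonically isomorphic to $p_\sharp V$ (up to shift) compatibly with the induced symplectic structures, so $f^*\vol_{\Map}=\vol_{p_\sharp V}$. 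Naturality of the construction in \cref{thm:volumelift} gives $f^*\tau(Y)=\tau(V)$, while $f^*\ev^*w_2(Y)=w_2(V)$ intertwines the fiber integration $\int_\Sigma$. Hence (1) follows pointwise from (2).

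For (2), set $\sigma(V):=\vol_{p_\sharp V}/\tau(V)$. By \cref{prop:symplectictorsionsign}(1) we have $\sigma(V)\in\mu_2(R)$; part (2) of that proposition shows $\sigma(V)$ is independent of the spin structure once $\det V$ is trivialized; parts (3)--(4) show $\sigma$ is additive under $\oplus$ and unchanged by adding constant orthogonal local systems. The invariant $V\mapsto\int_\Sigma w_2(V)$ has the same three properties tautologically.

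The central step is to recognize both invariants as arising from two nullhomotopies of a single universal composite. Unwinding the definitions of \cref{sect:symplecticvolume,sect:Poincarevolumeform}, each of $\vol_{p_\sharp V}$ and $\tau(V)$ provides a nullhomotopy of
\[\C^\bullet(\Sigma_\B;\tau_{\geq 0}\uGW^+)\xrightarrow{p_\sharp}\tau_{\geq 0}\uGW^-\xrightarrow{\detgr}\uPic^{\Z/2,-,C_2}\]
on unimodular classes: the first via \cref{thm:symplecticvolumeform}, the second via the factorization of the pushforward through the assembly and coassembly maps provided by the simple structure on $\Sigma_\B$ compatible with Poincar\'e duality together with \cref{thm:indextheoremwithPoincare}. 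After trivializing the rank and determinant components using \cref{prop:GW1type}, the difference of these two nullhomotopies defines a natural transformation $\tau_{\geq 2}\uGW^+|_{\Sigma_\B}\to\B\mu_2$ of \'etale-sheafified prestacks.

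The hard part is pinning this transformation down and identifying it with $w_2$. Since both sides are additive natural $\mu_2(R)$-valued invariants on the unimodular orthogonal $K$-theory of $\Sigma$, it suffices by universality to match them on a single test example with $\int_\Sigma w_2\neq 0$. Concretely, I would evaluate both sides on a flat unimodular rank-three orthogonal local system over a surface with nontrivial $w_2$, computing $\tau(V)$ explicitly from the Reidemeister--Turaev description in \cref{sect:Reidemeister} and $\vol_{p_\sharp V}$ from the induced symplectic form on $p_\sharp V$. The normalization can be cross-checked against the sign computation in \cref{ex:toruscomputation} by means of the product formula of \cref{prop:symplectictimesorthogonal}.
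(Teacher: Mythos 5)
Your reduction of (1) to (2) matches the paper's, and your general framing — realizing both sides as nullhomotopies of a common composite through $\uGW$, differencing them to obtain a natural $\mu_2$-valued transformation, then testing on a single example — is the right skeleton. However, the sentence ``Since both sides are additive natural $\mu_2(R)$-valued invariants on the unimodular orthogonal $K$-theory of $\Sigma$, it suffices by universality to match them on a single test example with $\int_\Sigma w_2\neq 0$'' is where the argument has a genuine gap, and it is exactly the gap the paper works to close.

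The issue is that there are \emph{many} additive $\mu_2$-valued natural transformations out of $\tau_{\geq 2}\uGW^+$ paired against a chain-level class on $\Sigma$: a priori, the difference of nullhomotopies could depend on more of the simple structure than the mod-$2$ fundamental class, and even granting that it factors through $\rH^2(\Sigma;\uGW^+_2)\otimes \rH_0(\Sigma;?)$, you still need to show that the relevant ``$?$'' is only $\Z/2$ and that the target homomorphism $\GW^+_2\to\mu_2$ has exactly one nonzero option. The paper supplies this in two steps you omit. First, \cref{prop:symplectictimesorthogonal} shows the difference vanishes on the image of the connective part $\tau_{\geq 0}\GW^-(k)$ of the Euler class, so it factors through $\rH_0(\Sigma;\tau_{\leq -1}\GW^-(k))$; identifying the negative homotopy with $L$-theory (and $\rL_1(F)=0$, $\rL_0(F)=W(F)\cong\Z/2$ for $F$ algebraically closed) collapses this to the ordinary mod-$2$ fundamental class $[\Sigma]\in\rH_2(\Sigma;\Z/2)$. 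Second, \cref{lm:GW2} identifies $\GW^+_2(F)/2\cong\mu_2(F)$ via $w_2$, so that a homomorphism $\GW^+_2(F)\to\mu_2(F)$ factoring through reduction mod $2$ is either trivial or equal to $w_2$. Only after these two steps does ``nontrivial on one example implies equal to $\int_\Sigma w_2$'' become valid. You do cite \cref{prop:symplectictimesorthogonal}, but only in the auxiliary role of a normalization cross-check, not in its load-bearing use to kill the connective contribution.

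Two smaller remarks. Your proposed test case (a rank-$3$ flat orthogonal local system with $w_2\neq 0$, torsion computed via \cref{sect:Reidemeister}) is workable but unnecessarily heavy; the paper's choice $[V]=([\cL_{+-}]-[\cO])([\cL_{-+}]-[\cO])$ on $T^2$ is a virtual rank-$0$ class, so no stabilization is required and the torsion computation reduces immediately to \cref{ex:toruscomputation}. And you should be explicit that the whole comparison passes to an algebraically closed field — since both sides land in the discrete sheaf $\mu_2$, this is where the $L$-theory identification is available and where the argument becomes a finite check.
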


We begin with a lemma.

\begin{lm}
Suppose $R=F$ is an algebraically closed field. Then
\[w_2\colon \GW^+_2(F)/2\longrightarrow \mu_2(F)\]
is an isomorphism.
\label{lm:GW2}
\end{lm}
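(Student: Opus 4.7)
We may assume $\operatorname{char}(F) \neq 2$, since otherwise $\mu_2(F) = 0$ and, by a short argument, $\GW^+_2(F)/2 = 0$ as well, making the claim trivial. Since $F$ is algebraically closed, $\Gm(F)$ is divisible, so $\Gm(F)/2 = 0$. Hence \cref{prop:GW1type} gives an isomorphism $\det\colon \GW^+_1(F) \xrightarrow{\sim} \mu_2(F) \cong \Z/2$; let $\eta \in \GW^+_1(F)$ denote the preimage of $-1$. The plan is to exhibit a class in $\GW^+_2(F)/2$ detected by $w_2$, and then bound the size of $\GW^+_2(F)/2$ from above.

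\emph{Surjectivity of $w_2 \bmod 2$.} Since $\GW^+(F)$ is an $\bE_\infty$-ring, we obtain $\eta^2 \in \GW^+_2(F)$. To show $w_2(\eta^2)$ is the nontrivial element of $\mu_2(F)$, I would reduce to a universal computation: the generator of $\GW^+_1$ pulls back along the unit map $\bS \to \GW^+(F)$ from the Hopf element in $\pi_1\bS$, so $\eta^2$ is the image of the nonzero class in $\pi_2\bS/2$. Its image under $w_2$ can then be identified, via the classical extension $1 \to \mu_2 \to \Spin \to SO \to 1$, with the generator $-1 \in \mu_2(F)$.

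\emph{Injectivity of $w_2 \bmod 2$.} This is the main obstacle. I would invoke the fundamental fiber sequence of Grothendieck--Witt spectra from \cite{CDHHLMNNS1} (valid as $1/2 \in F$),
\[
\K(F)_{hC_2} \longrightarrow \GW^+(F) \longrightarrow \rL^{sym}(F),
\]
and compute both flanking terms mod $2$. For $F$ algebraically closed of characteristic $\neq 2$, the Witt group $W(F) = \Z/2$ and every skew-symmetric form is hyperbolic, so $\rL^{sym}_*(F)$ is $4$-periodic with values $(\Z/2,0,0,0)$; in particular $\rL^{sym}_2(F) = 0$. On the other side, Suslin's theorem says $\K_n(F)$ is uniquely divisible for $n \geq 1$, so the homotopy orbit spectral sequence for $\K(F)_{hC_2}$ mod $2$ has only the contribution from $\K_0(F) = \Z$ with trivial $C_2$-action, yielding $\pi_2(\K(F)_{hC_2})/2 = \rH_2(BC_2; \Z/2) = \Z/2$. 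Combining these, $\GW^+_2(F)/2$ has order at most $2$.

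Together with Step 1 producing a nonzero class, this forces $w_2 \bmod 2$ to be an isomorphism. The hardest part is verifying the precise form of the fundamental fiber sequence and tracking that the class $\eta^2$ constructed above is indeed the generator of $\pi_2(\K(F)_{hC_2})/2$ descending to $\GW^+_2(F)/2$; as a safety net, one can alternatively appeal to the Karoubi--Suslin comparison $\GW^+(F)^\wedge_2 \simeq (KO^{top})^\wedge_2$, from which the claim is immediate by the classical computation $\pi_2(KO^{top}/2) = \Z/2$ detected by $w_2$.
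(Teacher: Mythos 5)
Your approach is genuinely different from the paper's, and largely sound in spirit, but there are a couple of inaccuracies in the details of the injectivity argument that keep it from being airtight as written.

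The paper's proof is a direct citation: by R\"ondigs--{\O}stv\ae r \cite[Theorem 1.2]{RoendigsOstvaer}, $\GW^+_2(F)$ is identified with the kernel of a map $\K_2(F)\oplus\mu_2(F)\to {}_2\Br(F)$, which collapses to $\K_2(F)\oplus\mu_2(F)$ for $F$ algebraically closed, and then $\K_2(F)/2=0$ (norm residue, or universal coefficients plus Suslin's $\K_2(F;\Z/2)\cong\mu_2$). This gives both injectivity and surjectivity with essentially no spectral sequence. Your route instead deduces the upper bound from the fundamental fiber sequence $\K(F)_{hC_2}\to\GW^+(F)\to\rL(F)$ and Suslin rigidity, and the lower bound from $\eta^2$. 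That trade is reasonable --- it avoids quoting a specific structure theorem for $\GW^+_2$ --- but you pay with more bookkeeping and you get two details wrong.

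First, ``$\K_n(F)$ is uniquely divisible for $n\geq 1$'' is only correct in even degrees $\geq 2$. In odd degrees $\K_n(F)$ is divisible but has torsion; e.g. $\K_1(F)=F^\times$ has torsion $\mu_\infty(F)$. Second, and more substantively, the identification of the surviving mod~$2$ class in $\pi_2(\K(F)_{hC_2})$ is misattributed. In the homotopy orbit spectral sequence $E^2_{p,q}=\rH_p(C_2;\K_q(F))$, the group $E^2_{2,0}=\rH_2(C_2;\Z)$ with trivial action is actually \emph{zero} (group homology of $C_2$ with trivial $\Z$-coefficients vanishes in positive even degrees). The contribution that survives is $E^2_{1,1}=\rH_1(C_2;F^\times)$ where $C_2$ acts by inversion, which equals the invariants $\mu_2(F)$; and $E^2_{0,2}=\K_2(F)_{C_2}$ is a quotient of a uniquely $2$-divisible group, hence vanishes mod~$2$. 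The conclusion that $\pi_2(\K(F)_{hC_2})/2$ has order at most $2$ is still right, but your stated reason is not, and the $C_2$-action on $\K_1$ (not $\K_0$) is what does the work. The surjectivity claim via $\eta^2$ and the spin extension, and the alternative via $2$-adic comparison with $KO$, are both believable but would need citations or a short argument; the paper sidesteps this entirely by reading off the direct summand $\mu_2(F)$ from the R\"ondigs--{\O}stv\ae r decomposition.
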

\begin{proof}
When $F$ is a field of characteristic different from $2$, the group $\GW^+_2(F)$ is computed in \cite[Theorem 1.2]{RoendigsOstvaer} to be the kernel of a homomorphism
\[\K_2(F)\oplus \mu_2(F)\longrightarrow {}_2\Br(F),\]
where ${}_2\Br(F)$ is the $2$-torsion subgroup of the Brauer group which vanishes when $F$ is algebraically closed. Moreover, in this case $K_2(F)/2$ also vanishes (e.g. by the norm residue isomorphism).
\end{proof}

\begin{proof}[Proof of \cref{thm:symplectictorsionvolumeform}]
The first statement follows from the second statement by pulling back to the affine scheme $S$ along a map $S\rightarrow \Map(\Sigma_\B, Y)$, so it is enough to prove the second statement, i.e. that
\[\sigma(V) = \int_\Sigma w_2(V).\]

Let us explicate the construction of $\sigma_s\colon \uGW^+(\Sigma_\B)\rightarrow \mu_2$. Consider the map
\[p_\sharp[-1]\colon \uGW^+(\Sigma_\B)\longrightarrow \uGW^-.\]
Consider the commutative diagram
\[
\xymatrix{
\tau_{\geq 0}\uGW^+(\Sigma_\B) \ar^{p_\sharp[-1]}[drr] \ar^{\epsilon\otimes e_{\GW}(\Sigma)}[d] && \\
\C^\bullet(\Sigma; \tau_{\geq 0}\uGW^+)\otimes \tau_{\geq 0}\C_\bullet(\Sigma; \GW^-(k)) \ar^-{\langle -, -\rangle}[r] \ar^{\detgr\otimes\detgr}[d] & \tau_{\geq 0}(\uGW^+\otimes \GW^-(k)) \ar^{\detgr\otimes\detgr}[d] \ar[r] & \tau_{\geq 0}\uGW^- \ar^{\detgr}[d] \\
\C^\bullet(\Sigma; \uPic^{\Z, +, C_2})\otimes \C_\bullet(\Sigma; \Pic^{\Z/2, -, C_2}(k)) \ar^-{\langle -, -\rangle}[r] & \uPic^{\Z, +, C_2}\otimes \Pic^{\Z, -, C_2}(k) \ar[r] & \uPic^{\Z, -, C_2}
}
\]
Here the top triangle commutes by \cref{thm:indextheoremwithPoincare} and the rightmost square commutes by \cref{prop:dualitydeterminant}. The rightmost vertical map $\detgr\colon \tau_{\geq 0}\uGW^-\rightarrow \uPic^{\Z, -, C_2}$ is nullhomotopic using the symplectic volume form. The element $\detgr(e_{\GW}(\Sigma))\in\Omega^\infty \C_\bullet(\Sigma; \Pic^{\Z/2, -, C_2}(k))$ admits a nullhomotopy using the spin structure and the canonical homology orientation on $\Sigma$ provided by the symplectic volume form on $\rH_\bullet(\Sigma; \Z)[-1]$. Comparison of the two nullhomotopies provides the morphism $\sigma_s\colon \uGW^+_0(\Sigma_\B)\rightarrow \pi_1 \uPic^{\Z, -, C_2}$.

Considering just the bottom part of the diagram we obtain a morphism
\[\pi_0(\C^\bullet(\Sigma; \tau_{\geq 2}\uGW^+)\otimes \tau_{\geq 0}\C_\bullet(\Sigma; \GW^-(k)))=\rH^2(\Sigma; \uGW^+_2)\otimes_\Z \rH_0(\Sigma; \GW^-(k))\longrightarrow \mu_2\]
under which $\epsilon([V])\otimes e_{\GW}(\Sigma)$ is sent to $\sigma(V)$. Observe that by \cref{prop:symplectictimesorthogonal} this morphism is trivial when restricted to $\rH^2(\Sigma; \uGW^+_2)\otimes_\Z \rH_0(\Sigma; \tau_{\geq 0}\GW^-(k))$. Due to the exact sequence
\[\rH_0(\Sigma, \tau_{\geq 0}\GW^-(k))\longrightarrow \rH_0(\Sigma, \GW^-(k))\longrightarrow \rH_0(\Sigma, \tau_{\leq -1}\GW^-(k))\longrightarrow 0\]
it factors through
\[\rH^2(\Sigma; \uGW^+_2)\otimes_\Z \rH_0(\Sigma; \tau_{\leq -1}\GW^-(k))\longrightarrow \mu_2.\]
As we are comparing elements of $\mu_2(R)$, it is enough to prove the claim when $R=F$ is an algebraically closed field in which case we obtain a morphism
\[\rH^2(\Sigma; \GW^+_2(F))\otimes_\Z \rH_0(\Sigma; \tau_{\leq -1}\GW^-(F))\longrightarrow \mu_2(F).\]
There is a natural morphism $\GW^-(F)\rightarrow \rL(F)[-2]$ of spectra which induces an isomorphism on negative homotopy groups (see e.g. \cite[Main Theorem]{CDHHLMNNS2}). As $\rL_1(F) = 0$ and $\rL_0(F) = W(F)\cong \Z/2$ is the Witt group, we obtain a morphism
\[\rH^2(\Sigma; \GW^+_2(F))\otimes_\Z \rH_2(\Sigma; \Z/2)\longrightarrow \GW^+_2(F)\otimes_\Z \Z/2\longrightarrow \mu_2(F),\]
where the first morphism is given by the natural pairing of chains and cochains. By \cref{lm:GW2} the second Stiefel--Whitney class provides an isomorphism $w_2\colon \GW^+_2(F)/2\rightarrow \mu_2(F)$, so the claim boils down to the fact that the homomorphism we have constructed is nontrivial (and, therefore, coincides with $w_2$). For this it is enough to exhibit an example of a local system $V$ where the sign $\sigma(V)$ is nontrivial.

To prove this claim, consider $\Sigma=T^2$ and the local systems $\cL_{\epsilon_1\epsilon_2}$ from \cref{ex:toruscomputation}. Consider the element
\[[V] = ([\cL_{+-}]-[\cO])([\cL_{-+}]-[\cO])\in \Omega^\infty\GW^+(\Spec F\times \Sigma_\B).\]
As $[\cL_{\pm\mp}]-[\cO]$ has (virtual) rank zero, $\detgr(\epsilon([V]))$ is trivial. Let us then compute the sign $\sigma_s([V]) = \sigma([V])$, where $s$ is the spin structure on $T^2$ from \cref{ex:toruscomputation}. We have $[V] = [\cL_{--}] + [\cL_{++}] - [\cL_{+-}] - [\cL_{-+}]$. Therefore,
\[\sigma_s([V]) = \sigma_s(\cL_{--}) \sigma_s(\cL_{++}) \sigma_s(\cL_{+-})^{-1} \sigma_s(\cL_{-+})^{-1} = -1\]
using the computation from \cref{ex:toruscomputation}. This finishes the proof.
\end{proof}

\begin{example}
Let $G$ be a connected simply-connected algebraic group over a field $k$ whose Lie algebra $\g$ is equipped with a nondegenerate $G$-invariant symmetric bilinear pairing. As $\pi_1(G)$ is trivial, $w_2(\B G)$ is trivial. Therefore, by \cref{thm:symplectictorsionvolumeform} we get that the torsion volume form on the character stack $\Loc_G(\Sigma)$ coincides with the symplectic volume form.
\end{example}

In \cref{ex:toruscomputation} we have computed $\sigma_s(V)$ for all rank 1 orthogonal local systems over $T^2$. In that case the sign $\sigma_s$ defines a function
\[\sigma_s\colon \rH^1(\Sigma; \mu_2)\longrightarrow \mu_2.\]
However, examining the precise values for $\Sigma=T^2$ we see that this map is not linear. Let us explain the precise behavior.

Recall that Johnson \cite{Johnson} has defined a quadratic function
\[q_s\colon \rH^1(\Sigma; \mu_2)\longrightarrow \mu_2\]
for a closed oriented surface $\Sigma$ and any spin structure $s$. Its underlying symmetric bilinear form is the intersection pairing
\[\frac{q_s(\alpha\beta)}{q_s(\alpha) q_s(\beta)} = \int_\Sigma \alpha\cup \beta.\]
Moreover, for two spin structures $s_1, s_2$ differing by $h\in\rH_1(\Sigma; \Z/2)$ it satisfies
\begin{equation}
q_{s_2}(\alpha) = \langle \alpha, h\rangle q_{s_1}(\alpha).
\label{eq:Johsonspindependence}
\end{equation}

There is a canonical homomorphism
\[w_1\colon \uGW^+_1\longrightarrow \mu_2\]
given by the stabilizing the determinant map on the orthogonal group. Equivalently, it is obtained by taking $\pi_1$ of the determinant morphism $\detgr\colon \uGW^+\rightarrow \uPic^{\Z, +, C_2}$. It allows us to identify orthogonal rank 1 local systems $\cL$ over $\Sigma$ with classes $w_1(\cL)\in\rH^1(\Sigma; \mu_2)$.

\begin{prop}
Let $\Sigma$ be a closed oriented surface and $s$ a spin structure on $\Sigma$. Consider a function
\[\sigma_s\colon \rH^1(\Sigma; \mu_2)\longrightarrow \mu_2\]
given by $\cL\mapsto \sigma_s(\cL)$, where we identify classes in $\rH^1(\Sigma; \mu_2)$ with orthogonal rank 1 local systems $\cL$ using $w_1$. Then $\sigma_s$ coincides with Johnson's quadratic refinement of the intersection pairing $q_s$.
\label{prop:Johnsonsign}
\end{prop}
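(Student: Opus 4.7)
The plan is to check that $\sigma_s$ is a quadratic refinement of the intersection pairing whose $s$-dependence matches that of $q_s$, then to remove the remaining ambiguity by a mapping class group argument. If $g=0$ then $\rH^1(\Sigma;\mu_2)=0$ and there is nothing to prove, so I assume $g\ge 1$.

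The key step is the identity
\[ \sigma_s(\cL_1\otimes\cL_2)\sigma_s(\cL_1)^{-1}\sigma_s(\cL_2)^{-1}=\int_\Sigma \alpha_1\cup\alpha_2, \]
where $\alpha_i=w_1(\cL_i)\in\rH^1(\Sigma;\mu_2)$. Apply this to the virtual class $V=[\cL_1\otimes\cL_2]-[\cL_1]-[\cL_2]+[\cO]$. Parts (3) and (4) of \cref{prop:symplectictorsionsign} together with the normalization $\sigma_s(\cO)=1$ identify the left-hand side with $\sigma(V)$. Since $V$ has virtual rank $0$ and trivial determinant (as $\alpha_1+\alpha_2-\alpha_1-\alpha_2=0$) it is unimodular, so \cref{thm:symplectictorsionvolumeform} gives $\sigma(V)=\int_\Sigma w_2(V)$. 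A Whitney sum computation in $\rH^\bullet(\Sigma;\Z/2)$ (truncated above degree $2$), using $w(\cL)=1+w_1(\cL)$ for rank $1$ orthogonal local systems, then yields
\[ w(V)=\frac{1+\alpha_1+\alpha_2}{(1+\alpha_1)(1+\alpha_2)}=1+\alpha_1\cup\alpha_2, \]
so $w_2(V)=\alpha_1\cup\alpha_2$ as desired.

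Next, I would match the $s$-dependence. By \cref{prop:symplectictorsionsign}(2), changing $s$ by $h\in\rH_1(\Sigma;\Z/2)$ multiplies $\sigma_s(\cL)$ by $\langle\alpha,h\rangle$, and by \eqref{eq:Johsonspindependence} the same holds for $q_s(\alpha)$. Hence $\rho(\alpha):=\sigma_s(\alpha)/q_s(\alpha)$ is independent of $s$, and as the ratio of two quadratic refinements of the same bilinear form it is a homomorphism $\rho\colon\rH^1(\Sigma;\mu_2)\to\mu_2$, corresponding via Poincar\'e duality to a class $\gamma\in\rH_1(\Sigma;\Z/2)$.

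Finally, the diffeomorphism invariance of $\sigma_s$ from \cref{rmk:torusspinstructure} and the naturality of Johnson's construction imply that $\gamma$ is invariant under the action of the mapping class group on $\rH_1(\Sigma;\Z/2)$. This action factors through $\Sp_{2g}(\Z/2)$, which acts transitively on nonzero elements for $g\ge 1$; hence $\gamma=0$ and $\sigma_s=q_s$. The main obstacle is justifying the Whitney-sum computation in the first step — specifically, verifying that the spectrum-level map $w_2\colon \GW^+_2\to\mu_2$ used in \cref{thm:symplectictorsionvolumeform} agrees, on a virtual rank-$0$ class with trivial $w_1$, with the classical formula for $w_2$ of virtual orthogonal bundles.
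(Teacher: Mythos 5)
Your overall strategy is the same as the paper's: (i) show $\sigma_s$ is a quadratic refinement of the intersection form via the virtual class $[V]=([\cL_1]-[\cO])([\cL_2]-[\cO])$ and \cref{thm:symplectictorsionvolumeform}; (ii) match $s$-dependence using \cref{prop:symplectictorsionsign}(2) and \eqref{eq:Johsonspindependence}; (iii) kill the remaining character $\sigma_s/q_s$ by mapping class group invariance. Steps (ii) and (iii) are carried out exactly as in the paper.

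The gap you flag at the end of step (i) is real and is precisely the crux of the paper's argument. Your Whitney-sum computation
\[
w(V)=\frac{1+\alpha_1+\alpha_2}{(1+\alpha_1)(1+\alpha_2)}=1+\alpha_1\cup\alpha_2
\]
presupposes that the map $w_2\colon\GW^+_2\to\mu_2$ defined by stabilizing the spin extension satisfies the classical Whitney formula for virtual orthogonal classes. That is not established in the paper, and it is exactly what needs to be proved. The paper takes a different route: the identity $w_2([V])=w_1(\cL_1)\cup w_1(\cL_2)$ is rephrased as commutativity of the square
\[
\xymatrix{
\uGW^+_1\otimes_\Z \uGW^+_1 \ar[r] \ar^{w_1\otimes w_1}[d] & \uGW^+_2 \ar^{w_2}[d] \\
\mu_2\otimes_\Z\mu_2 \ar[r] & \mu_2,
}
\]
and then, since $w_1\colon\uGW^+_1\to\mu_2$ is an \'etale-local isomorphism by \cref{prop:GW1type}, the two ways around the square are determined by a single $\mu_2$-valued bit: whether the bilinear composite $\GW^+_1(F)\otimes\GW^+_1(F)\to\GW^+_2(F)\to\mu_2(F)$ is nontrivial for $F$ algebraically closed. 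That bit is supplied by the explicit torus computation already carried out in the proof of \cref{thm:symplectictorsionvolumeform}: for $\Sigma=T^2$, $\cL_1=\cL_{+-}$, $\cL_2=\cL_{-+}$ one gets $\sigma(V)=-1$, hence $\int_{T^2}w_2([V])\neq 1$. This \'etale-local reduction to a nontriviality check is strictly weaker input than the full Whitney formula, which is why the paper does not need to prove the latter. A minor further caveat: to invoke \cref{thm:symplectictorsionvolumeform} you need $V$ unimodular, which requires a \emph{chosen} trivialization of $\det(\epsilon([V]))$ (squaring to the canonical one), not merely that the determinant class vanishes; in the paper this is supplied by the multiplicative structure on the Picard ring (the product $([\cL_1]-[\cO])([\cL_2]-[\cO])$ has canonically trivial determinant since one factor has rank $0$). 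If you want to salvage your Whitney-formula argument, you would essentially have to prove the commutative square anyway, at which point you would be reproducing the paper's proof.
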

\begin{proof}
We begin by showing that $\sigma_s$ is, indeed, a quadratic refinement of the intersection pairing. Consider orthogonal rank 1 local systems $\cL_1, \cL_2$ on $\Sigma$. Let $\cO$ be the trivial rank 1 local system and consider
\[[V] = ([\cL_1]-[\cO])([\cL_2]-[\cO]) = [\cL_1\otimes \cL_2] + [\cO] - [\cL_1] - [\cL_2]\in\Omega^\infty \uGW^+(\Sigma_\B).\]
As in the proof of \cref{thm:symplectictorsionvolumeform}, $\detgr(\epsilon([V]))$ is canonically trivial. Therefore, by \cref{thm:symplectictorsionvolumeform}
\[\frac{\sigma_s(\cL_1\otimes \cL_2)}{\sigma_s(\cL_1)\sigma_s(\cL_2)} = \sigma_s([V]) = \int_\Sigma w_2([V]).\]
We want to prove that
\[w_2([V]) = w_1(\cL_1)\cup w_1(\cL_2)\in\rH^2(\Sigma; \mu_2).\]
This is equivalent to showing the commutativity of the square
\[
\xymatrix{
\uGW^+_1\otimes_\Z \uGW^+_1\ar[r] \ar^{w_1\otimes w_1}[d] & \uGW^+_2 \ar^{w_2}[d] \\
\mu_2\otimes_\Z\mu_2 \ar[r] & \mu_2.
}
\]

Using that $w_1\colon \uGW^+_1\rightarrow \mu_2$ is an isomorphism \'etale locally (by \cref{prop:GW1type}) we are reduced to checking that the multiplication
\[\GW^+_1(F)\otimes \GW^+_1(F)\longrightarrow \GW^+_2(F)\]
is a nontrivial map for $F$ an algebraically closed field. For this we can compute $w_2([V])$ for some $\Sigma$, e.g. $\Sigma=T^2$. But this was done in \cref{thm:symplectictorsionvolumeform} where it was shown that $w_2([V])$ defines a nontrivial element of $\rH^2(T^2; \mu_2)$ for $\cL_1 = \cL_{+-}$ and $\cL_2=\cL_{-+}$. This finishes the proof that $\sigma_s$ is a quadratic refinement of the intersection pairing.

The difference ratio $\sigma_s/q_s$ defines a homomorphism
\[\sigma_s/q_s\colon \rH^1(\Sigma; \mu_2)\longrightarrow \mu_2.\]
Using the formula for the dependence of $\sigma_s$ on the spin structure from \cref{prop:symplectictorsionsign} and the formula \eqref{eq:Johsonspindependence} for the dependence of $q_s$ on the spin structure we see that the ratio defines a homomorphism $\sigma_s/q_s\colon \rH^1(\Sigma; \mu_2)\rightarrow \mu_2$ independent of the spin structure. Both $\sigma_s$ and $q_s$ are invariant under orientation-preserving diffeomorphisms, but no nontrivial elements of $\rH_1(\Sigma; \Z/2)$ are stable under all orientation-preserving diffeomorphisms. So, $\sigma_s/q_s = 1$.
\end{proof}

\begin{example}
Using that $\sigma_s=q_s$ is Johnson's quadratic refinement of the intersection pairing for rank 1 local systems we can compute the Arf invariant of the spin structure on $T^2$ from \cref{ex:toruscomputation}. We have
\[\Arf(\sigma_s) = \frac{1}{2}(\sigma_s(\cL_{++}) + \sigma_s(\cL_{--}) + \sigma_s(\cL_{+-}) + \sigma_s(\cL_{-+})) = -1.\]
So, the spin structure $s$ from \cref{ex:toruscomputation} is odd. As there is a unique odd spin structure on $T^2$ and the Arf invariant is preserved under diffeomorphisms, we get another proof of the assertion from \cref{rmk:torusspinstructure} that the spin structure $s$ is diffeomorphism-invariant.
\end{example}

\subsection{Cohomological DT invariants of 3-manifolds}

Let $M$ be an oriented 3-dimensional Poincar\'e complex, i.e. a finitely dominated space equipped with a fundamental class $[M]\in\rH_3(M; \Z)$ satisfying Poincar\'e duality. By \cref{prop:fundamentalclassorientation} the stack $M_\B$ carries an $\cO$-orientation of degree $3$.

Let $G$ be a connected algebraic group whose Lie algebra $\g$ is equipped with a nondegenerate $G$-invariant symmetric bilinear pairing. Then the classifying stack $\B G$ has a $2$-shifted symplectic structure. Therefore, by the AKSZ construction \cite[Theorem 2.5]{PTVV} the character stack
\[\Loc_G(M) = \Map(M_\B, \B G)\]
carries a $(-1)$-shifted symplectic structure. Recall the following notion from \cite{BBBBJ}.

\begin{defn}
Let $X$ be a $(-1)$-shifted symplectic stack. \defterm{Orientation data} on $X$ is a choice of a line bundle $K_X^{1/2}$ together with an isomorphism $(K_X^{1/2})^{\otimes 2}\cong \det(\bL_X)$, i.e. a square root of $\det(\bL_X)$.
\end{defn}

\begin{prop}
Let $M$ and $G$ be as before. Moreover, suppose $M$ is a finite CW complex (for instance, $M$ is a closed oriented 3-manifold). Then $\Loc_G(M)$ has a canonical orientation data.
\label{prop:LocGoriented}
\end{prop}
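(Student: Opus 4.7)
The plan is to apply \cref{thm:volumelift} with $X = M_\B$ and $Y = \B G$ using canonical auxiliary data, producing a canonical torsion volume form $\tau$ on $\Loc_G(M) = \Map(M_\B, \B G)$, i.e.\ a canonical trivialization $\det(\bL_{\Loc_G(M)}) \cong \cO_{\Loc_G(M)}$. The orientation data is then $K^{1/2}_{\Loc_G(M)} := \cO_{\Loc_G(M)}$ equipped with the square map $\cO \otimes \cO \cong \cO \xrightarrow{\tau^{-1}} \det(\bL_{\Loc_G(M)})$, which is canonically orientation data.

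The required data is produced canonically as follows. Since $M$ is a closed oriented $3$-manifold, it is a finite CW complex (by Moise's theorem in the topological category), so $M_\B$ satisfies \cref{mainassumptionduality} by \cref{prop:Bettiassumptions} and carries a canonical simple structure by \cref{prop:CWlift} (the lift is unique up to homotopy, as different fundamental families of cells give homotopic lifts). Because $d=3$ is odd, $\chi(M) = 0$ and by \cref{rmk:canonicalhomologyorientation} $M$ has a canonical homology orientation, giving a canonical isomorphism $\det(p_\sharp \cO_{M_\B}) \cong k$ (this is option (2) of the first choice in \cref{thm:volumelift}). On the target side, the nondegenerate $G$-invariant pairing on $\g$ yields $\g \cong \g^*$ as $G$-representations, hence $\det(\g) \cong \det(\g)^{-1}$ and the modular character $\Delta_G$ satisfies $\Delta_G^2 = 1$; since $G$ is connected, the image of $\Delta_G\colon G \to \Gm$ is connected and contained in the discrete subgroup $\mu_2 \subset \Gm$, forcing $\Delta_G = 1$. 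Thus $\det(\g)$ is trivial as a $G$-representation, a $G$-invariant volume form on $\g$ exists (unique up to a scalar in $k^*$), and by \cref{ex:quotientvolumeform} this induces a volume form $\vol_{\B G}$ on $\B G$ (option (1) of the second choice).

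Feeding this data into \cref{thm:volumelift} produces the torsion volume form $\tau$. The only remaining non-canonical input is the scalar in the choice of $\vol_{\B G}$: by the rescaling remark immediately following \cref{thm:volumelift}, rescaling $\vol_{\B G}$ by $A \in k^*$ rescales $\tau$ by $A^{\chi(M)} = A^0 = 1$. Hence $\tau$ is independent of this scalar and is genuinely canonical, giving canonical orientation data. The main (and essentially only) technical point is this cancellation, which hinges crucially on the vanishing $\chi(M) = 0$ provided by odd-dimensional Poincar\'e duality; without it, an extra $k^*$-ambiguity would persist and the construction would produce only a $k^*$-torsor of orientation data rather than a canonical one.
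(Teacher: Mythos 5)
Your proof is correct and follows the same route as the paper's: trivialize $\det(\g)$ to get a volume form on $\B G$, use the canonical homology orientation from odd-dimensional Poincar\'e duality for $\det(p_\sharp \cO_{M_\B})\cong k$, feed both into \cref{thm:volumelift} to get the torsion volume form $\vol_{\Loc_G}$, and declare $K^{1/2}=\cO$ with the squaring map sending $1 \mapsto \vol_{\Loc_G}$. You helpfully make two points explicit that the paper leaves implicit: that the hypothesis on $\g$ forces $\Delta_G^2=1$ and hence $\Delta_G=1$ by connectedness, so a $G$-invariant volume form on $\g$ exists; and that the residual $k^*$-ambiguity in that choice washes out by the rescaling remark after \cref{thm:volumelift} because $\chi(M)=0$.
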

\begin{proof}
The assumptions on $G$ imply that $\g$ has a $G$-invariant volume form. Therefore, $\B G$ carries a volume form. Since $\dim(M) = 3$, there is a canonical homology orientation on $M$ coming from Poincar\'e duality (see \cref{rmk:canonicalhomologyorientation}). Consider the torsion volume form $\vol_{\Loc_G}$ on $\Loc_G(M)$ from \cref{thm:volumelift}. Then we may choose $K_{\Loc_G(M)}^{1/2} = \cO_{\Loc_G(M)}$ and the isomorphism $\cO_{\Loc_G(M)} = (K_X^{1/2})^{\otimes 2}\cong \det(\bL_X)$ given by $1\mapsto \vol_{\Loc_G}$.
\end{proof}

Until the end of this section assume $k=\bC$ is the field of complex numbers. By \cite{BBBBJ}, if $X$ is a $(-1)$-shifted symplectic stack equipped with orientation data, then its underlying classical stack $t_0(X)$ carries a canonical perverse sheaf $\phi$ of $\Q$-vector spaces globalizing the sheaf of vanishing cycles. Therefore, for any closed oriented 3-manifold $M$ and a group $G$ as above we may consider the cohomology
\[\rH^\bullet(t_0(\Loc_G(M)), \phi_{\Loc_G(M)}),\]
which is a \defterm{cohomological DT invariant} of $M$.

Now suppose that $G$ is a split connected reductive group, $P\subset G$ a parabolic subgroup and $L$ the Levi factor. Let $\g, \fp, \fl$ be the corresponding Lie algebras. Let $\Delta_P\colon P\rightarrow \GL_1$ be the modular character of $P$, i.e. the character of $\det(\fp)$.

We will be interested when $\Delta_P$ admits a square root. For this, choose a Borel subgroup $B\subset G$ and suppose $P$ is a standard parabolic as in \cite[Proposition 14.18]{Borel} associated to a subset $I\subset \Delta$ of simple roots. Let $\Phi^+$ be the set of positive roots with respect to $B$, $[I]$ is the root subsystem generated by $I$ and $\Phi(I)^+ = \Phi^+\setminus [I]$ the set of positive roots not lying in $[I]$. Consider the integral weight
\[2\rho_I = \sum_{\alpha\in\Phi(I)^+} \alpha.\]
The modular character of $P$ restricted to the maximal torus has weight $2\rho_I$. So, it admits a square root if, and only if, $\rho_I$ is an integral weight.

\begin{example}
The whole group $G\subset G$ is a parabolic subgroup; in this case $\Delta_G = 1$ admits a square root.
\end{example}

\begin{example}
For the Borel subgroup $B\subset \SL_2$ the modular character $\Delta_B$ admits a square root, but for the Borel subgroup $B\subset \PGL_2$ the modular character $\Delta_B$ does not admit a square root.
\end{example}

The nondegenerate pairing on $G$ restricts to one on $L$, so that $\B L$ has a 2-shifted symplectic structure and
\[
\xymatrix{
& \B P \ar[dl] \ar[dr] & \\
\B L && \B G
}
\]
is a 2-shifted Lagrangian correspondence, see \cite[Lemma 3.4]{SafronovImplosion}. Therefore,
\begin{equation}
\xymatrix{
& \Loc_P(M) \ar_{\pi_L}[dl] \ar^{\pi_G}[dr] & \\
\Loc_L(M) && \Loc_G(M)
}
\label{eq:Locparabolicinduction}
\end{equation}
is a $(-1)$-shifted Lagrangian correspondence. In this setting there is a relative notion of orientation data introduced in \cite[Definition 5.3]{AmorimBB} which we now review. Suppose $f\colon L\rightarrow X$ is a Lagrangian morphism to a $(-1)$-shifted symplectic stack. Then there is a fiber sequence
\[\bT_L\longrightarrow f^* \bT_X\longrightarrow \bL_L[-1]\]
and hence, after taking determinants, there is a canonical isomorphism
\begin{equation}
f^*\det(\bL_X)\cong \det(\bL_L)^{\otimes 2}.
\label{eq:Lagrangiansquare}
\end{equation}

\begin{defn}
Let $X$ be a $(-1)$-shifted symplectic stack equipped with orientation data $K_X^{1/2}$.  An \defterm{orientation data} on a Lagrangian morphism $f\colon L\rightarrow X$ is the data of an isomorphism $\det(\bL_L)\cong f^* K_X^{1/2}$ whose square coincides with the canonical isomorphism $f^* \det(\bL_X)\cong \det(\bL_L)^{\otimes 2}$ defined above.
\end{defn}

We can construct orientation data on the Lagrangian correspondence \eqref{eq:Locparabolicinduction} as follows.

\begin{thm}
Let $M$, $G$, $P$ and $L$ be as before. Suppose that $M$ is a closed oriented PL 3-manifold and one of the following holds:
\begin{enumerate}
\item $M$ is equipped with a spin structure.
\item The modular character $\Delta_P$ admits a square root.
\end{enumerate}
Then the Lagrangian correspondence
\[
\xymatrix{
& \Loc_P(M) \ar[dl] \ar[dr] & \\
\Loc_L(M) && \Loc_G(M)
}
\]
has canonical orientation data.
\label{thm:Locparabolicinductionorientation}
\end{thm}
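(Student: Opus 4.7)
The first step is to unpack orientation data on the $(-1)$-shifted Lagrangian morphism $f\colon \Loc_P(M)\to \Loc_L(M)\times \Loc_G(M)$ (with the second factor carrying the opposite symplectic structure) relative to the canonical orientation data on the target furnished by \cref{prop:LocGoriented}. That orientation data on $\Loc_L(M)$ and $\Loc_G(M)$ uses the $L$- and $G$-invariant pairings on $\fl$ and $\g$ together with the canonical homology orientation from Poincar\'e duality (\cref{rmk:canonicalhomologyorientation}) to trivialize both canonical line bundles via torsion volume forms $v_L, v_G$ (\cref{prop:LocGvolumeform}). What is required is therefore a line bundle $K_{\Loc_P(M)}^{1/2}$ on $\Loc_P(M)$ together with an isomorphism $(K_{\Loc_P(M)}^{1/2})^{\otimes 2}\cong \det\bL_{\Loc_P(M)}$ and an identification of $K_{\Loc_P(M)}^{1/2}$ with $f^*(K_{\Loc_L(M)}^{1/2}\otimes K_{\Loc_G(M)}^{1/2})\cong \cO_{\Loc_P(M)}$ whose square matches the Lagrangian isomorphism \eqref{eq:Lagrangiansquare} trivialized by $f^*(v_L\otimes v_G)$.

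In case (1), the spin structure on $M$ induces a $\Spinc$-structure via the natural inclusion $\mathrm{Spin}\subset \Spinc$, which by \cref{prop:SpincEuler} corresponds to an Euler structure on $M$, i.e.\ a trivialization of $e(M)\in \C_\bullet(M;\Z)$. Together with the canonical homology orientation this supplies the input data of \cref{thm:volumelift} (the isomorphism $\det(p_\sharp\cO_{M_\B})\cong k$ from the homology orientation and the trivialization of the integral Euler class) and produces a torsion volume form $v_P$ on $\Loc_P(M)$ without requiring any volume form on $Y=\B P$. In case (2), the square root $\chi$ of $\Delta_P$ defines a $P$-equivariant line bundle $\cL_\chi$ on $\B P$ (the one-dimensional $P$-representation with character $\chi$) satisfying $\cL_\chi^{\otimes 2}\cong \det\bL_{\B P}$. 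Plugging into the factorization from the proof of \cref{thm:volumelift},
\[\det\bL_{\Loc_P(M)} \cong \langle \epsilon(\ev^* \det\bL_{\B P}), e(M)\rangle \otimes \det(p_\sharp\cO_{M_\B})^{\otimes \dim\B P},\]
the first factor becomes the manifest square $\langle \epsilon(\ev^*\cL_\chi), e(M)\rangle^{\otimes 2}$ while the second factor has a canonical square root using the homology orientation (directly if $\dim\B P$ is even, and via the trivial square root of the trivialization otherwise). Their product defines the line bundle $K_{\Loc_P(M)}^{1/2}$ equipped with a canonical isomorphism $(K_{\Loc_P(M)}^{1/2})^{\otimes 2}\cong \det\bL_{\Loc_P(M)}$.

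The main technical obstacle is verifying compatibility with the Lagrangian identification. The 2-shifted Lagrangian condition on $\B P\to \B L\times \B G$ supplies only a canonical trivialization of $\det\bL_{\B L\times \B G}|_{\B P}$ (reflecting that $\det(\fl\oplus \g)$ has trivial $P$-character since $L,G$ are reductive) and not a direct relation with $\det\bL_{\B P}^{\otimes 2}$, which carries the nontrivial $P$-character $\Delta_P^2$. The $(-1)$-shifted Lagrangian isomorphism $\det\bL_{\Loc_P(M)}^{\otimes 2}\cong f^*(\det\bL_{\Loc_L(M)}\otimes \det\bL_{\Loc_G(M)})$ emerges only after AKSZ via the fundamental class of $M$, so I would verify compatibility by analyzing the determinant of the Lagrangian fiber sequence $\bT_{\Loc_P}\to \bT_{\Loc_L\times \Loc_G}\to \bL_{\Loc_P}[-1]$ obtained by applying $\pi_\sharp\ev^*$ to the 2-shifted Lagrangian fiber sequence on the classifying stacks, and then checking that the constructions of $v_P$ (or $K_{\Loc_P(M)}^{1/2}$) and of $v_L, v_G$ are natural with respect to the maps $\B L\leftarrow \B P\to \B G$, so that the trivialization (respectively square root) of the determinant at the 2-shifted level propagates to the required compatibility after AKSZ.
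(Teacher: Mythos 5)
Your proposal correctly identifies the overall structure (orientation data on the $(-1)$-shifted Lagrangian morphism requires a volume form on $\Loc_P(M)$ whose square matches the trivialization of $f^*\det\bL_{\Loc_L(M)\times\Loc_G(M)}$ coming from the torsion volume forms on the outer stacks), and your treatment of case (1) — using $\mathrm{Spin}\subset\Spinc$ and \cref{prop:SpincEuler} to get an Euler structure on $M$ and hence a torsion volume form $\vol_{\Loc_P}$ — matches the paper. However, you leave the essential step unaddressed.

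The crucial thing the proof must establish is that, under the isomorphism \eqref{eq:Lagrangiansquare}, we have $\vol_{\Loc_G}\,\vol_{\Loc_L} = \vol_{\Loc_P}^2$. You flag this as "the main technical obstacle" and then only sketch a route (propagating naturality from the $2$-shifted level), but that sketch is not a proof and it is not the argument that actually works. The paper reduces the equality, fiberwise at a $P$-local system $Q$, to the Reidemeister--Turaev torsion identity
$\tau(M;\ad_G Q)\,\tau(M;\ad_L Q)=\tau(M;\ad_P Q)^2$
via \cref{prop:determinantReidemeister}. This is proved in two steps: (a) the exact sequence of $P$-representations $0\to\fp\to\g\oplus\fl\to\fp^*\to0$ yields $\tau(M;\ad_G Q)\tau(M;\ad_L Q)=\tau(M;\ad_P Q)\tau(M;(\ad_P Q)^*)$ by multiplicativity of torsion, and (b) Poincar\'e duality yields $\tau(M;(\ad_P Q)^*)=\tau(M;\ad_P Q)$ by the duality theorem for torsion. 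Neither the exact sequence $0\to\fp\to\g\oplus\fl\to\fp^*\to0$ (which is precisely the Lie-algebraic shadow of the $2$-shifted Lagrangian condition) nor the multiplicativity/duality of torsion appear anywhere in your argument, so there is a genuine gap. You also miss a necessary refinement: the duality theorem holds only when the Euler structure is \emph{canonical} (characteristic class $c(\xi)=0$), which the paper arranges using $w_2(M)=0$; an arbitrary Euler structure would not suffice.

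Your case (2) also diverges from what is actually needed. You construct a line bundle $K^{1/2}_{\Loc_P(M)}$ squaring to $\det\bL_{\Loc_P(M)}$ out of $\cL_\chi$, but the orientation data on the Lagrangian requires an isomorphism $\det\bL_{\Loc_P(M)}\cong f^*K^{1/2}_X\cong\cO$ — a genuine trivialization (volume form), not merely a square root — and a square root of $\Delta_P$ alone does not produce a trivialization of $\langle\epsilon(\ev^*\det\bL_{\B P}),e(M)\rangle$ without first trivializing $e(M)$. The paper treats case (2) uniformly with case (1): in both cases it uses a canonical Euler structure on $M$ to build $\vol_{\Loc_P}$; the role of the square root of $\Delta_P$ is only to show that the resulting volume form is \emph{independent} of the choice of canonical Euler structure (since two canonical Euler structures differ by a $2$-torsion class $h$, and $\langle h,\Delta_P\rangle=\langle 2h,\Delta_P^{1/2}\rangle=1$). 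This independence argument does not appear in your proposal.
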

\begin{proof}
By assumptions $M$ has trivial Euler characteristic, so we may choose an Euler structure $\xi$ on $M$. In fact, since the second Stiefel--Whitney class $w_2(M)\in\rH^2(M;\Z/2\Z)$ vanishes, we may choose a \emph{canonical} Euler structure in the sense of \cite[Section 3.2]{FarberTuraevAbsolute}, i.e. an Euler structure with characteristic class $c(\xi)=0\in\rH_1(M;\Z)$. Consider the torsion volume forms $\vol_{\Loc_G}$, $\vol_{\Loc_L}$ and $\vol_{\Loc_P}$ on the corresponding moduli spaces defined in \cref{prop:LocGvolumeform}. In particular, $\vol_{\Loc_G}$ and $\vol_{\Loc_L}$ define orientation data on the $(-1)$-shifted symplectic stacks $\Loc_G(M)$ and $\Loc_L(M)$. The volume form $\vol_{\Loc_P}$ defines an isomorphism $\det(\bL_{\Loc_P(M)})\cong \cO_{\Loc_P(M)}$. To check that this gives an orientation data on the Lagrangian correspondence we have to show that
\[\vol_{\Loc_G} \vol_{\Loc_L} = \vol_{\Loc_P}^2\]
under the isomorphism \eqref{eq:Lagrangiansquare}.

Fix a $P$-local system $Q\rightarrow M$ and consider the adjoint bundles
\[\ad_P Q = Q\times^P \fp,\qquad \ad_G Q = Q\times^P \g,\qquad \ad_L Q = Q\times^P \fl.\]

The isomorphism \eqref{eq:Lagrangiansquare} at $Q\in\Loc_P(M)$ boils down to an isomorphism
\begin{equation}
\det(\rH_\bullet(M; \ad_G Q\oplus \ad_L Q))\cong \det(\rH_\bullet(M; \ad_P Q))^{\otimes 2}
\label{eq:inductionisomorphism}
\end{equation}
constructed as a combination of the following two isomorphisms \eqref{eq:isomorphism1} and \eqref{eq:isomorphism2}. First, we have an exact sequence of $P$-representations
\[0\longrightarrow \fp\longrightarrow \g\oplus \fl\longrightarrow \fp^*\longrightarrow 0.\]
Taking the adjoint bundles and using multiplicativity of the determinant we obtain an isomorphism
\begin{equation}
\det(\rH_\bullet(M; \ad_G Q\oplus \ad_L Q))\cong \det(\rH_\bullet(M; \ad_P Q))\otimes \det(\rH_\bullet(M; (\ad_P Q)^*)).
\label{eq:isomorphism1}
\end{equation}
Second, using Poincar\'e duality on $M$ we obtain an isomorphism
\begin{equation}
\det(\rH_\bullet(M; \ad_P Q))\xrightarrow{\sim}\det(\rH_\bullet(M; (\ad_P Q)^*)).
\label{eq:isomorphism2}
\end{equation}

By \cref{prop:determinantReidemeister} the value of $\vol_{\Loc_P}$ at $Q\in\Loc_P(M)$ coincides with the Reidemeister--Turaev torsion $\tau(M; \ad_P Q)$ and similarly for the other groups. So, we have to show that under \eqref{eq:inductionisomorphism} we have
\[\tau(M; \ad_G Q) \tau(M; \ad_L Q) = \tau(M; \ad_P Q).\]
By the multiplicativity of torsions (see \cite[Theorem 7.1]{FarberTuraevPR}) we have
\[\tau(M; \ad_G Q)\tau(M;\ad_L Q) = \tau(M;\ad_P Q) \tau(M;(\ad_P Q)^*).\]
By the duality of torsions (see \cite[Theorem 7.2]{FarberTuraevPR}) we have
\[\tau(M; (\ad_P Q)^*) = \tau(M; \ad_P Q).\]
Note that the characteristic class of $\xi$ vanishes since we have assumed that $\xi$ is canonical. This proves that the volume forms $\vol_{\Loc_P}$, $\vol_{\Loc_G}$, $\vol_{\Loc_L}$ define orientation data on the Lagrangian correspondence.

We have defined the orientation data on the Lagrangian correspondence depending on the choice of a canonical Euler structure. Let us now consider the two possible assumptions:
\begin{enumerate}
\item If $M$ is equipped with a spin structure, it also carries a $\Spinc$-structure with trivial first Chern class. Therefore, by \cref{prop:SpincEuler} it carries a canonical Euler structure.
\item Suppose the modular character $\Delta_P$ admits a square root $\Delta_P^{1/2}$. Two canonical Euler structures $\xi_1,\xi_2$ on $M$ differ by a 2-torsion element $h\in\rH_1(M;\Z)$. By \cref{prop:LocGvolumeform} the volume form $\vol_{\Loc_P}$ changes as follows:
\[\vol_{\Loc_P, \xi_2} = \langle h, \Delta_P\rangle \vol_{\Loc_P, \xi_1},\]
while $\vol_{\Loc_G}$ and $\vol_{\Loc_L}$ do not change since $G$ and $L$ are unimodular. Using the square root of $\Delta_P$ we have
\[\langle h, \Delta_P\rangle = \langle h, \Delta^{1/2}_P\rangle^2 = \langle 2h, \Delta^{1/2}_P\rangle = 1.\]
In other words, in this case $\vol_{\Loc_P}$ is independent of the choice of a canonical Euler structure.
\end{enumerate}
\end{proof}

The above result has the following application. Let us recall the following conjecture of Joyce (see \cite[Conjecture 5.18]{AmorimBB}).

\begin{conjecture}
Let $X$ be a $(-1)$-shifted symplectic stack and $f\colon L\rightarrow X$ a Lagrangian morphism, where both $X$ and $L\rightarrow X$ are equipped with orientation data. Then there is a natural morphism
\[\mu_L\colon \Q_{t_0(L)}[\dim L]\longrightarrow f^! \phi_X.\]
\label{conj:Joyce}
\end{conjecture}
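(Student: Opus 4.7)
The final item labeled \texttt{conj:Joyce} is Joyce's conjecture, which is presented by the authors as a hypothesis (not established in the excerpt). A full proof is, to my knowledge, an open problem in the field, so what follows is a proof \emph{strategy} rather than a proof I could realistically carry to completion.

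The plan is to reduce to a local model and then glue. By Brav--Bussi--Joyce and Bouaziz--Grojnowski (Darboux for $(-1)$-shifted symplectic derived schemes), $X$ is Zariski-locally equivalent to a derived critical locus $\mathrm{Crit}(g)$ for some function $g\colon U\to\mathbb{A}^1$ on a smooth scheme $U$; in this local model $\phi_X|_{t_0(\mathrm{Crit}(g))}$ is the perverse sheaf of vanishing cycles $\phi_g\,\underline{\Q}_U[\dim U]$. One next needs a structural description of a Lagrangian $f\colon L\to X$ in such local coordinates: by the Lagrangian neighbourhood theorem of Joyce--Safronov one may assume, locally on $L$, that $L$ is cut out inside a smooth scheme $V$ equipped with a smooth map $\pi\colon V\to U$ such that $g\circ\pi$ vanishes on $t_0(L)$ and such that the symplectic data on $X$ and the Lagrangian data on $L$ are controlled by a single function $h\colon V\to\mathbb{A}^1$ with $\pi^*g = h^2$ (up to the sign ambiguity absorbed by the orientation).

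With this local model in hand, the first step is to construct $\mu_L$ in a chart. The basic input is the classical morphism of constructible sheaves on the zero locus $Z(g)\subset U$,
\[
\underline{\Q}_{Z(g)}\longrightarrow \phi_g\,\underline{\Q}_U[-1],
\]
coming from the specialization/variation triangle for nearby and vanishing cycles. Pulling back along $\pi$ and using the expression $\pi^*g = h^2$ (so that $Z(\pi^*g)$ contains $t_0(L)$), one produces a morphism $\underline{\Q}_{t_0(L)}[\dim L]\to f^!\phi_g\,\underline{\Q}_U[\dim U]$ after the appropriate shift, exactly the form required by $\mu_L$. The second step is to verify that this local construction is independent of the choice of Darboux chart for $X$ and of the Lagrangian neighbourhood for $L\to X$. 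This is where the orientation data on $X$ and on $L\to X$ enters: changes of chart differ by stabilizations $g\rightsquigarrow g\oplus Q$ by a nondegenerate quadratic form $Q$ on a trivial bundle, and the Thom--Sebastiani isomorphism for $\phi$ together with the Milnor fibre computation for $Q$ introduces a sign local system governed by $\det$ of that bundle; the square-root structures on $K_X^{1/2}$ and on $\det(\bL_L)\cong f^*K_X^{1/2}$ are precisely the data needed to trivialise these ambiguities coherently. The third step is to descend the trivialised local morphisms to a global morphism using the stability/gluing result for $\phi_X$ from \cite{BBBBJ} in its relative (Lagrangian) form.

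The hard part, and the reason this remains a conjecture, is the second step: showing functoriality of the vanishing cycle construction with respect to stabilisation in the \emph{Lagrangian} setting, rather than only in the ambient $(-1)$-shifted symplectic setting. The absolute case (just producing $\phi_X$) was carried out in \cite{BBBBJ} via delicate 2-categorical descent for perverse sheaves; the Lagrangian case requires a strictification of the Lagrangian Darboux theorem compatible with Thom--Sebastiani and with the relative orientation trivialisations, and the control of higher coherences under changes of chart does not follow formally from the absolute case. A plausible route, in the spirit of Amorim--Ben-Bassat, is to package all Lagrangian charts into a simplicial diagram and verify the cocycle condition on double overlaps modulo the relative orientation group; but producing the required higher homotopies seems to need either a theory of perverse sheaves of vanishing cycles with values in an $\infty$-category of modules (as conjectured by Joyce and partially developed by Kinjo, Park, Safronov and others) or a motivic lift where such coherence becomes automatic. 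I would therefore not claim a proof, but indicate this as the conceptual obstruction that any attempt must address.
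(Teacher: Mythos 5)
You have correctly recognized that the labeled statement is Joyce's conjecture, attributed in the paper to \cite[Conjecture 5.18]{AmorimBB}: the paper offers no proof of it and explicitly invokes it as a hypothesis in \cref{thm:DTinduction}. There is therefore no ``paper's proof'' to compare against; your account of the Darboux/Lagrangian-neighbourhood strategy, the role of the specialization morphism into vanishing cycles, and the identification of coherence of Thom--Sebastiani under stabilisation in the relative (Lagrangian) setting as the essential obstruction is a reasonable summary of why the statement remains conjectural, and is consistent with the way the paper treats it.
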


Let us now apply the conjecture to the Lagrangian correspondence \eqref{eq:Locparabolicinduction} which carries orientation data according to \cref{thm:Locparabolicinductionorientation}.

\begin{thm}
Suppose \cref{conj:Joyce} holds. Suppose either $M$ is equipped with a spin structure or the modular character $\Delta_P$ admits a square root. Then there is a natural parabolic induction map
\[
\rH^\bullet(t_0(\Loc_L(M)), \phi_{\Loc_L(M)})\longrightarrow \rH^\bullet(t_0(\Loc_G(M)), \phi_{\Loc_G(M)})
\]
between the cohomological DT invariants of $M$.
\label{thm:DTinduction}
\end{thm}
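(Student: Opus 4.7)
The plan is to repackage the Lagrangian correspondence \eqref{eq:Locparabolicinduction} as a single Lagrangian morphism into a product of $(-1)$-shifted symplectic stacks, feed this into \cref{conj:Joyce}, and extract the parabolic induction map via Thom--Sebastiani together with a standard pull--push.

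First I would identify the Lagrangian correspondence with the Lagrangian morphism
\[f = (\pi_L, \pi_G)\colon \Loc_P(M) \longrightarrow X := \overline{\Loc_L(M)} \times \Loc_G(M),\]
where the overline denotes the opposite $(-1)$-shifted symplectic structure. Since passing to the opposite structure preserves $\det(\bL_Y)$, the orientation data on $\Loc_L(M)$ and $\Loc_G(M)$ supplied by \cref{prop:LocGoriented} assemble to orientation data on $X$. The equality $\vol_{\Loc_G}\cdot\vol_{\Loc_L} = \vol_{\Loc_P}^{2}$ verified in the proof of \cref{thm:Locparabolicinductionorientation} is then precisely the statement that $\vol_{\Loc_P}$ endows $f$ with orientation data as a Lagrangian morphism in the sense of \cite[Definition 5.3]{AmorimBB}. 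Applying \cref{conj:Joyce} to $f$ yields
\[\mu_f\colon \Q_{t_0(\Loc_P(M))}[\dim \Loc_P(M)] \longrightarrow f^{!}\phi_{X}.\]

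Next I would use the expected Thom--Sebastiani identification $\phi_X \cong \phi_{\overline{\Loc_L(M)}} \boxtimes \phi_{\Loc_G(M)}$ (which holds for $\phi$ on products of $(-1)$-shifted symplectic stacks by \cite{BBBBJ}), together with the local identification of $\phi_{\overline{Y}}$ with the Verdier dual of $\phi_{Y}$ (up to a cohomological twist). Unwinding $\mu_f$ via K\"unneth and base change converts it, modulo a cohomological shift, into a morphism
\[\pi_L^{*}\phi_{\Loc_L(M)} \longrightarrow \pi_G^{!}\phi_{\Loc_G(M)}\]
of constructible sheaves on $t_0(\Loc_P(M))$. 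Adjunction along $\pi_G$ then produces $\pi_{G,!}\pi_L^{*}\phi_{\Loc_L(M)} \to \phi_{\Loc_G(M)}$, and taking global sections and precomposing with the pullback $\pi_L^{*}\colon \rH^{\bullet}(t_0(\Loc_L(M)),\phi_{\Loc_L(M)}) \to \rH^{\bullet}(t_0(\Loc_P(M)),\pi_L^{*}\phi_{\Loc_L(M)})$ delivers the desired parabolic induction map.

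The hard part will be tracking the cohomological shifts and sign conventions across Thom--Sebastiani and Verdier duality for the opposite $(-1)$-shifted symplectic structure, and verifying that the flavor of orientation data on $f$ produced by the Reidemeister--Turaev trivializations in \cref{thm:Locparabolicinductionorientation} matches the version required to instantiate \cref{conj:Joyce}. As the theorem is conditional on that conjecture, these compatibilities should be viewed as part of the expected final formulation of the conjecture itself rather than something to be established independently here.
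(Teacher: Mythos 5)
Your proposal takes essentially the same route as the paper: repackage the correspondence as a Lagrangian morphism into $\overline{\Loc_L(M)}\times\Loc_G(M)$, feed it into \cref{conj:Joyce}, and unwind with Verdier duality and adjunctions. The difference is purely one of bookkeeping: the paper starts from $\Q_{t_0(\Loc_P(M))}\to(\pi_L\times\pi_G)^!(\phi_{\Loc_L(M)}\boxtimes\phi_{\Loc_G(M)})$, Verdier dualizes to $\pi_L^*\phi_{\Loc_L}\otimes\pi_G^*\phi_{\Loc_G}\to\omega_{t_0(\Loc_P)}$, transposes to $\phi_{\Loc_L}\to(\pi_L)_*\pi_G^!\phi_{\Loc_G}$, pushes down along $p_L$, and closes with the counit $(\pi_G)_*\pi_G^!\to\id$; your route converts first to $\pi_L^*\phi_{\Loc_L}\to\pi_G^!\phi_{\Loc_G}$ and then applies $\pi_{G,!}$ and global sections. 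These are the same computation in different order.

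However, there is a genuine gap: your final two steps silently require $\pi_G\colon\Loc_P(M)\to\Loc_G(M)$ to be (representable and) proper. The counit $\pi_{G,!}\pi_G^!\to\id$ you invoke is the $(!,\,!)$ counit, and without properness "taking global sections" of $\pi_{G,!}\pi_L^*\phi_{\Loc_L}\to\phi_{\Loc_G}$ would produce compactly supported cohomology of $t_0(\Loc_P(M))$ on the source, not $\rH^\bullet(t_0(\Loc_P(M)),\pi_L^*\phi_{\Loc_L})$, so the precomposition with the restriction map $\pi_L^*$ from $\rH^\bullet(t_0(\Loc_L(M)),\phi_{\Loc_L})$ would not typecheck. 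Once $\pi_G$ is proper, $\pi_{G,!}=\pi_{G,*}$ and everything works; but this is exactly the content the paper proves by exhibiting $\pi_G$ as a base change of a closed $G$-equivariant subscheme of $G^n\times G/P$ projecting properly to $G^n$. You should add that argument (and the observation that $\dim\Loc_P(M)=0$, used to normalise the shift in $\mu_f$) to complete the proof.
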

\begin{proof}
Let us first show that the morphism $\pi_G$ is representable and proper (as a morphism of underived stacks). Without loss of generality we may assume that $M$ is connected. The fundamental group of $M$ is finitely generated which gives a closed immersion $\Loc_G(M)\subset [G^n / G]$. Now consider a closed $G$-equivariant subscheme $X\subset G^n\times G/P$ consisting of elements $(g_1, \dots, g_n, [h])$ satisfying the equations $g_i[h] = [h]\in G/P$. Then we have a pullback diagram
\[
\xymatrix{
\Loc_P(M) \ar[r] \ar^{\pi_G}[d] & [X/G] \ar[d] \\
\Loc_G(M) \ar[r] & [G^n / G]
}
\]
The $G$-equivariant morphism $X\rightarrow G^n$ is obtained as a composition of a closed immersion $X\subset G^n\times G/P$ and a projection on the first factor, both of which are proper. Since proper morphisms are stable under base change, $\pi_G\colon \Loc_P(M)\rightarrow \Loc_G(M)$ is proper as well. Moreover, by \cref{prop:LocGvolumeform} we have $\dim(\Loc_P(M)) = 0$ since $\chi(M) = 0$.

Using the orientation data on the Lagrangian correspondence \eqref{eq:Locparabolicinduction} constructed in \cref{thm:Locparabolicinductionorientation} and \cref{conj:Joyce} we get a morphism
\[\Q_{t_0(\Loc_P(M))}\longrightarrow (\pi_L\times \pi_G)^!(\phi_{\Loc_L(M)}\boxtimes \phi_{\Loc_G(M)}).\]
Applying Verdier duality we get
\[\pi_L^* \phi_{\Loc_L(M)}\otimes \pi_G^* \phi_{\Loc_G(M)}\longrightarrow \omega_{t_0(\Loc_P(M))}\]
and, applying adjunctions,
\[\phi_{\Loc_L(M)}\longrightarrow (\pi_L)_*\pi_G^! \phi_{\Loc_G(M)}.\]
Let $p_L\colon t_0(\Loc_L(M))\rightarrow \pt$ be the projection. Applying $(p_L)_*$ to the above morphism we get
\[\rH^\bullet(t_0(\Loc_L(M)), \phi_{\Loc_L(M)})\longrightarrow \rH^\bullet(t_0(\Loc_G(M)), (\pi_G)_*\pi_G^! \phi_{\Loc_G(M)})\longrightarrow \rH^\bullet(t_0(\Loc_G(M)), \phi_{\Loc_G(M)}),\]
where the second map uses the counit $(\pi_G)_*\pi_G^!\rightarrow \id$ of the adjunction which exists since $\pi_G$ is proper.
\end{proof}

\section{Dolbeault and de Rham setting}
\label{sect:DolbeaultdeRham}

In this section we explain how to apply the results of \cref{sect:determinant} in the case of de Rham stacks.

\subsection{Setting}

Let $M$ be a smooth scheme. In this section we will be interested in the following stacks:
\begin{itemize}
    \item The \defterm{de Rham stack} $X=M_{\dR}$ is defined by the functor of points
    \[(M_{\dR})(R) = M(\rH^0(R)^{\red}).\]
    Let $\widehat{M\times M}$ be the formal completion of $M\times M$ along the diagonal. The two projections $\widehat{M\times M}\rightrightarrows M$ form a groupoid and one may identify
    \[M_{\dR}\cong [M / \widehat{M\times M}]\]
    with the groupoid quotient. One may identify $\QCoh(M_{\dR})$ with the derived $\infty$-category of $D$-modules on $M$ \cite{GaitsgoryRozenblyumCrys}.
    \item The \defterm{Dolbeault stack} $X=M_{\Dol}$ is defined to be the quotient
    \[M_{\Dol} = [M / \widehat{\T} M]\]
    of $M$ by the formal group scheme $\widehat{\T} M\rightarrow M$ given by the formal completion of the tangent bundle along the zero section with the group structure given by addition along the fibers. The pullback $\QCoh(M_{\Dol})\rightarrow \QCoh(M)$ under the projection $M\rightarrow M_{\Dol}$ is monadic and identifies $\QCoh(M_{\Dol})$ with $\Mod_{\Sym(\T_M)}(\QCoh(M))\cong \QCoh(\T^* M)$.
\end{itemize}

\begin{prop}
Suppose $M$ is a smooth and proper scheme. Then $X=M_{\dR}$ and $X=M_{\Dol}$ satisfy \cref{mainassumptionduality}.
\end{prop}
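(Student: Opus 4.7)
The plan is to verify each clause of \cref{mainassumptionduality} in both cases, using the identifications: $\QCoh(M_{\dR})$ is the derived $\infty$-category of $\cD_M$-modules (following \cite{GaitsgoryRozenblyumCrys}), and $\QCoh(M_{\Dol}) \simeq \QCoh(\T^* M)$ as recalled above. Compact generation is standard in both settings: for $M_{\dR}$ by coherent $\cD_M$-modules, and for $M_{\Dol}$ because $\T^*M$ is a quasi-compact, quasi-separated, smooth scheme. The structure sheaf is compact in each case: on $M_{\dR}$ the trivial $\cD$-module $\cO_M$ is holonomic and hence compact; on $M_{\Dol}$ the structure sheaf corresponds to $i_*\cO_M$, where $i\colon M \to \T^*M$ is the zero section --- a regular closed embedding of smooth schemes, so $i_*\cO_M$ is perfect.

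For the pushforward adjoint $p_\sharp$: in the de Rham setting, properness of $M$ ensures that the $\cD$-module direct image along $M\to \pt$ preserves compactness and yields a left adjoint to $p^*$. In the Dolbeault setting, $p^*\colon \Mod_k\to \QCoh(\T^*M)$ factors as $V\mapsto i_*(V\otimes \cO_M)$; its left adjoint is assembled from the left adjoint $p_{M,\sharp}$ of $p_M^*$ (which exists since $M$ is smooth and proper, with $F\mapsto p_{M,*}(F\otimes \omega_M[\dim M])$ by Serre duality) together with the left adjoint of $i^*$ (which exists by Grothendieck duality for the regular embedding $i$, and takes the form $G\mapsto i_*(G\otimes \omega_{M/\T^*M})$ for an invertible $\omega_{M/\T^*M}$). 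Every $k$-point of $X$ factors canonically through the projection $M\to X$, and the same Grothendieck-duality argument --- now applied to the regular closed embedding of a point into the smooth scheme $M$ --- produces $i_\sharp$ together with the projection formula.

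Finally, for the diagonal: both $(-)_{\dR}$ and $(-)_{\Dol}$ preserve finite products (the Dolbeault identity reduces to $\T(M\times M)\cong \T M\times \T M$), so $\Delta_X$ is induced from $\Delta_M\colon M\to M\times M$, itself a regular closed embedding of smooth proper schemes. In the de Rham case $\Delta_\sharp$ is the $\cD$-module pushforward along $\Delta_M$; in the Dolbeault case it is the analogous construction after translating through the equivalence $\QCoh(X\times X)\simeq \QCoh(\T^*M\times \T^*M)$ (which uses compact generation together with $\T^*(M\times M)\cong \T^*M\times \T^*M$). The projection formula descends from the corresponding formula on $M\times M$. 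The main obstacle is the bookkeeping of tensor structures through these equivalences: the monoidal structure on $\QCoh(M_{\Dol})$ corresponds to a convolution product on $\QCoh(\T^*M)$ (rather than the naive $\otimes_{\cO_{\T^*M}}$), so one must carefully check that $\Delta^*$ and its left adjoint are computed using the correct operations before the required projection formula can be read off.
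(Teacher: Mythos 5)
Your overall route is the same as the paper's: pass through the equivalence $\QCoh(M_{\Dol})\simeq\QCoh(\T^*M)$, translate $p^*$, the pullback at a point, and $\Delta^*$ into concrete operations on $\T^*M$, and then invoke Serre/Grothendieck duality for the smooth proper scheme $M$ and for regular closed immersions. The de Rham case is also handled the same way (the paper simply says it "follows from the usual functoriality of $D$-modules").

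Two points. First, a small slip in the construction of $p_\sharp$: from the factorization $p^* = i_*\,p_M^*$ the left adjoint is $p_\sharp = p_{M,\sharp}\circ(\text{left adjoint of }i_*)=p_{M,\sharp}\circ i^*$; what you need here is that $i^*$ is left adjoint to $i_*$ (automatic for a closed immersion), not a left adjoint of $i^*$. A left adjoint of $i^*$ (which you do produce via Grothendieck duality for a point $i\colon\pt\to M$) is what you need later for the clause in \cref{mainassumptionassembly}, since the pullback at a $k$-point of $M_{\Dol}$ is $i^*\tilde\pi_*$ and its left adjoint is $\tilde\pi^*\,i_\sharp$.

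Second, and more substantively, you correctly identify the subtle point for the diagonal---that the symmetric monoidal structure on $\QCoh(M_{\Dol})$ is \emph{not} the naive tensor product on $\QCoh(\T^*M)$ but a convolution---and then stop, leaving this as an explicit unresolved obstacle. That is precisely the step the paper does carry out: under the identification, $\Delta^*$ is the integral transform along the correspondence
\[
\T^*M\times\T^*M \longleftarrow \T^*M\times_M\T^*M \xrightarrow{\;m\;} \T^*M,
\]
with $m$ the fiberwise addition, so that $\Delta^* = m_*\,j^*$ where $j$ is the regular closed immersion obtained by base-changing $\Delta_M\colon M\to M\times M$. Its left adjoint and the projection formula then reduce to those for $\Delta_M$ (i.e., to $j_\sharp$ existing and satisfying the projection formula, which holds by Grothendieck duality and base change from $\Delta_M$). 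As written, your proof stops short of this last step, so the diagonal clause of \cref{mainassumptionduality} is not actually verified.
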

\begin{proof}
Denote as usual $p\colon X\rightarrow \pt$.

The claim for $X=M_{\dR}$ follows from the usual functoriality of $D$-modules and we omit the proof.

Let us now consider the case $X=M_{\Dol}$. Let $s\colon M\hookrightarrow \T^* M$ be the inclusion of the zero section, $\tilde{p}\colon M\rightarrow \pt$ and $f\colon M\rightarrow M_{\Dol}$ and $\tilde{\pi}\colon \T^* M\rightarrow M$. Then under the identification $\QCoh(M_{\Dol})\cong \QCoh(\T^* M)$ we have
\[p^* = s_*\tilde{p}^*,\qquad f^* = \tilde{\pi}_*.\]
Since $\tilde{p}$ is smooth and proper, $\tilde{p}^*$ admits colimit-preserving left and right adjoints. Moreover, since $s$ is a regular immersion, $s_*$ admits colimit-preserving left and right adjoints. Therefore, $p^*$ admits colimit-preserving left and right adjoints.

The pullback under the composite
\[\pt\xrightarrow{i} M\xrightarrow{\pi} M_{\Dol},\]
where $i$ is an inclusion of a point, is
\[i^*\tilde{\pi}_*\colon \QCoh(\T^* M)\longrightarrow \Mod_k.\]
It has a left adjoint satisfying the projection formula as $i^*$ does.

Finally, under the identification $\QCoh(M_{\Dol})\cong \QCoh(\T^* M)$ the functor 
\[\Delta^*\colon \QCoh(M_{\Dol}\times M_{\Dol})\rightarrow \QCoh(M_{\Dol})\]
goes to the integral transform along the correspondence
\[
\xymatrix{
& \T^* X\times_X \T^* X\ar^{m}[dr] \ar[dl] & \\
\T^* X\times \T^* X && \T^* X
}
\]
where $m\colon \T^* X\times_X \T^* X\rightarrow \T^* X$ is given by the addition along the fibers. So, $\Delta^*$ admits a left adjoint satisfying the projection formula precisely because the pullback along $\Delta_X\colon X\rightarrow X\times X$ admits a left adjoint satisfying the projection formula.
\end{proof}

\subsection{Lifts along the assembly map}

We begin by describing the $K$-theory of $M_{\Dol}$. Using the equivalence $\QCoh(M_{\Dol})\cong \QCoh(\T^* M)$ we have $\K^\omega(M_{\Dol})\cong \K(\T^* M)$. Under this equivalence $\cO_{M_{\Dol}}\in \QCoh(M_{\Dol})$ is sent to $\cO_{\T^*_M M}\in\QCoh(\T^* M)$, the structure sheaf of the zero section $\T^*_M M\subset \T^* M$. Using the Koszul resolution we obtain the following result.

\begin{prop}
Under the isomorphism $\K^\omega(M_{\Dol})\cong \K(M)$ the class $[\cO_{M_{\Dol}}]\in\Omega^\infty\K^\omega(M_{\Dol})$ is sent to the $K$-theoretic Euler class
\[e_K(\T^*_M) = \sum_{k=0}^\infty (-1)^k \left[\bigwedge^k \T_M\right]\in \Omega^\infty\K(M)\]
of the cotangent bundle $\T^*_M$. For instance, if $M$ is of pure dimension $d$, this is the top Chern class $c_d(\T^*_M)$.
\label{prop:DolbeaultEulerclass}
\end{prop}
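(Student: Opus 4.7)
The plan is to identify $\cO_{M_{\Dol}}$ with the structure sheaf of the zero section $\T^*_M M \subset \T^*M$ under the monadic equivalence, and then apply a Koszul resolution together with homotopy invariance of algebraic $K$-theory.

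I would begin by identifying $\cO_{M_\Dol}$ explicitly under $\QCoh(M_\Dol) \cong \QCoh(\T^*M)$. Pulling back $\cO_{M_\Dol}$ along $f\colon M \to M_\Dol$ yields $\cO_M$ with its canonical (trivial) $\widehat{\T}M$-equivariance. Under $\QCoh(M_\Dol) \cong \Mod_{\Sym(\T_M)}(\QCoh(M))$, this trivial equivariance corresponds to $\cO_M$ as a $\Sym(\T_M)$-module via the augmentation $\Sym(\T_M) \to \cO_M$ (so $\T_M$ acts by zero). Under the identification $\QCoh(\T^*M) \cong \Mod_{\Sym(\T_M)}(\QCoh(M))$, this module is exactly $i_*\cO_M$, where $i\colon M \hookrightarrow \T^*M$ denotes the zero section. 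Hence $\cO_{M_\Dol}$ corresponds to $\cO_{\T^*_M M} = i_*\cO_M$.

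Next, writing $\pi\colon \T^*M \to M$, the zero section $i$ is cut out by the tautological section $\tau \in \Gamma(\T^*M, \pi^*\T^*M)$, which is Koszul-regular of codimension $\rk \T_M$. The Koszul complex on $\tau$ provides a resolution
\[
i_*\cO_M \simeq \Bigl(\bigwedge^\bullet \pi^*\T_M,\, \iota_\tau\Bigr) = \bigl[\bigwedge^{\rk \T_M}\! \pi^*\T_M \to \cdots \to \bigwedge^1 \pi^*\T_M \to \cO_{\T^*M}\bigr].
\]
Applying \cref{prop:chaincomplexfiltration} to the brutal filtration on this finite complex of locally free sheaves gives
\[
[\cO_{M_\Dol}] = [i_*\cO_M] = \sum_{k=0}^\infty (-1)^k \bigl[\pi^*\bigwedge^k \T_M\bigr] = \pi^*\!\biggl(\sum_{k=0}^\infty (-1)^k \bigl[\bigwedge^k \T_M\bigr]\biggr)
\]
in $\Omega^\infty \K(\T^*M) = \Omega^\infty \K^\omega(M_\Dol)$.

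Finally, the asserted isomorphism $\K^\omega(M_\Dol) \cong \K(M)$ is the inverse of the pullback $\pi^*\colon \K(M) \to \K(\T^*M)$, which is an equivalence by Quillen's homotopy invariance of algebraic $K$-theory for vector bundles over regular schemes (and $M$ is smooth). Under this inverse, the class in the previous display corresponds to $\sum_k (-1)^k [\bigwedge^k \T_M] \in \Omega^\infty\K(M)$, which is exactly $e_K(\T^*_M)$; the top Chern class interpretation when $M$ has pure dimension $d$ is the standard $K$-theoretic identity for the Euler class of a vector bundle. The only nonformal step is the identification of $\cO_{M_\Dol}$ with $i_*\cO_M$, which depends on how the Koszul-duality-type equivalence $\QCoh(M_\Dol) \cong \QCoh(\T^*M)$ is normalized; once this is pinned down, the rest is a routine Koszul computation combined with homotopy invariance.
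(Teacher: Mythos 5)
Your proof is correct and follows the same route the paper sketches: identify $\cO_{M_\Dol}$ with the structure sheaf of the zero section $\cO_{\T^*_M M}$ under the monadic equivalence $\QCoh(M_\Dol)\cong\QCoh(\T^*M)$, resolve it by the Koszul complex $\bigwedge^\bullet\pi^*\T_M$, and pass to $\K(M)$ via homotopy invariance of $K$-theory for the vector bundle $\T^*M\to M$. The only detail you add beyond the paper's sketch is making explicit that the isomorphism $\K(\T^*M)\cong\K(M)$ is the inverse of $\pi^*$, which is correct and worth spelling out.
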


Next, let us describe the $K$-theory of $M_{\dR}$. Recall that $\QCoh(M_{\dR})$ is the derived $\infty$-category of $D$-modules on $M$. It is compactly generated: $\QCoh(M_{\dR}) = \Ind\QCoh(M_{\dR})^\omega$. For a conic subset $S\subset \T^* M$ we denote by $\QCoh_S(M_{\dR})^\omega\subset \QCoh(M_{\dR})^\omega$ the subcategory of $D$-modules with singular support in $S$. For instance, for $S=\T^*_M M$ the zero section we get $\QCoh_{\T^*_M M}(M_{\dR})^\omega = \Perf(M_{\dR})$. Let $\K^\omega_S(M_{\dR})$ be the $K$-theory of $\QCoh_S(M_{\dR})^\omega$.

For a smooth scheme $X$ and a subset $S\subset X$ we denote by $\K_S(X)$ the $K$-theory of $X$ with support on $S$. The following is shown in \cite[Chapter 6, Theorem 7]{Quillen} and \cite[Corollary 3.1.16]{Patel}.

\begin{prop}
There is a commutative diagram
\[
\xymatrix{
\K^\omega_S(M_{\dR}) \ar[r] \ar[d] & \K^\omega(M_{\dR}) \ar^{\sim}[d] \\
\K_S(\T^* M) \ar[r] & \K(\T^* M).
}
\]
Under these morphisms $[\cO_{M_{\dR}}]\in\Omega^\infty\K^\omega_{\T^*_M M}(M_{\dR})$ goes to the class of the structure sheaf of the zero section $\T^*_M M\subset \T^* M$.
\label{prop:KD}
\end{prop}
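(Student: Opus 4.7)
The plan is to obtain the commutative square essentially from the cited theorems of Quillen and Patel, and then do the identification of the class $[\cO_{M_\dR}]$ by hand. Recall that the sheaf of differential operators $\mathcal{D}_M$ carries the order filtration whose associated graded is $\Sym(\T_M)\cong \pi_* \cO_{\T^* M}$, where $\pi\colon \T^* M\to M$. A coherent $D$-module equipped with a good filtration (which exists locally) then has an associated graded which is a coherent graded $\Sym(\T_M)$-module, i.e.\ a coherent sheaf on $\T^* M$, and by definition its support equals the singular support of the $D$-module.

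Second, I would invoke Quillen's theorem on K-theory of filtered abelian categories \cite[Chapter 6, Theorem 7]{Quillen} and its globalization to $D$-modules by Patel \cite[Corollary 3.1.16]{Patel}. These assert that the associated graded construction, while not functorial on the level of categories, is well-defined on K-theory classes (two good filtrations differ by shifts that cancel) and induces the desired maps $\K_S^\omega(M_\dR)\to \K_S(\T^* M)$, which refine to an equivalence $\K^\omega(M_\dR)\xrightarrow{\sim} \K(\T^* M)$. Compatibility with supports is automatic from the identification of singular support with support of the associated graded, so the square commutes.

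Finally, for the image of $[\cO_{M_\dR}]$: under the equivalence $\QCoh(M_\dR)\cong D\text{-mod}(M)$, the structure sheaf $\cO_{M_\dR}$ corresponds to $\cO_M$ equipped with its tautological $D$-module structure, whose singular support is indeed the zero section. The natural trivial good filtration on $\cO_M$, placing it in filtration degree zero, has associated graded equal to $\cO_M$ regarded as a $\Sym(\T_M)$-module on which $\T_M$ acts by zero. Under $\QCoh(\T^* M)\cong \Mod_{\Sym(\T_M)}(\QCoh(M))$, this is precisely $s_*\cO_M$, the structure sheaf of the zero section $s\colon M\hookrightarrow \T^* M$.

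The main obstacle — routine but requiring care — is to match up the vertical equivalence produced by the $\gr$ construction with the identification $\QCoh^\omega(M_\dR)\cong \QCoh^\omega(\T^* M)$ implicit in the earlier identification $\K^\omega(M_\dR)\cong \K(\T^* M)$; once this matching is unwound, all three assertions (the existence of the square, the equivalence on the right, and the computation of the class) follow formally.
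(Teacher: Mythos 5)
Your proposal is correct and matches the paper's own treatment, which is essentially a citation of Quillen and Patel together with the observation (recorded in the remark following the proposition) that a good filtration sends $[\cF]$ to $[\gr\cF]$. Your worry about matching the ``$\gr$ equivalence'' to ``the other identification'' is moot: the right vertical map in the square \emph{is} the associated-graded equivalence, so there is nothing to reconcile, and the computation of $[\cO_{M_\dR}]$ via the trivial good filtration on $\cO_M$ is exactly what is intended.
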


\begin{remark}
Given a coherent $D$-module $\cF$ with a good filtration, the class $[\cF]\in\Omega^\infty\K^\omega(M_{\dR})$ goes to $[\gr\cF]\in\Omega^\infty\K(\T^* M)$.
\end{remark}

Using \cref{prop:KD} and the isomorphism $\K(\T^* M)\cong \K(M)$ we see that the assembly map for $M_{\dR}$ and $M_{\Dol}$ coincides with the assembly map for $M$ itself:
\[\C_\bullet(M(k); \K(k))\longrightarrow \K(M).\]

To construct a lift of $e_K(\T^*_M)\in \K(M)$ we will use the construction of de Rham $\epsilon$-factors from \cite{Groechenig}. Suppose $M$ is of pure dimension $d$. Consider the setting of \cite[Situation 3.1]{Groechenig}:
\begin{itemize}
    \item $Z\subset M$ is a closed subset of dimension $0$. $U=M\setminus Z$ is the complement.
    \item Consider an open covering $U=\cup_{i=1}^d U_i$ and regular one-forms $\nu_i$ on $U_i$ for each $i$.
    \item The one-forms $\{\nu_i\}$ satisfy the following condition. For each ordered subset $\{i_1<\dots<i_l\}\subset \{1,\dots,d\}$ we require that $\sum_{j=1}^l \lambda_j \nu_{i_j}$ nowhere vanishes on $U_{i_1\dots i_l} = \cap_{j=1}^l U_{i_j}$ for any $\lambda_1,\dots, \lambda_j\in k$ satisfying $\sum_{j=1}^l \lambda_j = 1$.
\end{itemize}

\begin{example}
Consider $M=\bP^2$ with homogeneous coordinates $[x:y:z]$. Let $Z=[1:0:0]\cup [0:0:1]$ and consider the open sets $U_1 = \{x\neq 0, z\neq 0\}$ and $U_2 = \{y\neq 0\}$ covering the complement of $Z$. Then the one-forms
\[\nu_1 = d(x/z),\qquad \nu_2 = d(x/y)\]
satisfy the assumptions.
\end{example}

\begin{prop}
Let $M$ be a smooth and proper scheme of pure dimension $d$ and $\{\nu_1, \dots, \nu_d\}$ a collection of 1-forms satisfying the above conditions. Then there is a simple structure on $M_{\Dol}$ and $M_{\dR}$.
\end{prop}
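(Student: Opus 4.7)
The plan is to construct the lift along the assembly map by exploiting the local trivializations provided by the 1-forms $\nu_i$, and to glue them using the nerve of the cover $\{U_i\}$ and the affine combinations of the $\nu_i$ that are forced by the hypothesis to be nowhere vanishing. Since $Z$ is zero-dimensional, $Z(k) \subset M(k)$ is finite, and the inclusion of chains supported on $Z(k)$ factors the assembly map; so it suffices to produce a trivialization of $[\cO_X]\in\Omega^\infty\K^\omega(X)$ after restriction to $U = M\setminus Z$ (for both $X=M_{\dR}$ and $X=M_{\Dol}$), together with the requisite coherences. By \cref{prop:DolbeaultEulerclass} and \cref{prop:KD}, this restriction is identified with the restriction of the $K$-theoretic Euler class $e_K(\T^*_M) \in \Omega^\infty\K(M)$ to $\K(U)$.

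The first step is the local trivialization. On each $U_i$, the section $\nu_i\colon \cO_{U_i}\to \T^*_M|_{U_i}$ is nowhere vanishing (the $l=1$ case of the hypothesis), so contraction with $\nu_i$ makes the Koszul complex $\wedge^\bullet \T_M|_{U_i}$ into an acyclic complex. By \cref{prop:chaincomplexfiltration} this produces a canonical homotopy $e_K(\T^*_M)|_{U_i}\sim 0$ in $\Omega^\infty\K(U_i)$. For the Dolbeault side this is exactly the desired local trivialization of $[\cO_{M_{\Dol}}]$ by \cref{prop:DolbeaultEulerclass}; for the de Rham side, one works with the corresponding acyclic de Rham–Koszul complex of $D$-modules with singular support on the graph of $\nu_i$, and invokes \cref{prop:KD} to transport the trivialization into $\K^\omega(M_{\dR})$.

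The second step is gluing on the nerve of the cover. On an $l$-fold intersection $U_{i_1\cdots i_l}$ the hypothesis is exactly the statement that every affine combination $\sum_j \lambda_j \nu_{i_j}$ with $\sum_j \lambda_j = 1$ is nowhere vanishing. Parametrizing such combinations by an algebraic simplex $\Delta^{l-1}$, one obtains a family of nowhere vanishing 1-forms on $U_{i_1\cdots l}\times \Delta^{l-1}$ and hence, by the Koszul construction of the first step, a simplex of trivializations interpolating between the $\nu_{i_j}$. This is Beilinson's $\epsilon$-factorization data as carried out in \cite{Patel,Groechenig}: it yields compatible homotopies on every face of the nerve of $\{U_i\}$ and thus, by descent (which holds because $\K$-theory is a Zariski sheaf on smooth schemes in this range), a single trivialization of $e_K(\T^*_M)|_U$ in $\Omega^\infty\K(U)$. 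Pushing this homotopy through the fiber sequence $\K^\omega_Z(X)\to\K^\omega(X)\to \K^\omega(X|_U)$ (cf.\ \cref{prop:KD}) lifts $[\cO_X]$ to a class supported on $Z$, i.e.\ to an element of $\Omega^\infty \C_\bullet(Z(k);\K(k))\subset \Omega^\infty\C_\bullet(M(k);\K(k))$, providing the simple structure.

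The main obstacle is Step two, the coherent assembly of the local trivializations across higher intersections: verifying that the simplicial family of Koszul trivializations provided by the affine combinations is a genuinely compatible system (as opposed to just pairwise compatible) and that Zariski descent in $K$-theory applies in the form needed. This is not new content — it is precisely the input from \cite{Patel,Groechenig} on de Rham $\epsilon$-factors — so the proof essentially amounts to packaging their construction in the present framework of lifts along the assembly map. The rest of the argument, i.e.\ the local Koszul computation and the identification of the restriction of $[\cO_X]$ with $e_K(\T^*_M)$, is immediate from \cref{prop:DolbeaultEulerclass} and \cref{prop:KD}.
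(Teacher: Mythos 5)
Your proof is correct and follows essentially the same route as the paper: the paper cites \cite{Groechenig} for the existence of the $\epsilon$-factor morphism $\nu^*\colon \K(\T^*M\setminus\T^*_MM)\to\K(U)$ and packages the lift by passing to fibers of the resulting commutative square, while you unpack $\nu^*$ as the \v{C}ech-glued Koszul trivializations (each $\nu_i$ makes $(\wedge^\bullet\T_M|_{U_i},\iota_{\nu_i})$ acyclic, and the affine combinations over the nerve supply the coherences) and express the lift as a nullhomotopy of $e_\K(\T^*_M)|_U$ fed through the localization fiber sequence $\K_Z(M)\to\K(M)\to\K(U)$. These are two descriptions of the same construction, with the de Rham case handled identically in both by transport along \cref{prop:KD}.
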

\begin{proof}
By \cite[Section 3.1]{Groechenig} there is a morphism $\nu^*\colon \K(\T^* M\setminus \T^*_M M)\rightarrow \K(U)$ determined by the collection of one-forms $\{\nu_i\}$ which fits into a commutative diagram
\[
\xymatrix{
\K(\T^* M) \ar[r] & \K(\T^*M\setminus \T^*_M M) \ar[d] \\
\K(M) \ar^{\tilde{\pi}^*}_{\sim}[u] \ar[r] & \K(U)
}
\]
Taking the fibers of the horizontal maps we obtain a morphism
\[\phi_{\underline{\nu}}\colon \K_{\T^*_M M}(\T^* M)\longrightarrow \K_Z(M).\]
We have
\[\K_Z(M) = \bigoplus_{z\in Z} \K(k)\]
and so the above commutative square constructs a lift of $[\cO_{M_{\Dol}}]$ (equivalently, $[\cO_{M_{\dR}}]$) under the assembly map.
\end{proof}

\begin{example}
Suppose $M$ is a smooth and proper curve. Let $\nu$ be a nonzero rational one-form on $M$ and $v=\nu^{-1}$ the corresponding rational vector field with divisor $\sum_i n_i x_i$ for some points $x_i\in M$. The vector field $v$ identifies $\T_M\cong \cO(\sum_i n_i x_i)$. Let
\[
\begin{cases}
[J_n\cO_x] = \sum_{j=1}^n [\T^j_{M, x}]\in\K_0(M), & \text{if $n>0$} \\
[J_n\cO_x] = -\sum_{j=1}^{-n} [\T^{1-j}_{M, x}]\in\K_0(M) & \text{if $n<0$}
\end{cases}
\]
where $\T^n_{M, x}$ is the skyscraper sheaf at $x\in M$ with fiber the $n$-th power of the tangent space. Using the exact sequences
\[0\longrightarrow \cO\longrightarrow \cO(x)\longrightarrow \T_{M, x}\longrightarrow 0\]
and
\[0\longrightarrow \cO(-x)\longrightarrow \cO\longrightarrow \cO_x\longrightarrow 0\]
we may identify
\[\left[\cO\left(\sum_i n_i x_i\right)\right] = \cO + \sum_i [J_{n_i}\cO_{x_i}]\in\K_0(M).\]
Therefore, using $\nu$ we identify
\[e(\T^*_M) = -\sum_i [J_{n_i}\cO_{x_i}]\in\K_0(M)\]
and the right-hand side lies in the source of the assembly map.
\end{example}

\section{Circle and the exponential map}

In this section we describe the behavior of the torsion volume form on the derived loop space under the exponential map. In this section $Y$ is a derived prestack which admits a deformation theory with a perfect cotangent complex.

\subsection{Circle}
\label{sect:circle}

Consider $M=S^1$ with the standard cell structure with a $0$-cell $p\in S^1$ and a $1$-cell $\gamma$ as shown in \cref{fig:circle}. Choose a clockwise orientation of $\gamma$ which induces a homology orientation. Equivalently, it is the canonical homology orientation induced using \cref{rmk:canonicalhomologyorientation} from the clockwise orientation of $S^1$. As the Euler structure $\xi$ we take the one given by an anticlockwise path from $\gamma$ to $p$. With this Euler structure and a homology orientation on $S^1$ we obtain a nullhomotopy
\[h_{S^1_\B}\colon [\cO_{S^1_\B}]\sim 0\in \Omega^\infty\K^\omega(S^1_\B)\]
by \cref{prop:turaevtrivialization}.

\begin{figure}[ht]
\begin{tikzpicture}
\draw[thick] (0, 1) circle (1);
\draw[fill] (0, 0) circle (0.05);
\draw (0, -0.2) node {$p$};
\draw [decorate, decoration={zigzag, segment length=4, amplitude=.9}] (0,0) arc (-90:-140:1);
\draw[fill] (-0.72, 0.32) circle (0.05);
\draw (0, 2.2) node {$\gamma$};
\end{tikzpicture}
\caption{Circle with a chosen Euler structure.}
\label{fig:circle}
\end{figure}
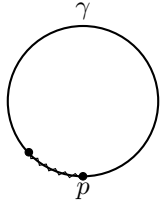

Let us unpack this nullhomotopy. First, let us identify the $\infty$-category $\QCoh(S^1_\B)$.

\begin{prop}
Let $S=\Spec R$ be a derived affine scheme. Then we have equivalences
\begin{itemize}
    \item $\QCoh(S\times S^1_\B)\cong \Mod_{k[z, z^{-1}]}(\QCoh(S)) = \Mod_{R[z, z^{-1}]}$, the $\infty$-category of quasi-coherent complexes $\cF$ on $S$ together with an automorphism $z\colon \cF\rightarrow \cF$. Under this equivalence $\cO_{S\times S^1_\B}$ goes to $\cO_S$ equipped with the identity automorphism.
    \item $\QCoh^\omega(S\times S^1_\B)\cong \Perf(R[z, z^{-1}])$.
    \item $\Perf(S\times S^1_\B)\cong \Mod_{k[z, z^{-1}]}(\Perf(S))$, the $\infty$-category of perfect complexes on $S$ together with an automorphism $z\colon \cF\rightarrow \cF$.
\end{itemize}
\label{prop:QCohS1}
\end{prop}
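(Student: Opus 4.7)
The first equivalence will come from identifying $S^1$ with $\B\Z$ as an $\infty$-groupoid. Under the identification $\QCoh(S^1_\B) = \LocSys(S^1) = \Fun(S^1, \Mod_k)$ from the previous subsection, picking a basepoint $p \in S^1$ yields a fiber functor $i^*\colon \Fun(S^1, \Mod_k)\to \Mod_k$ which preserves limits and colimits, has a left adjoint $i_\sharp$ and is conservative on the connected component $S^1$, so Barr--Beck applies. The resulting monad is $V\mapsto V\otimes_k \C_\bullet(\Omega S^1; k)$, and since $\Omega S^1 \simeq \Z$ as an $E_1$-group, $\C_\bullet(\Omega S^1; k)\cong k[\Z] = k[z, z^{-1}]$ as an $E_\infty$-$k$-algebra. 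Thus $\QCoh(S^1_\B)\cong \Mod_{k[z, z^{-1}]}$.

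Next I will extend this to $S\times S^1_\B$. Since $S^1$ is a finite CW complex, by \cref{prop:Bettiassumptions} the prestack $S^1_\B$ satisfies \cref{mainassumptionduality}. By \cref{thm:QCohselfdual}(1) the external product functor gives an equivalence
\[
\QCoh(S\times S^1_\B) \cong \QCoh(S)\otimes \QCoh(S^1_\B)\cong \Mod_R\otimes_{\Mod_k}\Mod_{k[z,z^{-1}]}\cong \Mod_{R[z,z^{-1}]},
\]
where the last equivalence uses the base change identity $\Mod_R\otimes_{\Mod_k}\Mod_A\cong \Mod_{R\otimes_k A}$ for $A = k[z,z^{-1}]$. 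To check that $\cO_{S\times S^1_\B}$ is sent to $R$ with the identity automorphism, we note that by construction $\cO_{S\times S^1_\B} = p^* k$ is the unit of the pullback symmetric monoidal structure, which corresponds to the constant local system with fiber $R$; under the monadic equivalence above this is $R$ viewed as an $R[z,z^{-1}]$-module via the augmentation $k[z,z^{-1}]\to k$, $z\mapsto 1$.

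For the second and third statements I will distinguish carefully the plain $\infty$-categorical notion of compact object from the symmetric-monoidal notion of dualizable object. Since $R$ is connective, so is $R[z,z^{-1}]$, and compact objects in $\Mod_{R[z,z^{-1}]}$ are by definition $\Perf(R[z,z^{-1}])$; this immediately gives $\QCoh^\omega(S\times S^1_\B)\cong \Perf(R[z,z^{-1}])$. For the perfect (i.e.\ dualizable) complexes, the subtlety is that the symmetric monoidal structure on $\QCoh(S\times S^1_\B)$ is the \emph{pointwise} tensor product coming from $\Fun(S^1, \Mod_R)$, whereas the usual symmetric monoidal structure on $\Mod_{R[z,z^{-1}]}$ uses $\otimes_{R[z,z^{-1}]}$; under the equivalence above these do not agree (e.g.\ the unit on the left corresponds to $R$, not $R[z,z^{-1}]$). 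However, the pointwise tensor product makes $\Fun(S^1, \Mod_R)$ into $\Mod_R^{\B\Z}$ with $\Z$-action by diagonal conjugation, and in this description an object is dualizable if and only if its underlying $R$-module is dualizable (i.e.\ perfect) and the $\Z$-action is by automorphisms. This is precisely the data of a perfect complex on $S$ together with an automorphism, that is, an object of $\Mod_{k[z,z^{-1}]}(\Perf(S))$.

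The main obstacle will be the symmetric monoidal bookkeeping in the last step — ensuring that the identification of $\Perf(S\times S^1_\B)$ uses the correct (pointwise) tensor product and not $\otimes_{R[z,z^{-1}]}$. Once this distinction is made explicit, the identification with $\Mod_{k[z,z^{-1}]}(\Perf(S))$ is essentially formal from the fact that modules over the group algebra $k[\Z]$ are the same as objects with a $\Z$-action, i.e.\ with an automorphism.
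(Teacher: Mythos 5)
Your proposal is correct and follows essentially the same route as the paper: identify $\QCoh(S^1_\B)\cong\Mod_{k[z,z^{-1}]}$ (the paper cites the general fact $\LocSys(M)\cong\Mod_{\C_\bullet(\Omega M;k)}$ already recorded in the Betti section; your Barr--Beck argument simply re-derives it), pass to $S\times S^1_\B$ via the external-product equivalence using dualizability of $\QCoh(S)$, identify compacts with $\Perf(R[z,z^{-1}])$, and characterize perfect objects as those whose restriction to the basepoint fiber is perfect — exactly the criterion the paper uses in its final sentence, with your remarks on the pointwise versus $\otimes_{R[z,z^{-1}]}$ monoidal structure making explicit a bookkeeping point the paper leaves implicit.
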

\begin{proof}
As $\QCoh(S)$ is dualizable, $\boxtimes\colon \QCoh(S)\otimes \QCoh(S^1_\B)\rightarrow \QCoh(S\times S^1_\B)$ is an equivalence. We have $\QCoh(S^1_\B) = \LocSys(S^1) = \Mod_{k[z, z^{-1}]}$, the $\infty$-category of modules over the group algebra of $S^1$. Thus,
\[\QCoh(S)\otimes \QCoh(S^1_\B)\cong \Mod_R\otimes \Mod_{k[z, z^{-1}]}\cong \Mod_{R[z, z^{-1}]}.\]

Compact objects in the $\infty$-category of modules are given by perfect modules which proves the claim about $\QCoh^\omega(S\times S^1_\B)$.

Finally, a quasi-coherent complex on $S\times S^1_\B$ is perfect, if, and only if, it is perfect when pulled back to $S\times \{p\}$ which proves the claim about $\Perf(S\times S^1_\B)$.
\end{proof}

Under the equivalence $\QCoh(S^1_\B)\cong \Mod_{k[z, z^{-1}]}$ the structure sheaf $\cO_{S^1_\B}$ goes to the augmentation module $k = k[z, z^{-1}]/(z-1)$. The nullhomotopy $[k]\sim 0$ is provided by choosing a resolution of $k$ by the chain complex $\C_\bullet(\tilde{S}^1; k)$ of free based $k[z, z^{-1}]$-modules of chains on the universal cover of $S^1$, which is
\[\C_\bullet(\tilde{S}^1; k) = (k[z, z^{-1}]\xrightarrow{z-1} k[z, z^{-1}]).\]

\begin{remark}
Two nullhomotopies of $[k]\in \K(k[z, z^{-1}])$ differ by an element of $\K_1(k[z, z^{-1}])$ which can be identified with $k[z, z^{-1}]^\times$ using the determinant map on $\K_1$ as $k[z, z^{-1}]$ is a Euclidean domain. The different nullhomotopies correspond to different choices of the generator of the free $k[z, z^{-1}]$-module $\ker(k[z, z^{-1}]\rightarrow k)$.
\end{remark}

Consider the derived loop stack
\[LY = \Map(S^1_\B, Y)\]
with $p\colon LY\rightarrow Y$ given by evaluation at $p\in S^1$. By \cref{thm:volumelift} we obtain a torsion volume form $\vol_{LY}$ on $LY$. The goal of this section is to give a more explicit description of this volume form. We begin with the following observation.

\begin{prop}
Consider the isomorphism
\[\det \bL_{LY}\cong p^*\det(\bL_Y)\otimes \det \bL_{LY/Y},\]
induced by the fiber sequence
\[p^*\bL_Y\longrightarrow \bL_{LY}\longrightarrow \bL_{LY/Y}.\]
The pullback diagram
\[
\xymatrix{
LY \ar^{p}[r] \ar^{p}[d] & Y \ar^{\Delta}[d] \\
Y \ar^{\Delta}[r] & Y\times Y
}
\]
induces an isomorphism
\[p^*\bL_{Y/Y\times Y}\cong \bL_{LY/Y}\]
and a fiber sequence
\[\bL_Y\oplus \bL_Y\longrightarrow \bL_Y\longrightarrow \bL_{Y/Y\times Y}\]
inducing an isomorphism
\[\det\bL_{Y/Y\times Y}\cong \det(\bL_Y)^{-1}.\]
The torsion volume form $\vol_{LY}$ is obtained by a sequence of the above isomorphisms:
\[\det \bL_{LY}\cong p^* \det\bL_Y\otimes \det \bL_{LY/Y}\cong p^*\det \bL_Y\otimes p^* \det \bL_{Y/Y\times Y}\cong p^*\det \bL_Y\otimes p^*\det(\bL_Y)^{-1}\cong \cO_{LY}.\]
\label{prop:LYvolume}
\end{prop}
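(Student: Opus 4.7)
The plan is to make the torsion volume form from \cref{thm:volumeEuler} fully explicit under the module description $\QCoh(S \times S^1_\B) \cong \Mod_{R[z,z^{-1}]}$ of \cref{prop:QCohS1}, and then to match the resulting trivialization with the chain of isomorphisms stated in the proposition. The first task is to unpack the nullhomotopy $h_{S^1_\B}$ from \cref{prop:turaevtrivialization}. Under the equivalence, $\cO_{S^1_\B}$ corresponds to the augmentation module $k$, and the resolution by cellular chains on the universal cover (with the Euler structure and homology orientation chosen as in \cref{fig:circle}) becomes the free two-term resolution
\[
\bigl[k[z,z^{-1}] \xrightarrow{z-1} k[z,z^{-1}]\bigr] \xrightarrow{\sim} k.
\]
The nullhomotopy is then exactly the additivity homotopy $[k] \sim [k[z,z^{-1}]] - [k[z,z^{-1}]] = 0$ of \cref{prop:chaincomplexfiltration}.

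Second, I will transport this to a formula for $\pi_\sharp$. Tensoring the above resolution with $\cF \in \Perf(S^1_\B)$ represents $\pi_\sharp \cF$ by the two-term complex $[\cF \xrightarrow{z-1} \cF]$, and the nullhomotopy of $\pi_\sharp \colon \uK(S^1_\B) \to \uK$ from \cref{thm:pushforwardEulertrivial} is the additivity homotopy $[\pi_\sharp \cF] \sim [\cF] - [\cF]$. Applying $\detgr$ and \cref{prop:chaincomplexfiltration} gives a canonical trivialization $\det\pi_\sharp \cF \cong \det(\cF)\otimes \det(\cF)^{-1} \cong \cO$. Specializing to $\cF = \ev^*\bL_Y$ and using $\bL_{LY} = \pi_\sharp \ev^*\bL_Y$ from \cref{prop:mappingcotangent}, together with the observation that the underlying $\cO_{LY}$-module of $\ev^*\bL_Y$ is $p^*\bL_Y$, yields a cofiber sequence
\[
p^*\bL_Y \xrightarrow{z-1} p^*\bL_Y \longrightarrow \bL_{LY},
\]
and identifies the torsion volume form $\vol_{LY}$ with the cancellation $\det\bL_{LY} \cong \det(p^*\bL_Y)\otimes\det(p^*\bL_Y)^{-1} \cong \cO_{LY}$ of the two copies of $\det(p^*\bL_Y)$.

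Third, I will match this with the statement. Rotating the above cofiber sequence gives $p^*\bL_Y \to \bL_{LY} \to p^*\bL_Y[1]$ which, under the canonical identification $\bL_{LY/Y} \simeq p^*\bL_Y[1]$, coincides with the fiber sequence $p^*\bL_Y \to \bL_{LY} \to \bL_{LY/Y}$ of the proposition. The chain of isomorphisms described in the proposition then factors through the same identification of $\det \bL_{LY/Y}$ with $\det(p^*\bL_Y)^{-1}$: the pullback-square step $\bL_{LY/Y}\cong p^*\bL_{Y/Y\times Y}$ is just base change for the cotangent complex of $\Delta\colon Y \to Y\times Y$, and the diagonal cofiber sequence $\bL_Y \oplus \bL_Y \xrightarrow{+} \bL_Y \to \bL_{Y/Y\times Y}$ produces $\det \bL_{Y/Y\times Y}\cong \det(\bL_Y)^{-1}$. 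Both routes to the trivialization of $\det\bL_{LY}$ then come from applying multiplicativity of $\detgr$ to the \emph{same} cofiber sequence, so they produce the same section.

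The main obstacle will be identifying the $z$-action on $\ev^*\bL_Y$ with the data of the diagonal pullback square: one has to check that, functorially in $S \to LY$, the automorphism $z$ of $p^*\bL_Y$ is precisely the comparison map encoded by the two projections $LY \rightrightarrows Y$ in the pullback $LY \cong Y \times_{Y\times Y} Y$. This amounts to tracing through the definition of $\ev$ under the module equivalence of \cref{prop:QCohS1} and is the only step not formal; once it is in place, all three cofiber sequences (the $z-1$ sequence, the relative cotangent sequence for $p\colon LY\to Y$, and the diagonal cotangent sequence pulled back along $p$) are canonically identified, and the equality of the two trivializations follows by multiplicativity of the determinant.
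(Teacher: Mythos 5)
Your proof is correct and lands on the same underlying mechanism as the paper's, but the packaging is different in a way worth noting. The paper works directly with the topological pushout presenting $S^1$ (two points $\hookrightarrow$ disk, two points $\to$ point), and applies $\Map(-,Y)$ to it. This has the virtue that the pushout of spaces is \emph{literally} sent to the pullback $LY \cong Y \times_{Y\times Y} Y$, so the three isomorphisms in the statement (relative cotangent sequence for $p$; base change along the square; the sequence for $\Delta$) are the three steps of the chain-level construction of the nullhomotopy, matched up term by term with no residual verification. You instead work through the algebraic model $\QCoh(S\times S^1_\B) \cong \Mod_{R[z,z^{-1}]}$ and the free resolution $\bigl[k[z,z^{-1}]\xrightarrow{z-1}k[z,z^{-1}]\bigr]\to k$, which is the cellular chain complex of $\widetilde{S^1}$ in disguise; from there you get the two-term presentation $p^*\bL_Y \xrightarrow{z-1} p^*\bL_Y \to \bL_{LY}$ and cancel the two copies of $\det(p^*\bL_Y)$.

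The step you correctly flag as nontrivial --- that the automorphism $z$ of $p^*\bL_Y$ is the one encoded by the two evaluations $LY\rightrightarrows Y$ in the pullback square, so that the $z-1$ cofiber sequence really does unwind to the composite of the relative and diagonal cotangent sequences --- is exactly what the paper gets for free from the pushout. You would need to unfold $\ev$ under \cref{prop:QCohS1}: the local system $\ev^*\bL_Y$ has underlying module $p^*\bL_Y$ and $z$-action equal to the monodromy, which by the description of $LY$ as $Y\times_{Y\times Y}Y$ is the canonical comparison of the two pullbacks of $\bL_Y$ along the two projections $LY\to Y$; and the cofiber of $z-1$ is then the relative tangent presented by the universal two-point attaching datum. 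That verification is routine but not purely formal, and it is the content of the paper's steps (2)--(3). So: your route is valid and arguably more explicit about the actual shape of $\pi_\sharp$, but the paper's decomposition via $\Map(-,Y)$ applied to the CW pushout buys you the matching with the pullback diagram for free, which is precisely the thing the proposition is asserting.
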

\begin{proof}
The presentation of $S^1$ as a CW complex gives a pushout diagram
\[
\xymatrix{
\pt\coprod \pt \ar[r] \ar[d] & D \ar^{\gamma}[d] \\
\pt \ar^{p}[r] & S^1
}
\]

Let $\ev\colon S^1_\B\times LY\rightarrow Y$ be the evaluation map. The volume form $\omega$ is constructed using the following steps:
\begin{enumerate}
\item Consider the fiber sequence
\[\C_\bullet(\pt; p^* \bL_Y) \longrightarrow \C_\bullet(S^1; \ev^* \bL_Y)\longrightarrow \C_\bullet(S^1, \pt; \ev^*\bL_Y)\]
which corresponds to the fiber sequence
\[p^*\bL_Y\longrightarrow \bL_{LY}\longrightarrow \bL_{LY/Y}\]
via \cref{prop:mappingcotangent}.
\item Use the above pushout diagram to identify
\[\C_\bullet(S^1, \pt; \ev^*\bL_Y)\cong \C_\bullet(D, \pt\coprod \pt; \ev^*\bL_Y|_D)\]
which corresponds to the isomorphism
\[\bL_{LY/Y}\cong p^*\bL_{Y/Y\times Y}.\]
\item Identify
\[\C_\bullet(D, \pt\coprod \pt; \ev^*\bL_Y|_D)\cong p^*\bL_Y[1]\]
using the Euler structure and an orientation of the 1-cell $D$ which corresponds to the isomorphism
\[\bL_{Y/Y\times Y}\cong \bL_Y[1].\]
\item Take determinants of the above isomorphisms and fiber sequences to obtain a trivialization of $\det(\bL_{LY})$.
\end{enumerate}
These are precisely the isomorphisms described in the statement of the proposition.
\end{proof}

\begin{remark}
The sequence of isomorphisms above defines the ``canonical orientation'' of $LY$ in the sense of \cite[Construction 3.2.1]{KondyrevPrikhodko}.
\end{remark}

Let us now give a more geometric interpretation of the volume form $\vol_{LY}$. Let $Y\rightarrow Z$ be a morphism in an $\infty$-category with finite limits. Its \v{C}ech nerve is an augmented simplicial object $Y_\bullet$ with $Y_n = Y\times_Z \dots\times_Z Y$ (the product taken $n$ times). In this case $Y_\bullet$ is a groupoid object, see \cite[Proposition 6.1.2.11]{LurieHTT}. Concretely, the pullback diagram
\[
\xymatrix{
Y_2 \ar^{d_2}[r] \ar^{d_0}[d] & Y_1 \ar^{d_0}[d] \\
Y_1\ar^{d_1}[r]  & Y_0
}
\]
identifies $Y_2\cong Y_1\times_{Y_0} Y_1$ and the multiplication is given by $d_1\colon Y_1\times_{Y_0} Y_1\rightarrow Y_1$.

\begin{remark}
If $Y_0 = Y$ is the final object, $Y_1 = Y\times_Z Y$ is a group object.
\end{remark}

\begin{defn}
Let $Y\rightarrow Z$ be a morphism of derived stacks, where both of them admit a cotangent complex, and let $Y_\bullet$ be its \v{C}ech nerve. Consider the pullback diagram
\[
\xymatrix{
Y_1 \ar^{d_1}[r] \ar^{d_0}[d] & Y \ar^{d_0}[d] \\
Y \ar^{d_0}[r] & Z
}
\]
The induced isomorphism
\[\bL_{Y_1 / Y_0}\cong d_1^* \bL_{Y/Z}\]
is the \defterm{left-invariant trivialization} of the relative cotangent complex.
\label{def:leftinvariant}
\end{defn}

\begin{remark}
Reflecting the above diagram along the diagonal we obtain a \emph{right-invariant trivialization}.
\end{remark}

For instance, consider the diagonal map $Y\rightarrow Y\times Y$ in the $\infty$-category of derived stacks over $Y$, where we consider the projection on the first factor $Y\times Y\rightarrow Y$ on the right. Its \v{C}ech nerve gives the simplicial object
\[
\simp{Y}{LY}{LY\times_Y LY}
\]
which induces a group structure on $LY$ relative to $Y$ given by the loop composition.

\begin{example}
Consider $Y=\B G$, the classifying stack of an algebraic group $G$. The product $G\times G\rightarrow G$ is conjugation-invariant, so it defines a group structure on $LY=[G/G]$ relative to $Y=\B G$.
\end{example}

The isomorphism
\[\bL_{LY/Y}\cong p^*\bL_{Y/Y\times Y}\]
provided by \cref{prop:LYvolume} is given by the left-invariant trivialization of the relative cotangent complex using the group structure on $LY\rightarrow Y$.

\subsection{Formal circle}
\label{sect:BGa}

In this section we assume $k$ is a field of characteristic zero. Consider the classifying stack $X = \B \hGa$ of the formal additive group $\hGa$. It has a natural $\Gm$-action coming from the $\Gm$-action on $\hGa$.

\begin{prop}
Let $S = \Spec R$ be a derived affine scheme. Then we have equivalences
\begin{itemize}
    \item $\QCoh(S\times \B\hGa)\cong \Mod_{k[x]}(\QCoh(S)) = \Mod_{R[x]}$, the $\infty$-category of quasi-coherent complexes $\cF$ on $S$ together with an endomorphism $x\colon \cF\rightarrow \cF$. Under this equivalence $\cO_{S\times \B\hGa}$ is sent to $\cO_S$ equipped with the zero endomorphism.
    \item $\QCoh^\omega(S\times \B\hGa)\cong \Perf(R[x])$.
    \item $\Perf(S\times \B\hGa)\cong \Mod_{k[x]}(\Perf(S))$, the $\infty$-category of perfect complexes on $S$ together with an endomorphism $x\colon \cF\rightarrow \cF$.
\end{itemize}
\label{prop:QCohBhGa}
\end{prop}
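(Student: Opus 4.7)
The plan is to follow the pattern of \cref{prop:QCohS1}. Since $S$ is a derived affine scheme, $\QCoh(S)$ is dualizable, so there is a symmetric monoidal equivalence
\[\QCoh(S\times \B\hGa)\simeq \QCoh(S)\otimes \QCoh(\B\hGa)= \Mod_R\otimes \QCoh(\B\hGa).\]
It therefore suffices to produce an equivalence $\QCoh(\B\hGa)\simeq \Mod_{k[x]}$ sending $\cO_{\B\hGa}$ to the augmentation module $k=k[x]/(x)$; base changing along $k\to R$ then yields the first assertion, with $\cO_{S\times \B\hGa}$ corresponding to $\cO_S$ equipped with the zero endomorphism.

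For the equivalence $\QCoh(\B\hGa)\simeq \Mod_{k[x]}$, I would apply Barr--Beck--Lurie monadicity to the pullback $p^*\colon \QCoh(\B\hGa)\to \Mod_k$ along $p\colon \pt\to \B\hGa$. The map $p$ is an $\hGa$-torsor, so $p^*$ is conservative and colimit-preserving, and admits a right adjoint $p_*$. Hence $\QCoh(\B\hGa)\simeq \Mod_A$ for some $\bE_\infty$-algebra $A$. In characteristic zero, the Lurie--Pridham correspondence identifies formal moduli problems with dg Lie algebras; the tangent complex of $\B\hGa$ at the basepoint is the abelian Lie algebra $k[1]$, so $A$ is the corresponding Chevalley--Eilenberg cochain algebra $C^*(k[1])\simeq \Sym_k(k[-2])\simeq k[x]$ with $x$ in cohomological degree $2$. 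Under this equivalence $\cO_{\B\hGa}$, whose pullback to $\pt$ is the unit of $\Mod_k$, is sent to the augmentation module $k[x]/(x)$.

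The remaining two statements then follow formally: the compact objects of $\Mod_{R[x]}$ are by definition the perfect modules $\Perf(R[x])$, and an object $\cF\in\QCoh(S\times \B\hGa)\simeq \Mod_{R[x]}$ is perfect (i.e.\ dualizable) iff its restriction to $S\times \pt$ is perfect iff its underlying $R$-module is perfect, giving $\Perf(S\times \B\hGa)\simeq \Mod_{k[x]}(\Perf(S))$. The main technical hurdle will be the Koszul-duality identification of $A$ with the polynomial ring $k[x]$: it uses the characteristic-zero hypothesis in an essential way (otherwise one would obtain a shifted exterior algebra) and requires some care in distinguishing $\QCoh$ from $\IndCoh$ of the formal stack $\B\hGa$.
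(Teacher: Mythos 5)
Your high-level plan — tensor decomposition and then identify $\QCoh(\B\hGa)\simeq\Mod_{k[x]}$ — is the same as the paper's, but your route to the algebra $A$ has several errors that compound to give the wrong answer.

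First, the Lie algebra of $\B\hGa$ under the Lurie--Pridham equivalence is $\bT_{\B\hGa,\pt}[-1]$, not $\bT_{\B\hGa,\pt}$. The tangent complex is $k[1]$, so the Lie algebra is $k$ concentrated in degree $0$ (the paper records exactly this in \cref{prop:BGapushout}). You instead take $k[1]$ as the Lie algebra.

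Second, even with the correct Lie algebra $\fl$, the monad you are after is governed by the \emph{universal enveloping algebra} $\U\fl$, not the Chevalley--Eilenberg cochain algebra $C^*(\fl)$. The relevant Koszul duality (the one the paper invokes via \cite[Chapter 7, Corollary 5.2.4]{GaitsgoryRozenblyum2}) is $\IndCoh(\B\widehat G)\simeq\Mod_{\U\g}$, and $\U(k[0])=k[x]$ with $x$ in degree $0$. By combining the two errors you land on $C^*(k[1])\simeq\Sym_k(k[-2])\simeq k[x]$ with $x$ in degree $2$. This is the \emph{Koszul dual} of the right answer, not the right answer: $\Mod_{k[x]_{|x|=0}}$ (modules with a degree-zero endomorphism) and $\Mod_{k[x]_{|x|=2}}$ are not equivalent, and only the former matches the statement's description of an endomorphism $x\colon\cF\rightarrow\cF$.

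Third, the Barr--Beck step is stated with the wrong adjoint. To conclude $\QCoh(\B\hGa)\simeq\Mod_A$ you need $p^*$ to have a \emph{left} adjoint $p_\sharp$; having only the right adjoint $p_*$ and colimit-preservation would give comonadicity, not monadicity. You would also need to justify that the monad $p^*p_\sharp$ is $k$-linear and colimit-preserving so that it is tensoring by an algebra, and then compute that algebra — which is precisely where the $\U\fl$ vs.\ $C^*(\fl)$ distinction bites. Finally, as you yourself flag, the $\QCoh$/$\IndCoh$ distinction for the formal stack $\B\hGa$ is real; the paper resolves it by invoking the equivalence $\Upsilon\colon\QCoh(\B\hGa)\rightarrow\IndCoh(\B\hGa)$ (valid because $\hGa$ is a formally smooth indscheme), and your proposal needs a comparable input rather than leaving it as a caveat.
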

\begin{proof}
As in the proof of \cref{prop:QCohS1}, the functor $\QCoh(S)\otimes \QCoh(\B\hGa)\rightarrow \QCoh(S\times \B\hGa)$ is an equivalence.

Since $\hGa$ is a formally smooth indscheme, by \cite[Theorem 10.1.1]{GaitsgoryRozenblyumIndSch} the functor
\[\Upsilon\colon \QCoh(\B \hGa)\longrightarrow \IndCoh(\B \hGa)\]
is an equivalence. Under this functor $\cO_{\B\hGa}$ is sent to $\omega_{\B\hGa}$.

If $\g$ is a Lie algebra and $\widehat{G}$ the corresponding formal group, by \cite[Chapter 7, Corollary 5.2.4]{GaitsgoryRozenblyum2} there is an identification
\[\IndCoh(\B\widehat{G})\cong \Mod_{\U\g}\]
under which $\omega_{\B\widehat{G}}$ is sent to the augmentation module $k$. Applying this result to $\widehat{G} = \hGa$ we get
\[\IndCoh(\B\hGa)\cong \Mod_{k[x]}\]
and hence $\QCoh(\B\hGa)\cong \Mod_{k[x]}$.
\end{proof}

\begin{prop}
The derived prestack $\B\hGa$ satisfies \cref{mainassumption}.
\end{prop}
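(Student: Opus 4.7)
The plan is to invoke Proposition \ref{prop:QCohBhGa} to reduce everything to module theory over $k[x]$, and then verify the two conditions of \cref{mainassumption} by direct computation. Under the equivalence $\QCoh(\B\hGa)\cong \Mod_{k[x]}$, the structure sheaf $\cO_{\B\hGa}$ corresponds to the augmentation module $k = k[x]/(x)$, and the pullback $p^*\colon \Mod_k\rightarrow \Mod_{k[x]}$ is the functor $V\mapsto V$ equipped with the zero $x$-action (this is the unique colimit-preserving symmetric monoidal functor sending the unit to the augmentation module).

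For condition (1), I would note that the augmentation module $k$ admits the finite free resolution $[k[x]\xrightarrow{x} k[x]]$ (the Koszul complex), hence is perfect, and in particular compact in $\Mod_{k[x]}$. Equivalently, one computes
\[p_*(M) \cong \Hom_{k[x]}(k, M)\cong \fib(M\xrightarrow{x} M),\]
which is manifestly colimit-preserving on $\Mod_{k[x]}$.

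For condition (2), I would verify that $p^*$ preserves limits, so that by the adjoint functor theorem for presentable $\infty$-categories it admits a left adjoint. Limit preservation is immediate: a limit in $\Mod_{k[x]}$ is computed on underlying $k$-complexes with the induced $x$-action, and the zero $x$-action is preserved under taking limits. The resulting left adjoint admits the explicit description
\[p_\sharp(M) \cong M\otimes_{k[x]} k\cong \cofib(M\xrightarrow{x} M),\]
as follows from the adjunction $\Hom_{k[x]}(M, p^* V)\cong \Hom_k(M\otimes_{k[x]} k, V)$.

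There is no real obstacle here: both verifications are essentially the observation that the augmentation module over $k[x]$ is perfect, which is witnessed by the Koszul resolution in a single variable. The only mild subtlety is being careful that the identification $\QCoh(\B\hGa)\cong \Mod_{k[x]}$ from \cref{prop:QCohBhGa} genuinely sends $\cO_{\B\hGa}$ to the augmentation module (and not, say, to the free module), but this is already recorded in the statement of that proposition.
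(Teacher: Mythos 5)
Your proof is correct, and it uses the same identification $\QCoh(\B\hGa)\cong\Mod_{k[x]}$ (recorded in the paper as \cref{prop:QCohBhGa}) as the starting point. For condition (1), both you and the paper observe that the augmentation module $k$ is compact in $\Mod_{k[x]}$; your invocation of the two-term Koszul resolution makes this explicit but is the same point. The genuine divergence is in condition (2): the paper simply cites \cite[Chapter 3, Proposition 2.1.2]{GaitsgoryRozenblyum2}, which establishes the existence of $p_\sharp$ in the general framework of prestacks, whereas you verify it by hand — either by observing that $p^*\colon V\mapsto (V,\,\text{zero $x$-action})$ preserves limits (and is accessible, being colimit-preserving), so the adjoint functor theorem applies, or by directly writing $p_\sharp(M)=M\otimes_{k[x]}k\cong\cofib(M\xrightarrow{x}M)$ and checking the adjunction. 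Your route is more elementary and self-contained for this particular $X$, at the cost of being special to $\B\hGa$ rather than appealing to the general machinery; the paper's citation is shorter but less transparent. One small care point you handled correctly: the identification of $p^*$ with $V\mapsto(V,0)$ does require knowing that the monoidal structure on $\Mod_{k[x]}$ comes from the Hopf structure with $x$ primitive, so that zero $x$-actions are closed under tensor product and the augmentation module is the monoidal unit.
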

\begin{proof}
The fact that the functor $p^*$ from \cref{mainassumption} admits a left adjoint follows from \cite[Chapter 3, Proposition 2.1.2]{GaitsgoryRozenblyum2}. The object $k\in\Mod_{k[x]}$ is compact, so $p^*$ also admits a right adjoint $p_*$.
\end{proof}

We define a nullhomotopy
\[h_{\B\hGa}\colon [\cO_{\B\hGa}]\sim 0\in\Omega^\infty\K^\omega(\B\hGa)\]
as follows. By \cref{prop:QCohBhGa} we may identify this class with $[k]\in\Omega^\infty\K(k[x])$. Consider the resolution of $k$ by the chain complex of free based $k[x]$-modules
\[k\cong (k[x]\xrightarrow{x} k[x])\]
concentrated in degrees $-1$ and $0$. This provides a homotopy $[k]\sim [k[x]] - [k[x]] = 0$.

\begin{prop}
There is a pushout square of derived stacks equipped with $\Gm$-actions
\[
\xymatrix{
\Spec k[x]/x^2 \ar[r] \ar[d] & \pt \ar[d] \\
\pt \ar[r] & \B\hGa
}
\]
where $\Gm$ acts on $x$ with weight $1$.
\label{prop:BGapushout}
\end{prop}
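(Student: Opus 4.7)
The plan is to produce a canonical map from the pushout to $\B\hGa$ and verify it is an equivalence using the computation of $\QCoh(\B\hGa)$ from \cref{prop:QCohBhGa}. Let $X := \pt \cup_{\Spec k[x]/x^2} \pt$ denote the pushout. The universal property specifies a map $f \colon X \to \B\hGa$ as follows: both legs $\pt \to \B\hGa$ are taken to be the canonical basepoint, and to extend across the pushout one must supply a homotopy between the two composites $\Spec k[x]/x^2 \to \pt \to \B\hGa$, equivalently a map $\Spec k[x]/x^2 \to \Omega_\pt \B\hGa \cong \hGa$. The first step is to take the tautological inclusion $\Spec k[x]/x^2 \hookrightarrow \hGa$ arising from the ind-presentation $\hGa = \colim_n \Spec k[x]/x^{n+1}$. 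This construction is manifestly $\Gm$-equivariant with $x$ of weight $1$ on both sides, so $f$ inherits a $\Gm$-equivariant structure.

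Next, I would analyze the induced functor on $\QCoh$. Since $\QCoh$ sends colimits of prestacks to limits of $\infty$-categories,
\[
\QCoh(X) \simeq \Mod_k \times_{\Mod_{k[x]/x^2}} \Mod_k,
\]
with both arrows given by $(-) \otimes_k k[x]/x^2$. I would compute this pullback explicitly: using the retraction $\pt \to \Spec k[x]/x^2 \to \pt$ to normalize, every object is equivalent to a triple $(V, V, \phi)$ with $\phi \colon V \otimes_k k[x]/x^2 \xrightarrow{\sim} V \otimes_k k[x]/x^2$ equal to the identity modulo $x$. Since $x^2 = 0$, such $\phi$ has the unique form $\id + xN$ with $N \in \End_k(V)$, giving an equivalence $\QCoh(X) \simeq \Mod_{k[y]}$ with $y$ in cohomological degree $0$ acting as $N$. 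By \cref{prop:QCohBhGa} we have $\QCoh(\B\hGa) \simeq \Mod_{k[x]}$ with $x$ of cohomological degree $0$, and tracing through the construction of $f$ shows that $f^*$ sends a $k[x]$-module $M$ to the pair $(M, M)$ glued by $\id + xN_M$ with $N_M$ the given $x$-action; under the identification $k[x] \simeq k[y]$ by $x \mapsto y$, this is the identity functor.

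Finally, to upgrade the $\QCoh$-equivalence to an equivalence of derived stacks, I would invoke Lurie's correspondence (valid in characteristic zero) between pointed formal moduli problems and dg Lie algebras, under which colimits of FMPs are computed as colimits of dg Lie algebras. Both $\Spec k[x]/x^2$ (pointed at the origin) and $\B\hGa$ are pointed FMPs with Lie algebras $L_{\Spec k[x]/x^2} \simeq k[-1]$ and $L_{\B\hGa} \simeq k[0]$ (via $T_\pt \simeq L[1]$). The pushout in dg Lie algebras of $0 \leftarrow k[-1] \to 0$ is the suspension $k[-1][1] \simeq k[0]$, matching $L_{\B\hGa}$, so $X \simeq \B\hGa$ as pointed FMPs.

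The main obstacle I anticipate is the final step: guaranteeing that the pushout computed in the larger category of derived stacks with $\Gm$-action coincides with the pushout in pointed FMPs (or equivalently, that the $\QCoh$-level equivalence $f^*$ genuinely reflects an equivalence of derived stacks). Both $\pt$ and $\Spec k[x]/x^2$ are affine and formal, so the pushout is formal in a suitable sense; checking this carefully, together with the compatibility of the $\Gm$-actions, is where most of the technical work lies.
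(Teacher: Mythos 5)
Your strategy of identifying both sides as pointed formal moduli problems and comparing Lie algebras is the same overall route the paper takes, but there is a genuine error in the Lie-algebra step that happens to be load-bearing. You write that $L_{\Spec k[x]/x^2}\simeq k[-1]$ ``via $T_{\pt}\simeq L[1]$,'' but this only records the \emph{underlying chain complex} of the Lie algebra, not the Lie algebra itself. The dg Lie algebra of $\Spec k[x]/x^2$ is the \emph{free} Lie algebra $\flie(k[-1])$ on one generator $a$ in degree $1$; it is two-dimensional, with a second generator $b=[a,a]$ in degree $2$ and a nontrivial bracket. (One way to see this is to compute the tangent complex of $\Spec k[x]/x^2$ at the origin: it is $k\oplus k[1]$, not $k$, since $\Spec k[x]/x^2$ is singular; another is to note that the abelian Lie algebra $k[-1]$ corresponds under Koszul duality to $k[[x]]=\cO(\widehat{\A}^1)$, not to $k[x]/x^2$.) Your subsequent assertion that ``the pushout in dg Lie algebras of $0 \leftarrow k[-1]\to 0$ is the suspension $k[-1][1]$'' is also not valid in general: the $\infty$-category of dg Lie algebras is not stable, so forming $0\amalg_{\g}0$ is not simply a shift of the underlying complex (compare the topological suspension $\Sigma S^1=S^2$, which is not $\B S^1=K(\Z,2)$). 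The paper resolves both problems at once: since each Lie algebra in the square is free, and $\flie$ is a colimit-preserving left adjoint, the pushout of dg Lie algebras is $\flie$ applied to the pushout of chain complexes $0\leftarrow k[-1]\to 0$, namely $\flie(k)=k$. That step is what your write-up is missing, and without it the argument does not go through.

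Your detour through $\QCoh$ is an interesting independent check, but it is not itself a proof: an equivalence $f^*\colon\QCoh(\B\hGa)\to\QCoh(X)$ does not by itself imply $f$ is an equivalence of (pre)stacks, and the homotopy-coherent identification $\QCoh(X)\simeq\Mod_{k[y]}$ requires more care than the $1$-categorical automorphism analysis $\phi=\id+xN$ you sketch. If you want to make this route work you would still have to pass through the formal-moduli-problem identification, at which point the free-Lie-algebra point above must be addressed. The worry you flag at the end (whether the pushout of derived stacks agrees with the pushout of FMPs) is reasonable but secondary; the sharper gap is the wrong Lie algebra.
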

\begin{proof}
Recall from \cite[Chapter 5]{GaitsgoryRozenblyum2} that a formal moduli problem over $\pt$ is a derived stack $X$ satisfying the following conditions: $X$ is locally almost of finite type, $X$ admits a deformation theory and $X^{\red} = \pt$. An equivalence due to Lurie and Pridham (see \loccit) asserts that the $\infty$-category of formal moduli problems over $\pt$ are equivalent to the $\infty$-category of dg Lie algebras.

The stacks $\Spec k[x]/x^2,\pt,\B\hGa$ are all formal moduli problems over $\pt$ which correspond to the following Lie algebras:
\begin{itemize}
    \item The Lie algebra corresponding to $\Spec k[x]/x^2$ is two-dimensional with generators $a,b$ of degrees $\deg(a)=1,\deg(b)=2$ and the bracket $[a,a]=b$. Equivalently, it is the free Lie algebra $\flie(a)$ on a generator $a$ of degree $1$.
    \item The Lie algebra corresponding to $\pt$ is 0.
    \item The Lie algebra corresponding to $\B\hGa$ is $k$ in degree $0$. Equivalently, it is the free Lie algebra $\flie(x)$ on a generator $x$ of degree $0$.
\end{itemize}

Thus, using the equivalence between formal moduli problems and Lie algebras, we have to construct a pushout square
\[
\xymatrix{
\flie(a) \ar[r] \ar[d] & 0 \ar[d] \\
0 \ar[r] & \flie(x)
}
\]

But such a square is obtained by applying the functor $\flie$ (as it is a left adjoint, it preserves colimits) to the pushout square
\[
\xymatrix{
k[-1] \ar[r] \ar[d] & 0 \ar[d] \\
0 \ar[r] & k
}
\]
of chain complexes.
\end{proof}

Consider the derived mapping prestack $\Map(\B\hGa, Y)$ together with a projection $p\colon\Map(\B\hGa, Y)\rightarrow Y$ given by evaluation at the basepoint $p\in\B\hGa$. We also have the shifted tangent bundle $\T[-1] Y$ which is an example of a linear stack in the sense of \cite{Monier}.

\begin{prop}
There is an equivalence of derived prestacks $\Map(\B\hGa, Y)\cong \T[-1] Y$ compatible with the $\Gm$-action under which $p$ corresponds to the projection map $p\colon \T[-1] Y\rightarrow Y$.
\end{prop}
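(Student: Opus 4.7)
The plan is to apply $\Map(-, Y)$ to the pushout square from \cref{prop:BGapushout}. Since $\Map(-, Y)$ sends colimits of derived prestacks to limits, we obtain a pullback square
\[
\xymatrix{
\Map(\B\hGa, Y) \ar[r] \ar[d] & \Map(\pt, Y) \ar[d] \\
\Map(\pt, Y) \ar[r] & \Map(\Spec k[x]/x^2, Y)
}
\]
in $\Gm$-equivariant derived prestacks. I will then identify each corner: $\Map(\pt, Y) = Y$, and $\Map(\Spec k[x]/x^2, Y) = \T Y$ is the tangent prestack (its $R$-points are pairs of an $R$-point of $Y$ and a tangent vector at that point). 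The two maps $Y\to \T Y$ in the diagram both come from the unique morphism $\Spec k[x]/x^2\to \pt$, so they both agree with the zero section $0\colon Y\to \T Y$.

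Thus it remains to identify $Y\times_{\T Y, 0, 0} Y$ with $\T[-1] Y$. Working affine-locally with $Y = \Spec A$ one has $\T Y = \Spec_A \Sym_A(\bL_Y)$, and the pushout of algebras over $\Sym_A(\bL_Y)$
\[
A \otimes_{\Sym_A(\bL_Y)}^{\bL} A \cong \Sym_A(\bL_Y[1])
\]
is computed using the Koszul resolution of $A$ as a $\Sym_A(\bL_Y)$-module; this identifies the pullback globally with $\T[-1] Y = \Spec_Y\Sym_{\cO_Y}(\bL_Y[1])$. This step is standard in derived deformation theory; the only thing to verify is that the construction is sufficiently natural in $Y$, which follows from functoriality of the cotangent complex and of the Koszul resolution.

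For the $\Gm$-equivariance, note that the pushout of \cref{prop:BGapushout} is $\Gm$-equivariant with weight $1$ on $x$, so the induced action on $\T Y$ scales the fiber direction with weight $1$; passing to the pullback gives the standard weight-$1$ $\Gm$-action on $\bL_Y[1]$, which is the standard action on $\T[-1] Y$. Finally, the projection $p\colon \Map(\B\hGa, Y)\to Y$ corresponds to evaluation at the basepoint $\pt\to \B\hGa$, which under the pullback description is either of the two maps $Y\times_{\T Y} Y\to Y$; these agree and coincide with the structural projection of $\T[-1] Y\to Y$. The main subtlety is really the computation of the Koszul pushout above, but this is standard and everything else is formal manipulation of the pushout square.
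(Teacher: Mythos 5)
Your proof follows the same overall strategy as the paper: apply $\Map(-, Y)$ to the pushout square of \cref{prop:BGapushout}, identify $\Map(\Spec k[x]/x^2, Y)$ with the tangent prestack $\T Y$ and the outer maps with the zero section, and then identify $Y\times_{\T Y} Y$ with $\T[-1]Y$. You differ only in how you carry out that last identification: you compute the derived pushout of algebras via a Koszul resolution, $A\otimes^{\bL}_{\Sym_A(\bL_Y)} A\cong \Sym_A(\bL_Y[1])$, whereas the paper observes that
\[
\xymatrix{
\bT_Y[-1] \ar[r] \ar[d] & 0 \ar[d] \\
0 \ar[r] & \bT_Y
}
\]
is Cartesian in $\QCoh(Y)$ and transports this across the formation of linear stacks. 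The two arguments are interchangeable and yours has the virtue of being explicit. One small imprecision to fix: ``working affine-locally with $Y=\Spec A$'' is not really licensed for an arbitrary derived prestack $Y$ admitting deformation theory, which need not be a scheme with an affine atlas; what you actually want is the relative statement over $Y$, namely $\cO_Y\otimes^{\bL}_{\Sym_{\cO_Y}(\bL_Y)}\cO_Y\cong \Sym_{\cO_Y}(\bL_Y[1])$ in $\QCoh(Y)$ followed by relative $\Spec$, which holds verbatim and gives $\T[-1]Y$ directly. With that rephrasing the proof is sound, and your identification of the $\Gm$-action and of $p$ is correct.
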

\begin{proof}
Applying $\Map(-, Y)$ to the pushout square from \cref{prop:BGapushout} we obtain a pullback square
\[
\xymatrix{
\Map(\B\hGa, Y) \ar[r] \ar[d] & Y \ar[d] \\
Y \ar[r] & \Map(\Spec k[x]/x^2, Y)
}
\]
By \cite[Proposition 1.4.1.9]{HAGII} we may identify $\Map(\Spec k[x]/x^2, Y)\cong \T Y$, so that inclusion of $Y$ is given by the zero section. The claim then follows from the fact that
\[
\xymatrix{
\bT_Y[-1] \ar[r] \ar[d] & 0 \ar[d] \\
0 \ar[r] & \bT_Y
}
\]
is a pullback diagram in $\QCoh(Y)$.
\end{proof}

Using \cref{prop:QCohBhGa} we obtain a natural forgetful functor
\[\QCoh(\B\hGa\times \T[-1] Y)\longrightarrow \Mod_{k[x]}(\QCoh(\T[-1] Y)),\]
i.e. a quasi-coherent complex on $\B\hGa\times \T[-1] Y$ gives rise to a quasi-coherent complex on $\T[-1] Y$ equipped with an endomorphism. This construction is equivariant for the natural $\Gm$-action on $\B\hGa$, so that the endomorphism has weight $1$. Now suppose $E\in \QCoh(Y)$ is a quasi-coherent complex. Then under this functor $\ev^* E$ corresponds to the quasi-coherent complex $p^* E$ equipped with an endomorphism $p^* E\rightarrow p^* E$ of weight $1$, i.e. a map $E\rightarrow p_* p^* E$ of weight $1$. If we further assume that $E\in\QCoh(X)^-$ is bounded above, by \cite[Theorem 2.5]{Monier} it is the same as a map
\[\at_E\colon E\longrightarrow E\otimes \bL_Y[1].\]

\begin{defn}
Let $E\in\QCoh(Y)^-$ be a bounded above quasi-coherent complex. The \defterm{Atiyah class} of $E$ is the map $\at_E\colon E\rightarrow E\otimes \bL_Y[1]$ defined above (equivalently, a weight 1 endomorphism $\at_E\colon p^* E\rightarrow p^* E$).
\end{defn}

Consider the map $Y\rightarrow \T Y$ given by the inclusion of the zero section. Its \v{C}ech nerve gives the simplicial object
\[
\simp{Y}{\T[-1] Y}{\T[-1] Y\times_Y \T[-1] Y}
\]
which induces an abelian group structure on $\T[-1] Y$ relative to $Y$ given by addition in the fiber coordinate.

\begin{example}
Consider $Y=\B G$, the classifying stack of an algebraic group $G$ and let $\g$ be the Lie algebra of $G$. The addition map $\g\times \g\rightarrow \g$ is conjugation-invariant, so it defines an abelian group structure on $\T[-1] Y = [\g / G]$ relative to $\B G$.
\end{example}

Given the nullhomotopy $[\cO_{\B\hGa}]\sim 0$ in $\Omega^\infty\K^\omega(\B\hGa)$ constructed above, by \cref{thm:volumelift} we obtain a torsion volume form $\vol_{\T[-1] Y}$ on $\T[-1] Y$. Our goal is to give a geometric description of this volume form similar to the description of the volume form $\omega_{LY}$ on $LY$ given in the previous section.

\begin{prop}
The pullback $\pi^*\colon \QCoh(\pt)\rightarrow \QCoh(\Spec k[x]/x^2)$ along $\Spec k[x]/x^2\rightarrow \pt$ admits a left adjoint $\pi_\sharp$, such that $\pi_\sharp\cO_{\Spec k[x]/x^2}\cong (k[x]/x^2)^*$ and the counit is dual to the inclusion of the unit $k\rightarrow k[x]/x^2$.
\end{prop}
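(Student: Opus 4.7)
The plan is to reduce everything to commutative algebra, since $\Spec k[x]/x^2 \to \pt$ is an affine morphism of affine derived schemes. Set $A = k[x]/x^2$, so that $\QCoh(\Spec A) = \Mod_A$ and $\pi^* \colon \Mod_k \to \Mod_A$ is the base change functor $V \mapsto V \otimes_k A$, with $\cO_{\Spec A}$ corresponding to the $A$-module $A$.

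The existence of $\pi_\sharp$ comes from the fact that $A$ is perfect (in fact free of rank $2$) as a $k$-module. Consequently $\pi^* = -\otimes_k A$ preserves all limits (tensoring by a finite free module does), and in this presentable setting this gives a left adjoint. I would make it explicit via the formula $\pi_\sharp(M) := A^* \otimes_A M$, where $A^*=\Hom_k(A,k)$ is viewed as an $(A,A)$-bimodule (with the two actions agreeing by commutativity of $A$). To check this is a left adjoint to $\pi^*$, I would exhibit the natural isomorphism
\[
\Hom_k(A^*\otimes_A M,\, V) \cong \Hom_A(M,\, \Hom_k(A^*, V)) \cong \Hom_A(M,\, A\otimes_k V) = \Hom_A(M, \pi^* V),
\]
where the first iso is tensor-hom adjunction and the second uses that $A$ is finite projective over $k$, so that $\Hom_k(A^*,V) \cong A\otimes_k V$ naturally in $V$. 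Taking $M = A = \cO_{\Spec A}$ gives immediately $\pi_\sharp \cO_{\Spec A} = A^* \otimes_A A = A^* = (k[x]/x^2)^*$.

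For the identification of the counit, I would unwind the adjunction at $V = k$: the counit is a map $\pi_\sharp\pi^* k = A^* \otimes_A A \cong A^* \to k$. Under the bijection
\[
\Hom_k(A^*,\, k) \cong \Hom_A(A,\, \pi^* k) = \Hom_A(A, A) \cong A,
\]
coming from the chain of isomorphisms above, the identity on $A$ on the right corresponds (by construction) to the counit on the left; tracing this through the natural iso $\Hom_k(A^*,V) \cong A\otimes_k V$, the identity of $A$ pairs against an element $\phi \in A^*$ to produce $\phi(1) \in k$. Hence the counit $A^* \to k$ is the evaluation-at-$1$ map, which is precisely the $k$-linear dual of the unit inclusion $k \to A$, $1 \mapsto 1$.

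There is no serious obstacle here; the only point requiring care is the last one, namely matching the abstractly defined counit with the concrete ``evaluation at $1$'' map. I would handle this by being explicit about the natural isomorphism $\Hom_k(A^*,V)\cong A\otimes_k V$ used to identify the right adjoint of $\pi_\sharp$ with $\pi^*$, and tracing the identity of $A$ through the resulting adjunction bijection.
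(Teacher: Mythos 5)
Your proof is correct but takes a genuinely different route than the paper's. The paper deduces the existence of $\pi_\sharp$ from Grothendieck duality: $\Spec k[x]/x^2$ is proper lci over $\pt$, so $\pi_*$ preserves colimits and has a right adjoint $\pi^!$ which differs from $\pi^*$ by a twist by a line bundle (citing \cite[Proposition 7.3.8]{Gaitsgory}); since $\pi^*$ is a twist of a right adjoint, it has a left adjoint $\pi_\sharp(-) = \pi_*(- \otimes L)$, and then $\pi_\sharp \cO_{\Spec A} \cong A^*$ falls out of the adjunction $\Hom(\pi_\sharp \cO_{\Spec A}, k) \cong \Hom(\cO_{\Spec A}, \cO_{\Spec A}) = A$. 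You instead work directly in $\Mod_A$ with $A = k[x]/x^2$: the point is only that $A$ is finite free over $k$, so $-\otimes_k A$ preserves limits, and the left adjoint has the explicit form $\pi_\sharp(M) = A^* \otimes_A M$, from which everything, including the counit identification, follows by tensor--hom adjunction and the canonical isomorphism $\Hom_k(A^*,V) \cong A \otimes_k V$ for $A$ finite projective. Your approach is more elementary and self-contained (no Grothendieck duality for proper lci morphisms is needed), and you also explicitly verify the counit statement, which the paper's proof asserts but does not spell out; the paper's route is shorter on the page but invokes a heavier black box, and is the natural one to reach for if one cares about generalizing beyond the affine situation.
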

\begin{proof}
$\Spec k[x]/x^2$ is a proper lci scheme. Therefore, $\pi_*$ is colimit-preserving and it admits a right adjoint $\pi^!$ differing from $\pi^*$ by tensoring by a line bundle \cite[Proposition 7.3.8]{Gaitsgory}. Therefore, $\pi^*$ admits a left adjoint $\pi_\sharp$.

We have
\[\Hom(\pi_\sharp\cO_{\Spec k[x]/x^2}, k)\cong \Hom(\cO_{\Spec k[x]/x^2}, \cO_{\Spec k[x]/x^2}) = k[x]/x^2,\]
so we may canonically identify $\pi_\sharp\cO_{\Spec k[x]/x^2}\cong (k[x]/x^2)^*$.
\end{proof}

Let $s\colon Y\rightarrow \T Y$ be the inclusion of the zero section. Using \cref{prop:mappingcotangent} we identify the fiber sequence
\[s^*\bL_{\T Y}\longrightarrow \bL_Y\longrightarrow \bL_{Y/\T Y}\]
with
\[(k[x]/x^2)^*\otimes \bL_Y\longrightarrow \bL_Y\longrightarrow \bL_{Y/\T Y},\]
where the first map is induced by $(k[x]/x^2)^*\rightarrow k$. Thus, there is a canonical identification
\[\bL_{Y/\T Y}\cong \bL_Y[1].\]
The following statement is proven analogously to \cref{prop:LYvolume}.

\begin{prop}
Consider the fiber sequence
\[
p^*\bL_Y\longrightarrow \bL_{\T[-1] Y}\longrightarrow \bL_{\T[-1] Y/Y}
\]
inducing an isomorphism
\[\det \bL_{\T[-1] Y}\cong p^*\det(\bL_Y)\otimes \det \bL_{\T[-1] Y/Y}\]
and the left-invariant trivialization (see \cref{def:leftinvariant})
\[\bL_{\T[-1] Y/Y}\cong p^* \bL_Y[1]\]
of the relative cotangent complex. The torsion volume form $\vol_{\T[-1] Y}$ is obtained by a sequence of the above isomorphisms
\[\det \bL_{\T[-1] Y}\cong p^* \det\bL_Y\otimes \det \bL_{\T[-1] Y/Y}\cong p^*\det \bL_Y\otimes p^*\det(\bL_Y)^{-1}\cong \cO_{\T[-1] Y}.\]
\end{prop}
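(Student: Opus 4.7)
The strategy is to mirror the proof of \cref{prop:LYvolume}, replacing the CW decomposition of $S^1$ by the pushout presentation of $\B\hGa$ furnished by \cref{prop:BGapushout}. The whole argument consists in tracking how the nullhomotopy $h_{\B\hGa}\colon [\cO_{\B\hGa}]\sim 0$ in $\Omega^\infty\K^\omega(\B\hGa)$, which is baked into the construction of $\vol_{\T[-1]Y}$ via \cref{thm:volumelift}, unpacks geometrically when transported along the mapping prestack functor $\Map(-, Y)$.

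First I would apply $\Map(-, Y)$ to the pushout square of \cref{prop:BGapushout} to obtain a pullback square
\[
\xymatrix{
\T[-1]Y \ar[r] \ar[d] & Y \ar^{s}[d] \\
Y \ar^{s}[r] & \T Y
}
\]
where $s\colon Y\to \T Y$ is the zero section. By base change for cotangent complexes this identifies $\bL_{\T[-1]Y/Y}\cong p^*\bL_{Y/\T Y}$. The fiber sequence $s^*\bL_{\T Y}\to \bL_Y\to \bL_{Y/\T Y}$, together with the canonical splitting $s^*\bL_{\T Y}\cong \bL_Y\oplus \bL_Y[-1]$ (which is itself the determining identification of $\T Y$ as a linear stack over $Y$), produces the isomorphism $\bL_{Y/\T Y}\cong \bL_Y[1]$ and hence $\bL_{\T[-1]Y/Y}\cong p^*\bL_Y[1]$. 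Moreover this is the left-invariant trivialization relative to the abelian group structure on $\T[-1]Y\to Y$ obtained from the \v{C}ech nerve of $s$, since both the pullback square above and \cref{def:leftinvariant} express $\bL_{\T[-1]Y/Y}$ through the pullback of the relative cotangent complex of $s$.

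Next I would unwind the nullhomotopy $h_{\B\hGa}$. Under the equivalence $\QCoh(\B\hGa)\cong \Mod_{k[x]}$ of \cref{prop:QCohBhGa}, $\cO_{\B\hGa}$ is the augmentation module $k$, and $h_{\B\hGa}$ is built from the length-one resolution $k\cong (k[x]\xrightarrow{x} k[x])$ via the additivity of $K$-theory applied to the induced two-step filtration (as in \cref{prop:chaincomplexfiltration}). This resolution is precisely the one implicit in the pushout square: the two copies of $\cO_\pt=k$ are the summands $k[x]$ and $k[x]$, while the relation $x$ records the derived intersection $\Spec k[x]/x^2$. Transporting along $\Map(-,Y)$, the filtration becomes the fiber sequence of perfect complexes on $\T[-1]Y$ obtained from pulling back $\bL_Y$ to $Y\times_{\T Y} Y=\T[-1]Y$.

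Finally, tracing through the proof of \cref{thm:volumelift}, the trivialization of $\det(\bL_{\T[-1]Y})$ coming from $h_{\B\hGa}$ factors as: (i) the splitting $\det(\bL_{\T[-1]Y})\cong p^*\det(\bL_Y)\otimes \det(\bL_{\T[-1]Y/Y})$ induced by the projection $p$; (ii) the isomorphism $\det(\bL_{\T[-1]Y/Y})\cong p^*\det(\bL_Y)^{-1}$ obtained by taking determinants of $\bL_{\T[-1]Y/Y}\cong p^*\bL_Y[1]$ derived in the previous paragraph; (iii) the evident cancellation. The only point that requires care, and hence the main obstacle, is the compatibility between the $k[x]$-linear structure used to define $h_{\B\hGa}$ and the identifications induced by the pushout square: once one checks that the resolution $(k[x]\xrightarrow{x} k[x])$ and the pushout of \cref{prop:BGapushout} determine the same class in $\K^\omega(\B\hGa)$, everything else is bookkeeping of signs and degree shifts exactly parallel to the $LY$-case.
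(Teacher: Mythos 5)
Your approach matches the paper's: the official text simply says the statement is ``proven analogously to'' \cref{prop:LYvolume}, and you carry out exactly that translation, replacing the CW decomposition of $S^1$ by the pushout presentation of $\B\hGa$ from \cref{prop:BGapushout}, applying $\Map(-,Y)$ to get the pullback square $\T[-1]Y = Y\times_{\T Y} Y$, and then tracking the nullhomotopy $h_{\B\hGa}$ through the determinant.

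One factual error, however: you write $s^*\bL_{\T Y}\cong \bL_Y\oplus \bL_Y[-1]$, but the correct identification is $s^*\bL_{\T Y}\cong \bL_Y\oplus \bL_Y$ (no shift). This is visible in the paper immediately before the proposition, where the fiber sequence is identified with $(k[x]/x^2)^*\otimes\bL_Y\to\bL_Y\to\bL_{Y/\T Y}$ and $(k[x]/x^2)^*$ is a two-dimensional $k$-vector space in degree zero, so $s^*\bL_{\T Y}\cong\bL_Y^{\oplus 2}$; equivalently, $\T Y$ is the total space of a vector bundle over $Y$ with fiber $\bT_Y$, so the relative cotangent complex along the zero section is $\bL_Y$ undshifted. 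Your version is internally inconsistent: if $s^*\bL_{\T Y}\cong\bL_Y\oplus\bL_Y[-1]$, the cofiber of the projection $s^*\bL_{\T Y}\to\bL_Y$ would be $\bL_Y$, not $\bL_Y[1]$, contradicting the conclusion you (correctly) want. The rest of your argument is unaffected once this is corrected.
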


\subsection{Exponential}

The goal of this section is to compare the Euler structures on $S^1_\B$ and $\B\hGa$ constructed in the previous two sections. For this, consider the stack $\B\Ga$. It carries natural maps
\[
\xymatrix{
& \B\Ga & \\
S^1_\B \ar^{i^m}[ur] && \B\hGa \ar_{i^a}[ul]
}
\]
obtained by taking classifying stacks of the inclusions $\Z\rightarrow\Ga$ and $\hGa\rightarrow\Ga$. We denote by
\[\pi^u\colon \B\Ga\longrightarrow \pt,\qquad \pi^m\colon S^1_\B\longrightarrow \pt,\qquad \pi^a\colon \B\hGa\longrightarrow \pt\]
the natural projections.

We begin by describing the $\infty$-category $\QCoh(\B\Ga)$. We denote by
\[\Mod^0_{k[x]} = \colim_n \Mod_{k[x]/x^n}\]
the $\infty$-category of $x$-nilpotent $k[x]$-modules.

\begin{prop}
Let $S = \Spec R$ be a derived affine scheme. Then we have equivalences
\begin{itemize}
    \item $\QCoh(S\times \B\Ga)\cong \Mod^0_{k[x]}(\QCoh(S)) = \Mod^0_{R[x]}$, the $\infty$-category of quasi-coherent complexes $\cF$ on $S$ together with a nilpotent endomorphism $x\colon \cF\rightarrow \cF$. Under this equivalence $\cO_{S\times \B\Ga}$ is sent to $\cO_S$ equipped with the zero endomorphism.
    \item $\Perf(S\times \B\Ga)\cong \Mod_{k[x]}(\Perf(S))$, the $\infty$-category of perfect complexes on $S$ together with a nilpotent endomorphism $x\colon \cF\rightarrow \cF$.
\end{itemize}
\label{prop:QCohBGa}
\end{prop}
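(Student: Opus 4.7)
The plan is to follow the same pattern as the proof of \cref{prop:QCohBhGa}. First I would reduce to the case $S = \pt$: the dualizability of $\QCoh(S)$ gives $\QCoh(S) \otimes \QCoh(\B\Ga) \xrightarrow{\sim} \QCoh(S \times \B\Ga)$, so it suffices to identify $\QCoh(\B\Ga) \cong \Mod^0_{k[x]}$, with the case over $R$ following by base change.

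Next, descent along the faithfully flat atlas $\pt \to \B\Ga$ identifies
\[\QCoh(\B\Ga) \simeq \lim_{[n] \in \Delta} \Mod_{\cO(\Ga^n)} \simeq \coMod_{k[t]}(\Mod_k),\]
the $\infty$-category of comodules in $\Mod_k$ over the cocommutative Hopf algebra $\cO(\Ga) = k[t]$ with coproduct $\Delta(t) = t \otimes 1 + 1 \otimes t$. I would then write $k[t]$ as the filtered colimit of its finite-dimensional sub-coalgebras $C_n = \mathrm{span}_k\{1, t, \ldots, t^n\}$ (each closed under $\Delta$). Since comodules commute with filtered colimits of coalgebras, and comodules over a finite-dimensional coalgebra coincide with modules over the linear dual algebra, we obtain
\[\coMod_{k[t]}(\Mod_k) \simeq \colim_n \coMod_{C_n}(\Mod_k) \simeq \colim_n \Mod_{C_n^*}.\]
In characteristic zero, the divided-power pairing identifies $C_n^* \cong k[x]/(x^{n+1})$ (with $x$ dual to $t$, so that $x^i = i!\cdot e_i$, where $e_i$ is the basis dual to $t^i$), and the filtered colimit of $\Mod_{k[x]/(x^{n+1})}$ is exactly $\Mod^0_{k[x]}$. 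Under this equivalence the structure sheaf corresponds to the trivial comodule on $k$, i.e., $k$ with zero $x$-action.

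For perfect complexes, an object of $\Mod^0_{R[x]}$ is a perfect complex on $S \times \B\Ga$ iff its pullback to $S$ along the unit section $S \to S \times \B\Ga$, which corresponds to forgetting the $x$-action, is a perfect $R$-module. For such a perfect complex $P$ equipped with an endomorphism $x$, local nilpotence and genuine nilpotence agree: a finite set of generators gives a uniform bound on the nilpotency order. This yields $\Perf(S \times \B\Ga) \cong \Mod_{k[x]}(\Perf(S))$ with nilpotent action.

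The main obstacle will be the derived lift of the classical equivalence between $\Ga$-representations and locally nilpotent $k[x]$-modules. The cleanest route is to verify that both sides are presentable stable $\infty$-categories with compatible $t$-structures and matching hearts, and then to invoke right/left-completeness; alternatively, one can identify the comonad for the forgetful functor $\coMod_{k[t]}(\Mod_k) \to \Mod_k$ directly and match it with the obvious comonad for $\Mod^0_{k[x]} \to \Mod_k$. Characteristic zero enters crucially through the divided-power duality $C_n^* \cong k[x]/(x^{n+1})$; in positive characteristic the dual algebra would involve divided-power generators and the equivalence would fail.
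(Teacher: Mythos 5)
Your approach coincides with the paper's: both identify $\QCoh(S\times\B\Ga)$ with comodules over $\cO(\Ga)$ via comonadic descent, expand the coalgebra as a filtered colimit of finite-dimensional sub-coalgebras, dualize to truncated polynomial algebras (using characteristic zero), and take a filtered colimit of module categories. Your closing worries about a ``derived lift'' of the classical equivalence are unnecessary---the filtered-colimit argument already identifies the $\infty$-categories directly---and you additionally supply an explicit argument for the $\Perf$ statement, which the paper's proof leaves implicit.
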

\begin{proof}
The pullback along $p\colon S\rightarrow S\times \B\Ga$ defines a comonadic functor $f^*\colon \QCoh(S\times \B\Ga)\rightarrow \QCoh(S)$ which identifies
\[\QCoh(S\times \B\Ga)\cong \coMod_{\cO(\Ga)}(\QCoh(S)).\]
We may identify $\cO(\Ga)\cong \colim_n k[t]/t^n$ as coalgebras. Thus,
\[\coMod_{\cO(\Ga)}(\QCoh(S))\cong \colim_n \coMod_{k[t]/t^n}(\QCoh(S)).\]
Identifying $(k[t]/t^n)^*\cong k[x]/x^n$ as algebras, we get
\[\coMod_{\cO(\Ga)}(\QCoh(S))\cong \colim_n \Mod_{k[x]/x^n}(\QCoh(S)) =\Mod^0_{k[x]}(\QCoh(S)).\]
\end{proof}

For $\B\Ga$ perfect and compact quasi-coherent complexes coincide:
\[\Perf(\B\Ga)\cong \QCoh^\omega(\B\Ga),\]
see \cite[Corollary 3.22]{BZFN}. Our next goal is to define the integration map $\pi^u_\sharp\colon \Perf(S\times\B\Ga)\rightarrow \Perf(S)$. We begin with the following lemma.

\begin{lm}
Let $\cC$ be a presentable $k$-linear $\infty$-category and $i\colon \cD\hookrightarrow \cC$ a full subcategory. Let $x\in\cD$ be an object and consider the colimit-preserving functors $F_\cD\colon \Mod_k\rightarrow \cD$ and $F_\cC\colon \Mod_k\rightarrow \cC$ given by $V\mapsto V\otimes x$. Suppose $F_\cC$ has a left adjoint $F^L_\cC\colon \cC\rightarrow \Mod_k$. Then $F_\cD$ has a left adjoint given by the composite
\[\cD\xrightarrow{i} \cC\xrightarrow{F^L_\cC} \Mod_k.\]
\label{lm:fullyfaithfuladjoint}
\end{lm}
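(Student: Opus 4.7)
The plan is to exploit the fact that, because $i$ is fully faithful and $x \in \cD$, the two functors $F_\cC$ and $F_\cD$ agree after composing with $i$; then the desired adjunction follows from the adjunction for $F_\cC$ by restricting the target of the hom.

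More concretely, I would first observe that since $x$ lies in $\cD$ and $\Mod_k$ acts on both $\cC$ and $\cD$ in a compatible way (both are $k$-linear presentable and $i$ is $k$-linear, so $V \otimes x$ is computed the same in $\cD$ and in $\cC$), there is a natural isomorphism $F_\cC \simeq i \circ F_\cD$. Then, for any $V \in \Mod_k$ and $y \in \cD$, the fact that $i$ is fully faithful gives
\[
\Hom_\cD(F_\cD V, y) \simeq \Hom_\cC(i F_\cD V, i y) \simeq \Hom_\cC(F_\cC V, i y) \simeq \Hom_{\Mod_k}(V, F_\cC^L (i y)),
\]
naturally in $V$ and $y$. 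This exhibits $F_\cC^L \circ i$ as a left adjoint of $F_\cD$.

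There is essentially no obstacle here: the argument is purely formal, and the only subtlety worth spelling out is the identification $F_\cC \simeq i \circ F_\cD$, which requires using that $i$ preserves the $\Mod_k$-tensoring (equivalently, that $i$ preserves the colimits computing $V \otimes x$ as a sifted/colimit construction). Since $i$ is the inclusion of a full subcategory of a $k$-linear presentable $\infty$-category and both tensorings are defined by the same universal property relative to $\Mod_k$, this is automatic once $V \otimes x$ is known to lie in $\cD$, which holds whenever $\cD$ is closed under the relevant colimits; in the applications of the lemma this will be clear from context.
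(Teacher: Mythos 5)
Your overall approach matches the paper's: identify $F_\cC \simeq i \circ F_\cD$ (using that $V \otimes x$ lands in $\cD$) and transport the adjunction along the fully faithful inclusion $i$. However, the Hom chain you wrote misreads the hypothesis and, as a consequence, verifies the wrong adjunction.

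The hypothesis is that $F_\cC^L$ is a \emph{left} adjoint of $F_\cC$, i.e.\ $F_\cC^L \dashv F_\cC$, which gives the natural equivalence
\[
\Hom_{\Mod_k}(F_\cC^L z, V) \simeq \Hom_\cC(z, F_\cC V), \qquad z \in \cC,\ V \in \Mod_k.
\]
Your last step asserts $\Hom_\cC(F_\cC V, iy) \simeq \Hom_{\Mod_k}(V, F_\cC^L(iy))$, which would instead require $F_\cC \dashv F_\cC^L$, i.e.\ that $F_\cC$ is the left adjoint. Moreover, even granting that step, the resulting equivalence $\Hom_\cD(F_\cD V, y) \simeq \Hom_{\Mod_k}(V, F_\cC^L(iy))$ is the defining property of a \emph{right} adjoint of $F_\cD$, not of a left adjoint; so the conclusion you state does not follow from the chain you wrote.

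The fix is simply to run the computation in the other direction, as the paper does:
\[
\Hom_{\Mod_k}(F_\cC^L i(y), V) \simeq \Hom_\cC(i(y), F_\cC(V)) \simeq \Hom_\cC(i(y), i(F_\cD(V))) \simeq \Hom_\cD(y, F_\cD(V)),
\]
naturally in $y \in \cD$ and $V \in \Mod_k$. The middle equivalence uses precisely the identification $F_\cC \simeq i \circ F_\cD$ you spelled out, and the last uses full faithfulness of $i$. With that reversal your argument is exactly the paper's.
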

\begin{proof}
For $M\in\cD$ and $V\in\Mod_k$ we have a sequence of equivalences
\[
\Hom_{\Mod_k}(F^L_\cC i(M), V)\cong \Hom_\cC(i(M), i(F_\cD(V)))\cong \Hom_\cD(M, F_\cD(V)),
\]
where the first equivalence uses that $F^L_\cC$ is a left adjoint and the second equivalence uses that $i$ is fully faithful.
\end{proof}

By \cref{prop:QCohBGa} the pullback functors
\[(i^a)^*\colon \QCoh(S\times \B\Ga)\longrightarrow \QCoh(S\times \B\hGa),\qquad (i^m)^*\colon \QCoh(S\times \B\Ga)\longrightarrow \QCoh(S\times S^1_\B)\]
are fully faithful. Therefore, by \cref{lm:fullyfaithfuladjoint} we obtain a functor $\pi^u_\sharp\colon \Perf(S\times \B\Ga)\rightarrow \Perf(S)$, left adjoint to the pullback 
functor $(\pi^u)^*\colon\Perf(S)\rightarrow \Perf(S\times \B\Ga)$, which is compatible with the corresponding functors $\pi^a_\sharp$ and $\pi^m_\sharp$ on $\B\hGa$ and $S^1_\B$ using the diagram
\[
\Perf^\vee(S^1_\B)\longrightarrow \Perf^\vee(\B\Ga)\longleftarrow \Perf^\vee(\B\hGa)
\]

To obtain a volume form on the mapping stack from $\B\Ga$, we have to trivialize the pushforward functor $\pi^u_\sharp$ for $\B\Ga$ in $K$-theory. For this, consider a commutative diagram
\begin{equation}
\xymatrix{
& \Perf(\B\Ga)\cong \QCoh^\omega(\B\Ga) \ar_{(i^m)^*}[dl] \ar^{(i^a)^*}[dr] & \\
\Perf(k[z,z^{-1}]) \ar_{z=\exp(x)}[r] &\Perf(k[\![x]\!]) & \Perf(k[x]) \ar[l]
}
\label{eq:BGacommutative}
\end{equation}
of stable $\infty$-categories, where the two bottom functors denote induction functors along the inclusions $k[z, z^{-1}]\hookrightarrow k[\![x]\!]$ and $k[x]\hookrightarrow k[\![x]\!]$.

Let $S=\Spec R$ be a derived affine scheme. Recall that for any $X$ satisfying \cref{mainassumption} we have the functor
\[\tens_X\colon \QCoh^\omega(X)\longrightarrow \Fun^{ex}(\Perf(S\times X), \Perf(S)).\]

Let us unpack this functor for $X = \B\hGa$. By \cref{prop:QCohBhGa} the functor $\tens_X$ is equivalent to a functor
\[\Mod_{k[x]}(\Perf(R))\otimes \Perf(k[x])\longrightarrow \Perf(R)\]
which sends $M\in \Mod_{k[x]}(\Perf(R))$ and $N\in \Perf(k[x])$ to $M\otimes_{k[x]} N$.

We will now define a functor
\begin{equation}
\Perf(k[\![x]\!])\longrightarrow \uPerf^\vee(\B\Ga)
\label{eq:BGadualityintegration}
\end{equation}
analogous to $\tens_{\B\hGa}$. For $S=\Spec R$ a derived affine scheme we define it to be
\[\Mod^0_{k[x]}(\Perf(R))\otimes \Perf(k[\![x]\!])\longrightarrow \Perf(R)\]
which sends $M\in \Mod^0_{k[x]}(\Perf(R))$ and $N\in\Perf(k[\![x]\!])$ to $M\otimes_{k[\![x]\!]} N$.

Let us make several observations about this construction:
\begin{itemize}
    \item The functors $\tens_{\B\hGa}$ and \eqref{eq:BGadualityintegration} fit into a commutative diagram
    \[
    \xymatrix@C=1cm{
    & \Perf(\B\Ga)\cong\QCoh^\omega(\B\Ga) \ar[dl] \ar[dr] & \\
    \Perf(k[z,z^{-1}]) \ar_{z=\exp(x)}[r] \ar[d] &\Perf(k[\![x]\!]) \ar[d] & \Perf(k[x]) \ar[l] \ar^{\tens_{\B\hGa}}[d] \\
    \Perf^\vee(S^1_\B) \ar[r] & \Perf^\vee(\B\Ga) & \Perf^\vee(\B\hGa) \ar[l]
    }
    \]

    \item Under $\Perf(k[\![x]\!])\rightarrow \Perf^\vee(\B\Ga)$ the module $N=k[\![x]\!]$ is sent to the functor
    \[\Perf(S\times\B\Ga)\cong \Mod^0_{k[x]}(\Perf(R))\rightarrow \Perf(R)\]
    given by forgetting the $x$-module structure. The module $N=k[\![x]\!]/(x)=k$ is sent to the functor $\pi_\sharp\colon \Perf(S\times \B\Ga)\rightarrow \Perf(S)$.
\end{itemize}

Applying $K$-theory to the commutative diagram \eqref{eq:BGacommutative} of stable $\infty$-categories we get a commutative diagram
\[
\xymatrix{
& \K(\B\Ga)\cong\K^\omega(\B\Ga) \ar_{(i^m)^*}[dl] \ar^{(i^a)^*}[dr] & \\
\K^\omega(S^1_\B) \ar[r] & \K(k[\![x]\!]) & \K^\omega(\B\hGa) \ar[l]
}
\]

We have the following data:
\begin{itemize}
\item There is a class $[\cO_{\B\Ga}]\in\Omega^\infty\K(\B\Ga)$ which maps to $[\cO_{S^1_\B}]\in\Omega^\infty\K^\omega(S^1_\B)$, $[\cO_{\B\hGa}]\in\Omega^\infty\K^\omega(\B\hGa)$ and $[k]\in\Omega^\infty\K(k[\![x]\!])$.
\item There is a nullhomotopy $h_{S^1_\B}\colon [\cO_{S^1_\B}]\sim 0\in\Omega^\infty\K^\omega(S^1_\B)$ constructed in \cref{sect:circle}.
\item There is a nullhomotopy $h_{\B\hGa}\colon [\cO_{\B\hGa}]\sim 0\in\Omega^\infty\K^\omega(\B\hGa)$ constructed in \cref{sect:BGa}.
\end{itemize}

So, we can take the difference $h_{S^1_\B} - h_{\B\hGa}$ of the two nullhomotopies in $\Omega^\infty\K(k[\![x]\!])$ to obtain an element of $\K_1(k[\![x]\!])$.

\begin{remark}
As opposed to the other examples we have considered previously, the class $[\cO_{\B\Ga}]\in\Omega^\infty\K^\omega(\B\Ga)$ is nontrivial. Indeed, by devissage pullback along $\pt\rightarrow \B\Ga$ induces an equivalence $\K(\B\Ga)\rightarrow \K(\pt)$ and under this equivalence $[\cO_{\B\Ga}]\in\Omega^\infty\K(\B\Ga)$ goes to $[k]\in\Omega^\infty\K(k)$.
\end{remark}

\begin{prop}
The images of the nullhomotopies $h_{S^1_\B}$ and $h_{\B\hGa}$ in $\Omega^\infty\K(k[\![x]\!])$ differ by $J(x)$, where
\[J(x) = \frac{x}{\exp(x)-1}\in k[\![x]\!]^\times\cong \K_1(k[\![x]\!]).\]
\label{prop:Duflodifference}
\end{prop}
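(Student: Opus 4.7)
The plan is to push both nullhomotopies forward to $\Omega^\infty \K(k[\![x]\!])$ via the commutative diagram \eqref{eq:BGacommutative}, where they become paths arising (by additivity) from explicit free resolutions of $k$, and then compare them via an explicit chain isomorphism. By \cref{prop:QCohS1}, $h_{S^1_\B}$ uses the resolution $k \simeq (k[z,z^{-1}] \xrightarrow{z-1} k[z,z^{-1}])$; base change along $z = \exp(x)$ turns this into $(k[\![x]\!] \xrightarrow{\exp(x)-1} k[\![x]\!])$. By \cref{prop:QCohBhGa}, $h_{\B\hGa}$ uses $(k[x] \xrightarrow{x} k[x])$; base change along $k[x] \to k[\![x]\!]$ gives $(k[\![x]\!] \xrightarrow{x} k[\![x]\!])$. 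In both cases the nullhomotopy is the standard additivity path expressing $[k] = [k[\![x]\!]] - [k[\![x]\!]] = 0$.

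Their difference is therefore a loop at $[k] \in \Omega^\infty \K(k[\![x]\!])$, defining an element of $\K_1(k[\![x]\!])$. Since $k[\![x]\!]$ is a commutative local regular ring, the determinant map gives an isomorphism $\K_1(k[\![x]\!]) \cong k[\![x]\!]^\times$, so the difference is an explicit unit. The computation rests on the identity $x = (\exp(x) - 1) \cdot J(x)$ in $k[\![x]\!]$, which holds because $\exp(x) - 1 = x(1 + x/2 + \cdots)$ is $x$ times a unit. This identity assembles into a chain quasi-isomorphism
\[
\xymatrix{
k[\![x]\!] \ar[r]^{x} \ar[d]_{J(x)} & k[\![x]\!] \ar[d]^{1} \\
k[\![x]\!] \ar[r]_{\exp(x)-1} & k[\![x]\!]
}
\]
between the two resolutions, lifting the identity on their common cohomology $k$.

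By the additivity theorem in $K$-theory (the principle underlying \cref{prop:chaincomplexfiltration}), such a chain isomorphism between two rank-one two-term free resolutions of $k$ induces the loop whose class in $\K_1$ is the alternating product of the determinants of its components. With $\phi_0 = 1$ and $\phi_{-1} = J(x)$, this alternating product evaluates to $J(x)$ up to a sign depending on the direction of comparison. The main technical obstacle is this last step: verifying that the sign convention of additivity, combined with the direction in which we compare $h_{S^1_\B}$ and $h_{\B\hGa}$, produces $J(x)$ rather than its inverse. Apart from this bookkeeping, the substance of the argument is the explicit power-series identity defining $J(x)$.
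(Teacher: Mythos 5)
Your approach is essentially the paper's: push both nullhomotopies into $\K(k[\![x]\!])$ via the commutative triangle \eqref{eq:BGacommutative}, represent each by the standard additivity path coming from the two-term free resolution of $k$, and compare the resolutions through the chain map that multiplies by $J(x)$ in one degree and is the identity in the other. The commuting square you write (with $\phi_{-1}=J(x)$, $\phi_0 = 1$ going from the $\B\hGa$-resolution to the $S^1_\B$-resolution) is the inverse of the square in the paper (which has $\phi_{-1}=J^{-1}$, $\phi_0=\id$ in the opposite direction), so the two comparisons are genuinely the same data.

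The one thing you do not do is pin down the final answer. You correctly reduce the question to computing the Whitehead torsion of an explicit based chain isomorphism and you observe that this is the alternating product of the $\det(\phi_i)$, but you stop at ``$J(x)$ or its inverse, up to a sign depending on conventions.'' That residual ambiguity is not cosmetic: $\K_1(k[\![x]\!])\cong k[\![x]\!]^\times$ is a group and the statement asserts a specific element, and downstream (in the proof of \cref{thm:Toddclass}) the exact power matters, since $J$ and $J^{-1}$ correspond to the Todd class and its reciprocal. The paper resolves this by a concrete computation: it glues the two paths into a single free based acyclic complex
\[k[\![x]\!]\xrightarrow{(\exp(x)-1)\oplus J^{-1}} k[\![x]\!]^{\oplus 2}\xrightarrow{-\id\oplus x}k[\![x]\!],\]
writes down an explicit contracting homotopy $h$, and evaluates $\det(d+h)=J$. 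You should supply an equivalent normalization step — either this $\det(d+h)$ computation, or a careful bookkeeping of the orientation of the loop $h_{S^1_\B}-h_{\B\hGa}$ together with the degree conventions in the alternating-product formula — to turn your ``$J^{\pm 1}$'' into the asserted $J$.
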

\begin{proof}
The nullhomotopy $h_{\B\hGa}$ is represented by the free based complex $k[x]\xrightarrow{x} k[x]$ (in degrees $-1, 0$) of $k[x]$-modules with a quasi-isomorphism to $k$. Similarly, the nullhomotopy $h_{S^1_\B}$ is represented by the free based complex $k[z, z^{-1}]\xrightarrow{z-1} k[z, z^{-1}]$ of $k[z, z^{-1}]$-modules with a quasi-isomorphism to $k$. Thus, the corresponding loop is given by
\[
0 \sim \left(k[\![x]\!] \xrightarrow{\exp(x)-1} k[\![x]\!]\right) \sim k \sim \left(k[\![x]\!] \xrightarrow{x} k[\![x]\!]\right) \sim 0
\]
Note that the middle two paths can be composed (by lifting the identity on $k$ to the resolutions) to
\[
\xymatrix@C=2cm{
k[\![x]\!] \ar[r]^{\exp(x)-1} \ar[d]_{J^{-1}} & k[\![x]\!] \ar[d]^{\id} \\
k[\![x]\!] \ar[r]^{x} & k[\![x]\!].
}
\]
Thus, the total loop is represented by the free based acyclic complex of $k[\![x]\!]$-modules
\[k[\![x]\!]\xrightarrow{(\exp(x)-1)\oplus J^{-1}} k[\![x]\!]^{\oplus 2}\xrightarrow{-\id\oplus x}k[\![x]\!]\]
in degrees $-1,0,1$. It is contractible with the nullhomotopy $h$ given by
\[
k[\![x]\!] \xleftarrow{\id\oplus J(2-\exp(x))} k[\![x]\!]^{\oplus 2}\xleftarrow{(\exp(x)-2)\oplus J^{-1}}k[\![x]\!]
\]
This contractible complex gives rise to an invertible matrix $d+h$ from the even part to the odd part given by
\[
\begin{pmatrix}
1 & J(x)(2-\exp(x)) \\
-1 & x
\end{pmatrix}
\]
whose determinant is $J$.

Since $k[\![x]\!]$ is a Euclidean domain, the determinant map $\K_1(k[\![x]\!])\rightarrow k[\![x]\!]^\times$ is an isomorphism splitting the obvious inclusion $k[\![x]\!]^\times\rightarrow \K_1(k[\![x]\!])$.
\end{proof}

Now consider the \defterm{unipotent loop space}
\[L^u Y=\Map(\B\Ga, Y)\]
which carries maps
\[
\xymatrix{
& L^u Y \ar_{q^m}[dl] \ar^{q^a}[dr] & \\
LY && \T[-1] Y
}
\]

By \cref{prop:mappingcotangent} it admits a perfect cotangent complex given by
\[\bL_{L^u Y} = \pi^u_\sharp \ev^*\bL_Y.\]

\begin{example}
Consider $Y=\B G$, the classifying stack of an algebraic group $G$ with Lie algebra $\g$. Let $U\subset G$  be the variety of unipotent elements and $\cN\subset \g$ the variety of nilpotent elements. Then
\[L^u Y\cong [\widehat{G}_U / G]\cong [\widehat{\g}_\cN / G],\]
where $\widehat{G}_U$ is the formal completion of $G$ along the unipotent cone and similarly for $\widehat{\g}_\cN$ \cite[Proposition 2.1.25]{Chen}.
\end{example}

\begin{prop}
The maps $q^m\colon L^u Y\rightarrow LY$ and $q^a\colon L^u Y\rightarrow \T[-1] Y$ are formally \'etale.
\end{prop}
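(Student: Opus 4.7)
The plan is to show that $q^m$ and $q^a$ are formally \'etale by verifying that their relative cotangent complexes vanish. Both maps arise by the same construction: $q^m$ and $q^a$ are induced by precomposition with $i^m\colon S^1_\B\to\B\Ga$ and $i^a\colon \B\hGa\to\B\Ga$ respectively. I would therefore prove the general lemma: if $g\colon X_1\to X_2$ is a morphism of derived prestacks (both satisfying \cref{mainassumption}) such that the pullback $g^*\colon \QCoh(X_2)\to\QCoh(X_1)$ is fully faithful, then the induced map $\tilde g\colon \Map(X_2,Y)\to \Map(X_1,Y)$ has vanishing relative cotangent complex.

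For the lemma, write $M_i=\Map(X_i,Y)$, $\pi_i\colon M_i\times X_i\to M_i$, and $\ev_i\colon M_i\times X_i\to Y$. By \cref{prop:mappingcotangent}, $\bL_{M_i}=\pi_{i,\sharp}\ev_i^*\bL_Y$. The Cartesian square
\[
\xymatrix{
M_2\times X_1 \ar[r]^{\tilde{g}\times\id} \ar[d] & M_1\times X_1 \ar[d]^{\pi_1} \\
M_2 \ar[r]^{\tilde{g}} & M_1
}
\]
together with the factorizations $\pi'=\pi_2\circ(\id_{M_2}\times g)$ and $\ev_1\circ(\tilde{g}\times\id)=\ev_2\circ(\id_{M_2}\times g)$ and base change yield an identification
\[
\tilde{g}^*\bL_{M_1}\cong \pi_{2,\sharp}(\id_{M_2}\times g)_\sharp(\id_{M_2}\times g)^*\ev_2^*\bL_Y.
\]
Unwinding the definitions, the natural map $\tilde{g}^*\bL_{M_1}\to\bL_{M_2}$ is $\pi_{2,\sharp}$ applied to the counit of $(\id_{M_2}\times g)_\sharp\dashv(\id_{M_2}\times g)^*$ evaluated on $\ev_2^*\bL_Y$. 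Working affine-locally on $M_2$ and using the tensor product decomposition $\QCoh(S\times X_i)\cong\QCoh(S)\otimes\QCoh(X_i)$, fully faithfulness of $g^*$ implies fully faithfulness of $(\id_{M_2}\times g)^*$. Hence the counit is already an equivalence, so $\bL_{M_2/M_1}=\cofib(\tilde{g}^*\bL_{M_1}\to\bL_{M_2})=0$.

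Applying the lemma: by \cref{prop:QCohS1}, \cref{prop:QCohBhGa}, and \cref{prop:QCohBGa}, the functor $(i^a)^*\colon \QCoh(\B\Ga)\to\QCoh(\B\hGa)$ is the natural inclusion $\Mod^0_{k[x]}\hookrightarrow \Mod_{k[x]}$ of $x$-nilpotent modules, and $(i^m)^*\colon \QCoh(\B\Ga)\to\QCoh(S^1_\B)$ identifies $\Mod^0_{k[x]}$ with the full subcategory of $\Mod_{k[z,z^{-1}]}$ on which $z-1$ acts nilpotently (via $z=\exp(x)$); both are fully faithful, as was already implicit in the definition of $\pi^u_\sharp$ via \cref{lm:fullyfaithfuladjoint}. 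The main obstacle is to verify that $\B\Ga$ itself satisfies \cref{mainassumption}; this amounts to checking that $p^*\colon \Mod_k\to\Mod^0_{k[x]}$ sending $V$ to $V$ with zero $x$-action admits a colimit-preserving right adjoint $M\mapsto \fib(x\colon M\to M)$ (using that fibers commute with filtered colimits) and a left adjoint $M\mapsto M\otimes^L_{k[x]} k$, both of which are straightforward.
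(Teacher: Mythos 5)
Your general lemma is correct and the overall strategy mirrors the paper's, which also reduces the claim to full faithfulness of $(i^a)^*$ and $(i^m)^*$ together with the formula $\bL_{\Map(X,Y)}=\pi_\sharp\ev^*\bL_Y$. However, there is a concrete gap in your proof of the lemma: you factor $\pi'_\sharp=\pi_{2,\sharp}\circ(\id_{M_2}\times g)_\sharp$ and then invoke the counit of $(\id_{M_2}\times g)_\sharp\dashv(\id_{M_2}\times g)^*$. This presupposes that the fully faithful pullback $(\id_{M_2}\times g)^*$ admits a \emph{left} adjoint. For the maps at hand this fails: by \cref{prop:QCohBGa} and \cref{prop:QCohBhGa}, $(i^a)^*$ is the colimit-preserving fully faithful embedding $\Mod^0_{k[x]}\hookrightarrow\Mod_{k[x]}$ of $x$-torsion modules, which has a right adjoint (the derived torsion functor $\Gamma_x$) but no left adjoint — e.g.\ there is no initial map from $k[x]$ into a torsion module — and similarly for $(i^m)^*$ landing in $\Mod_{k[z,z^{-1}]}$.

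The paper sidesteps this by never invoking $(i^a)_\sharp$: it uses \cref{lm:fullyfaithfuladjoint} to produce $\pi^u_\sharp$ as $\pi^a_\sharp\circ(i^a)^*$ directly, and then the identification of the natural map $(q^a)^*\bL_{\T[-1]Y}\to\bL_{L^uY}$ with $\pi^a_\sharp(i^a)^*(\ev^u)^*\bL_Y\to\pi^u_\sharp(\ev^u)^*\bL_Y$ makes it an equivalence by construction. Your argument can be repaired the same way: after the base-change step you have $\tilde g^*\bL_{M_1}\cong\pi'_\sharp(\id\times g)^*\ev_2^*\bL_Y$, and \cref{lm:fullyfaithfuladjoint} (applied with $\cD=\QCoh(M_2\times X_2)$, $\cC=\QCoh(M_2\times X_1)$, $i=(\id\times g)^*$ and the object $\cO_{M_2\times X_2}$) gives $\pi_{2,\sharp}\cong\pi'_\sharp\circ(\id\times g)^*$ outright, so the comparison map is an equivalence without ever mentioning $(\id\times g)_\sharp$. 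You should also note that the passage from ``$g^*$ fully faithful'' to ``$(\id_{M_2}\times g)^*$ fully faithful'' uses dualizability of $\QCoh(S)$ for affine $S$ (so tensoring preserves fully faithful embeddings), which is worth stating explicitly since it is where the affine-local reduction enters.

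Aside from this, your closing remarks are fine: verifying \cref{mainassumption} for $\B\Ga$ via the left adjoint $M\mapsto M\otimes^L_{k[x]}k$ (obtained once more from \cref{lm:fullyfaithfuladjoint}) and the colimit-preserving right adjoint $M\mapsto\fib(x\colon M\to M)$ (which preserves all colimits since fibers in a stable $\infty$-category are shifted cofibers) is exactly what is needed, and matches the implicit assumptions behind the paper's use of \cref{prop:mappingcotangent} for $L^uY$.
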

\begin{proof}
The statements for $LY$ and $\T[-1] Y$ are proven in the same way, so we only consider the second case. Consider the commutative diagram
\[
\xymatrix{
& \B\Ga\times L^u Y \ar^{\ev^u}[dr] & \\
\B\hGa\times L^u Y \ar^{i^a\times \id}[ur] \ar_{\id\times q^a}[dr] && Y \\
& \B\hGa\times \T[-1] Y \ar_{\ev^a}[ur] &
}
\]
induced by the map $i^a\colon \B\hGa\rightarrow \B\Ga$. We have
\[\bL_{L^u Y}\cong \pi^u_\sharp(\ev^u)^*\bL_Y,\qquad \bL_{\T[-1] Y}\cong \pi^a_\sharp (\ev^a)^*\bL_Y.\]
Using the above commutative diagram the map $(q^a)^* \bL_{\T[-1] Y}\rightarrow \bL_{LY}$ can be identified with
\[\pi^a_\sharp (i^a)^*(\ev^u)^*\bL_Y\longrightarrow \pi^u_\sharp (\ev^u)^*\bL_Y,\]
but $\pi^a_\sharp (i^a)^*\rightarrow \pi^u_\sharp$ is an equivalence by \cref{lm:fullyfaithfuladjoint}.
\end{proof}

Using the previous proposition we obtain two volume forms $(q^a)^*\omega_{\T[-1] Y}$ and $(q^m)^*\omega_{LY}$ on $L^u Y$. We can relate them as follows. Let $p\colon L^u Y\rightarrow Y$ be the projection. For a bounded above quasi-coherent complex $E\in\QCoh(Y)^-$, the restriction of the Atiyah class to $L^u Y$ defines a nilpotent endomorphism
\[\at_E\colon p^* E\longrightarrow p^* E.\]
For any invertible power series $f(x)\in k[\![x]\!]$ we obtain an automorphism
\[f(\at_E)\colon p^* E\longrightarrow p^* E.\]
If $E$ is a perfect complex, we may take the determinant of this automorphism to obtain an invertible function
\[\det f(\at_E)\in\cO(L^u Y)^\times.\]

\begin{thm}
There is an equality
\[(q^m)^* \vol_{LY} = (q^a)^*\vol_{\T[-1] Y} \det\left(\frac{\at_{\bL_Y}}{\exp(\at_{\bL_Y})-1}\right)\]
of volume forms on $L^u Y$.
\label{thm:Toddclass}
\end{thm}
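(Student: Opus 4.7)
Both $(q^m)^*\vol_{LY}$ and $(q^a)^*\vol_{\T[-1] Y}$ are trivializations of the same line $\det \bL_{L^u Y}$, so their ratio is a unit in $\cO(L^u Y)^\times$; we must identify this unit with $\det\bigl(\at_{\bL_Y}/(\exp(\at_{\bL_Y})-1)\bigr)$. The overall plan is to transfer the universal calculation \cref{prop:Duflodifference} in $\K_1(k[\![x]\!])$ into the calculation of this ratio via the ``integration" functor \eqref{eq:BGadualityintegration} and the identification of the Atiyah class as the $x$-action.

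First, I would unpack how each torsion volume form arises from \cref{thm:volumelift}: the nullhomotopy $h_X$ of $[\cO_X]$ in $\K^\omega(X)$ is sent, via the Poincar\'e functor $\tens_X$, to a nullhomotopy of $\pi_\sharp \in \Perf^\vee(X)$, which in turn provides (after applying the determinant functor $\detgr$ and evaluating on $\ev^*\bL_Y$) the trivialization of $\det\bL_{\Map(X, Y)}$. Applied to $X = S^1_\B$ with $h_{S^1_\B}$ and to $X = \B\hGa$ with $h_{\B\hGa}$, and pulled back to $L^u Y$ along $q^m, q^a$, both volume forms land on $\det\pi^u_\sharp\ev^*\bL_Y \cong \det\bL_{L^u Y}$ (using that $q^m, q^a$ are formally \'etale and the factorizations $\pi^u_\sharp = \pi^m_\sharp (i^m)^* = \pi^a_\sharp (i^a)^*$).

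Next, I would exploit the commutative diagram \eqref{eq:BGacommutative}: both pulled-back nullhomotopies become nullhomotopies of the same class $[\cO_{\B\Ga}]$ in $\K^\omega(\B\Ga)$, and their images in $\Omega^\infty\K(k[\![x]\!])$ differ by the loop $J(x) = x/(\exp(x)-1) \in \K_1(k[\![x]\!])$ by \cref{prop:Duflodifference}. By naturality, the corresponding two nullhomotopies of $\pi^u_\sharp\in\Perf^\vee(\B\Ga)$ differ by the image of $J(x)$ under the functor \eqref{eq:BGadualityintegration} $\Perf(k[\![x]\!])\rightarrow \Perf^\vee(\B\Ga)$, evaluated on the universal sheaf $\ev^*\bL_Y$ on $\B\Ga\times L^u Y$.

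The final step is to identify this image explicitly. Under the equivalence $\QCoh(\B\Ga\times L^u Y)^\omega \cong \Mod^0_{k[x]}(\QCoh(L^u Y))$ from \cref{prop:QCohBGa}, the object $\ev^*\bL_Y$ corresponds to $p^*\bL_Y$ equipped with the $x$-action given precisely by the Atiyah class $\at_{\bL_Y}$; this is the defining property of the Atiyah class in the $\hGa$-equivariant formulation recalled above. The integration functor \eqref{eq:BGadualityintegration} takes $N \in \Perf(k[\![x]\!])$ to the functor $M \mapsto M \otimes_{k[\![x]\!]} N$; at the level of $K_1$, the loop $J(x)$ thereby induces the automorphism $J(\at_{\bL_Y})$ of $p^*\bL_Y$ over $L^u Y$. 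Taking $\detgr$ converts this to the invertible function $\det J(\at_{\bL_Y})$ on $L^u Y$, which gives the claimed identity.

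The main obstacle is Step~3: making precise that the naturality of $\tens_X$ and $\detgr$ applied to the loop $J(x)\in\K_1(k[\![x]\!])$ genuinely computes the determinant of $J(\at_{\bL_Y})$ — in other words, compatibility of the classifying-map description of $\ev^*\bL_Y$ with the $\Mod^0_{k[x]}$-module structure and its tensor pairing with $\Perf(k[\![x]\!])$. Once this compatibility is in place, the rest is a straightforward $K$-theoretic diagram chase combined with the explicit $\K_1$ computation of \cref{prop:Duflodifference}.
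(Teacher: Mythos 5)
Your proposal is correct and follows essentially the same route as the paper: identify both volume forms as trivializations of $\det\bL_{L^u Y}$ coming from nullhomotopies of $[\cO_{\B\Ga}]$ via the integration functor \eqref{eq:BGadualityintegration}, reduce the ratio to the $\K_1(k[\![x]\!])$ computation of \cref{prop:Duflodifference}, and use that the $x$-action on $\ev^*\bL_Y$ is the Atiyah class. The compatibility you flag as the "main obstacle" in Step~3 is exactly what the paper handles: it works with an $S$-point $f\colon S\times\B\Ga\rightarrow Y$, packages the two evaluations $F(k)\cong\pi_\sharp(f^*\bL_Y)$ and $F(k[\![x]\!])\cong\tilde{f}^*p^*\bL_Y$ into a single exact functor $F\colon\Perf(k[\![x]\!])\rightarrow\Perf(S)$, and then the commutative square relating $\Aut_{\Perf(k[\![x]\!])}(k[\![x]\!])$, $\K_1(k[\![x]\!])$, $\K_1(S)$, and $\cO(S)^\times$ makes your "$J(x)$ induces $J(\at_{\bL_Y})$" step precise.
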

\begin{proof}
Consider a morphism $f\colon S\times \B\Ga\rightarrow Y$ corresponding to an $S$-point $\tilde{f}\colon S\rightarrow L^u Y$.

Consider the composite
\[F\colon \Perf(k[\![x]\!])\longrightarrow \Perf(S\times \B\Ga)\otimes \Perf(k[\![x]\!])\longrightarrow \Perf(S),\]
where the first functor is given by the inclusion of $f^* \bL_Y$ and the second functor is \eqref{eq:BGadualityintegration}. Note that $F(k) \cong \pi_\sharp(f^*\bL_Y)$ and $F(k[\![x]\!]) = \tilde{f}^* p^* \bL_Y$. Let
\[[F]\colon \K(k[\![x]\!])\longrightarrow \K(S)\]
be the induced map on $K$-theory. Consider the element $[F](h_{S^1} - h_{\B\hGa})\in\K_1(S)$ and its determinant $\det([F](h_{S^1} - h_{\B\hGa}))\in\cO(S)^\times$. Unpacking the definitions, we have
\[\tilde{f}^* \frac{(q^m)^* \vol_{LY}}{(q^a)^* \vol_{\T[-1] Y}} = \det([F](h_{S^1} - h_{\B\hGa})).\]

Consider the commutative diagram
\[
\xymatrix{
k[\![x]\!]^\times = \Aut_{\Perf(k[\![x]\!])}(k[\![x]\!])\ar^{F}[r] \ar^{\sim}[d] & \Aut_{\Perf(S)}(\tilde{f}^* p^* \bL_Y) \ar[d] \\
\K_1(k[\![x]\!]) \ar^{[F]}[r] & \K_1(S) \ar^{\det}[d] \\
& \cO(S)^\times 
}
\]

Using \cref{prop:Duflodifference} we obtain
\[\det([F](h_{S^1} - h_{\B\hGa})) = \tilde{f}^* \det(J(\at_{\bL_Y})),\]
which proves the claim.
\end{proof}

\begin{remark}
We have
\[\det\left(\frac{\at_{\bL_Y}}{\exp(\at_{\bL_Y})-1}\right) = \det\left(\frac{\at_{\bT_Y}}{1-\exp(-\at_{\bT_Y})}\right)\in\cO(L^u Y)^\times.\]
If $Y$ is a smooth scheme, the map $q^a\colon L^u Y\rightarrow \T[-1] Y$ is an isomorphism. Then
\[\det\left(\frac{\at_{\bT_Y}}{1-\exp(-\at_{\bT_Y})}\right)\in\cO(\T[-1] Y)\cong \bigoplus_{p=0}^{\dim Y} \rH^p(Y, \Omega^p_Y)\]
is the Todd class of $Y$. Thus, \cref{thm:Toddclass} shows that the torsion volume forms $\vol_{LY}$ on $LY$ and $\vol_{\T[-1] Y}$ on $\T[-1] Y$ differ by the Todd class. We refer to \cite[Corollary 4.4.3]{KondyrevPrikhodko} for a related statement.
\end{remark}

\printbibliography

\end{document}